\providecommand{\tabularnewline}{\\}
\providecommand{\algorithmname}{Algorithm}
\theoremstyle{plain}
\newtheorem{thm}{\protect\theoremname}[section]
\theoremstyle{plain}
\newtheorem{lem}[thm]{\protect\lemmaname}
\newenvironment{proof}[1][\protect\proofname]{\par
	\normalfont\topsep6\p@\@plus6\p@\relax
	\trivlist
	\itemindent\parindent
	\item[\hskip\labelsep\scshape #1]\ignorespaces
}{%
	\endtrivlist\@endpefalse
}
\providecommand{\proofname}{Proof}
\providecommand{\lemmaname}{Lemma}
\providecommand{\theoremname}{Theorem}
\begin{document}
\global\long\def\E{\mathbb{E}}%
\global\long\def\Var{\mathrm{Var}}%
\global\long\def\Cov{\mathrm{Cov}}%
\global\long\def\dom{\mathcal{X}}%
\global\long\def\R{\mathbb{R}}%
\global\long\def\avx{\overline{x}}%

\allowdisplaybreaks

\title{Adaptive Accelerated (Extra-)Gradient Methods with Variance Reduction}
\author{Zijian Liu\thanks{Equal contribution. Department of Computer Science, Boston University,
\texttt{{zl3067@bu.edu}.}}\and Ta Duy Nguyen\thanks{Equal contribution. Department of Computer Science, Boston University,
\texttt{{taduy@bu.edu}.}} \and Alina Ene\thanks{Department of Computer Science, Boston University, \texttt{{aene@bu.edu}}}
\and Huy L. Nguy\~{ê}n\thanks{Khoury College of Computer and Information Science, Northeastern University,
\texttt{{hu.nguyen@northeastern.edu}.}}}

\maketitle

\begin{abstract}
In this paper, we study the finite-sum convex optimization problem
focusing on the general convex case. Recently, the study of variance
reduced (VR) methods and their accelerated variants has made exciting
progress. However, the step size used in the existing VR algorithms
typically depends on the smoothness parameter, which is often unknown
and requires tuning in practice. To address this problem, we propose
two novel adaptive VR algorithms: \textit{Adaptive Variance Reduced
Accelerated Extra-Gradient} (AdaVRAE) and\textit{ Adaptive Variance
Reduced Accelerated Gradient} (AdaVRAG). Our algorithms do not require
knowledge of the smoothness parameter. AdaVRAE uses $\mathcal{O}\left(n\log\log n+\sqrt{\frac{n\beta}{\epsilon}}\right)$
gradient evaluations and AdaVRAG uses $\mathcal{O}\left(n\log\log n+\sqrt{\frac{n\beta\log\beta}{\epsilon}}\right)$
gradient evaluations to attain an $\mathcal{O}(\epsilon)$-suboptimal
solution, where $n$ is the number of functions in the finite sum
and $\beta$ is the smoothness parameter. This result matches the
best-known convergence rate of non-adaptive VR methods and it improves
upon the convergence of the state of the art adaptive VR method, AdaSVRG.
We demonstrate the superior performance of our algorithms compared
with previous methods in experiments on real-world datasets.
\end{abstract}

\section{Introduction}

\begin{table*}[t]
\begin{centering}
{\small{}\caption{Our results and comparison with prior works.}
\label{tb:alg-comparison}\vspace{0.1in}
}{\small\par}
\par\end{centering}
\centering{}{\small{}}%
\begin{tabular}{ccc}
\toprule 
{\small{}Algorithm} & {\small{}General convex} & {\small{}Adaptive}\tabularnewline
\midrule
\midrule 
{\small{}SVRG \cite{johnson2013accelerating}} & {\small{}-} & {\small{}No}\tabularnewline
\midrule 
{\small{}$\text{SVRG}^{++}$ \cite{allen2016improved}} & {\small{}$\mathcal{O}\left(n\log\frac{\beta}{\epsilon}+\frac{\beta}{\epsilon}\right)$} & {\small{}No}\tabularnewline
\midrule 
{\small{}Katyusha \cite{allen2017katyusha}} & {\small{}$\mathcal{O}\left(n\log\frac{\beta}{\epsilon}+\sqrt{\frac{n\beta}{\epsilon}}\right)$} & {\small{}No}\tabularnewline
\midrule 
{\small{}VARAG \cite{lan2019unified}} & {\small{}$\mathcal{O}\left(n\min\left\{ \log\frac{\beta}{\epsilon},\log n\right\} +\sqrt{\frac{n\beta}{\epsilon}}\right)$} & {\small{}No}\tabularnewline
\midrule 
{\small{}VRADA \cite{song2020variance}} & {\small{}$\mathcal{O}\left(n\min\left\{ \log\log\frac{\beta}{\epsilon},\log\log n\right\} +\sqrt{\frac{n\beta}{\epsilon}}\right)$} & {\small{}No}\tabularnewline
\midrule 
\multirow{2}{*}{{\small{}AdaSVRG \cite{dubois2021svrg}}} & {\small{}$\mathcal{O}\left(\frac{n\beta}{\epsilon}\right)$ (fixed
sized inner loop, only if $\epsilon=\Omega(\frac{\beta}{n})$)} & \multirow{2}{*}{{\small{}Yes}}\tabularnewline
\cmidrule{2-2} 
 & {\small{}$\mathcal{O}\left(n\log\frac{\beta}{\epsilon}+\frac{\beta}{\epsilon}\right)$
(multi-stage)} & \tabularnewline
\midrule 
{\small{}AdaVRAE (unknown $\beta$) }\textbf{\small{}(This Paper)} & {\small{}$\mathcal{O}\left(n\min\left\{ \log\log\frac{\beta}{\epsilon},\log\log n\right\} +\sqrt{\frac{n\beta}{\epsilon}}\right)$} & {\small{}Yes}\tabularnewline
\cmidrule{2-3} \cmidrule{3-3} 
{\small{}AdaVRAE (known $\beta$) }\textbf{\small{}(This Paper)} & {\small{}$\mathcal{O}\left(n\min\left\{ \log\log\frac{\beta}{\epsilon},\log\log n\right\} +\sqrt{\frac{n\beta}{\epsilon}}\right)$} & {\small{}No}\tabularnewline
\midrule 
{\small{}AdaVRAG (unknown $\beta$)}\textbf{\small{} (This Paper)} & {\small{}$\mathcal{O}\left(n\min\left\{ \log\log\frac{\beta\log\beta}{\epsilon},\log\log n\right\} +\sqrt{\frac{n\beta\log\beta}{\epsilon}}\right)$} & {\small{}Yes}\tabularnewline
\cmidrule{2-3} \cmidrule{3-3} 
{\small{}AdaVRAG (known $\beta$)}\textbf{\small{} (This Paper)} & {\small{}$\mathcal{O}\left(n\min\left\{ \log\log\frac{\beta}{\epsilon},\log\log n\right\} +\sqrt{\frac{n\beta}{\epsilon}}\right)$} & {\small{}No}\tabularnewline
\midrule
{\small{}Lower Bound \cite{woodworth2016tight}} & {\small{}$\Omega\left(n+\sqrt{\frac{n\beta}{\epsilon}}\right)$} & {\small{}-}\tabularnewline
\bottomrule
\end{tabular}{\small\par}
\end{table*}

In this paper, we consider the finite-sum optimization problem in
the form of 
\begin{equation}
\min_{x\in\mathcal{X}}\left\{ \frac{1}{n}\sum_{i=1}^{n}f_{i}(x)+h(x)\right\} \label{eq:problem}
\end{equation}
where each function $f_{i}$ is convex and $\beta$-smooth, $h$ is
convex and potentially nonsmooth but admitting an efficient proximal
operator, and $\dom\subseteq\R^{d}$ is a closed convex set. Additionally,
we further assume that $\dom$ is compact when $\beta$ is unknown.
Problem (\ref{eq:problem}) has found a wide range of applications
in machine learning, typically in empirical risk minimization problems,
and has been extensively studied in the past few years. 

Among existing approaches to solve this problem, variance reduced
(VR) methods \cite{johnson2013accelerating,defazio2014saga,schmidt2017minimizing,roux2012stochastic}
have recently shown significant improvement over the classic stochastic
gradient methods such as stochastic gradient descent (SGD) and its
variants. For example, in strongly convex problems, VR methods such
as \cite{allen2017katyusha,lan2019unified,lin2015universal} can achieve
the optimal number of gradient evaluations of $\mathcal{O}\left((n+\sqrt{n\kappa})\log\frac{1}{\epsilon}\right)$
to attain an $\mathcal{O}(\epsilon)$-suboptimal solution, where $\kappa$
is the condition number, which improves over full-batch gradient descent
($\mathcal{O}\left(n\kappa\log\frac{1}{\epsilon}\right)$) and Nesterov's
accelerated gradient descent \cite{nesterov1983method,nesterov2003introductory}
($\mathcal{O}\left(n\sqrt{\kappa}\log\frac{1}{\epsilon}\right)$).
For general convex problems, the current state-of-the-art VR methods,
namely VRADA \cite{song2020variance} can find an $\mathcal{O}(\epsilon)$-suboptimal
solution using $\mathcal{O}\left(n\log\log n+\sqrt{\frac{n\beta}{\epsilon}}\right)$
gradient evaluations, which nearly-matches the lower bound of $\Omega\left(n+\sqrt{\frac{n\beta}{\epsilon}}\right)$\cite{woodworth2016tight}. 

However, most of existing VR gradient methods have the same limitation
as classic gradient methods; that is, they require the prior knowledge
of the smoothness parameter in order to set the step size. Lacking
this information, one may have to carefully perform hyper-parameter
tuning to avoid the situation that the algorithm divergences or converges
too slowly due to too large or too small step size. This limitation
of gradient methods motivates the development of methods that aim
to adapt to unknown problem structures. A notable line of work starting
with the influential AdaGrad algorithm\textbf{ }has designed a family
of gradient descent based methods that set the step size based on
the gradients or iterates observed in previous iterations \cite{McMahanS10,duchi2011adaptive,kingma2014adam,Levy17,levy2018online,bach2019universal,Cutkosky19,KavisLBC19,joulani2020simpler,ene2021adaptive,antonakopoulos2021adaptive,ene2021variational}.
Remarkably, these works have shown that, in the setting where we have
access to the exact full gradient in each iteration, it is possible
to match the convergence rates of both unaccelerated and accelerated
gradient descent methods without any prior knowledge of the smoothness
parameter. These methods have also been analyzed in the stochastic
setting under a bounded variance assumption, and they achieve a convergence
rate that is comparable to that of SGD. 

Given the theoretical and practical success of adaptive methods, it
is natural to ask whether one can design VR methods that achieve state
of the art convergence guarantees without any prior knowledge of the
smoothness parameter. The recent work of \cite{dubois2021svrg} gives
the first adaptive VR method --- AdaSVRG --- with the gradient complexity
of $\mathcal{O}\left(n\log\frac{\beta}{\epsilon}+\frac{\beta}{\epsilon}\right)$.
AdaSVRG builds on the AdaGrad \cite{duchi2011adaptive} and SVRG algorithms
\cite{johnson2013accelerating}, both of which are not accelerated.

\textbf{Our contributions:} In this work, we take this line of work
further and design the first accelerated VR methods that do not require
any prior knowledge of the smoothness parameter. Our algorithms\textit{,
Adaptive Variance Reduced Accelerated Extra-Gradient} (AdaVRAE) and\textit{
Adaptive Variance Reduced Accelerated Gradient} (AdaVRAG), only use
$\mathcal{O}\left(n\log\log n+\sqrt{\frac{n\beta}{\epsilon}}\right)$
and $\mathcal{O}\left(n\log\log n+\sqrt{\frac{n\beta\log\beta}{\epsilon}}\right)$
gradient evaluations respectively to attain an $\mathcal{O}(\epsilon)$-suboptimal
solution when $\beta$ is unknown, both of which significantly improve
the convergence rate of AdaSVRG. Table \ref{tb:alg-comparison} compares
our algorithms and prior VR methods and Section \ref{sec:algorithms}
discusses our algorithmic approaches and techniques. The convergence
rate of AdaVRAE matches up to constant factors the best-known convergence
rate of non-adaptive VR methods \cite{song2020variance,joulani2020simpler}.
Both of our algorithms follow a different approach from these methods
that is based on extra-gradient and mirror descent, instead of dual
averaging.

We demonstrate the efficiency of our algorithms in practice on multiple
real-world datasets. We show that AdaVRAG and AdaVRAE are competitive
with existing standard and adaptive VR methods while having the advantage
of not requiring hyperparameter tuning, and in many cases AdaVRAG
outperforms these benchmarks. 

\subsection{Related work}

\textbf{Variance reduced gradient methods:} Variance reduction technique
\cite{roux2012stochastic,schmidt2017minimizing,shalev2013stochastic,mairal2013optimization,johnson2013accelerating,defazio2014saga}
has been proposed to improve the convergence rate of stochastic gradient
descent algorithms in the finite sum problem and has since become
widely-used in many successful algorithms. Notable improvements can
be seen in strongly convex optimization problems where earliest algorithms
such as SVRG \cite{johnson2013accelerating} or SAGA \cite{defazio2014saga}
obtain $\mathcal{O}\left((n+\kappa)\log\frac{1}{\epsilon}\right)$
convergence rate compared with $\mathcal{O}\left(\frac{\sigma^{2}\kappa}{\beta\epsilon}\right)$
of plain SGD, with the latter requiring an additional assumption on
the $\sigma^{2}$-boundedness of the variance term, i.e., $\E_{i}\left[\left\Vert \nabla f_{i}(x)-\nabla f(x)\right\Vert ^{2}\right]\le\sigma^{2}$.
However, these non-accelerated methods do not achieve the optimal
convergence rate. Recent works such as \cite{lin2015universal,allen2017katyusha,lan2019unified}
focus on designing accelerated methods and successfully match the
optimal lower bound for strongly convex optimization of $\Omega\left((n+\sqrt{n\kappa})\log\frac{1}{\epsilon}\right)$
given by \cite{lan2018random}. 

In non-strongly convex problems, however, existing works do not yet
match the lower bound of $\Omega\left(n+\sqrt{\frac{\beta n}{\epsilon}}\right)$
shown in \cite{woodworth2016tight}. The best effort so far can be
found in the line of accelerated methods started by \cite{allen2017katyusha}
and followed by \cite{allen2018katyusha,lan2019unified,li2021anita}
that rely on incorporating the checkpoint in each update. AdaVRAG
follows the same idea but offers simpler update and more efficient
choice of coefficients that results in a better convergence rate,
equivalent to VRADA \cite{song2020variance}. By comparison, while
VRADA is a dual-averaging scheme, AdaVRAG is a mirror descent method
and AdaVRAE is an extra-gradient algorithm.

In a different line of research \cite{allen2016variance,fang2018spider,zhou2018stochastic},
variance reduction has been applied to non-convex optimization to
find critical points with much better convergence rate. 

\textbf{Adaptive methods with variance reduction:} There has been
extensive research on adaptive methods \cite{duchi2011adaptive,kingma2014adam,reddi2018convergence,tieleman2012lecture,dozat2016incorporating}
in the setting where we compute a full gradient in each iteration.
However, there are only few works combining adaptive methods with
VR techniques in the finite sum setup. Most relevant for our work
is AdaSVRG \cite{dubois2021svrg}. This algorithm is built upon SVRG
which as mentioned earlier is a non-accelerated method and has a slower
convergence rate. AdaSVRG uses the gradient norm to update the step
size, similar to \cite{duchi2011adaptive} and the step is reset in
every epoch, which could lead to step sizes that are too large in
later stages. In contrast, both AdaVRAG and AdaVRAE are accelerated
VR methods and use a cumulative step size. AdaVRAG uses the iterate
movement to update the step size, as in \cite{bach2019universal,ene2021adaptive}.
AdaVRAE improves the convergence rate by a $\sqrt{\log\beta}$ factor
by using the gradient difference similarly to \cite{MohriYang16,joulani2020simpler,ene2021variational}. 

A different line of work considers VR methods that set the step size
using stochastic line search \cite{schmidt2017minimizing,mairal2013optimization}
or Barzilai-Borwein step size \cite{tan2016barzilai,li2020almost}.
The former methods do not have theoretical guarantees, and the latter
methods require knowledge of the smoothness parameter in order to
obtain theoretical bounds.

Recent works design variance-reduced methods for non-convex optimization.
STORM \cite{cutkosky2019momentum} and $\text{STORM}^{+}$\cite{levy2021storm+}
design an adaptive step size, though the former still requires the
smoothness parameter in the step size. Super-Adam \cite{huang2021super}
also requires their parameters to satisfy some inequality involving
the smoothness parameter like STORM.

\subsection{Notation and problem setup }

Let $\left[n\right]$ denote the set $\left\{ 1,2,\cdots,n\right\} $.
For simplicity, we only consider the Euclidean norm $\left\Vert \cdot\right\Vert \mathrel{\coloneqq}\left\Vert \cdot\right\Vert _{2}$
(Our work can be extended to $\left\Vert x\right\Vert _{A}\coloneqq\sqrt{x^{\top}Ax}$
for any $A\succ0$ with almost no change). $x^{+}$ represents $\max\left\{ x,0\right\} $.

We are interested in solving the following problem 
\[
\min_{x\in\mathcal{X}}\left\{ F(x)=f(x)+h(x)\right\} 
\]
where $f(x)\coloneqq\frac{1}{n}\sum_{i=1}^{n}f_{i}(x)$ and for $i\in\left[n\right]$,
$f_{i}:\R^{d}\to\R$ and $h:\mathcal{X}\to\R$ are convex functions
with a closed convex set $\mathcal{X}\subseteq\R^{d}$. Let $x^{*}=\arg\min_{x\in\dom}F(x)$.
We say a function $G$ is $\beta$-smooth if $\left\Vert \nabla G(x)-\nabla G(y)\right\Vert \leq\beta\left\Vert x-y\right\Vert $
for all $x,y\in\R^{d}$. Equivalently, we have $G(y)\leq G(x)+\left\langle \nabla G(x),y-x\right\rangle +\frac{\beta}{2}\left\Vert y-x\right\Vert ^{2}$.
In this paper we always assume that each $f_{i}$ is $\beta$-smooth,
which implies that $f$ is also $\beta$-smooth. We assume that we
can efficiently solve optimization problems of the form $\arg\min_{x\in\dom}\left(\gamma h(x)+\frac{1}{2}\left\Vert x-v\right\Vert ^{2}\right)$
where $\gamma\geq0$ and $v\in\R^{d}$. When the smoothness parameter
$\beta$ is unknown, we additionally assume that $\dom$ is compact
with diameter $D$, i.e., $\sup_{x,y\in\dom}\left\Vert x-y\right\Vert \le D$.

\section{Our algorithms and convergence guarantees}

\label{sec:algorithms}

\begin{algorithm}
\caption{AdaVRAE\label{alg:AdaVRAE}}

\textbf{Input:} initial point $u^{(0)}$, domain diameter $D$.

\textbf{Parameters:} $\{a^{(s)}\}$,$\left\{ T_{s}\right\} $, $A_{T_{0}}^{(0)}>0$,
$\eta>0$.

$\avx_{0}^{(1)}=z_{0}^{(1)}=u^{(0)}$, compute $\nabla f(u^{(0)})$

Initialize $\gamma_{0}^{(1)}=\gamma$, where $\gamma$ is any small
constant

\textbf{for $s=1$ to $S$:}

$\quad$$A_{0}^{(s)}=A_{T_{s-1}}^{(s-1)}-T_{s}\left(a^{(s)}\right)^{2}$

\textbf{$\quad$for $t=1$ to $T_{s}$:}

$\quad\quad$$x_{t}^{(s)}=\arg\min_{x\in\dom}\left\{ a^{(s)}\left\langle g_{t-1}^{(s)},x\right\rangle +a^{(s)}h(x)+\frac{\gamma_{t-1}^{(s)}}{2}\left\Vert x-z_{t-1}^{(s)}\right\Vert ^{2}\right\} $

$\quad\quad$Let $A_{t}^{(s)}=A_{t-1}^{(s)}+a^{(s)}+\left(a^{(s)}\right)^{2}$

$\quad\quad$$\avx_{t}^{(s)}=\frac{1}{A_{t}^{(s)}}\left(A_{t-1}^{(s)}\avx_{t-1}^{(s)}+a^{(s)}x_{t}^{(s)}+\left(a^{(s)}\right)^{2}u^{(s-1)}\right)$

$\quad\quad$ \textbf{if $t\neq T_{s}$:}

$\quad\quad\quad$Pick $i_{t}^{(s)}\sim\mathrm{Uniform}\left(\left[n\right]\right)$

$\quad\quad\quad$$g_{t}^{(s)}=\nabla f_{i_{t}^{(s)}}(\avx_{t}^{(s)})-\nabla f_{i_{t}^{(s)}}(u^{(s-1)})+\nabla f(u^{(s-1)})$

$\quad\quad$ \textbf{else:}

$\quad\quad\quad$$g_{t}^{(s)}=\nabla f(\avx_{t}^{(s)})$

$\quad\quad$$\gamma_{t}^{(s)}=\frac{1}{\eta}\sqrt{\eta^{2}\left(\gamma_{t-1}^{(s)}\right)^{2}+\left(a^{(s)}\right)^{2}\left\Vert g_{t}^{(s)}-g_{t-1}^{(s)}\right\Vert ^{2}}$

$\quad\quad$$z_{t}^{(s)}=\arg\min_{z\in\dom}\left\{ a^{(s)}\left\langle g_{t}^{(s)},z\right\rangle +a^{(s)}h(z)+\frac{\gamma_{t-1}^{(s)}}{2}\left\Vert z-z_{t-1}^{(s)}\right\Vert ^{2}+\frac{\gamma_{t}^{(s)}-\gamma_{t-1}^{(s)}}{2}\left\Vert z-x_{t}^{(s)}\right\Vert ^{2}\right\} $

$\quad$$u^{(s)}=\avx_{0}^{(s+1)}=\avx_{T_{s}}^{(s)}$, $z_{0}^{(s+1)}=z_{T_{s}}^{(s)}$,
$g_{0}^{(s+1)}=g_{T_{s}}^{(s)}$, $\gamma_{0}^{(s+1)}=\gamma_{T_{s}}^{(s)}$

\textbf{return} $u^{(S)}$
\end{algorithm}

\begin{algorithm}
\caption{AdaVRAG\label{alg:AdaVRAG}}

\textbf{Input:} initial point $u^{(0)}$, domain diameter $D$.

\textbf{Parameters:} $\{a^{(s)}\}$, $a^{(s)}\in(0,1)$, $\{q^{(s)}\}$,
$\{T_{s}\}$, $\eta>0$.

$x_{0}^{(1)}=u^{(0)}$

Initialize $\gamma_{0}^{(1)}=\gamma$, where $\gamma$ is any small
constant

\textbf{for $s=1$ to $S$:}

$\quad$$\avx_{0}^{(s)}=a^{(s)}x_{0}^{(s)}+(1-a^{(s)})u^{(s-1)},$
compute $\nabla f(u^{(s-1)})$

\textbf{$\quad$for $t=1$ to $T_{s}$:}

$\quad\quad$Pick $i_{t}^{(s)}\sim\mathrm{Uniform}\left(\left[n\right]\right)$

$\quad\quad$$g_{t}^{(s)}=\nabla f_{i_{t}^{(s)}}(\avx_{t-1}^{(s)})-\nabla f_{i_{t}^{(s)}}(u^{(s-1)})+\nabla f(u^{(s-1)})$

$\quad\quad$$x_{t}^{(s)}=\arg\min_{x\in\dom}\left\{ \left\langle g_{t}^{(s)},x\right\rangle +h(x)+\frac{\gamma_{t-1}^{(s)}q^{(s)}}{2}\left\Vert x-x_{t-1}^{(s)}\right\Vert ^{2}\right\} $

$\quad\quad$$\avx_{t}^{(s)}=a^{(s)}x_{t}^{(s)}+(1-a^{(s)})u^{(s-1)}$

$\quad\quad$Option I: $\gamma_{t}^{(s)}=\gamma_{t-1}^{(s)}\sqrt{1+\frac{\left\Vert x_{t}^{(s)}-x_{t-1}^{(s)}\right\Vert ^{2}}{\eta^{2}}}$

$\quad\quad$Option II: $\gamma_{t}^{(s)}=\gamma_{t-1}^{(s)}+\frac{\left\Vert x_{t}^{(s)}-x_{t-1}^{(s)}\right\Vert ^{2}}{\eta^{2}}$

$\quad$$u^{(s)}=\frac{1}{T_{s}}\sum_{t=1}^{T_{s}}\avx_{t}^{(s)}$,
$x_{0}^{(s+1)}=x_{T_{s}}^{(s)}$ $,\gamma_{0}^{(s+1)}=\gamma_{T_{s}}^{(s)}$

\textbf{return} $u^{(S)}$
\end{algorithm}

In this section, we describe our algorithms and state their convergence
guarantees. Our algorithm AdaVRAE shown in Algorithm \ref{alg:AdaVRAE}
is a novel accelerated scheme that uses past extra-gradient update
steps in the inner loop and novel averaging to achieve acceleration.
AdaVRAE adaptively sets the step sizes based on the stochastic gradient
difference. Our choice of step sizes is a novel adaptation to the
VR setting of the step sizes used by the works \cite{MohriYang16,KavisLBC19,joulani2020simpler,ene2021variational}
in the batch/full-gradient setting. Our algorithm builds on the work
\cite{ene2021variational}, which provides an unaccelerated past extra-gradient
algorithm in the batch/full-gradient setting.

Theorem \ref{thm:AdaVRAE-main-body} states the parameter choices
 and the convergence guarantee for AdaVRAE, and we give its proof
in Section \ref{sec:AdaVRAE} in the appendix. The convergence rate
of AdaVRAE matches up to constant factors the rate of the state of
the art non-adaptive VR methods \cite{joulani2020simpler,song2020variance}.
The initial step size $\gamma_{0}^{(1)}$ can be set to any small
constant $\gamma$, which in practice we choose $\gamma=0.01$. Similarly
to AdaGrad, setting $\eta=\Theta(D)$ gives us the optimal dependence
of the convergence rate in the domain diameter. For simplicity, we
state the convergence in Theorem \ref{thm:AdaVRAE-main-body} and
\ref{thm:AdaVRAG-main-body} when $\eta=\Theta(D)$. We refer the
reader to Theorems \ref{thm:AdaVRAE-Convergence-A} and \ref{thm:AdaVRAG-Convergence-A}
in the appendix for the precise choice of parameters as well as the
full dependence of the convergence rate on arbitrary choices of $\gamma$
and $\eta$. In both Theorem \ref{thm:AdaVRAE-main-body} and \ref{thm:AdaVRAG-main-body},
we measure convergence using the number of individual gradient evaluations
$\nabla f_{i}$, assuming that the exact computation of $\nabla f$
takes $n$ gradient evaluations.
\begin{thm}
\label{thm:AdaVRAE-main-body}(Convergence of AdaVRAE) Define $s_{0}=\left\lceil \log_{2}\log_{2}4n\right\rceil $,
$c=\frac{3}{2}$. Suppose we set the parameters of Algorithm \ref{alg:AdaVRAE}
as follows: 
\begin{align*}
a^{(s)} & =\begin{cases}
(4n)^{-0.5^{s}} & 1\leq s\leq s_{0}\\
\frac{s-s_{0}-1+c}{2c} & s_{0}<s
\end{cases},\\
T_{s} & =n,\\
A_{T_{0}}^{(0)} & =\frac{5}{4}.
\end{align*}
Suppose that $\dom$ is a compact convex set with diameter $D$ and
we set $\eta=\Theta(D)$. The number of individual gradient evaluations
to achieve a solution $u^{(S)}$ such that $\E\left[F(u^{(S)})-F(x^{*})\right]\le\epsilon$
for Algorithm \ref{alg:AdaVRAE} is
\begin{align*}
\#grads & =\begin{cases}
\mathcal{O}\text{\ensuremath{\left(n\log\log\frac{V_{1}}{\epsilon}\right)}} & \mbox{if }\epsilon\geq\frac{V_{1}}{n}\\
\mathcal{O}\left(n\log\log n+\sqrt{\frac{nV_{1}}{\epsilon}}\right) & \mbox{if }\epsilon<\frac{V_{1}}{n}
\end{cases}
\end{align*}
where $V_{1}=\mathcal{O}\left(F(u^{(0)})-F(x^{*})+\left(\text{\ensuremath{\gamma}}+\beta\right)D^{2}\right)$.
\end{thm}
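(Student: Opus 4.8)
The plan is to establish a per-epoch progress bound for the extra-gradient/mirror-descent inner loop, and then stitch these bounds together across epochs using the specific choice of the weights $a^{(s)}$ and the ``reset'' $A_0^{(s)} = A_{T_{s-1}}^{(s-1)} - T_s(a^{(s)})^2$. First I would analyze a single step of the inner loop. The updates for $x_t^{(s)}$ and $z_t^{(s)}$ are a past-extra-gradient step with an adaptively growing regularizer $\gamma_t^{(s)}$. Using the standard three-point identity for the proximal/mirror steps, I would show a bound of the form
\begin{align*}
a^{(s)}\left\langle g_t^{(s)}, x_t^{(s)} - x^* \right\rangle + a^{(s)}\left( h(x_t^{(s)}) - h(x^*) \right) \le \frac{\gamma_{t-1}^{(s)}}{2}\left\Vert z_{t-1}^{(s)} - x^* \right\Vert^2 - \frac{\gamma_t^{(s)}}{2}\left\Vert z_t^{(s)} - x^* \right\Vert^2 + (\text{error terms}),
\end{align*}
where the error terms involve $\left\Vert g_t^{(s)} - g_{t-1}^{(s)}\right\Vert$, and crucially the choice $\gamma_t^{(s)} = \frac{1}{\eta}\sqrt{\eta^2(\gamma_{t-1}^{(s)})^2 + (a^{(s)})^2\left\Vert g_t^{(s)} - g_{t-1}^{(s)}\right\Vert^2}$ lets the negative curvature terms absorb these errors (this is the adaptive trick from \cite{ene2021variational,joulani2020simpler} transplanted to the VR setting). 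Then I would convert the gradient inner product into a function-value decrease using convexity and $\beta$-smoothness of $f$, combined with the averaging definition of $\avx_t^{(s)}$; the quadratic weight $(a^{(s)})^2 u^{(s-1)}$ in the average is exactly what is needed to cancel the smoothness error against the progress from $A_t^{(s)} = A_{t-1}^{(s)} + a^{(s)} + (a^{(s)})^2$. This should give a telescoping inequality of the form $A_t^{(s)}\E[F(\avx_t^{(s)}) - F(x^*)] \le A_{t-1}^{(s)}\E[F(\avx_{t-1}^{(s)}) - F(x^*)] + (\text{mirror-descent terms}) + (\text{variance terms})$, where the variance of the VR estimator $g_t^{(s)}$ is controlled by $\beta$-smoothness and the distance $\left\Vert \avx_t^{(s)} - u^{(s-1)}\right\Vert$.

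Next I would sum the per-step bound over $t = 1, \dots, T_s$ within epoch $s$, and then sum over epochs. The telescoping in the mirror-descent terms works across epoch boundaries because $z_0^{(s+1)} = z_{T_s}^{(s)}$ and $\gamma_0^{(s+1)} = \gamma_{T_s}^{(s)}$; the telescoping in the $A F$ terms works because $u^{(s)} = \avx_{T_s}^{(s)} = \avx_0^{(s+1)}$ but with the weight reset from $A_{T_s}^{(s)}$ down to $A_0^{(s+1)} = A_{T_s}^{(s)} - T_{s+1}(a^{(s+1)})^2$. The point of this reset is that the ``extra'' quadratic mass $T_{s+1}(a^{(s+1)})^2$ accumulated in the next epoch is pre-paid, so that the effective growth of $A_T^{(s)}$ over $s$ epochs is governed by $\sum_s T_s a^{(s)} = n\sum_s a^{(s)}$. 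With the stated schedule, $a^{(s)} = (4n)^{-0.5^s}$ doubles its exponent each epoch for $s \le s_0$, so after $s_0 = \lceil \log_2\log_2 4n\rceil$ epochs we reach $a^{(s_0)} = \Theta(1)$, and $A_{T_{s_0}}^{(s_0)} = \Theta(n)$; for $s > s_0$, $a^{(s)}$ grows linearly, so $A_{T_S}^{(S)} = \Theta(n S^2)$. I then need to check that $A_0^{(s)} > 0$ throughout (this is where $A_{T_0}^{(0)} = 5/4$ and $c = 3/2$ come in — a small calculation verifying $A_{T_{s-1}}^{(s-1)} \ge T_s(a^{(s)})^2$ for every $s$).

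The final bound, after dividing by $A_{T_S}^{(S)}$, should read
\begin{align*}
\E\left[F(u^{(S)}) - F(x^*)\right] \le \frac{F(u^{(0)}) - F(x^*) + \mathcal{O}\!\left((\gamma+\beta)D^2\right)}{A_{T_S}^{(S)}} = \frac{\mathcal{O}(V_1)}{A_{T_S}^{(S)}},
\end{align*}
where the numerator collects the initial suboptimality, the initial regularizer mass $\gamma D^2$, and the $\beta D^2$ terms from smoothness and variance (using compactness of $\dom$ to bound all the squared distances by $D^2$, and using the adaptive step-size sum bound $\sum_t \gamma_t^{(s)}\left\Vert\cdot\right\Vert^2$-type estimates that convert gradient-difference sums into $\beta D$-scale quantities — here one also invokes a lemma bounding $\gamma_{T}^{(s)} \le \mathcal{O}(\beta)$ for the accumulated step size, exploiting smoothness). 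To get the gradient-complexity statement, I would set $A_{T_S}^{(S)} = V_1/\epsilon$ and solve for $S$ and the total work: in the regime $\epsilon \ge V_1/n$ this is reached already with $S \le s_0$ epochs so the cost is $\mathcal{O}(nS) = \mathcal{O}(n\log\log\tfrac{V_1}{\epsilon})$ (each epoch costs $\Theta(n)$ gradients: one full gradient plus $T_s - 1 = n-1$ stochastic ones); in the regime $\epsilon < V_1/n$ we need $S = s_0 + \Theta(\sqrt{V_1/(n\epsilon)})$ epochs since $A_{T_S}^{(S)} = \Theta(nS^2)$ for $S > s_0$, for a total of $\mathcal{O}(n\log\log n + n\sqrt{V_1/(n\epsilon)}) = \mathcal{O}(n\log\log n + \sqrt{nV_1/\epsilon})$ gradients. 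The main obstacle I anticipate is the first step: getting the single-step extra-gradient inequality to close cleanly in the variance-reduced setting, i.e., making the adaptive regularizer $\gamma_t^{(s)}$ simultaneously (i) dominate the mirror-descent error terms involving $g_t^{(s)} - g_{t-1}^{(s)}$, and (ii) keep its total accumulation bounded by $\mathcal{O}(\beta)$ rather than something epoch-dependent — together with correctly handling the boundary step $t = T_s$ where $g_{T_s}^{(s)} = \nabla f(\avx_{T_s}^{(s)})$ is a full gradient rather than a VR estimator, which is what makes the variance telescoping across epochs actually work.
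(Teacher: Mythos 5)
Your proposal follows essentially the same route as the paper: a per-step extra-gradient/mirror-descent regret bound with an adaptive $\gamma_t^{(s)}$ schedule, the observation that the $(a^{(s)})^2 u^{(s-1)}$ term in the average produces exactly the inner product needed to cancel against the variance-reduction bound on $\|g_t^{(s)}-g_{t-1}^{(s)}\|^2$ (modulo the part absorbed by the negative smoothness ``gain'' term), telescoping restored by the reset $A_0^{(s)} = A_{T_{s-1}}^{(s-1)} - T_s(a^{(s)})^2$, and the two-phase parameter schedule driving $A_{T_S}^{(S)}$ to $\Theta(n)$ after $s_0$ epochs and $\Theta(nS^2)$ thereafter. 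You also correctly identify the two main technical points — bounding the accumulated adaptive step by $\mathcal{O}(\beta)$ and handling the full-gradient boundary step $t=T_s$ — which are precisely the contents of the paper's residual-bound lemmas.
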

Our algorithm AdaVRAG is shown in Algorithm \ref{alg:AdaVRAG}. Compared
with AdaVRAE, AdaVRAG has a worse dependence on the smoothness parameter
$\beta$ but it performs only one projection onto $\dom$ in each
inner iteration. Additionally, as we discuss in more detail below,
it uses adaptive step sizes based on the iterate movement.

AdaVRAG follows a similar framework to existing VR methods such as
VARAG \cite{lan2019unified} and VRADA \cite{song2020variance}. Similarly
to VRADA, the algorithm achieves acceleration at the epoch level,
where an epoch is an iteration of the outer loop. The iterations in
an epoch update the main iterates via mirror descent with novel choices
of step sizes and coefficients. The stochastic gradient is computed
at a point that is a convex combination between the current iterate
and the checkpoint; the coefficients of this combination remain fixed
throughout the epoch. The step sizes are adaptively set based on the
iterate movement.

The structure of the inner iterations of our algorithm differs from
both VARAG and VRADA in several notable aspects. VARAG also uses mirror
descent to update the main iterates and it computes the stochastic
gradient at suitable combinations of the iterates and the checkpoint.
AdaVARAG uses a different averaging of the iterates to compute the
snapshots. Moreover, it uses a very different and simpler choice for
the coefficient used to combine the main iterates and the checkpoint
in order to obtain the points at which the stochastic gradients are
evaluated. In VARAG, this coefficient is set to a constant (namely,
$1/2$) in the initial iterations, whereas in AdaVRAG, it starts from
a small number and is increased gradually. This choice is critical
for improving the first term in the convergence from $\mathcal{O}(n\log n)$
to $\mathcal{O}(n\log\log n)$. In a similar manner, VRADA attains
the same convergence by a new choice of coefficient. However, this
is achieved via a very different approach based on dual-averaging. 

The step sizes used by AdaVRAG have two components: the step $\gamma_{t}^{(s)}$
that is updated based on the iterate movement and the per-epoch coefficient
$q^{(s)}$ to achieve acceleration at the epoch level. Our analysis
is flexible and allows the use of several approaches for updating
the steps $\gamma_{t}^{(s)}$. One approach, shown as option I in
Algorithm \ref{alg:AdaVRAG}, is based on the multiplicative update
rule of AdaGrad+ \cite{ene2021adaptive} which generalizes the AdaGrad
update to the constrained setting. We also propose a different variant,
shown as option II, that updates the steps in an additive manner.
Our analysis shows a similar convergence guarantee for both options,
with the main difference being in the dependence on the smoothness:
option I incurs a dependence of $\sqrt{\beta\log\beta}$, whereas
option II has a worse dependence of $\beta$. Option II achieved improved
performance in our experiments.

Theorem \ref{thm:AdaVRAG-main-body} states the parameter choices
and the convergence guarantee for AdaVRAG, and we give its proof in
Section \ref{sec:AdaVRAG} in the appendix. Analogously to AdaVRAE,
the initial step size $\gamma$ can be set to any small constant.
\begin{thm}
\label{thm:AdaVRAG-main-body}(Convergence of AdaVRAG) Define $s_{0}=\lceil\log_{2}\log_{2}4n\rceil$,
$c=\frac{3+\sqrt{33}}{4}$. Suppose we set the parameters of Algorithm
\ref{alg:AdaVRAG} as follows:
\begin{align*}
a^{(s)} & =\begin{cases}
1-\left(4n\right)^{-0.5^{s}} & 1\leq s\leq s_{0}\\
\frac{c}{s-s_{0}+2c} & s_{0}<s
\end{cases},\\
q^{(s)} & =\begin{cases}
\frac{1}{\left(1-a^{(s)}\right)a^{(s)}} & 1\leq s\leq s_{0}\\
\frac{8\left(2-a^{(s)}\right)a^{(s)}}{3(1-a^{(s)})} & s_{0}<s
\end{cases},\\
T_{s} & =n.
\end{align*}
Suppose that $\dom$ is a compact convex set with diameter $D$ and
we set $\eta=\Theta(D)$. Additionally, we assume that $2\eta^{2}>D^{2}$
if $\text{Option I}$ is used for setting the step size. The number
of individual gradient evaluations to achieve a solution $u^{(S)}$
such that $\E\left[F(u^{(S)})-F(x^{*})\right]\leq\epsilon$ for Algorithm
\ref{alg:AdaVRAG} is 
\[
\#grads=\begin{cases}
\mathcal{O}\left(n\log\log\frac{V_{2}}{\epsilon}\right) & \epsilon\geq\frac{V_{2}}{n}\\
\mathcal{O}\left(n\log\log n+\sqrt{\frac{nV_{2}}{\epsilon}}\right) & \epsilon<\frac{V_{2}}{n}
\end{cases},
\]
where
\[
V_{2}=\begin{cases}
\mathcal{O}\left(F(u^{(0)})-F(x^{*})+\left(\text{\ensuremath{\gamma}}+\beta\log\left(\frac{\beta}{\gamma}\right)\right)D^{2}\right) & \text{for Option I}\\
\mathcal{O}\left(F(u^{(0)})-F(x^{*})+\left(\text{\ensuremath{\gamma}}+\beta^{2}\right)D^{2}\right) & \text{for Option II}
\end{cases}.
\]
\end{thm}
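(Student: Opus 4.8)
The plan is to establish a one-epoch recursion of the form $\E[F(u^{(s)})-F(x^{*})]\le(1-a^{(s)})\,\E[F(u^{(s-1)})-F(x^{*})]+\frac{1}{n}(\text{telescoping}+\text{error})$ and then unroll it across the two-phase schedule for $a^{(s)}$: the doubly-exponential warmup for $s\le s_{0}$ and the $\Theta(1/s)$ tail for $s>s_{0}$. First I would analyze one inner iteration. The optimality condition for the mirror step $x_{t}^{(s)}$ (a prox step with modulus $\gamma_{t-1}^{(s)}q^{(s)}$) gives, for every $z\in\dom$,
\[
\langle g_{t}^{(s)},x_{t}^{(s)}-z\rangle+h(x_{t}^{(s)})-h(z)\le\tfrac{\gamma_{t-1}^{(s)}q^{(s)}}{2}\left(\|x_{t-1}^{(s)}-z\|^{2}-\|x_{t}^{(s)}-z\|^{2}-\|x_{t}^{(s)}-x_{t-1}^{(s)}\|^{2}\right).
\]
Combining $\beta$-smoothness of $f$ along $\avx_{t}^{(s)}-\avx_{t-1}^{(s)}=a^{(s)}(x_{t}^{(s)}-x_{t-1}^{(s)})$, convexity of $f$ at $\avx_{t-1}^{(s)}=a^{(s)}x_{t-1}^{(s)}+(1-a^{(s)})u^{(s-1)}$ tested against $a^{(s)}x^{*}+(1-a^{(s)})u^{(s-1)}$, and convexity of $h$, and writing $\xi_{t}^{(s)}=g_{t}^{(s)}-\nabla f(\avx_{t-1}^{(s)})$, I would obtain
\[
F(\avx_{t}^{(s)})\le a^{(s)}F(x^{*})+(1-a^{(s)})F(u^{(s-1)})+\tfrac{a^{(s)}\gamma_{t-1}^{(s)}q^{(s)}}{2}\left(\|x_{t-1}^{(s)}-x^{*}\|^{2}-\|x_{t}^{(s)}-x^{*}\|^{2}\right)-\left(\tfrac{a^{(s)}\gamma_{t-1}^{(s)}q^{(s)}}{2}-\tfrac{\beta(a^{(s)})^{2}}{2}\right)\|x_{t}^{(s)}-x_{t-1}^{(s)}\|^{2}-a^{(s)}\langle\xi_{t}^{(s)},x_{t}^{(s)}-x^{*}\rangle.
\]

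Next I would handle the stochastic term by the ghost-iterate device: let $\tilde{x}_{t}^{(s)}$ be the same prox step run with the exact gradient $\nabla f(\avx_{t-1}^{(s)})$, so $\langle\xi_{t}^{(s)},x_{t}^{(s)}-x^{*}\rangle=\langle\xi_{t}^{(s)},\tilde{x}_{t}^{(s)}-x^{*}\rangle+\langle\xi_{t}^{(s)},x_{t}^{(s)}-\tilde{x}_{t}^{(s)}\rangle$; the first term is conditionally mean-zero since $\tilde x_t^{(s)}$ is measurable before drawing $i_t^{(s)}$, and the second is at most $\|\xi_{t}^{(s)}\|^{2}/(\gamma_{t-1}^{(s)}q^{(s)})$ by prox nonexpansiveness, with $\E\|\xi_{t}^{(s)}\|^{2}\le\beta^{2}\|\avx_{t-1}^{(s)}-u^{(s-1)}\|^{2}=\beta^{2}(a^{(s)})^{2}\|x_{t-1}^{(s)}-u^{(s-1)}\|^{2}\le\beta^{2}(a^{(s)})^{2}D^{2}$ by smoothness of each $f_{i}$ and compactness of $\dom$. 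Summing over $t=1,\dots,T_{s}=n$, using $F(u^{(s)})\le\frac{1}{n}\sum_{t}F(\avx_{t}^{(s)})$ by convexity, and the Abel-summation estimate $\sum_{t}\gamma_{t-1}^{(s)}(\|x_{t-1}^{(s)}-x^{*}\|^{2}-\|x_{t}^{(s)}-x^{*}\|^{2})\le\gamma_{T_{s}}^{(s)}D^{2}$ (by monotonicity of $\gamma$ and the diameter bound), I reach the per-epoch recursion with error sums $\sum_{t}\left(\tfrac{\beta(a^{(s)})^{2}}{2}-\tfrac{a^{(s)}\gamma_{t-1}^{(s)}q^{(s)}}{2}\right)^{+}\|x_{t}^{(s)}-x_{t-1}^{(s)}\|^{2}$ and $\sum_{t}\tfrac{(a^{(s)})^{3}\beta^{2}D^{2}}{\gamma_{t-1}^{(s)}q^{(s)}}$. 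The exact constant $c=\tfrac{3+\sqrt{33}}{4}$ and the coefficient $q^{(s)}=\tfrac{8(2-a^{(s)})a^{(s)}}{3(1-a^{(s)})}$ in the statement are exactly what is needed so that, in the stable regime $\gamma_{t-1}^{(s)}q^{(s)}\ge\beta a^{(s)}$, the weighted recursion telescopes as in the non-adaptive accelerated case; I would pin them down by requiring the relevant quadratic inequality in the unrolling to hold.

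The main obstacle, and the step I would spend the most care on, is bounding the adaptive sequence $\{\gamma_{t}^{(s)}\}$ — which is cumulative across epochs and grows only in response to iterate movement — so that the transient error from iterations with $\gamma_{t-1}^{(s)}q^{(s)}<\beta a^{(s)}$ is $O(\beta\log(\beta/\gamma)D^{2})$ for Option I and $O(\beta^{2}D^{2})$ for Option II. This is an AdaGrad-style telescoping of the update rule: for Option I, $\|x_{t}^{(s)}-x_{t-1}^{(s)}\|^{2}=\eta^{2}\big((\gamma_{t}^{(s)})^{2}/(\gamma_{t-1}^{(s)})^{2}-1\big)$, so (using $2\eta^{2}>D^{2}$ to keep each multiplicative factor $O(1)$) the total movement over the transient phase telescopes to $O\big(\eta^{2}\log((\gamma_{\mathrm{end}})^{2}/\gamma^{2})\big)$ with $\gamma_{\mathrm{end}}=O(\beta)$, since one step after $\gamma_{t-1}^{(s)}q^{(s)}$ crosses $\beta a^{(s)}$ the smoothness term is absorbed; for Option II, $\|x_{t}^{(s)}-x_{t-1}^{(s)}\|^{2}=\eta^{2}(\gamma_{t}^{(s)}-\gamma_{t-1}^{(s)})$ telescopes directly to $O(\eta^{2}\beta)$, giving the worse $\beta^{2}$ dependence. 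Feeding $\eta=\Theta(D)$ yields $V_{2}$; similarly the second error sum is controlled once $\gamma_{t-1}^{(s)}q^{(s)}\gtrsim\beta a^{(s)}$, contributing $O(n(a^{(s)})^{2}\beta D^{2})$ per epoch, which after division by $n$ and the accelerated unrolling lands in the $\sqrt{nV_{2}/\epsilon}$ term.

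Finally I would unroll the recursion. For the warmup, $\prod_{s=1}^{s_{0}}(1-a^{(s)})=(4n)^{-(1-2^{-s_{0}})}\le\frac{1}{2n}$ since $(4n)^{2^{-s_{0}}}\le2$ for $s_{0}=\lceil\log_{2}\log_{2}4n\rceil$, so after $O(ns_{0})=O(n\log\log n)$ gradient evaluations the suboptimality is reduced to $O(V_{2}/n)$ up to the controlled warmup errors; and if $\epsilon\ge V_{2}/n$, stopping the warmup early after $O(\log\log\tfrac{V_{2}}{\epsilon})$ epochs already gives $\le\epsilon$, which is the first case. For $\epsilon<V_{2}/n$, in the tail phase $a^{(s)}=\Theta(1/(s-s_{0}))$ and a standard accelerated-method summation (weighting the recursion by coefficients growing like $(s-s_{0})^{2}$) gives, after $K$ tail epochs, suboptimality $O\big(\tfrac{V_{2}}{nK^{2}}\big)$ plus controlled error, so $K=O(\sqrt{V_{2}/(n\epsilon)})$ tail epochs suffice, for a total of $O(n\log\log n+\sqrt{nV_{2}/\epsilon})$ gradient evaluations. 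The dichotomy on $\epsilon$ versus $V_{2}/n$ in the statement simply records which of the two phases dominates.
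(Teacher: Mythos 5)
Your plan reproduces the deterministic skeleton of the paper's analysis — the prox optimality condition, the convexity step at $a^{(s)}x^{*}+(1-a^{(s)})u^{(s-1)}$, the AdaGrad-style telescoping of $\gamma_{t}^{(s)}$ with a hitting-time argument (this part is essentially identical to the paper's Lemmas \ref{lem:AdaVRAG-Simplify-Reminder}, \ref{lem:AdaVRAG-Reminder-I}, \ref{lem:AdaVRAG-Reminder-II}), and the two-phase unrolling. The genuine gap is in the treatment of the stochastic noise, and it is not repairable without a different variance estimate.

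You isolate the noise as $\xi_{t}^{(s)}=g_{t}^{(s)}-\nabla f(\avx_{t-1}^{(s)})$, carry $-a^{(s)}\langle\xi_{t}^{(s)},x_{t}^{(s)}-x^{*}\rangle$ as an explicit term, and control it via a ghost iterate plus prox-nonexpansiveness, ending with the uniform bound $\E\|\xi_{t}^{(s)}\|^{2}\le\beta^{2}(a^{(s)})^{2}D^{2}$ and a per-iteration penalty $\frac{(a^{(s)})^{3}\beta^{2}D^{2}}{\gamma_{t-1}^{(s)}q^{(s)}}$. The paper never does this: it instead sends the noise through Young's inequality on $\langle\nabla f(\avx_{t-1}^{(s)})-g_{t}^{(s)},\avx_{t}^{(s)}-\avx_{t-1}^{(s)}\rangle$ and then applies Lemma \ref{lem:Variance-Reduction}, which bounds $\E\|\xi_{t}^{(s)}\|^{2}$ by $2\beta\bigl(f(u^{(s-1)})-f(\avx_{t-1}^{(s)})-\langle\nabla f(\avx_{t-1}^{(s)}),u^{(s-1)}-\avx_{t-1}^{(s)}\rangle\bigr)$ — a Bregman divergence, not a crude $\beta^{2}D^{2}$ bound. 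That divergence cancels exactly against the convexity estimate at $a^{(s)}x^{*}+(1-a^{(s)})u^{(s-1)}$, so the variance disappears into the $(1-a^{(s)})$ decay coefficient and contributes nothing to the residual. Incidentally this is also why the paper's one-step lemma carries the coefficient $\frac{\beta(2-a^{(s)})(a^{(s)})^{2}}{2(1-a^{(s)})}$ on $\|x_{t}^{(s)}-x_{t-1}^{(s)}\|^{2}$ rather than your $\frac{\beta(a^{(s)})^{2}}{2}$, and why $q^{(s)}$ and $c$ have the specific form in the statement; your stable-regime threshold $\gamma_{t-1}^{(s)}q^{(s)}\ge\beta a^{(s)}$ does not match theirs.

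The consequence is quantitative and fatal for acceleration. In the tail phase $q^{(s)}=\Theta(a^{(s)})$, so after multiplying the recursion by the potential $\frac{T_{s}}{q^{(s)}a^{(s)}}$, your variance penalty per epoch is of order $\sum_{t=1}^{T_{s}}\frac{(a^{(s)})^{2}\beta^{2}D^{2}}{\gamma_{t-1}^{(s)}(q^{(s)})^{2}}=\Theta(n\beta D^{2})$ once $\gamma\approx\beta$. Summed over $K$ tail epochs this is $\Theta(Kn\beta D^{2})$, which does not converge; after dividing by $\frac{T_{S}}{q^{(S)}a^{(S)}}=\Theta(nK^{2})$ you are left with a suboptimality of $\Theta(\beta D^{2}/K)$, not $\Theta(\beta D^{2}/K^{2})$. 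That corresponds to $\mathcal{O}(n\beta D^{2}/\epsilon)$ gradient evaluations — the unaccelerated SVRG-type rate — not the claimed $\mathcal{O}(n\log\log n+\sqrt{n\beta D^{2}/\epsilon})$. The ghost-iterate device is a natural tool for unaccelerated variance-reduced methods, but here it decouples the variance from the Lyapunov decay; recovering acceleration requires the Bregman-divergence form of the variance bound and the exact cancellation that the paper exploits.
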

\textbf{Comparison to AdaSVRG:} As noted in the introduction, the
state of the art adaptive VR method is the AdaSVRG algorithm \cite{dubois2021svrg},
which is a non-accelerated method. Both of our algorithms achieve
a faster convergence using different approaches and step sizes. AdaSVRG
resets the step sizes in each epoch, whereas our algorithms use a
cumulative update approach for the step sizes. In our experimental
evaluation, the resetting of the step sizes led to slower convergence.
AdaSVRG (multi-stage variant) uses varying epoch lengths similarly
to $\text{SVRG}^{++}$ \cite{allen2016improved}, whereas our algorithms
use epoch lengths that are set to $n$. Using an epoch of length $n$
allows for implementing the random sampling via a random permutation
of $[n]$ and is the preferred approach in practice.

Both our algorithms and AdaSVRG require that the domain $\dom$ has
bounded diameter. This is a restriction that is shared by almost all
existing adaptive methods. Recent work \cite{antonakopoulos2021adaptive,ene2021variational}
in the batch/full-gradient setting have proposed unaccelerated methods
that are suitable for unbounded domains, at a loss of additional factors
in the convergence. All of the existing accelerated methods require
that the domain is bounded, even in the batch/full-gradient setting.
We note that our analysis holds for arbitrary compact domains, whereas
the analysis of AdaSVRG only applies to domains that contain the global
optimum. Similarly to AdaGrad, both our algorithms and AdaSVRG can
be used in the unconstrained setting under the promise that the iterates
do not move too far from the optimum.

\textbf{Non-adaptive variants of our algorithms:} In the setting where
the smoothness parameter is known, we can set the step sizes of our
algorithms based on the smoothness, as shown in Algorithms \ref{alg:VRAE}
and \ref{alg:VRAG} (Sections \ref{sec:VRAE} and \ref{sec:VRAG}
in the appendix). Both algorithms match the convergence rates of the
state of the art VR methods \cite{joulani2020simpler,song2020variance}
using different algorithmic approaches based on mirror descent and
extra-gradient instead of dual-averaging. We experimentally compare
the non-adaptive algorithms to existing methods in Section \ref{sec:Additional-experiment-details}
of the appendix.

\subsection{Analysis outline}

We outline some of the key steps in the analysis of AdaVRAE. For the
purpose of simplicity, we assume $h=0$ and $\eta=D$. By building
on the standard analysis of the stochastic regret for extra-gradient
methods, we obtain the following result for the progress of one iteration:
\begin{align}
 & \E\left[\left(A_{t}^{(s)}-\left(a^{(s)}\right)^{2}\right)\left(f(\avx_{t}^{(s)})-f(x^{*})\right)-A_{t-1}^{(s)}\left(f(\avx_{t-1}^{(s)})-f(x^{*})\right)\right]\nonumber \\
 & \leq\E\left[\frac{\gamma_{t-1}^{(s)}}{2}\left\Vert z_{t-1}^{(s)}-x^{*}\right\Vert ^{2}-\frac{\gamma_{t}^{(s)}}{2}\left\Vert z_{t}^{(s)}-x^{*}\right\Vert ^{2}\right]\nonumber \\
 & +\E\left[\frac{\gamma_{t}^{(s)}-\gamma_{t-1}^{(s)}}{2}\left\Vert x_{t}^{(s)}-x^{*}\right\Vert ^{2}\right]\nonumber \\
 & +\E\left[\left(a^{(s)}\right)^{2}\left\langle \nabla f(\avx_{t}^{(s)}),u^{(s-1)}-\avx_{t}^{(s)}\right\rangle \right]\nonumber \\
 & +\E\left[\frac{\left(a^{(s)}\right)^{2}}{2\gamma_{t}^{(s)}}\left\Vert g_{t}^{(s)}-g_{t-1}^{(s)}\right\Vert ^{2}\right]\nonumber \\
 & -\underbrace{\E\left[\frac{A_{t-1}^{(s)}}{2\beta}\left\Vert \nabla f(\avx_{t}^{(s)})-\nabla f(\avx_{t-1}^{(s)})\right\Vert ^{2}\right]}_{\mbox{gain}}.\label{eq:iteration-progress}
\end{align}
In comparison to the standard analysis, the coefficient for the checkpoint
appears in the coefficient of $f(\avx_{t}^{(s)})-f(x^{*})$, which
becomes $\left(A_{t}^{(s)}-\left(a^{(s)}\right)^{2}\right)$ instead
of the usual $A_{t}^{(s)}$, making the sum not telescope immediately.
To resolve this, we first turn our attention to the analysis of the
stochastic gradient difference $\left\Vert g_{t}^{(s)}-g_{t-1}^{(s)}\right\Vert ^{2}$.
The key idea is to split $\frac{\left(a^{(s)}\right)^{2}}{2\gamma_{t}^{(s)}}\left\Vert g_{t}^{(s)}-g_{t-1}^{(s)}\right\Vert ^{2}$
into $\left(\frac{1}{2\gamma_{t}^{(s)}}-\frac{1}{16\beta}\right)\left(a^{(s)}\right)^{2}\left\Vert g_{t}^{(s)}-g_{t-1}^{(s)}\right\Vert ^{2}+\frac{\left(a^{(s)}\right)^{2}}{16\beta}\left\Vert g_{t}^{(s)}-g_{t-1}^{(s)}\right\Vert ^{2}$,
and bound each term in turn. For the first term, we build on the techniques
from prior work in the batch/full-gradient setting \cite{ene2021variational}.
For the second term, we use Young's inequality to write $\E\left[\left\Vert g_{t}^{(s)}-g_{t-1}^{(s)}\right\Vert ^{2}\right]\leq\E\left[4\left\Vert \nabla f(\avx_{t}^{(s)})-g_{t}^{(s)}\right\Vert ^{2}+4\left\Vert \nabla f(\avx_{t-1}^{(s)})-g_{t-1}^{(s)}\right\Vert ^{2}\right]+\E\left[2\left\Vert \nabla f(\avx_{t}^{(s)})-\nabla f(\avx_{t-1}^{(s)})\right\Vert ^{2}\right]$.
The gradient difference loss term is cancelled by the gain term in
(\ref{eq:iteration-progress}), and thus we can focus on the first
two variance terms. We apply the usual variance reduction technique
put forward by \cite{lan2019unified} (see Lemma \ref{lem:Variance-Reduction})
to bound the two variance terms, as follows:
\begin{align*}
\E\left[\left\Vert g_{t}^{(s)}-\nabla f(\avx_{t}^{(s)})\right\Vert ^{2}\right]\le\E\left[2\beta\left(f(u^{(s-1)})-f(\avx_{t}^{(s)})-\left\langle \nabla f(\avx_{t}^{(s)}),u^{(s-1)}-\avx_{t}^{(s)}\right\rangle \right)\right].
\end{align*}
Thus we obtain an upper bound on $\frac{\left(a^{(s)}\right)^{2}}{16\beta}\left\Vert g_{t}^{(s)}-g_{t-1}^{(s)}\right\Vert ^{2}$
in terms of $\left(a^{(s)}\right)^{2}\left(f(u^{(s-1)})-f(\avx_{t}^{(s)})\right)$.
This is the reason for setting the coefficient for the checkpoint
to $\left(a^{(s)}\right)^{2}$, so that the LHS of (\ref{eq:iteration-progress})
can become the usual telescoping sum $A_{t}^{(s)}\left(f(\avx_{t}^{(s)})-f(x^{*})\right)-A_{t-1}^{(s)}\left(f(\avx_{t-1}^{(s)})-f(x^{*})\right)$.
Using the convexity of $f$, we obtain the following key result for
the progress of each epoch:
\begin{align*}
 & \E\left[A_{T_{s}}^{(s)}\left(f(\avx_{T_{s}}^{(s)})-f(x^{*})\right)-A_{0}^{(s)}\left(f(\avx_{0}^{(s)})-f(x^{*})\right)\right]\\
 & \leq\E\left[\frac{\gamma_{0}^{(s)}}{2}\left\Vert z_{0}^{(s)}-x^{*}\right\Vert ^{2}-\frac{\gamma_{T_{s}}^{(s)}}{2}\left\Vert z_{T_{s}}^{(s)}-x^{*}\right\Vert ^{2}\right]\\
 & +\E\left[\sum_{t=1}^{T_{s}}\frac{\gamma_{t}^{(s)}-\gamma_{t-1}^{(s)}}{2}\left\Vert x_{t}^{(s)}-x^{*}\right\Vert ^{2}\right]\\
 & +\E\left[T_{s}\left(a^{(s)}\right)^{2}\left(f(u^{(s-1)})-f(x^{*})\right)\right]\\
 & +\E\left[\sum_{t=1}^{T_{s}}\left(\frac{1}{2\gamma_{t}^{(s)}}-\frac{1}{16\beta}\right)\left(a^{(s)}\right)^{2}\left\Vert g_{t}^{(s)}-g_{t-1}^{(s)}\right\Vert ^{2}\right].
\end{align*}
Intuitively, we want to have another telescoping sum when summing
up the above inequality across all epochs $s$. To do so, we can set
the starting points of the next epoch to be the ending points of the
previous one, i.e., $\avx_{T_{s}}^{(s)}=\avx_{0}^{(s+1)}=u^{(s)}$,
$\gamma_{T_{s}}^{(s)}=\gamma_{0}^{(s+1)}$, $z_{T_{s}}^{(s)}=z_{0}^{(s+1)}$.
However, an extra term $T_{s}\left(a^{(s)}\right)^{2}\left(f(u^{(s-1)})-f(x^{*})\right)$
appears on the RHS. We need to reset the new starting coefficient
in the new epoch $A_{0}^{(s)}$ to $A_{T_{s-1}}^{(s-1)}-T_{s}\left(a^{(s)}\right)^{2}$
so that we can telescope the LHS.

To bound the term $\sum_{s=1}^{S}\sum_{t=1}^{T_{s}}\frac{\gamma_{t}^{(s)}-\gamma_{t-1}^{(s)}}{2}\left\Vert x_{t}^{(s)}-x^{*}\right\Vert ^{2}+\left(\frac{1}{2\gamma_{t}^{(s)}}-\frac{1}{16\beta}\right)\left(a^{(s)}\right)^{2}\left\Vert g_{t}^{(s)}-g_{t-1}^{(s)}\right\Vert ^{2}$,
since $\gamma_{T_{s}}^{(s)}=\gamma_{0}^{(s+1)}$ and $g_{T_{s}}^{(s)}=g_{0}^{(s+1)}$,
we can consider the doubly indexed sequences $\left(\gamma_{t}^{(s)}\right)$
and $\left(g_{t}^{(s)}\right)$ as two singly indexed sequences $\left(\gamma_{k}\right)$
and $\left(g_{k}\right)$ and the coefficient $a^{(s)}$ to be another
sequence $\left(a_{k}\right)$. Then we can employ the following two
inequalities:
\begin{align*}
\frac{D^{2}}{2}\left(\gamma_{K}-\gamma_{0}\right)-\frac{1}{48\beta}\sum_{k=1}^{K}a_{k}^{2}\left\Vert g_{k}-g_{k-1}\right\Vert ^{2} & \le12\beta D^{2}\\
\sum_{k=1}^{K}\text{\ensuremath{\left(\frac{1}{2\gamma_{k}}-\frac{1}{24\beta}\right)}}a_{k}^{2}\left\Vert g_{k}-g_{k-1}\right\Vert ^{2} & \le12\beta D^{2}
\end{align*}
Finally, we need to choose the parameters $a^{(s)}$ so that the conditions
needed for our analysis are satisfied and $A_{T_{s}}^{(s)}$ is sufficiently
large, so that we attain a fast convergence. We have to choose $a^{(s)}$
such that $\left(a^{(s)}\right)^{2}\leq4A_{t-1}^{(s)}$ for all $s,t\ge1$
and that $A_{0}^{(s)}=A_{T_{s-1}}^{(s-1)}-T_{s}\left(a^{(s)}\right)^{2}\ge0$.
The main idea is to divide the epochs into two phases: in the first
phase, $A_{T_{s}}^{(s)}$ quickly rises to $\Omega(n)$ and in the
second phase, to achieve the optimal $\sqrt{\frac{n\beta}{\epsilon}}$
rate, $A_{T_{s}}^{(s)}=\Omega(n^{2})$. The nearly-optimal choice
of $a^{(s)}$ in the first phase is $(4n)^{-0.5^{s}}$, stopping at
$s=s_{0}=\left\lceil \log_{2}\log_{2}4n\right\rceil $, while in the
second phase, we have to be more conservative and choose $a^{(s)}=\frac{s-s_{0}+\frac{1}{2}}{3}$.
With this we can obtain the convergence rate of $\mathcal{O}\left(n\min\left\{ \log\log\frac{\beta}{\epsilon},\log\log n\right\} +\sqrt{\frac{n\beta}{\epsilon}}\right)$.

\section{Experiments}
\label{sec:experiments}

\begin{figure*}
\subfloat[Logistic loss]{\includegraphics[width=0.33\textwidth]{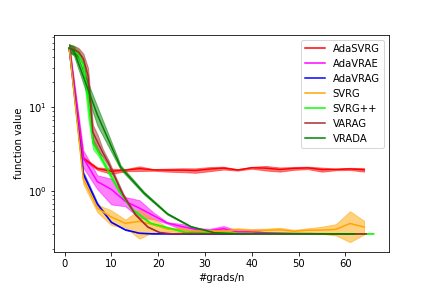}}\subfloat[Squared loss]{\includegraphics[width=0.33\textwidth]{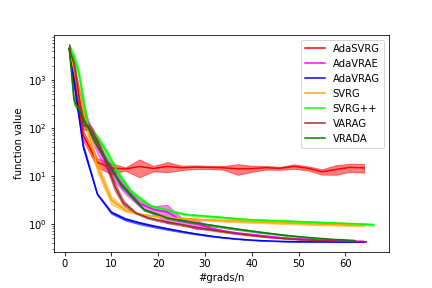}}\subfloat[Huber loss]{\includegraphics[width=0.33\textwidth]{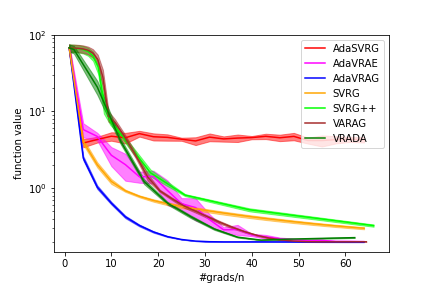}}\hfill{}\caption{a1a}

\label{fig:experimental-results-a1a}
\end{figure*}
\begin{figure*}
\subfloat[Logistic loss]{\includegraphics[width=0.33\textwidth]{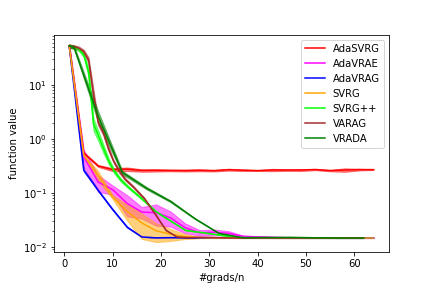}}\subfloat[Squared loss]{\includegraphics[width=0.33\textwidth]{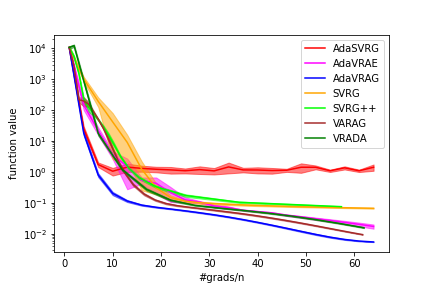}}\subfloat[Huber loss]{\includegraphics[width=0.33\textwidth]{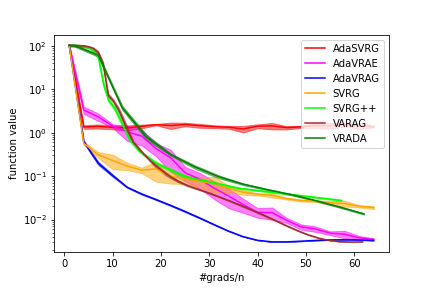}}\hfill{}\caption{mushrooms}

\label{fig:experimental-results-mushrooms}
\end{figure*}
\begin{figure*}
\subfloat[Logistic loss]{\includegraphics[width=0.33\textwidth]{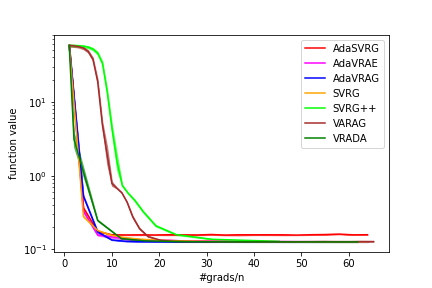}}\subfloat[Squared loss]{\includegraphics[width=0.33\textwidth]{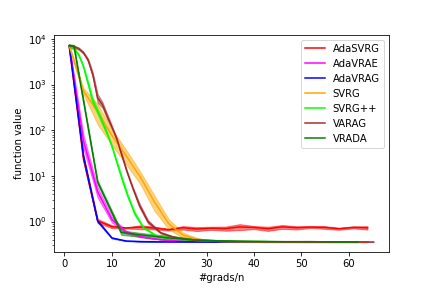}}\subfloat[Huber loss]{\includegraphics[width=0.33\textwidth]{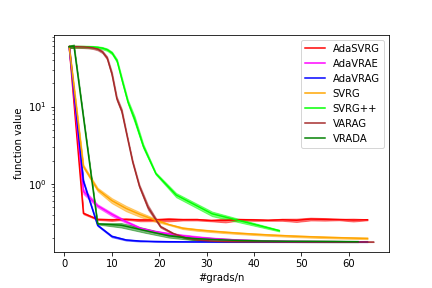}}\hfill{}\caption{w8a}

\label{fig:experimental-results-w8a}
\end{figure*}
\begin{figure*}
\subfloat[Logistic loss]{\includegraphics[width=0.33\textwidth]{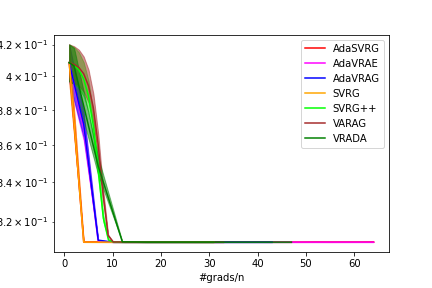}}\subfloat[Squared loss]{\includegraphics[width=0.33\textwidth]{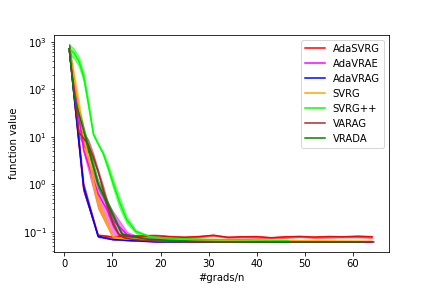}}\subfloat[Huber loss]{\includegraphics[width=0.33\textwidth]{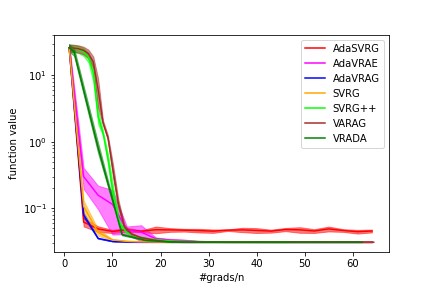}}\hfill{}\caption{phishing}

\label{fig:experimental-results-phishing}
\end{figure*}

In this section we demonstrate the performances of AdaVRAG and AdaVRAE
in comparison with the existing standard and adaptive VR methods.
We use the experimental setup and the code base of \cite{dubois2021svrg}\footnote{Their code can be found at https://github.com/bpauld/AdaSVRG}.

\textbf{Datasets and loss functions:} We experiment with binary classification
on four standard LIBSVM datasets: a1a, mushrooms, w8a and phishing
\cite{chang2011libsvm}. For each dataset, we show the results for
three different objective functions: logistic, squared and huber loss.
Following the setting in \cite{dubois2021svrg} we add a $\ell_{2}$-regularization
term to the loss function, with regularization set to $1/n$. 

\textbf{Constraint:} In all experiments, we evaluate the algorithms
under a ball constraint. That is, the domain of each problem in our
experiment is a ball of radius $R=100$ around the initial point,
which means for every algorithm, in the update step, we need to do
a projection onto this ball.

\textbf{Algorithms and hyperparameter selection:} We compare AdaVRAE
and AdaVRAG with the common VR algorithms: SVRG \cite{johnson2013accelerating},
$\text{SVRG}^{++}$\cite{allen2016improved}, VARAG \cite{lan2019unified},
VRADA \cite{song2020variance}, and AdaSVRG \cite{dubois2021svrg}
(in the experiment the multi-stage variant performs worse than the
fixed-sized inner loop variant, and we omit it from the plots). Among
these, only AdaSVRG is an adaptive VR method, which does not require
parameter tuning. For the non-adaptive methods we chose the step size
(or equivalently, the inverse of the smoothness parameter ($1/\beta$)
for VRADA) via hyperparameter search over $\left\{ 0.01,0.05,0.1,0.5,1,5,10,100\right\} $.
For each experiment, we used the choice that led to the best performance,
and we report the parameters used in Table \ref{tab:Hyperparameters}.
The adaptive methods --- AdaSVRG, AdaVRAE, AdaVRAG --- do not require
any hyperparameter tuning and we set their parameters as prescribed
by the theoretical analysis. For AdaSVRG, we used $\eta=D/\sqrt{2}=\sqrt{2}R$
as recommended in the original paper. For AdaVRAE and AdaVRAG, we
used $\gamma=0.01$ and $\eta=D/2=R$.

\textbf{Implementation and initialization:} For all algorithms, in
the inner loop, we use a random permutation to select a function.
We also fix the batch size to $1$ in all cases to match the theoretical
setting. We initialize $u^{(0)}$ to be a random point in $[0,10]^{d}$
where each dimension is uniformly chosen in $[0,10]$. Each experiment
is repeated five times with different initial point, which is kept
the same across all algorithms. 

\textbf{Results:} The results are shown in Figures \ref{fig:experimental-results-a1a},
\ref{fig:experimental-results-mushrooms}, \ref{fig:experimental-results-w8a},
\ref{fig:experimental-results-phishing}. For each experiment, we
plot the mean value and 95\% confidence interval of the training objective
against the number of gradient evaluations normalized by the number
of examples.

\textbf{Discussion: }We observe that, in all experiments, AdaVRAG
consistently performs competitively with all methods and generally
have the best performances. The non-accelerated methods in general
converge more slowly compared with accelerated methods, especially
in the later epochs. In some cases, VARAG suffers from a slow convergence
rate in the first phase. This is possibly due to the fact that it
sets to $1/2$ the coefficient for the checkpoint in the first phase.
VRADA sometimes exhibits similar behavior but to a lesser extent.
In AdaVRAG and AdaVRAE, the coefficient for the checkpoint is set
to be small in the beginning and gradually increased over time when
the quality of the checkpoint is improved. The other adaptive method,
AdaSVRG, exhibits slow convergence in many cases. One reason might
be that AdaSVRG resets the step size in every epoch and, in later
epochs, the step size may be too large for the algorithm to converge.
In contrast, AdaVRAG and AdaVRAE use cumulative step sizes.

\bibliographystyle{plain}
\bibliography{ref}

\appendix

\section{Analysis of algorithm \ref{alg:AdaVRAE}}

\label{sec:AdaVRAE}

In this section, we analyze Algorithm \ref{alg:AdaVRAE} and prove
the following convergence guarantee:
\begin{thm}[Convergence of AdaVRAE]
\label{thm:AdaVRAE-Convergence-A} Define $s_{0}=\left\lceil \log_{2}\log_{2}4n\right\rceil $,
$c=\frac{3}{2}$. If we choose parameters as follows
\begin{align*}
a^{(s)} & =\begin{cases}
(4n)^{-0.5^{s}} & 1\leq s\leq s_{0}\\
\frac{s-s_{0}-1+c}{2c} & s_{0}<s
\end{cases},\\
T_{s} & =n,\\
A_{T_{0}}^{(0)} & =\frac{5}{4}.
\end{align*}
Assuming $\dom$ is a compact convex set with diameter $D$, the number
of individual gradient evaluations to achieve a solution $u^{(S)}$
such that $\E\left[F(u^{(S)})-F(x^{*})\right]\le\epsilon$ for Algorithm
\ref{alg:AdaVRAE} is

\begin{align*}
\#grads & =\begin{cases}
\mathcal{O}\text{\ensuremath{\left(n\log\log\frac{V}{\epsilon}\right)}} & \mbox{if }\epsilon\geq\frac{V}{n}\\
\mathcal{O}\left(n\log\log n+\sqrt{\frac{Vn}{\epsilon}}\right) & \mbox{if }\epsilon<\frac{V}{n}
\end{cases}
\end{align*}

where $V=\frac{5}{2}\left(F(u^{(0)})-F(x^{*})\right)+\text{\ensuremath{\gamma}}\left\Vert u^{(0)}-x^{*}\right\Vert ^{2}+\frac{16\beta\left(D^{4}+2\eta^{4}\right)}{\eta^{2}}$.
\end{thm}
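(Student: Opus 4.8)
The plan is to carry out in full the program sketched in the analysis outline. The starting point is the one-iteration progress inequality~(\ref{eq:iteration-progress}): I derive it from the standard stochastic regret bound for the past-extra-gradient (optimistic mirror descent) scheme given by the two proximal updates for $x_t^{(s)}$ and $z_t^{(s)}$, combined with the accelerated coupling $\avx_t^{(s)}=\frac{1}{A_t^{(s)}}\big(A_{t-1}^{(s)}\avx_{t-1}^{(s)}+a^{(s)}x_t^{(s)}+(a^{(s)})^2u^{(s-1)}\big)$ and the co-coercivity bound $f(\avx_{t-1}^{(s)})\ge f(\avx_t^{(s)})+\langle\nabla f(\avx_t^{(s)}),\avx_{t-1}^{(s)}-\avx_t^{(s)}\rangle+\frac{1}{2\beta}\|\nabla f(\avx_t^{(s)})-\nabla f(\avx_{t-1}^{(s)})\|^2$, whose last (nonnegative) term becomes the ``gain'' term of~(\ref{eq:iteration-progress}) and is held in reserve. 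I then control $\frac{(a^{(s)})^2}{2\gamma_t^{(s)}}\|g_t^{(s)}-g_{t-1}^{(s)}\|^2$ by splitting it into $\big(\frac{1}{2\gamma_t^{(s)}}-\frac{1}{16\beta}\big)(a^{(s)})^2\|g_t^{(s)}-g_{t-1}^{(s)}\|^2$ plus $\frac{(a^{(s)})^2}{16\beta}\|g_t^{(s)}-g_{t-1}^{(s)}\|^2$; in the second piece, Young's inequality applied to $g_t^{(s)}-g_{t-1}^{(s)}=(g_t^{(s)}-\nabla f(\avx_t^{(s)}))-(g_{t-1}^{(s)}-\nabla f(\avx_{t-1}^{(s)}))+(\nabla f(\avx_t^{(s)})-\nabla f(\avx_{t-1}^{(s)}))$ lets the smoothness contribution be absorbed by the gain term as soon as $(a^{(s)})^2\le 4A_{t-1}^{(s)}$ (a condition I impose on the schedule), while the two variance errors are bounded in expectation via Lemma~\ref{lem:Variance-Reduction} by quantities proportional to $(a^{(s)})^2\big(f(u^{(s-1)})-f(\avx_t^{(s)})-\langle\nabla f(\avx_t^{(s)}),u^{(s-1)}-\avx_t^{(s)}\rangle\big)$. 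The $-(a^{(s)})^2f(\avx_t^{(s)})$ produced here is exactly what turns the coefficient $A_t^{(s)}-(a^{(s)})^2$ on the left of~(\ref{eq:iteration-progress}) into a telescoping $A_t^{(s)}$, the inner-product part cancels the corresponding term in~(\ref{eq:iteration-progress}), and convexity of $f$ then yields the per-epoch inequality of the outline.

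Next I telescope across epochs. Summing the per-epoch bound over $s=1,\dots,S$ with the boundary identifications $\avx_{T_s}^{(s)}=\avx_0^{(s+1)}=u^{(s)}$, $z_{T_s}^{(s)}=z_0^{(s+1)}$, $\gamma_{T_s}^{(s)}=\gamma_0^{(s+1)}$, $g_{T_s}^{(s)}=g_0^{(s+1)}$ makes the $F$-gaps and the $\|z-x^*\|^2$ terms telescope, while the reset $A_0^{(s)}=A_{T_{s-1}}^{(s-1)}-T_s(a^{(s)})^2$ absorbs the leftover $T_s(a^{(s)})^2(f(u^{(s-1)})-f(x^*))$ of each epoch. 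The surviving right-hand side is $\frac{\gamma_0^{(1)}}{2}\|u^{(0)}-x^*\|^2$, the initial gap $\mathcal{O}(F(u^{(0)})-F(x^*))$ (the residual accounting uses $A_0^{(1)}=A_{T_0}^{(0)}-n(4n)^{-1}=1$), and the adaptive-step remainder $\sum_{s,t}\big(\frac{\gamma_t^{(s)}-\gamma_{t-1}^{(s)}}{2}\|x_t^{(s)}-x^*\|^2+(\frac{1}{2\gamma_t^{(s)}}-\frac{1}{16\beta})(a^{(s)})^2\|g_t^{(s)}-g_{t-1}^{(s)}\|^2\big)$. Re-indexing the doubly-indexed sequences as single ones (legitimate since the boundary values agree), using $\|x_t^{(s)}-x^*\|^2\le D^2$, the step-size identity $\sum_k a_k^2\|g_k-g_{k-1}\|^2=\eta^2(\gamma_K^2-\gamma_0^2)$ and the standard AdaGrad telescoping $\sum_k\frac{a_k^2\|g_k-g_{k-1}\|^2}{\gamma_k}\le 2\eta(\gamma_K-\gamma_0)$, and maximizing the resulting quadratics in $\gamma_K$ over $\gamma_K\ge\gamma_0$, I obtain the two AdaGrad-type inequalities of the outline and bound the remainder by $\mathcal{O}(\beta(D^4+\eta^4)/\eta^2)$. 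Altogether $\E[A_{T_S}^{(S)}(F(u^{(S)})-F(x^*))]\le V$ with $V=\tfrac52(F(u^{(0)})-F(x^*))+\gamma\|u^{(0)}-x^*\|^2+\tfrac{16\beta(D^4+2\eta^4)}{\eta^2}$ as in the statement.

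It remains to study the growth of $A_{T_s}^{(s)}$ and fix the schedule. The recursions $A_t^{(s)}=A_{t-1}^{(s)}+a^{(s)}+(a^{(s)})^2$ and $A_0^{(s)}=A_{T_{s-1}}^{(s-1)}-n(a^{(s)})^2$ combine to $A_{T_s}^{(s)}=A_{T_{s-1}}^{(s-1)}+na^{(s)}$, so $A_{T_s}^{(s)}=\frac54+n\sum_{j\le s}a^{(j)}$. For $1\le s\le s_0=\lceil\log_2\log_2 4n\rceil$ the choice $a^{(s)}=(4n)^{-0.5^{s}}$ gives $A_{T_s}^{(s)}=\Omega(n^{1-0.5^{s}})$, hence $A_{T_{s_0}}^{(s_0)}=\Omega(n)$; for $s>s_0$ the linearly-growing choice $a^{(s)}=\frac{s-s_0-1+c}{2c}$ with $c=\frac32$ gives $A_{T_s}^{(s)}=\Omega\big(n+n(s-s_0)^2\big)$. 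I will verify that these choices meet both requirements used above, namely $(a^{(s)})^2\le 4A_{t-1}^{(s)}$ for all $s,t\ge1$ and $A_0^{(s)}=A_{T_{s-1}}^{(s-1)}-n(a^{(s)})^2\ge 0$; the second is the binding constraint in the second phase and is the reason $a^{(s)}$ cannot be raised faster than linearly there. Finally, since $\E[F(u^{(S)})-F(x^*)]\le V/A_{T_S}^{(S)}$, it suffices to take $A_{T_S}^{(S)}\ge V/\epsilon$, which requires $S=\mathcal{O}(\log\log(V/\epsilon))$ epochs when $\epsilon\ge V/n$ (phase~1 alone already suffices) and $S=\mathcal{O}(s_0+\sqrt{V/(n\epsilon)})$ when $\epsilon<V/n$; since one epoch costs $\mathcal{O}(n)$ individual gradient evaluations — the snapshot full gradient computed in the last inner iteration of epoch $s$ is reused as $g_0^{(s+1)}$, so no gradient is recomputed — this gives $\#grads=\mathcal{O}(nS)$, exactly the claimed bound.

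The main obstacle is the coordination required in the first two steps above: the coefficient $A_t^{(s)}-(a^{(s)})^2$ does not telescope on its own, and repairing it forces the checkpoint weight to be exactly $(a^{(s)})^2$ and the gradient-difference split to use the matching constant $\frac{1}{16\beta}$, so that the gain term cancels the smoothness contribution exactly and both the inner- and outer-loop sums telescope without an uncontrolled residual. Simultaneously, the two-phase schedule must be chosen so that $A_0^{(s)}\ge0$ throughout while $A_{T_s}^{(s)}$ still rises to $\Omega(n)$ within $\mathcal{O}(\log\log n)$ epochs and then to $\Omega(n^2)$ at the optimal quadratic rate; making all the constants ($A_{T_0}^{(0)}=\frac54$, $c=\frac32$, the $\frac{1}{16\beta}$ split, the factor $4$ in $(a^{(s)})^2\le 4A_{t-1}^{(s)}$) mutually consistent is where essentially all of the bookkeeping goes.
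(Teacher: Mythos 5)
Your proposal is correct and follows essentially the same route as the paper's proof: a cocoercivity-based single-iteration inequality with a held-in-reserve gain term (Lemma~\ref{lem:AdaVRAE-One-Step-Descent-I}), the split of $\tfrac{(a^{(s)})^2}{2\gamma_t^{(s)}}\|g_t^{(s)}-g_{t-1}^{(s)}\|^2$ and the variance-reduction Lemma~\ref{lem:Variance-Reduction} to promote $A_t^{(s)}-(a^{(s)})^2$ to $A_t^{(s)}$ (Lemma~\ref{lem:AdaVRAE-One-Step-Descent-II}), telescoping within epochs and across epochs via the boundary identifications and the reset $A_0^{(s)}=A_{T_{s-1}}^{(s-1)}-T_s(a^{(s)})^2$ (Lemmas~\ref{lem:AdaVRAE-One-Epoch-Descent}, \ref{lem:AdaVRAE-Whole-Descent}), the bound on the AdaGrad-style remainder (Lemmas~\ref{lem:AdaVRAE-Reminder-Bound-I}, \ref{lem:AdaVRAE-Reminder-Bound-II}), and the two-phase schedule analysis (Lemma~\ref{lem:AdaVRAE-Bound}). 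Two small remarks: your AdaGrad telescoping should read $\sum_k \gamma_k^{-1} a_k^2\|g_k-g_{k-1}\|^2 \le 2\eta^2(\gamma_K-\gamma_0)$, not $2\eta$ (clear typo); and your treatment of the second remainder inequality—substituting both the identity $\sum_k a_k^2\|g_k-g_{k-1}\|^2=\eta^2(\gamma_K^2-\gamma_0^2)$ and the telescoping bound, then maximizing the concave quadratic in $\gamma_K$—is a modest simplification of the paper's argument, which instead splits $\tfrac{1}{16\beta}$ into two pieces and uses a hitting-time index $\tau$ for the second; both yield the same $\mathcal{O}(\beta(D^4+\eta^4)/\eta^2)$ bound. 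Your observation $A_{T_s}^{(s)}=A_{T_{s-1}}^{(s-1)}+na^{(s)}=\tfrac54+n\sum_{j\le s}a^{(j)}$ is a nice compression of the induction the paper runs separately on $A_0^{(s)}$ and $A_{T_s}^{(s)}$.
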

To start with, we state and prove the following variance reduction
lemma commonly used in accelerated methods:
\begin{lem}
\label{lem:Variance-Reduction}(Variance Reduction) Let $i\sim\mathrm{Uniform}([n])$
and $g=\nabla f_{i}(x)-\nabla f_{i}(u)+\nabla f(u)$ be an estimate
of the gradient of $f$ at $x$. We have
\[
\E_{i}\left[\left\Vert g-\nabla f(x)\right\Vert ^{2}\right]\leq2\beta\left(f(u)-f(x)-\left\langle \nabla f(x),u-x\right\rangle \right).
\]
\end{lem}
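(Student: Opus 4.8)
The plan is to combine the unbiasedness of the variance-reduced estimator with a standard smoothness-plus-convexity bound applied to each $f_i$ individually. First I would observe that $g$ is an unbiased estimate of $\nabla f(x)$: since $i\sim\mathrm{Uniform}([n])$, we have $\E_i[\nabla f_i(x)]=\nabla f(x)$ and $\E_i[\nabla f_i(u)]=\nabla f(u)$, hence $\E_i[g]=\nabla f(x)$. Writing $g-\nabla f(x)=\bigl(\nabla f_i(x)-\nabla f_i(u)\bigr)-\E_i\bigl[\nabla f_i(x)-\nabla f_i(u)\bigr]$, the quantity on the left is the deviation of the random vector $Z_i:=\nabla f_i(x)-\nabla f_i(u)$ from its mean, so by the elementary inequality $\E\|Z-\E Z\|^2\le\E\|Z\|^2$ we obtain $\E_i\bigl[\|g-\nabla f(x)\|^2\bigr]\le\E_i\bigl[\|\nabla f_i(x)-\nabla f_i(u)\|^2\bigr]$.

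Next I would bound each term $\|\nabla f_i(x)-\nabla f_i(u)\|^2$ using only that $f_i$ is convex and $\beta$-smooth. The key fact is the standard consequence of smoothness that a convex $\beta$-smooth function $\phi$ with minimizer $y^\star$ satisfies $\phi(y)-\phi(y^\star)\ge\frac{1}{2\beta}\|\nabla\phi(y)\|^2$; one proves this by taking a single gradient step from $y$, namely $\phi(y^\star)\le\phi\bigl(y-\frac{1}{\beta}\nabla\phi(y)\bigr)\le\phi(y)-\frac{1}{2\beta}\|\nabla\phi(y)\|^2$. I would apply this to the Bregman-type function $\phi_i(y):=f_i(y)-f_i(x)-\langle\nabla f_i(x),y-x\rangle$, which is convex, $\beta$-smooth, and minimized at $y=x$ with value $0$ (since $\nabla\phi_i(x)=0$), and evaluate at $y=u$. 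This yields $\|\nabla f_i(u)-\nabla f_i(x)\|^2\le2\beta\bigl(f_i(u)-f_i(x)-\langle\nabla f_i(x),u-x\rangle\bigr)$.

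Finally I would average the per-$i$ bound over $i\sim\mathrm{Uniform}([n])$: by linearity, $\E_i\bigl[f_i(u)-f_i(x)-\langle\nabla f_i(x),u-x\rangle\bigr]=f(u)-f(x)-\langle\nabla f(x),u-x\rangle$, so chaining the two steps gives $\E_i\bigl[\|g-\nabla f(x)\|^2\bigr]\le2\beta\bigl(f(u)-f(x)-\langle\nabla f(x),u-x\rangle\bigr)$, as claimed. There is no genuine obstacle here; the only points that require a little care are the variance-versus-second-moment step (which relies on unbiasedness, and hence on the $+\nabla f(u)$ correction term in $g$) and the order of operations in the second step — the smoothness bound must be applied to each $f_i$ before averaging, since applying it directly to $f$ would only control $\|\nabla f(x)-\nabla f(u)\|^2$ rather than the variance of the individual gradients.
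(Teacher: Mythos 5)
Your proof is correct and follows essentially the same route as the paper's: bound the variance by the second moment via unbiasedness, apply the co\nobreakdash-coercivity inequality $\left\Vert \nabla f_i(u)-\nabla f_i(x)\right\Vert^2\le 2\beta\bigl(f_i(u)-f_i(x)-\langle\nabla f_i(x),u-x\rangle\bigr)$ to each $f_i$, and average. The only difference is that you spell out a self-contained derivation of that co\nobreakdash-coercivity step (via the Bregman auxiliary function $\phi_i$ and a single gradient step), whereas the paper simply cites it as a consequence of convexity and $\beta$-smoothness of $f_i$.
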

\begin{proof}
By the definition of $g$,
\begin{align*}
\E_{i}\left[\left\Vert g-\nabla f(x)\right\Vert ^{2}\right] & =\E_{i}\left[\left\Vert \nabla f_{i}(x)-\nabla f_{i}(u)+\nabla f(u)-\nabla f(x)\right\Vert ^{2}\right]\\
 & \overset{(a)}{\leq}\E_{i}\left[\left\Vert \nabla f_{i}(u)-\nabla f_{i}(x)\right\Vert ^{2}\right]\\
 & \overset{(b)}{\leq}\E_{i}\left[2\beta\left(f_{i}(u)-f_{i}(x)-\left\langle \nabla f_{i}(x),u-x\right\rangle \right)\right]\\
 & \overset{(c)}{=}2\beta\left(f(u)-f(x)-\left\langle \nabla f(x),u-x\right\rangle \right),
\end{align*}
where $(a)$ is because $\E_{i}\left[\nabla f_{i}(u)-\nabla f_{i}(x)\right]=\nabla f(u)-\nabla f(x)$
and $\E\left[\left\Vert X-\E\left[X\right]\right\Vert ^{2}\right]\leq\E\left[\left\Vert X\right\Vert ^{2}\right]$,
$(b)$ is by the convexity and $\beta$-smoothness of $f_{i}$, $(c)$
is by $i\sim\mathrm{Uniform}([n])$ and the definition of $f$.
\end{proof}

\subsection{Single iteration progress}

We first analyze the progress in function value made in a single iteration
of an epoch. The analysis follows the standard method as in \cite{ene2021variational};
however, we need to pay attention to the extra term for the checkpoint
that appears in the convex combination for $\overline{x}_{t}^{(s)}$.
We start off by the following observation
\begin{lem}
\label{lem:AdaVRAE-Iterate-Diff}For any $s\ge1$ and $t\in\left[T_{s}\right]$,
\[
\overline{x}_{t}^{(s)}-\avx_{t-1}^{(s)}=\frac{a^{(s)}}{A_{t-1}^{(s)}}\left(x_{t}^{(s)}-\overline{x}_{t}^{(s)}\right)+\frac{\left(a^{(s)}\right)^{2}}{A_{t-1}^{(s)}}\left(u^{(s-1)}-\overline{x}_{t}^{(s)}\right).
\]
\end{lem}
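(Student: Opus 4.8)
The plan is to prove the identity by a direct algebraic manipulation of the two defining relations from Algorithm~\ref{alg:AdaVRAE}, namely the weight update $A_{t}^{(s)}=A_{t-1}^{(s)}+a^{(s)}+\left(a^{(s)}\right)^{2}$ and the averaging step
\[
\avx_{t}^{(s)}=\frac{1}{A_{t}^{(s)}}\left(A_{t-1}^{(s)}\avx_{t-1}^{(s)}+a^{(s)}x_{t}^{(s)}+\left(a^{(s)}\right)^{2}u^{(s-1)}\right).
\]
First I would clear the denominator in the averaging step to get $A_{t}^{(s)}\avx_{t}^{(s)}=A_{t-1}^{(s)}\avx_{t-1}^{(s)}+a^{(s)}x_{t}^{(s)}+\left(a^{(s)}\right)^{2}u^{(s-1)}$, and then solve for $A_{t-1}^{(s)}\avx_{t-1}^{(s)}$.

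Next I would compute $A_{t-1}^{(s)}\bigl(\avx_{t}^{(s)}-\avx_{t-1}^{(s)}\bigr)$ by substituting the expression just obtained for $A_{t-1}^{(s)}\avx_{t-1}^{(s)}$, which turns the difference into $\bigl(A_{t-1}^{(s)}-A_{t}^{(s)}\bigr)\avx_{t}^{(s)}+a^{(s)}x_{t}^{(s)}+\left(a^{(s)}\right)^{2}u^{(s-1)}$. Using the weight update, $A_{t-1}^{(s)}-A_{t}^{(s)}=-\bigl(a^{(s)}+\left(a^{(s)}\right)^{2}\bigr)$, so this simplifies to $a^{(s)}\bigl(x_{t}^{(s)}-\avx_{t}^{(s)}\bigr)+\left(a^{(s)}\right)^{2}\bigl(u^{(s-1)}-\avx_{t}^{(s)}\bigr)$. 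Dividing both sides by $A_{t-1}^{(s)}$ (which is positive under the chosen parameters, since $A_{0}^{(s)}\ge 0$ and each step only adds positive quantities) yields exactly the claimed identity.

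There is essentially no obstacle here: the statement is a purely algebraic rewriting of the update rules, and the only point worth a sentence of care is that $A_{t-1}^{(s)}>0$ so that the division is legitimate; this follows from the parameter choices, which guarantee $A_{0}^{(s)}=A_{T_{s-1}}^{(s-1)}-T_{s}\left(a^{(s)}\right)^{2}\ge 0$ together with the increments $a^{(s)}+\left(a^{(s)}\right)^{2}>0$. I would present the computation in a short displayed chain of equalities and conclude.
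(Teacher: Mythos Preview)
Your proposal is correct and follows essentially the same route as the paper: clear the denominator in the averaging definition, use $A_{t}^{(s)}=A_{t-1}^{(s)}+a^{(s)}+\left(a^{(s)}\right)^{2}$ to rewrite $A_{t-1}^{(s)}\bigl(\avx_{t}^{(s)}-\avx_{t-1}^{(s)}\bigr)$, and divide through. Your extra sentence justifying $A_{t-1}^{(s)}>0$ is a nice touch the paper omits, but otherwise the arguments are identical.
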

\begin{proof}
We note that the definition $\avx_{t}^{(s)}=\frac{1}{A_{t}^{(s)}}\left(A_{t-1}^{(s)}\avx_{t-1}^{(s)}+a^{(s)}x_{t}^{(s)}+\left(a^{(s)}\right)^{2}u^{(s-1)}\right)$
implies

\begin{align*}
A_{t}^{(s)}\overline{x}_{t}^{(s)} & =A_{t-1}^{(s)}\avx_{t-1}^{(s)}+a^{(s)}x_{t}^{(s)}+\left(a^{(s)}\right)^{2}u^{(s-1)}\\
\overset{(a)}{\Leftrightarrow}A_{t-1}^{(s)}(\avx_{t}^{(s)}-\avx_{t-1}^{(s)}) & =a^{(s)}(x_{t}^{(s)}-\overline{x}_{t}^{(s)})+\left(a^{(s)}\right)^{2}(u^{(s-1)}-\overline{x}_{t}^{(s)})\\
\Leftrightarrow\avx_{t}^{(s)}-\avx_{t-1}^{(s)} & =\frac{a^{(s)}}{A_{t-1}^{(s)}}\left(x_{t}^{(s)}-\overline{x}_{t}^{(s)}\right)+\frac{\left(a^{(s)}\right)^{2}}{A_{t-1}^{(s)}}\left(u^{(s-1)}-\overline{x}_{t}^{(s)}\right),
\end{align*}
where $(a)$ is by $A_{t}^{(s)}=A_{t-1}^{(s)}+a^{(s)}+\left(a^{(s)}\right)^{2}$.
\end{proof}

Next, we bound the function progress in a single epoch via the stochastic
regret. Note that, this lemma is somewhat weaker than we would desire,
due to the appearance the coefficient of the checkpoint, making the
LHS not immediately telescope. We will account for this factor later
in the analysis.
\begin{lem}
\label{lem:AdaVRAE-One-Step-Descent-I}For all epochs $s\geq1$ and
all iterations $t\in\left[T_{s}\right]$
\begin{align*}
 & \E\left[\left(A_{t}^{(s)}-\left(a^{(s)}\right)^{2}\right)\left(F(\avx_{t}^{(s)})-F(x^{*})\right)-A_{t-1}^{(s)}\left(F(\avx_{t-1}^{(s)})-F(x^{*})\right)\right]\\
\leq & \E\left[a^{(s)}\underbrace{\left\langle g_{t}^{(s)},x_{t}^{(s)}-x^{*}\right\rangle }_{\text{stochastic regret}}+\left(a^{(s)}\right)^{2}\left\langle \nabla f(\avx_{t}^{(s)}),u^{(s-1)}-\avx_{t}^{(s)}\right\rangle \right]\\
 & -\E\left[\frac{A_{t-1}^{(s)}}{2\beta}\left\Vert \nabla f(\avx_{t}^{(s)})-\nabla f(\avx_{t-1}^{(s)})\right\Vert ^{2}\right]\\
 & +\E\left[\left(A_{t}^{(s)}-\left(a^{(s)}\right)^{2}\right)\left(h(\avx_{t}^{(s)})-h(x^{*})\right)-A_{t-1}^{(s)}\left(h(\avx_{t-1}^{(s)})-h(x^{*})\right)\right].
\end{align*}
\end{lem}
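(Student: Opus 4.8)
The plan is to separate off the part of the statement that involves $h$ --- which appears identically on both sides --- and reduce the claim to a deterministic-then-expectation inequality purely about $f$. First I would use that $A_t^{(s)}-(a^{(s)})^2=A_{t-1}^{(s)}+a^{(s)}$, so the $f$-contribution to the left-hand side is
\[
A_{t-1}^{(s)}\bigl(f(\avx_t^{(s)})-f(\avx_{t-1}^{(s)})\bigr)+a^{(s)}\bigl(f(\avx_t^{(s)})-f(x^*)\bigr).
\]
For the first bracket I would apply the standard first-order inequality for a convex $\beta$-smooth function, $f(x)-f(y)\le\langle\nabla f(x),x-y\rangle-\tfrac{1}{2\beta}\|\nabla f(x)-\nabla f(y)\|^2$, with $x=\avx_t^{(s)}$ and $y=\avx_{t-1}^{(s)}$, and then multiply by $A_{t-1}^{(s)}\ge0$; this is precisely where the negative ``gain'' term $-\tfrac{A_{t-1}^{(s)}}{2\beta}\|\nabla f(\avx_t^{(s)})-\nabla f(\avx_{t-1}^{(s)})\|^2$ is generated. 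For the second bracket I would use plain convexity of $f$, giving $f(\avx_t^{(s)})-f(x^*)\le\langle\nabla f(\avx_t^{(s)}),\avx_t^{(s)}-x^*\rangle$.

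Next I would recombine the two linear terms using Lemma~\ref{lem:AdaVRAE-Iterate-Diff}: substituting $A_{t-1}^{(s)}(\avx_t^{(s)}-\avx_{t-1}^{(s)})=a^{(s)}(x_t^{(s)}-\avx_t^{(s)})+(a^{(s)})^2(u^{(s-1)}-\avx_t^{(s)})$, the $a^{(s)}\avx_t^{(s)}$ pieces cancel and what remains is $a^{(s)}\langle\nabla f(\avx_t^{(s)}),x_t^{(s)}-x^*\rangle+(a^{(s)})^2\langle\nabla f(\avx_t^{(s)}),u^{(s-1)}-\avx_t^{(s)}\rangle$. At this stage we have a deterministic inequality that differs from the claim only in that the regret term carries $\nabla f(\avx_t^{(s)})$ rather than $g_t^{(s)}$. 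To close this gap I would pass to expectations and condition on the $\sigma$-algebra generated by all randomness before the sample $i_t^{(s)}$: both $\avx_t^{(s)}$ and $x_t^{(s)}$ are measurable with respect to it, since they are computed from $g_{t-1}^{(s)}$, $z_{t-1}^{(s)}$, $\gamma_{t-1}^{(s)}$ and earlier quantities, whereas the conditional mean of $g_t^{(s)}$ equals $\nabla f(\avx_t^{(s)})$ when $t\ne T_s$ (by the definition of the variance-reduced estimator, which is unbiased conditionally on the past) and $g_{T_s}^{(s)}=\nabla f(\avx_{T_s}^{(s)})$ exactly when $t=T_s$. In either case $\E\bigl[\langle\nabla f(\avx_t^{(s)})-g_t^{(s)},x_t^{(s)}-x^*\rangle\bigr]=0$, so $\nabla f(\avx_t^{(s)})$ may be replaced by $g_t^{(s)}$ under the expectation. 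Finally, adding to both sides the $h$-terms $(A_t^{(s)}-(a^{(s)})^2)(h(\avx_t^{(s)})-h(x^*))-A_{t-1}^{(s)}(h(\avx_{t-1}^{(s)})-h(x^*))$, which are unchanged on each side, upgrades the $f$-inequality to the stated bound for $F=f+h$.

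I expect the only genuinely delicate step to be this conditioning argument: one must check that no factor multiplying $g_t^{(s)}$ in the stochastic-regret term depends on the fresh index $i_t^{(s)}$, and handle the last inner iteration $t=T_s$ (where the exact gradient is used, so the discrepancy $\nabla f(\avx_{T_s}^{(s)})-g_{T_s}^{(s)}$ vanishes deterministically) separately. The remaining manipulations are routine. It is worth emphasizing that this lemma deliberately leaves both the stochastic regret $a^{(s)}\langle g_t^{(s)},x_t^{(s)}-x^*\rangle$ and the $h$-terms unbounded; as indicated in the analysis outline, they are controlled later by combining the mirror-descent/extra-gradient updates for $x_t^{(s)}$ and $z_t^{(s)}$ with convexity of $h$, while the residual gradient-difference term $\tfrac{(a^{(s)})^2}{2\gamma_t^{(s)}}\|g_t^{(s)}-g_{t-1}^{(s)}\|^2$ is eventually cancelled against the gain term above via Lemma~\ref{lem:Variance-Reduction}.
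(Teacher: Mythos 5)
Your proof is correct and follows essentially the same route as the paper's: smoothness plus convexity of $f$ weighted by $A_{t-1}^{(s)}$ and $a^{(s)}$, the iterate-difference identity from Lemma~\ref{lem:AdaVRAE-Iterate-Diff} to recombine, and conditional unbiasedness of $g_t^{(s)}$ (with the $t=T_s$ case handled deterministically) to swap $\nabla f(\avx_t^{(s)})$ for $g_t^{(s)}$ under expectation. The only difference is cosmetic --- you carry the $h$-terms along separately rather than bundling them into $F$ throughout --- and your measurability remark correctly identifies that $x_t^{(s)},\avx_t^{(s)}$ are determined before $i_t^{(s)}$ is drawn.
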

\begin{proof}
Using the observation in Lemma \ref{lem:AdaVRAE-Iterate-Diff}, we
have
\begin{align*}
 & F(\avx_{t}^{(s)})-F(\avx_{t-1}^{(s)})\\
= & f(\avx_{t}^{(s)})-f(\avx_{t-1}^{(s)})+h(\avx_{t}^{(s)})-h(\avx_{t-1}^{(s)})\\
\stackrel{(a)}{\le} & \left\langle \nabla f(\avx_{t}^{(s)}),\avx_{t}^{(s)}-\avx_{t-1}^{(s)}\right\rangle -\frac{1}{2\beta}\left\Vert \nabla f(\avx_{t}^{(s)})-\nabla f(\avx_{t-1}^{(s)})\right\Vert ^{2}+h(\avx_{t}^{(s)})-h(\avx_{t-1}^{(s)})\\
\stackrel{(b)}{=} & \frac{a^{(s)}}{A_{t-1}}\left\langle \nabla f(\avx_{t}^{(s)}),x_{t}^{(s)}-\avx_{t}^{(s)}\right\rangle +\frac{\left(a^{(s)}\right)^{2}}{A_{t-1}^{(s)}}\left\langle \nabla f(\avx_{t}^{(s)}),u^{(s-1)}-\avx_{t}^{(s)}\right\rangle \\
 & -\frac{1}{2\beta}\left\Vert \nabla f(\avx_{t}^{(s)})-\nabla f(\avx_{t-1}^{(s)})\right\Vert ^{2}+h(\avx_{t}^{(s)})-h(\avx_{t-1}^{(s)})
\end{align*}
where $(a)$ is due to the smoothness of $f$ and $(b)$ comes from
Lemma \ref{lem:AdaVRAE-Iterate-Diff}. By the convexity of $f$, we
also have
\begin{align*}
 & F(\avx_{t}^{(s)})-F(x^{*})\\
= & f(\avx_{t}^{(s)})-f(x^{*})+h(\avx_{t}^{(s)})-h(x^{*})\\
\leq & \left\langle \nabla f(\avx_{t}^{(s)}),\avx_{t}^{(s)}-x^{*}\right\rangle +h(\avx_{t}^{(s)})-h(x^{*})
\end{align*}
We combine the two inequalities and obtain
\begin{align*}
 & A_{t-1}^{(s)}\left(F(\avx_{t}^{(s)})-F(\avx_{t-1}^{(s)})\right)+a^{(s)}\left(F(\avx_{t}^{(s)})-F(x^{*})\right)\\
\leq & a^{(s)}\left\langle \nabla f(\avx_{t}^{(s)}),x_{t}^{(s)}-x^{*}\right\rangle +\left(a^{(s)}\right)^{2}\left\langle \nabla f(\avx_{t}^{(s)}),u^{(s-1)}-\avx_{t}^{(s)}\right\rangle \\
 & -\frac{A_{t-1}}{2\beta}\left\Vert \nabla f(\avx_{t}^{(s)})-\nabla f(\avx_{t-1}^{(s)})\right\Vert ^{2}\\
 & +A_{t-1}^{(s)}\left(h(\avx_{t}^{(s)})-h(\avx_{t-1}^{(s)})\right)+a^{(s)}\left(h(\avx_{t}^{(s)})-h(x^{*})\right)\\
= & a^{(s)}\left\langle g_{t}^{(s)},x_{t}^{(s)}-x^{*}\right\rangle +\left(a^{(s)}\right)^{2}\left\langle \nabla f(\avx_{t}^{(s)}),u^{(s-1)}-\avx_{t}^{(s)}\right\rangle \\
 & +a^{(s)}\left\langle \nabla f(\avx_{t}^{(s)})-g_{t}^{(s)},x_{t}^{(s)}-x^{*}\right\rangle -\frac{A_{t-1}^{(s)}}{2\beta}\left\Vert \nabla f(\avx_{t}^{(s)})-\nabla f(\avx_{t-1}^{(s)})\right\Vert ^{2}\\
 & +A_{t-1}^{(s)}\left(h(\avx_{t}^{(s)})-h(\avx_{t-1}^{(s)})\right)+a^{(s)}\left(h(\avx_{t}^{(s)})-h(x^{*})\right).
\end{align*}
Note that we can rearrange the terms
\begin{align*}
 & A_{t-1}^{(s)}\left(F(\avx_{t}^{(s)})-F(\avx_{t-1}^{(s)})\right)+a^{(s)}\left(F(\avx_{t}^{(s)})-F(x^{*})\right)\\
= & \left(A_{t}^{(s)}-\left(a^{(s)}\right)^{2}\right)\left(F(\avx_{t}^{(s)})-F(x^{*})\right)-A_{t-1}^{(s)}\left(F(\avx_{t-1}^{(s)})-F(x^{*})\right),\\
 & A_{t-1}^{(s)}\left(h(\avx_{t}^{(s)})-h(\avx_{t-1}^{(s)})\right)+a^{(s)}\left(h(\avx_{t}^{(s)})-h(x^{*})\right)\\
= & \left(A_{t}^{(s)}-\left(a^{(s)}\right)^{2}\right)\left(h(\avx_{t}^{(s)})-h(x^{*})\right)-A_{t-1}^{(s)}\left(h(\avx_{t-1}^{(s)})-h(x^{*})\right).
\end{align*}
Thus we obtain
\begin{align}
 & \left(A_{t}^{(s)}-\left(a^{(s)}\right)^{2}\right)\left(F(\avx_{t}^{(s)})-F(x^{*})\right)-A_{t-1}^{(s)}\left(F(\avx_{t-1}^{(s)})-F(x^{*})\right)\nonumber \\
\le & a^{(s)}\left\langle g_{t}^{(s)},x_{t}^{(s)}-x^{*}\right\rangle +\left(a^{(s)}\right)^{2}\left\langle \nabla f(\avx_{t}^{(s)}),u^{(s-1)}-\avx_{t}^{(s)}\right\rangle \nonumber \\
 & +a^{(s)}\left\langle \nabla f(\avx_{t}^{(s)})-g_{t}^{(s)},x_{t}^{(s)}-x^{*}\right\rangle -\frac{A_{t-1}^{(s)}}{2\beta}\left\Vert \nabla f(\avx_{t}^{(s)})-\nabla f(\avx_{t-1}^{(s)})\right\Vert ^{2}\nonumber \\
 & +\left(A_{t}^{(s)}-\left(a^{(s)}\right)^{2}\right)\left(h(\avx_{t}^{(s)})-h(x^{*})\right)-A_{t-1}^{(s)}\left(h(\avx_{t-1}^{(s)})-h(x^{*})\right).\label{eq:AdaVRAE-1}
\end{align}
Observe that for $t<T_{s}$
\begin{align*}
\E\left[a^{(s)}\left\langle \nabla f(\avx_{t}^{(s)})-g_{t}^{(s)},x_{t}^{(s)}-x^{*}\right\rangle \right] & =\E\left[\E_{i_{t}^{(s)}}\left[a^{(s)}\left\langle \nabla f(\avx_{t}^{(s)})-g_{t}^{(s)},x_{t}^{(s)}-x^{*}\right\rangle \right]\right]\\
 & =0.
\end{align*}
and for $t=T_{s},$we have $\nabla f(\avx_{t}^{(s)})=g_{t}^{(s)}$
thus $\E\left[a^{(s)}\left\langle \nabla f(\avx_{t}^{(s)})-g_{t}^{(s)},x_{t}^{(s)}-x^{*}\right\rangle \right]=0$.
By taking expectations w.r.t. both sides of (\ref{eq:AdaVRAE-1}),
we get
\begin{align*}
 & \E\left[\left(A_{t}^{(s)}-\left(a^{(s)}\right)^{2}\right)\left(F(\avx_{t}^{(s)})-F(x^{*})\right)-A_{t-1}^{(s)}\left(F(\avx_{t-1}^{(s)})-F(x^{*})\right)\right]\\
\le & \E\left[a^{(s)}\left\langle g_{t}^{(s)},x_{t}^{(s)}-x^{*}\right\rangle +\left(a^{(s)}\right)^{2}\left\langle \nabla f(\avx_{t}^{(s)}),u^{(s-1)}-\avx_{t}^{(s)}\right\rangle \right]\\
 & -\E\left[\frac{A_{t-1}^{(s)}}{2\beta}\left\Vert \nabla f(\avx_{t}^{(s)})-\nabla f(\avx_{t-1}^{(s)})\right\Vert ^{2}\right]\\
 & +\E\left[\left(A_{t}^{(s)}-\left(a^{(s)}\right)^{2}\right)\left(h(\avx_{t}^{(s)})-h(x^{*})\right)-A_{t-1}^{(s)}\left(h(\avx_{t-1}^{(s)})-h(x^{*})\right)\right].
\end{align*}
\end{proof}

To analyze the stochastic regret, we split the inner product as follows

\begin{align*}
\left\langle g_{t}^{(s)},x_{t}^{(s)}-x^{*}\right\rangle  & =\left\langle g_{t}^{(s)},z_{t}^{(s)}-x^{*}\right\rangle +\left\langle g_{t}^{(s)}-g_{t-1}^{(s)},x_{t}^{(s)}-z_{t}^{(s)}\right\rangle +\left\langle g_{t-1}^{(s)},x_{t}^{(s)}-z_{t}^{(s)}\right\rangle .
\end{align*}
For each term we give a bound as stated in Lemma \ref{lem:AdaVRAE-Stochastic-Regret-Bound}.
\begin{lem}
\label{lem:AdaVRAE-Stochastic-Regret-Bound}For any $s\ge1$ all iterations
$t\in\left[T_{s}\right]$, we have

\begin{align*}
a^{(s)}\left\langle g_{t-1}^{(s)},x_{t}^{(s)}-z_{t}^{(s)}\right\rangle  & \leq\frac{\gamma_{t-1}^{(s)}}{2}\left\Vert z_{t-1}^{(s)}-z_{t}^{(s)}\right\Vert ^{2}-\frac{\gamma_{t-1}^{(s)}}{2}\left\Vert z_{t-1}^{(s)}-x_{t}^{(s)}\right\Vert ^{2}-\frac{\gamma_{t-1}^{(s)}}{2}\left\Vert x_{t}^{(s)}-z_{t}^{(s)}\right\Vert ^{2}\\
 & \quad+a^{(s)}\left(h(z_{t}^{(s)})-h(x_{t}^{(s)})\right).
\end{align*}
\begin{align*}
a^{(s)}\left\langle g_{t}^{(s)},z_{t}^{(s)}-x^{*}\right\rangle  & \leq\frac{\gamma_{t}^{(s)}-\gamma_{t-1}^{(s)}}{2}\left\Vert x_{t}^{(s)}-x^{*}\right\Vert ^{2}+\frac{\gamma_{t-1}^{(s)}}{2}\left\Vert z_{t-1}^{(s)}-x^{*}\right\Vert ^{2}-\frac{\gamma_{t}^{(s)}}{2}\left\Vert z_{t}^{(s)}-x^{*}\right\Vert ^{2}\\
 & \quad-\frac{\gamma_{t-1}^{(s)}}{2}\left\Vert z_{t}^{(s)}-z_{t-1}^{(s)}\right\Vert ^{2}-\frac{\gamma_{t}^{(s)}-\gamma_{t-1}^{(s)}}{2}\left\Vert x_{t}^{(s)}-z_{t}^{(s)}\right\Vert ^{2}\\
 & \quad+a^{(s)}\left(h(x^{*})-h(z_{t}^{(s)})\right).
\end{align*}
\begin{align*}
a^{(s)}\left\langle g_{t}^{(s)}-g_{t-1}^{(s)},x_{t}^{(s)}-z_{t}^{(s)}\right\rangle  & \le\frac{\left(a^{(s)}\right)^{2}}{2\gamma_{t}^{(s)}}\left\Vert g_{t}^{(s)}-g_{t-1}^{(s)}\right\Vert ^{2}+\frac{\gamma_{t}^{(s)}}{2}\left\Vert x_{t}^{(s)}-z_{t}^{(s)}\right\Vert ^{2}.
\end{align*}
\end{lem}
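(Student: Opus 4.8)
The plan is to prove the three inequalities in Lemma~\ref{lem:AdaVRAE-Stochastic-Regret-Bound} separately, each by invoking the optimality (first-order variational) conditions of the proximal steps defining $x_t^{(s)}$ and $z_t^{(s)}$, together with the three-point identity $\langle a-b,a-c\rangle=\tfrac12\|a-b\|^2+\tfrac12\|a-c\|^2-\tfrac12\|b-c\|^2$ for squared Euclidean distances. Recall that $x_t^{(s)}=\arg\min_x\{a^{(s)}\langle g_{t-1}^{(s)},x\rangle+a^{(s)}h(x)+\tfrac{\gamma_{t-1}^{(s)}}{2}\|x-z_{t-1}^{(s)}\|^2\}$ and $z_t^{(s)}=\arg\min_z\{a^{(s)}\langle g_t^{(s)},z\rangle+a^{(s)}h(z)+\tfrac{\gamma_{t-1}^{(s)}}{2}\|z-z_{t-1}^{(s)}\|^2+\tfrac{\gamma_t^{(s)}-\gamma_{t-1}^{(s)}}{2}\|z-x_t^{(s)}\|^2\}$.

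For the first inequality, I would use that $x_t^{(s)}$ minimizes a $\gamma_{t-1}^{(s)}$-strongly-convex objective, so for any $z\in\dom$ (take $z=z_t^{(s)}$),
\[
a^{(s)}\langle g_{t-1}^{(s)},x_t^{(s)}\rangle+a^{(s)}h(x_t^{(s)})+\tfrac{\gamma_{t-1}^{(s)}}{2}\|x_t^{(s)}-z_{t-1}^{(s)}\|^2
\le a^{(s)}\langle g_{t-1}^{(s)},z_t^{(s)}\rangle+a^{(s)}h(z_t^{(s)})+\tfrac{\gamma_{t-1}^{(s)}}{2}\|z_t^{(s)}-z_{t-1}^{(s)}\|^2-\tfrac{\gamma_{t-1}^{(s)}}{2}\|x_t^{(s)}-z_t^{(s)}\|^2,
\]
then rearrange to isolate $a^{(s)}\langle g_{t-1}^{(s)},x_t^{(s)}-z_t^{(s)}\rangle$ and drop/keep the squared-norm terms as stated. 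The second inequality is analogous but uses the optimality of $z_t^{(s)}$ against the comparator $x^*\in\dom$: the objective defining $z_t^{(s)}$ is $(\gamma_{t-1}^{(s)}+(\gamma_t^{(s)}-\gamma_{t-1}^{(s)}))=\gamma_t^{(s)}$-strongly convex, giving a $-\tfrac{\gamma_t^{(s)}}{2}\|z_t^{(s)}-x^*\|^2$ term; one then expands $\tfrac{\gamma_{t-1}^{(s)}}{2}\|z_t^{(s)}-z_{t-1}^{(s)}\|^2=\tfrac{\gamma_{t-1}^{(s)}}{2}\|x^*-z_{t-1}^{(s)}\|^2-\gamma_{t-1}^{(s)}\langle x^*-z_{t-1}^{(s)},x^*-z_t^{(s)}\rangle+\dots$ — more cleanly, apply the strong-convexity inequality directly with comparator $x^*$, then rewrite $\tfrac{\gamma_{t-1}^{(s)}}{2}\|z_t^{(s)}-z_{t-1}^{(s)}\|^2$ and $\tfrac{\gamma_t^{(s)}-\gamma_{t-1}^{(s)}}{2}\|z_t^{(s)}-x_t^{(s)}\|^2$ using the three-point identity to produce the $\|x_t^{(s)}-x^*\|^2$, $\|z_{t-1}^{(s)}-x^*\|^2$, and the two negative squared-norm terms on the right-hand side. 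The third inequality is just Young's inequality: $a^{(s)}\langle g_t^{(s)}-g_{t-1}^{(s)},x_t^{(s)}-z_t^{(s)}\rangle\le\tfrac{(a^{(s)})^2}{2\gamma_t^{(s)}}\|g_t^{(s)}-g_{t-1}^{(s)}\|^2+\tfrac{\gamma_t^{(s)}}{2}\|x_t^{(s)}-z_t^{(s)}\|^2$.

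I would present the three bounds in that order since each is self-contained. The routine but slightly delicate bookkeeping is in the second inequality: one must be careful that the two proximal steps share the same ``center'' $z_{t-1}^{(s)}$ for the $\gamma_{t-1}^{(s)}$ part, so that the $+\tfrac{\gamma_{t-1}^{(s)}}{2}\|z_{t-1}^{(s)}-z_t^{(s)}\|^2$ term generated by the first inequality exactly matches the $-\tfrac{\gamma_{t-1}^{(s)}}{2}\|z_t^{(s)}-z_{t-1}^{(s)}\|^2$ term generated by the second, so that these cancel when the three bounds are later summed. The main (mild) obstacle is handling the extra Bregman term $\tfrac{\gamma_t^{(s)}-\gamma_{t-1}^{(s)}}{2}\|z-x_t^{(s)}\|^2$ in the definition of $z_t^{(s)}$ and tracking its contribution correctly: it is what yields both the useful $+\tfrac{\gamma_t^{(s)}-\gamma_{t-1}^{(s)}}{2}\|x_t^{(s)}-x^*\|^2$ term (from expanding against $x^*$) and the negative $-\tfrac{\gamma_t^{(s)}-\gamma_{t-1}^{(s)}}{2}\|x_t^{(s)}-z_t^{(s)}\|^2$ term; the strong convexity constant used in the optimality inequality for $z_t^{(s)}$ must be taken as $\gamma_t^{(s)}$ (the sum of the two coefficients), not $\gamma_{t-1}^{(s)}$, which is the one point where it is easy to slip. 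Everything else is mechanical algebra with the three-point identity and $\langle h$-subgradient terms handled by convexity of $h$ exactly as in \cite{ene2021variational}.
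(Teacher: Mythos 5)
Your proposal is correct and follows essentially the same route as the paper: the paper writes out the first-order optimality conditions $\langle \nabla\phi(\cdot), \cdot - z\rangle\le 0$ together with the polarization identity $\langle a,b\rangle=\tfrac12\left(\left\Vert a+b\right\Vert^{2}-\left\Vert a\right\Vert^{2}-\left\Vert b\right\Vert^{2}\right)$ and the convexity of $h$, whereas you package the same facts into the strong-convexity lower bound $\phi(z)\ge\phi(x^{+})+\tfrac{\mu}{2}\left\Vert z-x^{+}\right\Vert^{2}$ for the prox minimizer $x^{+}$, which is an equivalent reformulation. You correctly identified the one delicate point, namely that the objective defining $z_{t}^{(s)}$ is $\gamma_{t}^{(s)}$-strongly convex (the sum of the two quadratic coefficients), which is exactly what produces the $-\tfrac{\gamma_{t}^{(s)}}{2}\left\Vert z_{t}^{(s)}-x^{*}\right\Vert^{2}$ term matching the paper's derivation.
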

\begin{proof}
Since $x_{t}^{(s)}=\arg\min_{x\in\dom}\left\{ a^{(s)}\left\langle g_{t-1}^{(s)},x\right\rangle +a^{(s)}h(x)+\frac{\gamma_{t-1}^{(s)}}{2}\left\Vert x-z_{t-1}^{(s)}\right\Vert ^{2}\right\} $,
by the optimality condition of $x_{t}^{(s)}$, we have
\begin{align*}
\left\langle a^{(s)}g_{t-1}^{(s)}+a^{(s)}h'(x_{t}^{(s)})+\gamma_{t-1}^{(s)}\left(x_{t}^{(s)}-z_{t-1}^{(s)}\right),x_{t}^{(s)}-z_{t}^{(s)}\right\rangle  & \leq0,
\end{align*}
where $h'(x_{t}^{(s)})\in\partial h(x_{t}^{(s)})$ is a subgradient
of $h$ at $x_{t}^{(s)}$. We rearrange the above inequality and obtain
\begin{align*}
a^{(s)}\left\langle g_{t-1}^{(s)},x_{t}^{(s)}-z_{t}^{(s)}\right\rangle  & \le\gamma_{t-1}^{(s)}\left\langle x_{t}^{(s)}-z_{t-1}^{(s)},z_{t}^{(s)}-x_{t}^{(s)}\right\rangle +a^{(s)}\left\langle h'(x_{t}^{(s)}),z_{t}^{(s)}-x_{t}^{(s)}\right\rangle \\
 & \overset{(a)}{\leq}\gamma_{t-1}^{(s)}\left\langle x_{t}^{(s)}-z_{t-1}^{(s)},z_{t}^{(s)}-x_{t}^{(s)}\right\rangle +a^{(s)}\left(h(z_{t}^{(s)})-h(x_{t}^{(s)})\right)\\
 & \overset{(b)}{=}\frac{\gamma_{t-1}^{(s)}}{2}\left\Vert z_{t-1}^{(s)}-z_{t}^{(s)}\right\Vert ^{2}-\frac{\gamma_{t-1}^{(s)}}{2}\left\Vert z_{t-1}^{(s)}-x_{t}^{(s)}\right\Vert ^{2}-\frac{\gamma_{t-1}^{(s)}}{2}\left\Vert x_{t}^{(s)}-z_{t}^{(s)}\right\Vert ^{2}\\
 & \quad+a^{(s)}\left(h(z_{t}^{(s)})-h(x_{t}^{(s)})\right),
\end{align*}
where $(a)$ follows from the convexity of $h$ and the fact that
$h'(x_{t}^{(s)})\in\partial h(x_{t}^{(s)})$, and $(b)$ is due to
the identity $\langle a,b\rangle=\frac{1}{2}\left(\left\Vert a+b\right\Vert ^{2}-\left\Vert a\right\Vert ^{2}-\left\Vert b\right\Vert ^{2}\right)$.

Using the optimality condition of $z_{t}^{(s)}$, we have
\begin{align*}
\left\langle a^{(s)}g_{t}^{(s)}+a^{(s)}h'(z_{t}^{(s)})+\gamma_{t-1}^{(s)}\left(z_{t}^{(s)}-z_{t-1}^{(s)}\right)+\left(\gamma_{t}^{(s)}-\gamma_{t-1}^{(s)}\right)\left(z_{t}^{(s)}-x_{t}^{(s)}\right),z_{t}^{(s)}-x^{*}\right\rangle  & \leq0
\end{align*}
where $h'(z_{t}^{(s)})\in\partial h(z_{t}^{(s)})$ is a subgradient
of $h$ at $z_{t}^{(s)}$. We rearrange the above inequality and obtain
\begin{align*}
a^{(s)}\left\langle g_{t}^{(s)},z_{t}^{(s)}-x^{*}\right\rangle  & \leq\gamma_{t-1}^{(s)}\left\langle z_{t}^{(s)}-z_{t-1}^{(s)},x^{*}-z_{t}^{(s)}\right\rangle +\left(\gamma_{t}^{(s)}-\gamma_{t-1}^{(s)}\right)\left\langle z_{t}^{(s)}-x_{t}^{(s)},x^{*}-z_{t}^{(s)}\right\rangle \\
 & \quad+a^{(s)}\left\langle h'(z_{t}^{(s)}),x^{*}-z_{t}^{(s)}\right\rangle \\
 & \overset{(c)}{\leq}\gamma_{t-1}^{(s)}\left\langle z_{t}^{(s)}-z_{t-1}^{(s)},x^{*}-z_{t}^{(s)}\right\rangle +\left(\gamma_{t}^{(s)}-\gamma_{t-1}^{(s)}\right)\left\langle z_{t}^{(s)}-x_{t}^{(s)},x^{*}-z_{t}^{(s)}\right\rangle \\
 & \quad+a^{(s)}\left(h(x^{*})-h(z_{t}^{(s)})\right)\\
 & \overset{(d)}{=}\frac{\gamma_{t-1}^{(s)}}{2}\left[\left\Vert z_{t-1}^{(s)}-x^{*}\right\Vert ^{2}-\left\Vert z_{t}^{(s)}-x^{*}\right\Vert ^{2}-\left\Vert z_{t}^{(s)}-z_{t-1}^{(s)}\right\Vert ^{2}\right]\\
 & \quad+\frac{\gamma_{t}^{(s)}-\gamma_{t-1}^{(s)}}{2}\left[\left\Vert x_{t}^{(s)}-x^{*}\right\Vert ^{2}-\left\Vert z_{t}^{(s)}-x^{*}\right\Vert ^{2}-\left\Vert x_{t}^{(s)}-z_{t}^{(s)}\right\Vert ^{2}\right]\\
 & \quad+a^{(s)}\left(h(x^{*})-h(z_{t}^{(s)})\right)\\
 & =\frac{\gamma_{t}^{(s)}-\gamma_{t-1}^{(s)}}{2}\left\Vert x_{t}^{(s)}-x^{*}\right\Vert ^{2}+\frac{\gamma_{t-1}^{(s)}}{2}\left\Vert z_{t-1}^{(s)}-x^{*}\right\Vert ^{2}-\frac{\gamma_{t}^{(s)}}{2}\left\Vert z_{t}^{(s)}-x^{*}\right\Vert ^{2}\\
 & \quad-\frac{\gamma_{t-1}^{(s)}}{2}\left\Vert z_{t}^{(s)}-z_{t-1}^{(s)}\right\Vert ^{2}-\frac{\gamma_{t}^{(s)}-\gamma_{t-1}^{(s)}}{2}\left\Vert x_{t}^{(s)}-z_{t}^{(s)}\right\Vert ^{2}\\
 & \quad+a^{(s)}\left(h(x)-h(z_{t}^{(s)})\right),
\end{align*}
where $(c)$ follows from the convexity of $h$ and the fact that
$h'(z_{t}^{(s)})\in\partial h(z_{t}^{(s)})$, and $(d)$ is due to
the identity $\langle a,b\rangle=\frac{1}{2}\left(\left\Vert a+b\right\Vert ^{2}-\left\Vert a\right\Vert ^{2}-\left\Vert b\right\Vert ^{2}\right)$.

For the third inequality, we have
\begin{align*}
a^{(s)}\left\langle g_{t}^{(s)}-g_{t-1}^{(s)},x_{t}^{(s)}-z_{t}^{(s)}\right\rangle  & \overset{(e)}{\leq}a^{(s)}\left\Vert g_{t}^{(s)}-g_{t-1}^{(s)}\right\Vert \left\Vert x_{t}^{(s)}-z_{t}^{(s)}\right\Vert \\
 & \overset{(f)}{\leq}\frac{\left(a^{(s)}\right)^{2}}{2\gamma_{t}^{(s)}}\left\Vert g_{t}^{(s)}-g_{t-1}^{(s)}\right\Vert ^{2}+\frac{\gamma_{t}^{(s)}}{2}\left\Vert x_{t}^{(s)}-z_{t}^{(s)}\right\Vert ^{2}.
\end{align*}
 where $(e)$ is by the Cauchy--Schwarz inequality, $(f)$ is by
Young's inequality.
\end{proof}

With above results, we obtain the descent lemma for one iteration.
A key idea to remove $\left(a^{(s)}\right)^{2}$ from the coefficient
of $\left(F(\avx_{t}^{(s)})-F(x^{*})\right)$ is to split the term
$\frac{\left(a^{(s)}\right)^{2}}{2\gamma_{t}^{(s)}}\left\Vert g_{t}^{(s)}-g_{t-1}^{(s)}\right\Vert ^{2}$
into $\left(\frac{\left(a^{(s)}\right)^{2}}{2\gamma_{t}^{(s)}}-\frac{\left(a^{(s)}\right)^{2}}{16\beta}\right)\left\Vert g_{t}^{(s)}-g_{t-1}^{(s)}\right\Vert ^{2}+\frac{\left(a^{(s)}\right)^{2}}{16\beta}\left\Vert g_{t}^{(s)}-g_{t-1}^{(s)}\right\Vert ^{2}$
and apply the VR lemma for the second term. 
\begin{lem}
\label{lem:AdaVRAE-One-Step-Descent-II}For all epochs $s\geq1$ and
all iterations $t\in\left[T_{s}\right]$, we have 
\begin{align*}
 & \E\left[\left(A_{t}^{(s)}-\left(a^{(s)}\right)^{2}\right)\left(F(\avx_{t}^{(s)})-F(x^{*})\right)-A_{t-1}^{(s)}\left(F(\avx_{t-1}^{(s)})-F(x^{*})\right)\right]\\
\le & \E\left[\frac{\gamma_{t-1}^{(s)}}{2}\left\Vert z_{t-1}^{(s)}-x^{*}\right\Vert ^{2}-\frac{\gamma_{t}^{(s)}}{2}\left\Vert z_{t}^{(s)}-x^{*}\right\Vert ^{2}+\frac{\gamma_{t}^{(s)}-\gamma_{t-1}^{(s)}}{2}\left\Vert x_{t}^{(s)}-x^{*}\right\Vert ^{2}\right]\\
 & +\E\left[\left(a^{(s)}\right)^{2}\left\langle \nabla f(\avx_{t}^{(s)}),u^{(s-1)}-\avx_{t}^{(s)}\right\rangle \right]\\
 & +\E\left[\frac{\left(a^{(s)}\right)^{2}}{2}\left(f(u^{(s-1)})-f(\avx_{t}^{(s)})-\left\langle \nabla f(\avx_{t}^{(s)}),u^{(s-1)}-\avx_{t}^{(s)}\right\rangle \right)\right]\\
 & +\E\left[\frac{\left(a^{(s)}\right)^{2}}{2}\left(f(u^{(s-1)})-f(\avx_{t-1}^{(s)})-\left\langle \nabla f(\avx_{t-1}^{(s)}),u^{(s-1)}-\avx_{t-1}^{(s)}\right\rangle \right)\right]\\
 & +\E\left[\left(\frac{\left(a^{(s)}\right)^{2}}{2\gamma_{t}^{(s)}}-\frac{\left(a^{(s)}\right)^{2}}{16\beta}\right)\left\Vert g_{t}^{(s)}-g_{t-1}^{(s)}\right\Vert ^{2}+\left(\frac{\left(a^{(s)}\right)^{2}}{8\beta}-\frac{A_{t-1}^{(s)}}{2\beta}\right)\left\Vert \nabla f(\avx_{t}^{(s)})-\nabla f(\avx_{t-1}^{(s)})\right\Vert ^{2}\right]\\
 & +\E\left[\left(a^{(s)}\right)^{2}\left(h(u^{(s-1)})-h(\avx_{t}^{(s)})\right)\right].
\end{align*}
\end{lem}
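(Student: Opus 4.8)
The plan is to start from Lemma~\ref{lem:AdaVRAE-One-Step-Descent-I} and feed in the three regret bounds of Lemma~\ref{lem:AdaVRAE-Stochastic-Regret-Bound}, the variance reduction bound of Lemma~\ref{lem:Variance-Reduction}, and Young's inequality, so that every auxiliary term either telescopes within the iteration or is absorbed into the $-\frac{A_{t-1}^{(s)}}{2\beta}\|\nabla f(\avx_t^{(s)})-\nabla f(\avx_{t-1}^{(s)})\|^2$ term inherited from Lemma~\ref{lem:AdaVRAE-One-Step-Descent-I}. First I would substitute the splitting $\langle g_t^{(s)},x_t^{(s)}-x^*\rangle=\langle g_t^{(s)},z_t^{(s)}-x^*\rangle+\langle g_t^{(s)}-g_{t-1}^{(s)},x_t^{(s)}-z_t^{(s)}\rangle+\langle g_{t-1}^{(s)},x_t^{(s)}-z_t^{(s)}\rangle$ into the stochastic regret term and bound the three pieces via Lemma~\ref{lem:AdaVRAE-Stochastic-Regret-Bound}. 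Adding them, the two $\frac{\gamma_{t-1}^{(s)}}{2}\|z_t^{(s)}-z_{t-1}^{(s)}\|^2$ contributions cancel, the coefficients of $\|x_t^{(s)}-z_t^{(s)}\|^2$ sum to $-\frac{\gamma_{t-1}^{(s)}}{2}-\frac{\gamma_t^{(s)}-\gamma_{t-1}^{(s)}}{2}+\frac{\gamma_t^{(s)}}{2}=0$ and vanish, and the leftover $-\frac{\gamma_{t-1}^{(s)}}{2}\|z_{t-1}^{(s)}-x_t^{(s)}\|^2$ is nonpositive and is dropped. What remains is the three distance terms of the claim, the quantity $\frac{(a^{(s)})^2}{2\gamma_t^{(s)}}\|g_t^{(s)}-g_{t-1}^{(s)}\|^2$, the checkpoint inner product $(a^{(s)})^2\langle\nabla f(\avx_t^{(s)}),u^{(s-1)}-\avx_t^{(s)}\rangle$, the gradient-difference term, and the proximal leftovers $a^{(s)}(h(x^*)-h(x_t^{(s)}))$ plus those of Lemma~\ref{lem:AdaVRAE-One-Step-Descent-I}.

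For the proximal terms I would apply convexity of $h$ to the convex combination defining $\avx_t^{(s)}$, obtaining $A_t^{(s)}h(\avx_t^{(s)})\le A_{t-1}^{(s)}h(\avx_{t-1}^{(s)})+a^{(s)}h(x_t^{(s)})+(a^{(s)})^2 h(u^{(s-1)})$; since $A_t^{(s)}-(a^{(s)})^2-A_{t-1}^{(s)}=a^{(s)}$, the $h$-terms from Lemma~\ref{lem:AdaVRAE-One-Step-Descent-I} are at most $a^{(s)}(h(x_t^{(s)})-h(x^*))+(a^{(s)})^2(h(u^{(s-1)})-h(\avx_t^{(s)}))$, and the $a^{(s)}h(x_t^{(s)})$ and $a^{(s)}h(x^*)$ pieces cancel against $a^{(s)}(h(x^*)-h(x_t^{(s)}))$, leaving exactly $(a^{(s)})^2(h(u^{(s-1)})-h(\avx_t^{(s)}))$, the last line of the claim.

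The key step is to split $\frac{(a^{(s)})^2}{2\gamma_t^{(s)}}\|g_t^{(s)}-g_{t-1}^{(s)}\|^2=\bigl(\frac{(a^{(s)})^2}{2\gamma_t^{(s)}}-\frac{(a^{(s)})^2}{16\beta}\bigr)\|g_t^{(s)}-g_{t-1}^{(s)}\|^2+\frac{(a^{(s)})^2}{16\beta}\|g_t^{(s)}-g_{t-1}^{(s)}\|^2$, keep the first summand verbatim, and bound the second. Writing $g_t^{(s)}-g_{t-1}^{(s)}=(g_t^{(s)}-\nabla f(\avx_t^{(s)}))-(g_{t-1}^{(s)}-\nabla f(\avx_{t-1}^{(s)}))+(\nabla f(\avx_t^{(s)})-\nabla f(\avx_{t-1}^{(s)}))$ and applying Young's inequality twice yields $\|g_t^{(s)}-g_{t-1}^{(s)}\|^2\le 4\|g_t^{(s)}-\nabla f(\avx_t^{(s)})\|^2+4\|g_{t-1}^{(s)}-\nabla f(\avx_{t-1}^{(s)})\|^2+2\|\nabla f(\avx_t^{(s)})-\nabla f(\avx_{t-1}^{(s)})\|^2$. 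Taking expectations and invoking Lemma~\ref{lem:Variance-Reduction} with $x=\avx_t^{(s)}$, $u=u^{(s-1)}$ (and its analogue at index $t-1$) bounds $\frac{(a^{(s)})^2}{4\beta}\E\|g_t^{(s)}-\nabla f(\avx_t^{(s)})\|^2$ by $\frac{(a^{(s)})^2}{2}\E\bigl(f(u^{(s-1)})-f(\avx_t^{(s)})-\langle\nabla f(\avx_t^{(s)}),u^{(s-1)}-\avx_t^{(s)}\rangle\bigr)$, which are the two Bregman-type lines of the claim; the coefficient $\frac{2(a^{(s)})^2}{16\beta}=\frac{(a^{(s)})^2}{8\beta}$ of $\|\nabla f(\avx_t^{(s)})-\nabla f(\avx_{t-1}^{(s)})\|^2$ then merges with the $-\frac{A_{t-1}^{(s)}}{2\beta}$ term to give the stated coefficient $\frac{(a^{(s)})^2}{8\beta}-\frac{A_{t-1}^{(s)}}{2\beta}$. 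Collecting the surviving terms and taking full expectations yields the claim.

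The main obstacle is the boundary handling in the application of Lemma~\ref{lem:Variance-Reduction}: the conditional expectation over $i_t^{(s)}$ must be taken with $\avx_t^{(s)}$ already fixed, which holds by the order of operations in Algorithm~\ref{alg:AdaVRAE}; and when $t=T_s$ (so $g_{T_s}^{(s)}=\nabla f(\avx_{T_s}^{(s)})$) or $t=1$ (so $g_0^{(s)}$ is the carried-over exact gradient $\nabla f(\avx_0^{(s)})=\nabla f(u^{(s-1)})$) the corresponding variance term is identically zero, which is still dominated by the nonnegative Bregman quantity on the right-hand side, so Lemma~\ref{lem:Variance-Reduction} applies in exactly the stated form in every case. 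Everything else is routine coefficient bookkeeping using $A_t^{(s)}=A_{t-1}^{(s)}+a^{(s)}+(a^{(s)})^2$.
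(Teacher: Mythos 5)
Your proposal is correct and mirrors the paper's own argument step for step: bound the stochastic regret via Lemma~\ref{lem:AdaVRAE-Stochastic-Regret-Bound}, drop the nonpositive leftover distance term, handle the proximal pieces by convexity of $h$ applied to the combination defining $\avx_t^{(s)}$, then split $\frac{(a^{(s)})^2}{2\gamma_t^{(s)}}\left\Vert g_t^{(s)}-g_{t-1}^{(s)}\right\Vert^2$ and control the $\frac{(a^{(s)})^2}{16\beta}$ piece by the Young/variance-reduction inequality with coefficients $4,4,2$, with the same boundary treatment at $t=1$ and $t=T_s$. The only difference is expository — you spell out the cancellations when summing the three regret pieces, whereas the paper states the combined bound directly — so there is nothing to flag.
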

\begin{proof}
By Lemma \ref{lem:AdaVRAE-Stochastic-Regret-Bound}, we can bound
$a^{(s)}\left\langle g_{t}^{(s)},x_{t}^{(s)}-x^{*}\right\rangle $
as follows
\begin{align*}
a^{(s)}\left\langle g_{t}^{(s)},x_{t}^{(s)}-x^{*}\right\rangle  & \leq\frac{\gamma_{t-1}^{(s)}}{2}\left\Vert z_{t-1}^{(s)}-x^{*}\right\Vert ^{2}-\frac{\gamma_{t}^{(s)}}{2}\left\Vert z_{t}^{(s)}-x^{*}\right\Vert ^{2}+\frac{\gamma_{t}^{(s)}-\gamma_{t-1}^{(s)}}{2}\left\Vert x_{t}^{(s)}-x^{*}\right\Vert ^{2}-\frac{\gamma_{t-1}^{(s)}}{2}\left\Vert z_{t-1}^{(s)}-x_{t}^{(s)}\right\Vert ^{2}\\
 & \quad+a^{(s)}\left(h(x^{*})-h(x_{t}^{(s)})\right)+\frac{\left(a^{(s)}\right)^{2}}{2\gamma_{t}^{(s)}}\left\Vert g_{t}^{(s)}-g_{t-1}^{(s)}\right\Vert ^{2}.\\
 & \leq\frac{\gamma_{t-1}^{(s)}}{2}\left\Vert z_{t-1}^{(s)}-x^{*}\right\Vert ^{2}-\frac{\gamma_{t}^{(s)}}{2}\left\Vert z_{t}^{(s)}-x^{*}\right\Vert ^{2}+\frac{\gamma_{t}^{(s)}-\gamma_{t-1}^{(s)}}{2}\left\Vert x_{t}^{(s)}-x^{*}\right\Vert ^{2}\\
 & \quad+a^{(s)}\left(h(x^{*})-h(x_{t}^{(s)})\right)+\frac{\left(a^{(s)}\right)^{2}}{2\gamma_{t}^{(s)}}\left\Vert g_{t}^{(s)}-g_{t-1}^{(s)}\right\Vert ^{2}.
\end{align*}
Combining the above result with Lemma \ref{lem:AdaVRAE-One-Step-Descent-I},
we know
\begin{align}
 & \E\left[\left(A_{t}^{(s)}-\left(a^{(s)}\right)^{2}\right)\left(F(\avx_{t}^{(s)})-F(x^{*})\right)-A_{t-1}^{(s)}\left(F(\avx_{t-1}^{(s)})-F(x^{*})\right)\right]\nonumber \\
\leq & \E\left[\frac{\gamma_{t-1}^{(s)}}{2}\left\Vert z_{t-1}^{(s)}-x^{*}\right\Vert ^{2}-\frac{\gamma_{t}^{(s)}}{2}\left\Vert z_{t}^{(s)}-x^{*}\right\Vert ^{2}+\frac{\gamma_{t}^{(s)}-\gamma_{t-1}^{(s)}}{2}\left\Vert x_{t}^{(s)}-x^{*}\right\Vert ^{2}\right]\nonumber \\
 & +\E\left[\left(a^{(s)}\right)^{2}\left\langle \nabla f(\avx_{t}^{(s)}),u^{(s-1)}-\avx_{t}^{(s)}\right\rangle \right]\nonumber \\
 & +\E\left[\frac{\left(a^{(s)}\right)^{2}}{2\gamma_{t}^{(s)}}\left\Vert g_{t}^{(s)}-g_{t-1}^{(s)}\right\Vert ^{2}-\frac{A_{t-1}^{(s)}}{2\beta}\left\Vert \nabla f(\avx_{t}^{(s)})-\nabla f(\avx_{t-1}^{(s)})\right\Vert ^{2}\right]\nonumber \\
 & +\E\left[\left(A_{t}^{(s)}-\left(a^{(s)}\right)^{2}\right)\left(h(\avx_{t}^{(s)})-h(x^{*})\right)-A_{t-1}^{(s)}\left(h(\avx_{t-1}^{(s)})-h(x^{*})\right)+a^{(s)}\left(h(x^{*})-h(x_{t}^{(s)})\right)\right].\label{eq:AdaVRAE-2}
\end{align}
Note that
\begin{align}
 & \left(A_{t}^{(s)}-\left(a^{(s)}\right)^{2}\right)\left(h(\avx_{t}^{(s)})-h(x^{*})\right)-A_{t-1}^{(s)}\left(h(\avx_{t-1}^{(s)})-h(x^{*})\right)+a^{(s)}\left(h(x^{*})-h(x_{t}^{(s)})\right)\nonumber \\
= & \left(A_{t}^{(s)}-\left(a^{(s)}\right)^{2}\right)\left(h(\avx_{t}^{(s)})-h(x^{*})\right)+\left(a^{(s)}\right)^{2}\left(h(u^{(s-1)})-h(x^{*})\right)\nonumber \\
 & -\left(a^{(s)}\right)^{2}\left(h(u^{(s-1)})-h(x^{*})\right)-A_{t-1}^{(s)}\left(h(\avx_{t-1}^{(s)})-h(x^{*})\right)-a^{(s)}\left(h(x_{t}^{(s)})-h(x^{*})\right)\nonumber \\
\stackrel{(a)}{\le} & \left(A_{t}^{(s)}-\left(a^{(s)}\right)^{2}\right)\left(h(\avx_{t}^{(s)})-h(x^{*})\right)+\left(a^{(s)}\right)^{2}\left(h(u^{(s-1)})-h(x^{*})\right)\nonumber \\
 & -A_{t}^{(s)}(h(\avx_{t}^{(s)})-h(x^{*}))\nonumber \\
= & \left(a^{(s)}\right)^{2}\left(h(u^{(s-1)})-h(\avx_{t}^{(s)})\right),\label{eq:AdaVRAE-3}
\end{align}
where $(a)$ is by the convexity of $h$ and $A_{t}^{(s)}=A_{t-1}^{(s)}+a^{(s)}+\left(a^{(s)}\right)^{2}$.
Plugging in (\ref{eq:AdaVRAE-3}) into (\ref{eq:AdaVRAE-2}), we know
\begin{align*}
 & \E\left[\left(A_{t}^{(s)}-\left(a^{(s)}\right)^{2}\right)\left(F(\avx_{t}^{(s)})-F(x^{*})\right)-A_{t-1}^{(s)}\left(F(\avx_{t-1}^{(s)})-F(x^{*})\right)\right]\\
\le & \E\left[\frac{\gamma_{t-1}^{(s)}}{2}\left\Vert z_{t-1}^{(s)}-x^{*}\right\Vert ^{2}-\frac{\gamma_{t}^{(s)}}{2}\left\Vert z_{t}^{(s)}-x^{*}\right\Vert ^{2}+\frac{\gamma_{t}^{(s)}-\gamma_{t-1}^{(s)}}{2}\left\Vert x_{t}^{(s)}-x^{*}\right\Vert ^{2}\right]\\
 & +\E\left[\left(a^{(s)}\right)^{2}\left\langle \nabla f(\avx_{t}^{(s)}),u^{(s-1)}-\avx_{t}^{(s)}\right\rangle \right]\\
 & +\E\left[\frac{\left(a^{(s)}\right)^{2}}{2\gamma_{t}^{(s)}}\left\Vert g_{t}^{(s)}-g_{t-1}^{(s)}\right\Vert ^{2}-\frac{A_{t-1}^{(s)}}{2\beta}\left\Vert \nabla f(\avx_{t}^{(s)})-\nabla f(\avx_{t-1}^{(s)})\right\Vert ^{2}\right]\\
 & +\E\left[\left(a^{(s)}\right)^{2}\left(h(u^{(s-1)})-h(\avx_{t}^{(s)})\right)\right].
\end{align*}
Now for $\E\left[\left\Vert g_{t}^{(s)}-g_{t-1}^{(s)}\right\Vert ^{2}\right]$,
when $1<t<T_{s}$, we have
\begin{align}
\E\left[\left\Vert g_{t}^{(s)}-g_{t-1}^{(s)}\right\Vert ^{2}\right] & \leq\E\left[4\left\Vert \nabla f(\avx_{t}^{(s)})-g_{t}^{(s)}\right\Vert ^{2}+4\left\Vert \nabla f(\avx_{t-1}^{(s)})-g_{t-1}^{(s)}\right\Vert ^{2}\right]\nonumber \\
 & \quad+\E\left[2\left\Vert \nabla f(\avx_{t}^{(s)})-\nabla f(\avx_{t-1}^{(s)})\right\Vert ^{2}\right]\nonumber \\
 & \overset{(b)}{\leq}\E\left[8\beta\left(f(u^{(s-1)})-f(\avx_{t}^{(s)})-\left\langle \nabla f(\avx_{t}^{(s)}),u^{(s-1)}-\avx_{t}^{(s)}\right\rangle \right)\right]\nonumber \\
 & \quad+\E\left[8\beta\left(f(u^{(s-1)})-f(\avx_{t-1}^{(s)})-\left\langle \nabla f(\avx_{t-1}^{(s)}),u^{(s-1)}-\avx_{t-1}^{(s)}\right\rangle \right)\right]\nonumber \\
 & \quad+\E\left[2\left\Vert \nabla f(\avx_{t}^{(s)})-\nabla f(\avx_{t-1}^{(s)})\right\Vert ^{2}\right],\label{eq:AdaVRAE-4}
\end{align}
where $(b)$ is by Lemma \ref{lem:Variance-Reduction} for all $1<t<T_{s}$.
When $t=1$, note that both $\left\Vert \nabla f(\avx_{t-1}^{(s)})-g_{t-1}^{(s)}\right\Vert ^{2}$
and $f(u^{(s-1)})-f(\avx_{t-1}^{(s)})-\left\langle \nabla f(\avx_{t-1}^{(s)}),u^{(s-1)}-\avx_{t-1}^{(s)}\right\rangle $
are zero by our definition $\avx_{0}^{(s)}=u^{(s-1)}$ and $\nabla f(\avx_{0}^{(s)})=\nabla f(\avx_{T_{s-1}}^{(s-1)})=g_{T_{s-1}}^{(s-1)}=g_{0}^{(s)}$,
which means the above inequality is still true. When $t=T_{s}$, note
that $\left\Vert \nabla f(\avx_{t}^{(s)})-g_{t}^{(s)}\right\Vert ^{2}=0$
and $f(u^{(s-1)})-f(\avx_{t}^{(s)})-\left\langle \nabla f(\avx_{t}^{(s)}),u^{(s-1)}-\avx_{t}^{(s)}\right\rangle $
is always non-negative due to the convexity of $f$. So the above
inequality also holds in this case. Now we conlclude the above inequality
is right for $t\in\left[T_{s}\right]$.

Splitting $\frac{\left(a^{(s)}\right)^{2}}{2\gamma_{t}^{(s)}}\left\Vert g_{t}^{(s)}-g_{t-1}^{(s)}\right\Vert ^{2}$
into $\left(\frac{\left(a^{(s)}\right)^{2}}{2\gamma_{t}^{(s)}}-\frac{\left(a^{(s)}\right)^{2}}{16\beta}\right)\left\Vert g_{t}^{(s)}-g_{t-1}^{(s)}\right\Vert ^{2}+\frac{\left(a^{(s)}\right)^{2}}{16\beta}\left\Vert g_{t}^{(s)}-g_{t-1}^{(s)}\right\Vert ^{2}$
and applying (\ref{eq:AdaVRAE-4}) to $\frac{\left(a^{(s)}\right)^{2}}{16\beta}\left\Vert g_{t}^{(s)}-g_{t-1}^{(s)}\right\Vert ^{2}$,
we have
\begin{align*}
 & \E\left[\left(A_{t}^{(s)}-\left(a^{(s)}\right)^{2}\right)\left(F(\avx_{t}^{(s)})-F(x^{*})\right)-A_{t-1}^{(s)}\left(F(\avx_{t-1}^{(s)})-F(x^{*})\right)\right]\\
\le & \E\left[\frac{\gamma_{t-1}^{(s)}}{2}\left\Vert z_{t-1}^{(s)}-x^{*}\right\Vert ^{2}-\frac{\gamma_{t}^{(s)}}{2}\left\Vert z_{t}^{(s)}-x^{*}\right\Vert ^{2}+\frac{\gamma_{t}^{(s)}-\gamma_{t-1}^{(s)}}{2}\left\Vert x_{t}^{(s)}-x^{*}\right\Vert ^{2}\right]\\
 & +\E\left[\left(a^{(s)}\right)^{2}\left\langle \nabla f(\avx_{t}^{(s)}),u^{(s-1)}-\avx_{t}^{(s)}\right\rangle \right]\\
 & +\E\left[\frac{\left(a^{(s)}\right)^{2}}{2}\left(f(u^{(s-1)})-f(\avx_{t}^{(s)})-\left\langle \nabla f(\avx_{t}^{(s)}),u^{(s-1)}-\avx_{t}^{(s)}\right\rangle \right)\right]\\
 & +\E\left[\frac{\left(a^{(s)}\right)^{2}}{2}\left(f(u^{(s-1)})-f(\avx_{t-1}^{(s)})-\left\langle \nabla f(\avx_{t-1}^{(s)}),u^{(s-1)}-\avx_{t-1}^{(s)}\right\rangle \right)\right]\\
 & +\E\left[\left(\frac{\left(a^{(s)}\right)^{2}}{2\gamma_{t}^{(s)}}-\frac{\left(a^{(s)}\right)^{2}}{16\beta}\right)\left\Vert g_{t}^{(s)}-g_{t-1}^{(s)}\right\Vert ^{2}+\left(\frac{\left(a^{(s)}\right)^{2}}{8\beta}-\frac{A_{t-1}^{(s)}}{2\beta}\right)\left\Vert \nabla f(\avx_{t}^{(s)})-\nabla f(\avx_{t-1}^{(s)})\right\Vert ^{2}\right]\\
 & +\E\left[\left(a^{(s)}\right)^{2}\left(h(u^{(s-1)})-h(\avx_{t}^{(s)})\right)\right].
\end{align*}
\end{proof}

\subsection{Single epoch progress and final output}

Even though Lemma \ref{lem:AdaVRAE-One-Step-Descent-II} looks somewhat
more convoluted, when we sum up over all iterations in one epoch,
many terms are canceled out nicely and we obtain the following lemma
that states the progress of the function value in one epoch. The trick
is to set the value for each term at the end of one epoch equal to
its value in the next one, with an exception for $A_{T_{s-1}}^{(s-1)}$.
Due to the accumulation of the term $\left(F(u^{(s-1)})-F(x^{*})\right)$
throughout the epoch, we will set $A_{0}^{(s)}=A_{T_{s-1}}^{(s-1)}-T_{s}\left(a^{(s)}\right)^{2}$.
\begin{lem}
\label{lem:AdaVRAE-One-Epoch-Descent}For all epochs $s\geq1$, if
\begin{align*}
\left(a^{(s)}\right)^{2} & \leq4A_{t-1}^{(s)},\forall t\in\left[T_{s}\right].
\end{align*}
We have
\begin{align*}
 & \E\left[A_{T_{s}}^{(s)}\left(F(u^{(s)})-F(x^{*})\right)-A_{T_{s-1}}^{(s-1)}\left(F(u^{(s-1)})-F(x^{*})\right)\right]\\
\leq & \E\left[\frac{\gamma_{0}^{(s)}}{2}\left\Vert z_{0}^{(s)}-x^{*}\right\Vert ^{2}-\frac{\gamma_{0}^{(s+1)}}{2}\left\Vert z_{0}^{(s+1)}-x^{*}\right\Vert ^{2}+\sum_{t=1}^{T_{s}}\frac{\gamma_{t}^{(s)}-\gamma_{t-1}^{(s)}}{2}\left\Vert x_{t}^{(s)}-x^{*}\right\Vert ^{2}\right]\\
 & +\E\left[\sum_{t=1}^{T_{s}}\left(\frac{\left(a^{(s)}\right)^{2}}{2\gamma_{t}^{(s)}}-\frac{\left(a^{(s)}\right)^{2}}{16\beta}\right)\left\Vert g_{t}^{(s)}-g_{t-1}^{(s)}\right\Vert ^{2}\right].
\end{align*}
\end{lem}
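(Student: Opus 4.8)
The plan is to sum the one-iteration bound of Lemma~\ref{lem:AdaVRAE-One-Step-Descent-II} over $t=1,\dots,T_s$ and show that, under the stated hypothesis together with the continuity identities built into the algorithm ($\avx_{T_s}^{(s)}=u^{(s)}=\avx_0^{(s+1)}$, $z_{T_s}^{(s)}=z_0^{(s+1)}$, $\gamma_{T_s}^{(s)}=\gamma_0^{(s+1)}$, and the reset $A_0^{(s)}=A_{T_{s-1}}^{(s-1)}-T_s(a^{(s)})^2$), every non-obvious term collapses. Writing $\Phi_t\coloneqq F(\avx_t^{(s)})-F(x^*)$, the left-hand side sums to $(A_{T_s}^{(s)}-(a^{(s)})^2)\Phi_{T_s}-A_0^{(s)}\Phi_0-(a^{(s)})^2\sum_{t=1}^{T_s-1}\Phi_t$, since the outgoing coefficient $A_t^{(s)}-(a^{(s)})^2$ differs from the incoming coefficient $A_t^{(s)}$ by exactly $(a^{(s)})^2$ so the sum fails to telescope cleanly; note $\Phi_0=F(u^{(s-1)})-F(x^*)$ because $\avx_0^{(s)}=u^{(s-1)}$.

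On the right-hand side I would treat the three groups of terms separately. The quadratic $z$-terms $\tfrac{\gamma_{t-1}^{(s)}}{2}\|z_{t-1}^{(s)}-x^*\|^2-\tfrac{\gamma_t^{(s)}}{2}\|z_t^{(s)}-x^*\|^2$ telescope, leaving $\tfrac{\gamma_0^{(s)}}{2}\|z_0^{(s)}-x^*\|^2-\tfrac{\gamma_0^{(s+1)}}{2}\|z_0^{(s+1)}-x^*\|^2$ after invoking $\gamma_{T_s}^{(s)}=\gamma_0^{(s+1)}$ and $z_{T_s}^{(s)}=z_0^{(s+1)}$. The gradient-difference terms $(\tfrac{(a^{(s)})^2}{8\beta}-\tfrac{A_{t-1}^{(s)}}{2\beta})\|\nabla f(\avx_t^{(s)})-\nabla f(\avx_{t-1}^{(s)})\|^2$ are nonpositive precisely because the hypothesis $(a^{(s)})^2\le 4A_{t-1}^{(s)}$ forces $\tfrac{(a^{(s)})^2}{8\beta}\le\tfrac{A_{t-1}^{(s)}}{2\beta}$, so they are dropped; the $\|g_t^{(s)}-g_{t-1}^{(s)}\|^2$ terms are kept verbatim. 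The remaining ``checkpoint'' terms — $(a^{(s)})^2\langle\nabla f(\avx_t^{(s)}),u^{(s-1)}-\avx_t^{(s)}\rangle$, the two variance-reduction terms $\tfrac{(a^{(s)})^2}{2}(f(u^{(s-1)})-f(\avx_t^{(s)})-\langle\nabla f(\avx_t^{(s)}),u^{(s-1)}-\avx_t^{(s)}\rangle)$ and its $(t-1)$-indexed counterpart, and $(a^{(s)})^2(h(u^{(s-1)})-h(\avx_t^{(s)}))$ — are the delicate part. Summing, the $(t-1)$-indexed variance term shifts to an index range $0,\dots,T_s-1$ and merges with the $t$-indexed one; using $\avx_0^{(s)}=u^{(s-1)}$ the boundary contribution at index $0$ vanishes, while at index $T_s$ a half-weight residual $\tfrac{(a^{(s)})^2}{2}(f(u^{(s-1)})-f(\avx_{T_s}^{(s)})-\langle\nabla f(\avx_{T_s}^{(s)}),u^{(s-1)}-\avx_{T_s}^{(s)}\rangle)$ survives, which I would close using convexity of $f$ (namely $\langle\nabla f(\avx_{T_s}^{(s)}),u^{(s-1)}-\avx_{T_s}^{(s)}\rangle\le f(u^{(s-1)})-f(\avx_{T_s}^{(s)})$), and at every interior index the gradient inner product and the full variance term cancel algebraically to $f(u^{(s-1)})-f(\avx_t^{(s)})$. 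Adding the $h$-contributions, this whole bundle is at most $(a^{(s)})^2\sum_{t=1}^{T_s}(F(u^{(s-1)})-F(\avx_t^{(s)}))$. I expect this off-by-one bookkeeping together with the boundary step to be the main obstacle; the rest is routine.

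Finally I would combine the pieces. Expanding $(a^{(s)})^2\sum_{t=1}^{T_s}(F(u^{(s-1)})-F(\avx_t^{(s)}))=(a^{(s)})^2 T_s\Phi_0-(a^{(s)})^2\sum_{t=1}^{T_s-1}\Phi_t-(a^{(s)})^2\Phi_{T_s}$ and matching it against the leftover $(a^{(s)})^2$ terms of the summed left-hand side, the $\sum_{t=1}^{T_s-1}\Phi_t$ terms and the stray $\Phi_{T_s}$ term cancel, and using $A_0^{(s)}=A_{T_{s-1}}^{(s-1)}-T_s(a^{(s)})^2$ one obtains that the left-hand side equals $A_{T_s}^{(s)}\Phi_{T_s}-A_{T_{s-1}}^{(s-1)}\Phi_0=A_{T_s}^{(s)}(F(u^{(s)})-F(x^*))-A_{T_{s-1}}^{(s-1)}(F(u^{(s-1)})-F(x^*))$, which together with the surviving $z$-telescope and the $\|g_t^{(s)}-g_{t-1}^{(s)}\|^2$ terms is exactly the claimed inequality.
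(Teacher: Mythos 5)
Your proposal is correct and follows essentially the same route as the paper's proof: sum Lemma~\ref{lem:AdaVRAE-One-Step-Descent-II} over the epoch, telescope the $z$-terms via $z_{T_s}^{(s)}=z_0^{(s+1)}$ and $\gamma_{T_s}^{(s)}=\gamma_0^{(s+1)}$, drop the $\left(\frac{(a^{(s)})^2}{8\beta}-\frac{A_{t-1}^{(s)}}{2\beta}\right)\|\nabla f(\avx_t^{(s)})-\nabla f(\avx_{t-1}^{(s)})\|^2$ terms using $(a^{(s)})^2\le 4A_{t-1}^{(s)}$, merge the two variance terms by a shift of index (using $\avx_0^{(s)}=u^{(s-1)}$ at the boundary and convexity at $t=T_s$) to get $(a^{(s)})^2\sum_t(F(u^{(s-1)})-F(\avx_t^{(s)}))$, and finally use $A_0^{(s)}=A_{T_{s-1}}^{(s-1)}-T_s(a^{(s)})^2$ to reconcile the coefficient mismatch. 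The only cosmetic difference is that you carry the $(a^{(s)})^2\sum_t\Phi_t$ explicitly on the left and cancel, whereas the paper adds it to both sides before telescoping --- same algebra, same cancellations.
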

\begin{proof}
Using Lemma \ref{lem:AdaVRAE-One-Step-Descent-II}, we know
\begin{align}
 & \sum_{t=1}^{T_{s}}\E\left[\left(A_{t}^{(s)}-\left(a^{(s)}\right)^{2}\right)\left(F(\avx_{t}^{(s)})-F(x^{*})\right)-A_{t-1}^{(s)}\left(F(\avx_{t-1}^{(s)})-F(x^{*})\right)\right]\nonumber \\
\le & \sum_{t=1}^{T_{s}}\E\left[\frac{\gamma_{t-1}^{(s)}}{2}\left\Vert z_{t-1}^{(s)}-x^{*}\right\Vert ^{2}-\frac{\gamma_{t}^{(s)}}{2}\left\Vert z_{t}^{(s)}-x^{*}\right\Vert ^{2}+\frac{\gamma_{t}^{(s)}-\gamma_{t-1}^{(s)}}{2}\left\Vert x_{t}^{(s)}-x^{*}\right\Vert ^{2}\right]\nonumber \\
 & +\sum_{t=1}^{T_{s}}\E\left[\left(a^{(s)}\right)^{2}\left\langle \nabla f(\avx_{t}^{(s)}),u^{(s-1)}-\avx_{t}^{(s)}\right\rangle +\left(a^{(s)}\right)^{2}\left(h(u^{(s-1)})-h(\avx_{t}^{(s)})\right)\right]\nonumber \\
 & +\sum_{t=1}^{T_{s}}\E\left[\frac{\left(a^{(s)}\right)^{2}}{2}\left(f(u^{(s-1)})-f(\avx_{t}^{(s)})-\left\langle \nabla f(\avx_{t}^{(s)}),u^{(s-1)}-\avx_{t}^{(s)}\right\rangle \right)\right]\nonumber \\
 & +\sum_{t=1}^{T_{s}}\E\left[\frac{\left(a^{(s)}\right)^{2}}{2}\left(f(u^{(s-1)})-f(\avx_{t-1}^{(s)})-\left\langle \nabla f(\avx_{t-1}^{(s)}),u^{(s-1)}-\avx_{t-1}^{(s)}\right\rangle \right)\right]\nonumber \\
 & +\sum_{t=1}^{T_{s}}\E\left[\left(\frac{\left(a^{(s)}\right)^{2}}{2\gamma_{t}^{(s)}}-\frac{\left(a^{(s)}\right)^{2}}{16\beta}\right)\left\Vert g_{t}^{(s)}-g_{t-1}^{(s)}\right\Vert ^{2}+\left(\frac{\left(a^{(s)}\right)^{2}}{8\beta}-\frac{A_{t-1}^{(s)}}{2\beta}\right)\left\Vert \nabla f(\avx_{t}^{(s)})-\nabla f(\avx_{t-1}^{(s)})\right\Vert ^{2}\right]\nonumber \\
\overset{(a)}{=} & \E\left[\frac{\gamma_{0}^{(s)}}{2}\left\Vert z_{0}^{(s)}-x^{*}\right\Vert ^{2}-\frac{\gamma_{0}^{(s+1)}}{2}\left\Vert z_{0}^{(s+1)}-x^{*}\right\Vert ^{2}+\sum_{t=1}^{T_{s}}\frac{\gamma_{t}^{(s)}-\gamma_{t-1}^{(s)}}{2}\left\Vert x_{t}^{(s)}-x^{*}\right\Vert ^{2}\right]\nonumber \\
 & +\E\left[\sum_{t=1}^{T_{s}-1}\left(a^{(s)}\right)^{2}\left(f(u^{(s-1)})-f(\avx_{t}^{(s)})\right)+\left(a^{(s)}\right)^{2}\left(h(u^{(s-1)})-h(\avx_{t}^{(s)})\right)\right]\nonumber \\
 & +\E\left[\frac{\left(a^{(s)}\right)^{2}}{2}\left(f(u^{(s-1)})-f(\avx_{T_{s}}^{(s)})+\left\langle \nabla f(\avx_{T_{s}}^{(s)}),u^{(s-1)}-\avx_{T_{s}}^{(s)}\right\rangle \right)\right]\nonumber \\
 & +\E\left[\sum_{t=1}^{T_{s}}\left(\frac{\left(a^{(s)}\right)^{2}}{2\gamma_{t}^{(s)}}-\frac{\left(a^{(s)}\right)^{2}}{16\beta}\right)\left\Vert g_{t}^{(s)}-g_{t-1}^{(s)}\right\Vert ^{2}+\left(\frac{\left(a^{(s)}\right)^{2}}{8\beta}-\frac{A_{t-1}^{(s)}}{2\beta}\right)\left\Vert \nabla f(\avx_{t}^{(s)})-\nabla f(\avx_{t-1}^{(s)})\right\Vert ^{2}\right]\nonumber \\
\overset{(b)}{\leq} & \E\left[\frac{\gamma_{0}^{(s)}}{2}\left\Vert z_{0}^{(s)}-x^{*}\right\Vert ^{2}-\frac{\gamma_{0}^{(s+1)}}{2}\left\Vert z_{0}^{(s+1)}-x^{*}\right\Vert ^{2}+\sum_{t=1}^{T_{s}}\frac{\gamma_{t}^{(s)}-\gamma_{t-1}^{(s)}}{2}\left\Vert x_{t}^{(s)}-x^{*}\right\Vert ^{2}\right]\nonumber \\
 & +\E\left[\sum_{t=1}^{T_{s}}\left(a^{(s)}\right)^{2}\left(f(u^{(s-1)})-f(\avx_{t}^{(s)})+\left(a^{(s)}\right)^{2}\left(h(u^{(s-1)})-h(\avx_{t}^{(s)})\right)\right)\right]\nonumber \\
 & +\E\left[\sum_{t=1}^{T_{s}}\left(\frac{\left(a^{(s)}\right)^{2}}{2\gamma_{t}^{(s)}}-\frac{\left(a^{(s)}\right)^{2}}{16\beta}\right)\left\Vert g_{t}^{(s)}-g_{t-1}^{(s)}\right\Vert ^{2}+\left(\frac{\left(a^{(s)}\right)^{2}}{8\beta}-\frac{A_{t-1}^{(s)}}{2\beta}\right)\left\Vert \nabla f(\avx_{t}^{(s)})-\nabla f(\avx_{t-1}^{(s)})\right\Vert ^{2}\right]\nonumber \\
\overset{(c)}{=} & \E\left[\frac{\gamma_{0}^{(s)}}{2}\left\Vert z_{0}^{(s)}-x^{*}\right\Vert ^{2}-\frac{\gamma_{0}^{(s+1)}}{2}\left\Vert z_{0}^{(s+1)}-x^{*}\right\Vert ^{2}+\sum_{t=1}^{T_{s}}\frac{\gamma_{t}^{(s)}-\gamma_{t-1}^{(s)}}{2}\left\Vert x_{t}^{(s)}-x^{*}\right\Vert ^{2}\right]\nonumber \\
 & +\E\left[\sum_{t=1}^{T_{s}}\left(a^{(s)}\right)^{2}\left(F(u^{(s-1)})-F(\avx_{t}^{(s)})\right)\right]\nonumber \\
 & +\E\left[\sum_{t=1}^{T_{s}}\left(\frac{\left(a^{(s)}\right)^{2}}{2\gamma_{t}^{(s)}}-\frac{\left(a^{(s)}\right)^{2}}{16\beta}\right)\left\Vert g_{t}^{(s)}-g_{t-1}^{(s)}\right\Vert ^{2}+\left(\frac{\left(a^{(s)}\right)^{2}}{8\beta}-\frac{A_{t-1}^{(s)}}{2\beta}\right)\left\Vert \nabla f(\avx_{t}^{(s)})-\nabla f(\avx_{t-1}^{(s)})\right\Vert ^{2}\right].\label{eq:AdaVRAE-5}
\end{align}
where $(a)$ is due to $z_{0}^{(s+1)}=z_{T_{s}}^{(s)}$, $\gamma_{0}^{(s+1)}=\gamma_{T_{s}}^{(s)}$,
$\avx_{0}^{(s)}=u^{(s-1)}$, $(b)$ is by the convexity of $f$
\[
\left\langle \nabla f(\avx_{T_{s}}^{(s)}),u^{(s-1)}-\avx_{T_{s}}^{(s)}\right\rangle \leq f(u^{(s-1)})-f(\avx_{T_{s}}^{(s)}),
\]
$(c)$ is by the definition of $F=f+h$. By adding $\sum_{t=1}^{T_{s}}\left(a^{(s)}\right)^{2}\left(F(\avx_{t}^{(s)})-F(x^{*})\right)$
to both sides of \ref{eq:AdaVRAE-5}. we obtain
\begin{align*}
 & \E\left[\sum_{t=1}^{T_{s}}A_{t}^{(s)}\left(F(\avx_{t}^{(s)})-F(x^{*})\right)-A_{t-1}^{(s)}\left(F(\avx_{t-1}^{(s)})-F(x^{*})\right)\right]\\
\leq & \E\left[\frac{\gamma_{0}^{(s)}}{2}\left\Vert z_{0}^{(s)}-x^{*}\right\Vert ^{2}-\frac{\gamma_{0}^{(s+1)}}{2}\left\Vert z_{0}^{(s+1)}-x^{*}\right\Vert ^{2}+\sum_{t=1}^{T_{s}}\frac{\gamma_{t}^{(s)}-\gamma_{t-1}^{(s)}}{2}\left\Vert x_{t}^{(s)}-x^{*}\right\Vert ^{2}\right]\\
 & +\E\left[T_{s}\left(a^{(s)}\right)^{2}\left(F(u^{(s-1)})-F(x^{*})\right)\right]\\
 & +\E\left[\sum_{t=1}^{T_{s}}\left(\frac{\left(a^{(s)}\right)^{2}}{2\gamma_{t}^{(s)}}-\frac{\left(a^{(s)}\right)^{2}}{16\beta}\right)\left\Vert g_{t}^{(s)}-g_{t-1}^{(s)}\right\Vert ^{2}+\left(\frac{\left(a^{(s)}\right)^{2}}{8\beta}-\frac{A_{t-1}^{(s)}}{2\beta}\right)\left\Vert \nabla f(\avx_{t}^{(s)})-\nabla f(\avx_{t-1}^{(s)})\right\Vert ^{2}\right].
\end{align*}
Note that
\begin{align*}
 & \E\left[\sum_{t=1}^{T_{s}}A_{t}^{(s)}\left(F(\avx_{t}^{(s)})-F(x^{*})\right)-A_{t-1}^{(s)}\left(F(\avx_{t-1}^{(s)})-F(x^{*})\right)\right]\\
= & \E\left[A_{T_{s}}^{(s)}\left(F(\avx_{T_{s}}^{(s)})-F(x^{*})\right)-A_{0}^{(s)}\left(F(\avx_{0}^{(s)})-F(x^{*})\right)\right]\\
\overset{(d)}{=} & \E\left[A_{T_{s}}^{(s)}\left(F(u^{(s)})-F(x^{*})\right)-A_{0}^{(s)}\left(F(u^{(s-1)})-F(x^{*})\right)\right],
\end{align*}
where $(d)$ is due to the definition $u^{(s)}=\avx_{T_{s}}^{(s)}$
and $\avx_{0}^{(s)}=u^{(s-1)}$. Finally we have
\begin{align*}
 & \E\left[A_{T_{s}}^{(s)}\left(F(u^{(s)})-F(x^{*})\right)-\left(A_{0}^{(s)}+T_{s}\left(a^{(s)}\right)^{2}\right)\left(F(u^{(s-1)})-F(x^{*})\right)\right]\\
\leq & \E\left[\frac{\gamma_{0}^{(s)}}{2}\left\Vert z_{0}^{(s)}-x^{*}\right\Vert ^{2}-\frac{\gamma_{0}^{(s+1)}}{2}\left\Vert z_{0}^{(s+1)}-x^{*}\right\Vert ^{2}+\sum_{t=1}^{T_{s}}\frac{\gamma_{t}^{(s)}-\gamma_{t-1}^{(s)}}{2}\left\Vert x_{t}^{(s)}-x^{*}\right\Vert ^{2}\right]\\
 & +\E\left[\sum_{t=1}^{T_{s}}\left(\frac{\left(a^{(s)}\right)^{2}}{2\gamma_{t}^{(s)}}-\frac{\left(a^{(s)}\right)^{2}}{16\beta}\right)\left\Vert g_{t}^{(s)}-g_{t-1}^{(s)}\right\Vert ^{2}+\left(\frac{\left(a^{(s)}\right)^{2}}{8\beta}-\frac{A_{t-1}^{(s)}}{2\beta}\right)\left\Vert \nabla f(\avx_{t}^{(s)})-\nabla f(\avx_{t-1}^{(s)})\right\Vert ^{2}\right].
\end{align*}
Combining the fact $A_{0}^{(s)}=A_{T_{s-1}}^{(s-1)}-T_{s}\left(a^{(s)}\right)^{2}$
and our condition $\left(a^{(s)}\right)^{2}\leq4A_{t-1}^{(s)}$, we
get the desired result.
\end{proof}

The telescoping sum on the LSH allows us to obtain the guarantee for
the final output $u^{(S)}$.
\begin{lem}
\label{lem:AdaVRAE-Whole-Descent}For all $S\geq1$, assume we have
\begin{align*}
\left(a^{(s)}\right)^{2} & \leq4A_{t-1}^{(s)},\forall t\in\left[T_{s}\right],\forall s\in\left[S\right].
\end{align*}
Then
\begin{align*}
 & \E\left[A_{T_{S}}^{(S)}\left(F(u^{(S)})-F(x^{*})\right)\right]\\
\leq & A_{T_{0}}^{(0)}\left(F(u^{(0)})-F(x^{*})\right)+\frac{\gamma}{2}\left\Vert u^{(0)}-x^{*}\right\Vert ^{2}\\
 & +\E\left[\sum_{s=1}^{S}\sum_{t=1}^{T_{s}}\frac{\gamma_{t}^{(s)}-\gamma_{t-1}^{(s)}}{2}\left\Vert x_{t}^{(s)}-x^{*}\right\Vert ^{2}+\left(\frac{\left(a^{(s)}\right)^{2}}{2\gamma_{t}^{(s)}}-\frac{\left(a^{(s)}\right)^{2}}{16\beta}\right)\left\Vert g_{t}^{(s)}-g_{t-1}^{(s)}\right\Vert ^{2}\right].
\end{align*}
\end{lem}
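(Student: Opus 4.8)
The plan is to derive Lemma~\ref{lem:AdaVRAE-Whole-Descent} by summing the single-epoch estimate of Lemma~\ref{lem:AdaVRAE-One-Epoch-Descent} over $s=1,\dots,S$ and reading off the two telescoping collapses. First I would note that the hypothesis $\left(a^{(s)}\right)^{2}\le 4A_{t-1}^{(s)}$ for all $t\in[T_{s}]$ and all $s\in[S]$ is exactly the per-epoch precondition required to invoke Lemma~\ref{lem:AdaVRAE-One-Epoch-Descent} at each epoch $s$. Applying it for each $s$ and adding the inequalities, the left-hand sides telescope:
\begin{align*}
&\sum_{s=1}^{S}\E\!\left[A_{T_{s}}^{(s)}\!\left(F(u^{(s)})-F(x^{*})\right)-A_{T_{s-1}}^{(s-1)}\!\left(F(u^{(s-1)})-F(x^{*})\right)\right]\\
&\qquad=\E\!\left[A_{T_{S}}^{(S)}\!\left(F(u^{(S)})-F(x^{*})\right)\right]-A_{T_{0}}^{(0)}\!\left(F(u^{(0)})-F(x^{*})\right),
\end{align*}
and moving the $u^{(0)}$ term to the right already isolates the quantity we want to bound.

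Second, I would collapse the distance terms on the right-hand side. Because Lemma~\ref{lem:AdaVRAE-One-Epoch-Descent} is phrased so that the endpoint data of epoch $s$ coincide with the start-of-epoch data of epoch $s+1$ (the reset conventions $\gamma_{0}^{(s+1)}=\gamma_{T_{s}}^{(s)}$ and $z_{0}^{(s+1)}=z_{T_{s}}^{(s)}$ already used inside it), the quantities $\tfrac{\gamma_{0}^{(s)}}{2}\|z_{0}^{(s)}-x^{*}\|^{2}-\tfrac{\gamma_{0}^{(s+1)}}{2}\|z_{0}^{(s+1)}-x^{*}\|^{2}$ telescope in $s$, leaving $\tfrac{\gamma_{0}^{(1)}}{2}\|z_{0}^{(1)}-x^{*}\|^{2}-\tfrac{\gamma_{0}^{(S+1)}}{2}\|z_{0}^{(S+1)}-x^{*}\|^{2}$. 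I would then use the initializations $\gamma_{0}^{(1)}=\gamma$ and $z_{0}^{(1)}=\avx_{0}^{(1)}=u^{(0)}$ and drop the nonpositive term $-\tfrac{\gamma_{0}^{(S+1)}}{2}\|z_{0}^{(S+1)}-x^{*}\|^{2}$ to bound this contribution by $\tfrac{\gamma}{2}\|u^{(0)}-x^{*}\|^{2}$. The remaining double sum $\sum_{s=1}^{S}\sum_{t=1}^{T_{s}}\big(\tfrac{\gamma_{t}^{(s)}-\gamma_{t-1}^{(s)}}{2}\|x_{t}^{(s)}-x^{*}\|^{2}+\big(\tfrac{(a^{(s)})^{2}}{2\gamma_{t}^{(s)}}-\tfrac{(a^{(s)})^{2}}{16\beta}\big)\|g_{t}^{(s)}-g_{t-1}^{(s)}\|^{2}\big)$ is carried over unchanged, since it appears verbatim in the desired conclusion; rearranging then gives the stated inequality.

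I do not expect a genuine obstacle here. The single-epoch lemma already absorbed the delicate parts of the argument --- using $\left(a^{(s)}\right)^{2}\le 4A_{t-1}^{(s)}$ to discard the $\|\nabla f(\avx_{t}^{(s)})-\nabla f(\avx_{t-1}^{(s)})\|^{2}$ terms, and handling the extra checkpoint coefficient $\left(a^{(s)}\right)^{2}$ so that the function-value terms telescope after adding back $\sum_{t}\left(a^{(s)}\right)^{2}(F(\avx_{t}^{(s)})-F(x^{*}))$ and resetting $A_{0}^{(s)}=A_{T_{s-1}}^{(s-1)}-T_{s}\left(a^{(s)}\right)^{2}$. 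The only care needed is bookkeeping: making sure the $t=0$ iterates, step sizes, and gradients of each epoch are identified with the $t=T_{s-1}$ ones of the preceding epoch so that both telescopes are exact, and checking that discarding $-\tfrac{\gamma_{0}^{(S+1)}}{2}\|z_{0}^{(S+1)}-x^{*}\|^{2}$ only weakens the bound. The leftover double sum is then controlled in the subsequent parts of the analysis via the two gradient-difference inequalities stated in the analysis outline.
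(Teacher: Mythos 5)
Your proposal is correct and takes essentially the same route as the paper: sum Lemma~\ref{lem:AdaVRAE-One-Epoch-Descent} over $s=1,\dots,S$, telescope the $A_{T_{s}}^{(s)}$ function-value terms and the $\tfrac{\gamma_{0}^{(s)}}{2}\|z_{0}^{(s)}-x^{*}\|^{2}$ terms, drop the nonpositive boundary term, and use $\gamma_{0}^{(1)}=\gamma$, $z_{0}^{(1)}=u^{(0)}$.
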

\begin{proof}
Note that our assumptions satisfy the requirements for Lemma \ref{lem:AdaVRAE-One-Epoch-Descent},
by Applying Lemma \ref{lem:AdaVRAE-One-Epoch-Descent} and make the
telescoping sum from $s=1$ to $S$, we obtain
\begin{align*}
 & \E\left[A_{T_{S}}^{(S)}\left(F(u^{(S)})-F(x^{*})\right)\right]\\
\le & A_{T_{0}}^{(0)}\left(F(u^{(0)})-F(x^{*})\right)+\E\left[\frac{\gamma_{0}^{(s)}}{2}\left\Vert z_{0}^{(s)}-x^{*}\right\Vert ^{2}-\frac{\gamma_{0}^{(S+1)}}{2}\left\Vert z_{0}^{(S+1)}-x^{*}\right\Vert ^{2}\right]\\
 & +\E\left[\sum_{s=1}^{S}\sum_{t=1}^{T_{s}}\frac{\gamma_{t}^{(s)}-\gamma_{t-1}^{(s)}}{2}\left\Vert x_{t}^{(s)}-x^{*}\right\Vert ^{2}+\left(\frac{\left(a^{(s)}\right)^{2}}{2\gamma_{t}^{(s)}}-\frac{\left(a^{(s)}\right)^{2}}{16\beta}\right)\left\Vert g_{t}^{(s)}-g_{t-1}^{(s)}\right\Vert ^{2}\right]\\
\le & A_{T_{0}}^{(0)}\left(F(u^{(0)})-F(x^{*})\right)+\frac{\gamma_{0}^{(1)}}{2}\left\Vert z_{0}^{(s)}-x^{*}\right\Vert ^{2}\\
 & +\E\left[\sum_{s=1}^{S}\sum_{t=1}^{T_{s}}\frac{\gamma_{t}^{(s)}-\gamma_{t-1}^{(s)}}{2}\left\Vert x_{t}^{(s)}-x^{*}\right\Vert ^{2}+\left(\frac{\left(a^{(s)}\right)^{2}}{2\gamma_{t}^{(s)}}-\frac{\left(a^{(s)}\right)^{2}}{16\beta}\right)\left\Vert g_{t}^{(s)}-g_{t-1}^{(s)}\right\Vert ^{2}\right]\\
= & A_{T_{0}}^{(0)}\left(F(u^{(0)})-F(x^{*})\right)+\frac{\gamma}{2}\left\Vert u^{(0)}-x^{*}\right\Vert ^{2}\\
 & +\E\left[\sum_{s=1}^{S}\sum_{t=1}^{T_{s}}\frac{\gamma_{t}^{(s)}-\gamma_{t-1}^{(s)}}{2}\left\Vert x_{t}^{(s)}-x^{*}\right\Vert ^{2}+\left(\frac{\left(a^{(s)}\right)^{2}}{2\gamma_{t}^{(s)}}-\frac{\left(a^{(s)}\right)^{2}}{16\beta}\right)\left\Vert g_{t}^{(s)}-g_{t-1}^{(s)}\right\Vert ^{2}\right],
\end{align*}
where we use $\gamma_{0}^{(1)}=\gamma$ and $z_{0}^{(1)}=u^{(0)}$.
\end{proof}

\subsection{Bound for the residual term}

We turn to bound the term
\[
\sum_{s=1}^{S}\sum_{t=1}^{T_{s}}\frac{\gamma_{t}^{(s)}-\gamma_{t-1}^{(s)}}{2}\left\Vert x_{t}^{(s)}-x^{*}\right\Vert ^{2}+\left(\frac{\left(a^{(s)}\right)^{2}}{2\gamma_{t}^{(s)}}-\frac{\left(a^{(s)}\right)^{2}}{16\beta}\right)\left\Vert g_{t}^{(s)}-g_{t-1}^{(s)}\right\Vert ^{2}
\]

This follows the standard analysis used to bound the residual term
in adaptive methods. We first admit Lemma \ref{lem:AdaVRAE-Reminder-Bound-II}
to give the final bound for this term.
\begin{lem}
\label{lem:AdaVRAE-Reminder-Bound-I}If $\dom$ is a compact convex
set with diameter $D$, we have
\[
\sum_{s=1}^{S}\sum_{t=1}^{T_{s}}\frac{\gamma_{t}^{(s)}-\gamma_{t-1}^{(s)}}{2}\left\Vert x_{t}^{(s)}-x^{*}\right\Vert ^{2}+\left(\frac{\left(a^{(s)}\right)^{2}}{2\gamma_{t}^{(s)}}-\frac{\left(a^{(s)}\right)^{2}}{16\beta}\right)\left\Vert g_{t}^{(s)}-g_{t-1}^{(s)}\right\Vert ^{2}\le\frac{8\beta\left(D^{4}+2\eta^{4}\right)}{\eta^{2}}
\]
\end{lem}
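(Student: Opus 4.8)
The plan is to collapse the two per-epoch summations into a single indexed sum, telescope the iterate-movement contribution against the domain diameter, and then split the gradient-difference coefficient so that the two resulting pieces are exactly the two quantities controlled by Lemma \ref{lem:AdaVRAE-Reminder-Bound-II}. First I would reindex: since the epoch transition sets $\gamma_{0}^{(s+1)}=\gamma_{T_{s}}^{(s)}$ and $g_{0}^{(s+1)}=g_{T_{s}}^{(s)}$, concatenating the epochs yields single-indexed sequences $(\gamma_{k})_{k=0}^{K}$ and $(g_{k})_{k=0}^{K}$ with $K=\sum_{s=1}^{S}T_{s}$ and $\gamma_{0}=\gamma_{0}^{(1)}=\gamma$; write $a_{k}$ for the (epoch-constant) coefficient $a^{(s)}$ attached to index $k$ and $x_{k}\in\dom$ for the corresponding iterate. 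The sum to be bounded then becomes
\[
\sum_{k=1}^{K}\left(\frac{\gamma_{k}-\gamma_{k-1}}{2}\left\Vert x_{k}-x^{*}\right\Vert ^{2}+\left(\frac{a_{k}^{2}}{2\gamma_{k}}-\frac{a_{k}^{2}}{16\beta}\right)\left\Vert g_{k}-g_{k-1}\right\Vert ^{2}\right).
\]

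Next I would dispose of the iterate-movement term. The recursion $\gamma_{t}^{(s)}=\eta^{-1}\sqrt{\eta^{2}(\gamma_{t-1}^{(s)})^{2}+(a^{(s)})^{2}\left\Vert g_{t}^{(s)}-g_{t-1}^{(s)}\right\Vert ^{2}}\ge\gamma_{t-1}^{(s)}$ shows that $(\gamma_{k})$ is nondecreasing, so every increment $\gamma_{k}-\gamma_{k-1}$ is nonnegative; since $x_{k},x^{*}\in\dom$ we have $\left\Vert x_{k}-x^{*}\right\Vert \le D$, hence $\sum_{k=1}^{K}\frac{\gamma_{k}-\gamma_{k-1}}{2}\left\Vert x_{k}-x^{*}\right\Vert ^{2}\le\frac{D^{2}}{2}(\gamma_{K}-\gamma_{0})$ by telescoping. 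Then I would split the gradient-difference coefficient via $\frac{1}{16\beta}=\frac{1}{24\beta}+\frac{1}{48\beta}$, i.e. $\frac{a_{k}^{2}}{2\gamma_{k}}-\frac{a_{k}^{2}}{16\beta}=\left(\frac{a_{k}^{2}}{2\gamma_{k}}-\frac{a_{k}^{2}}{24\beta}\right)-\frac{a_{k}^{2}}{48\beta}$. Combining the two observations, the target quantity is at most
\begin{align*}
 & \frac{D^{2}}{2}(\gamma_{K}-\gamma_{0})-\frac{1}{48\beta}\sum_{k=1}^{K}a_{k}^{2}\left\Vert g_{k}-g_{k-1}\right\Vert ^{2}\\
 & \quad+\sum_{k=1}^{K}\left(\frac{1}{2\gamma_{k}}-\frac{1}{24\beta}\right)a_{k}^{2}\left\Vert g_{k}-g_{k-1}\right\Vert ^{2}.
\end{align*}
By Lemma \ref{lem:AdaVRAE-Reminder-Bound-II}, each of these two quantities is at most $\frac{4\beta(D^{4}+2\eta^{4})}{\eta^{2}}$; adding the two bounds gives $\frac{8\beta(D^{4}+2\eta^{4})}{\eta^{2}}$, which is exactly the claim.

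The real work, which I would isolate into Lemma \ref{lem:AdaVRAE-Reminder-Bound-II}, is the AdaGrad-style control of these two quantities. For the second one this is the standard telescoping estimate for $\sum_{k}a_{k}^{2}\left\Vert g_{k}-g_{k-1}\right\Vert ^{2}/\gamma_{k}$ against $\eta^{2}(\gamma_{K}-\gamma_{0})$, combined with an a priori bound on $\gamma_{K}$ of order $\beta$ coming from smoothness and the variance-reduction inequality; for the first one, the point is that $\gamma_{K}$ can be large only if the accumulated gradient-difference mass $\sum_{k}a_{k}^{2}\left\Vert g_{k}-g_{k-1}\right\Vert ^{2}$ is correspondingly large, so the negative term cancels $\tfrac{D^{2}}{2}\gamma_{K}$ up to an additive $\beta$-dependent term. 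The delicate points there are making the $\tfrac{1}{16},\tfrac{1}{24},\tfrac{1}{48}$ bookkeeping line up and tracking the $D$-versus-$\eta$ dependence cleanly; but at the level of the present lemma, once Lemma \ref{lem:AdaVRAE-Reminder-Bound-II} is granted, the argument is just the reindex/telescope/split chain above and involves no further obstacle.
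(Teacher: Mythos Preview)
Your approach is exactly the paper's: bound $\|x_{t}^{(s)}-x^{*}\|\le D$, telescope the $\gamma$-increments, split the $\tfrac{1}{16\beta}$ coefficient into two pieces, and hand each piece to Lemma~\ref{lem:AdaVRAE-Reminder-Bound-II}. The only discrepancy is in the split itself. The paper uses the $(D,\eta)$-dependent decomposition
\[
\frac{1}{16\beta}=\frac{D^{4}}{16\beta(D^{4}+2\eta^{4})}+\frac{\eta^{4}}{8\beta(D^{4}+2\eta^{4})},
\]
so that the two resulting expressions are \emph{literally} the two quantities that Lemma~\ref{lem:AdaVRAE-Reminder-Bound-II} bounds by $\tfrac{4\beta(D^{4}+2\eta^{4})}{\eta^{2}}$ each. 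Your fixed split $\tfrac{1}{16\beta}=\tfrac{1}{48\beta}+\tfrac{1}{24\beta}$ is the special case $\eta=D$ of this (it is what appears in the paper's informal outline, which sets $\eta=D$); for general $\eta$, your two displayed quantities are not what Lemma~\ref{lem:AdaVRAE-Reminder-Bound-II} actually states, so the citation does not line up as written. This is bookkeeping rather than a real gap: either switch to the paper's $(D,\eta)$-weighted split, or redo Lemma~\ref{lem:AdaVRAE-Reminder-Bound-II} with your $\tfrac{1}{48},\tfrac{1}{24}$ coefficients (that route yields the individual bounds $3\beta D^{4}/\eta^{2}$ and $12\beta\eta^{2}$, whose sum is still at most $8\beta(D^{4}+2\eta^{4})/\eta^{2}$).
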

\begin{proof}
It follows that
\begin{align*}
 & \sum_{s=1}^{S}\sum_{t=1}^{T_{s}}\frac{\gamma_{t}^{(s)}-\gamma_{t-1}^{(s)}}{2}\left\Vert x_{t}^{(s)}-x^{*}\right\Vert ^{2}+\left(\frac{\left(a^{(s)}\right)^{2}}{2\gamma_{t}^{(s)}}-\frac{\left(a^{(s)}\right)^{2}}{16\beta}\right)\left\Vert g_{t}^{(s)}-g_{t-1}^{(s)}\right\Vert ^{2}\\
\overset{(a)}{\leq} & \sum_{s=1}^{S}\sum_{t=1}^{T_{s}}\frac{\gamma_{t}^{(s)}-\gamma_{t-1}^{(s)}}{2}D^{2}+\left(\frac{\left(a^{(s)}\right)^{2}}{2\gamma_{t}^{(s)}}-\frac{\left(a^{(s)}\right)^{2}}{16\beta}\right)\left\Vert g_{t}^{(s)}-g_{t-1}^{(s)}\right\Vert ^{2}\\
\overset{(b)}{=} & \frac{\gamma_{T_{s}}^{(s)}-\gamma_{0}^{(1)}}{2}D^{2}+\sum_{s=1}^{S}\sum_{t=1}^{T_{s}}\left(\frac{\left(a^{(s)}\right)^{2}}{2\gamma_{t}^{(s)}}-\frac{\left(a^{(s)}\right)^{2}}{16\beta}\right)\left\Vert g_{t}^{(s)}-g_{t-1}^{(s)}\right\Vert ^{2}\\
= & \frac{\gamma_{T_{s}}^{(s)}-\gamma_{0}^{(1)}}{2}D^{2}-\frac{D^{4}}{16\beta\left(D^{4}+2\eta^{4}\right)}\sum_{s=1}^{S}\sum_{t=1}^{T_{s}}\left(a^{(s)}\right)^{2}\left\Vert g_{t}^{(s)}-g_{t-1}^{(s)}\right\Vert ^{2}\\
 & +\sum_{s=1}^{S}\sum_{t=1}^{T_{s}}\text{\ensuremath{\left(\frac{1}{2\gamma_{t}^{(s)}}-\frac{\eta^{4}}{8\beta\left(D^{4}+2\eta^{4}\right)}\right)}}\left(a^{(s)}\right)^{2}\left\Vert g_{t}^{(s)}-g_{t-1}^{(s)}\right\Vert ^{2}\\
\overset{(c)}{\leq} & \frac{8\beta\left(D^{4}+2\eta^{4}\right)}{\eta^{2}}
\end{align*}
where $(a)$ is by $\gamma_{t}^{(s)}\geq\gamma_{t-1}^{(s)}$ and $\left\Vert x_{t}^{(s)}-x^{*}\right\Vert \leq D$,
$(b)$ is by noticing $\gamma_{0}^{(s+1)}=\gamma_{T_{s}}^{(s)}$,
$(c)$ is by Lemma \ref{lem:AdaVRAE-Reminder-Bound-II}.
\end{proof}

\begin{lem}
\label{lem:AdaVRAE-Reminder-Bound-II}Under our update rule of $\gamma_{t}^{(s)}$,
we have

\begin{align*}
\frac{D^{2}}{2}\left(\gamma_{T_{S}}^{(S)}-\gamma_{0}^{(1)}\right)-\frac{D^{4}}{16\beta\left(D^{4}+2\eta^{4}\right)}\sum_{s=1}^{S}\sum_{t=1}^{T_{s}}\left(a^{(s)}\right)^{2}\left\Vert g_{t}^{(s)}-g_{t-1}^{(s)}\right\Vert ^{2} & \le\frac{4\beta\left(D^{4}+2\eta^{4}\right)}{\eta^{2}}
\end{align*}

\begin{align*}
\sum_{s=1}^{S}\sum_{t=1}^{T_{s}}\text{\ensuremath{\left(\frac{1}{2\gamma_{t}^{(s)}}-\frac{\eta^{4}}{8\beta\left(D^{4}+2\eta^{4}\right)}\right)}}\left(a^{(s)}\right)^{2}\left\Vert g_{t}^{(s)}-g_{t-1}^{(s)}\right\Vert ^{2} & \le\frac{4\beta\left(D^{4}+2\eta^{4}\right)}{\eta^{2}}
\end{align*}
\end{lem}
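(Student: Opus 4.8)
The plan is to collapse the doubly-indexed quantities into scalar sequences and then invoke two elementary facts about monotone sequences. Since $\gamma_{T_s}^{(s)}=\gamma_0^{(s+1)}$ and $g_{T_s}^{(s)}=g_0^{(s+1)}$, I would concatenate $\{(\gamma_t^{(s)})\}_{s,t}$, $\{(g_t^{(s)})\}_{s,t}$ and $\{(a^{(s)})\}$ into single sequences $\gamma_0,\dots,\gamma_K$, $g_0,\dots,g_K$, $a_1,\dots,a_K$ with $K=\sum_{s=1}^S T_s$ and $\gamma_0=\gamma_0^{(1)}=\gamma$. Setting $\delta_k:=a_k^2\|g_k-g_{k-1}\|^2\ge 0$, the update rule for $\gamma_t^{(s)}$ reads exactly $\eta^2\gamma_k^2=\eta^2\gamma_{k-1}^2+\delta_k$; hence $(\gamma_k)$ is non-decreasing and telescoping gives $\sum_{k=1}^K\delta_k=\eta^2(\gamma_K^2-\gamma_0^2)\ge 0$. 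Both target inequalities then become statements purely about $\gamma_K$ and $\sum_k\delta_k$.

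For the first inequality, substitute $\sum_{k=1}^K\delta_k=\eta^2(\gamma_K^2-\gamma_0^2)$ so that its left-hand side equals
\[
\phi(\gamma_K),\qquad \phi(x):=\frac{D^2}{2}(x-\gamma_0)-c\,(x^2-\gamma_0^2),\quad c:=\frac{D^4\eta^2}{16\beta(D^4+2\eta^4)}.
\]
This is a concave parabola in $x$. If its vertex $x^\star=\frac{D^2}{4c}=\frac{4\beta(D^4+2\eta^4)}{D^2\eta^2}$ satisfies $x^\star\ge\gamma_0$, then $\phi(\gamma_K)\le\phi(x^\star)=\frac{D^4}{16c}-\frac{D^2}{2}\gamma_0+c\gamma_0^2\le\frac{D^4}{16c}$, where the last step uses $\gamma_0\le x^\star\le 2x^\star$, which forces $c\gamma_0^2\le\frac{D^2}{2}\gamma_0$; otherwise $\phi$ is decreasing on $[\gamma_0,\infty)$ and $\phi(\gamma_K)\le\phi(\gamma_0)=0$. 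In either case $\phi(\gamma_K)\le\frac{D^4}{16c}=\frac{\beta(D^4+2\eta^4)}{\eta^2}\le\frac{4\beta(D^4+2\eta^4)}{\eta^2}$.

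For the second inequality, the key elementary fact is $\frac{a-b}{\sqrt a}\le 2(\sqrt a-\sqrt b)$ for $0\le b\le a$; applied with $a=\eta^2\gamma_k^2$ and $b=\eta^2\gamma_{k-1}^2$ (so $a-b=\delta_k$) it gives $\frac{\delta_k}{2\gamma_k}\le\eta^2(\gamma_k-\gamma_{k-1})$. Because $(\gamma_k)$ is non-decreasing, the coefficient $\frac1{2\gamma_k}-\frac{\eta^4}{8\beta(D^4+2\eta^4)}$ changes sign at most once; let $k^\star$ be the largest index with $\gamma_{k^\star}\le\frac{4\beta(D^4+2\eta^4)}{\eta^4}$, drop the (non-positive) terms with $k>k^\star$, bound the remaining coefficients by $\frac1{2\gamma_k}$, telescope, and use $\gamma_{k^\star}\le\frac{4\beta(D^4+2\eta^4)}{\eta^4}$:
\[
\sum_{k=1}^{k^\star}\Bigl(\tfrac1{2\gamma_k}-\tfrac{\eta^4}{8\beta(D^4+2\eta^4)}\Bigr)\delta_k\le\sum_{k=1}^{k^\star}\frac{\delta_k}{2\gamma_k}\le\eta^2(\gamma_{k^\star}-\gamma_0)\le\eta^2\gamma_{k^\star}\le\frac{4\beta(D^4+2\eta^4)}{\eta^2}.
\]

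I do not expect a genuine obstacle: the two displayed bounds are the standard AdaGrad-type "sum estimates" specialized to this step-size rule. The only points needing care are recording that $(\gamma_k)$ is non-decreasing (which both makes $\gamma_K^2-\gamma_0^2\ge 0$ in the first part and justifies the truncation at $k^\star$ in the second) and handling the degenerate vertex case $x^\star<\gamma_0$ in the parabola argument.
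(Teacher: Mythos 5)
Your proposal is correct and follows essentially the same path as the paper: collapse the doubly-indexed sequences into single-indexed ones using $\gamma_{T_s}^{(s)}=\gamma_0^{(s+1)}$, maximize a concave quadratic for the first bound, and use a hitting-time truncation plus a telescoping estimate $\tfrac{\delta_k}{2\gamma_k}\le\eta^2(\gamma_k-\gamma_{k-1})$ for the second. The only cosmetic differences are that you maximize the quadratic directly in $\gamma_K$ (via the exact identity $\sum_k\delta_k=\eta^2(\gamma_K^2-\gamma_0^2)$) rather than in $\sqrt{\sum_k\delta_k}$ as the paper does, and you derive the telescoping bound from AM--GM rather than by factoring $\gamma_t^2-\gamma_{t-1}^2$, which amounts to the same thing.
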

\begin{proof}
For simplicity, $g_{0}^{(1)},g_{1}^{(1)},\dots,g_{T_{1}}^{(1)}=g_{0}^{(2)},g_{1}^{(2)},\dots,g_{T_{2}}^{(2)},\dots$
as $\left(g_{k}\right)_{k\ge0}$ and $\gamma_{0}^{(1)},\gamma_{1}^{(1)},\dots,\gamma_{T_{1}}^{(1)}=\gamma_{0}^{(2)},\gamma_{1}^{(2)},\dots,\gamma_{T_{2}}^{(2)},\dots$
as $\left(\gamma_{k}\right)_{k\ge0}$. For $k\ge1$, assume that $g_{t}^{(s)}$
is the element that correspond to $g_{k}$, and let $a_{k}=a^{(s)}$.
Then we can write $\gamma_{k}=\frac{1}{\eta}\sqrt{\eta^{2}\gamma_{k-1}^{2}+a_{k}^{2}\left\Vert g_{k}-g_{k-1}\right\Vert ^{2}}$.
By writing $\eta^{2}\gamma_{k}^{2}=\eta^{2}\gamma_{k-1}^{2}+a_{k}^{2}\left\Vert g_{k}-g_{k-1}\right\Vert ^{2}$
we obtain $\eta^{2}\gamma_{k}^{2}=\eta^{2}\gamma_{0}^{2}+\sum_{t=1}^{k}a_{t}^{2}\left\Vert g_{t}-g_{t-1}\right\Vert ^{2}$
and hence $\gamma_{k}=\frac{1}{\eta}\sqrt{\eta^{2}\gamma_{0}+\sum_{t=1}^{k}a_{t}^{2}\left\Vert g_{t}-g_{t-1}\right\Vert ^{2}}$.

For 1). Using $\sqrt{a+b}\le\text{\ensuremath{\sqrt{a}+\sqrt{b}}}$
we have $\gamma_{k}\le\gamma_{0}+\frac{1}{\eta}\sqrt{\sum_{t=1}^{k}a_{t}^{2}\left\Vert g_{t}-g_{t-1}\right\Vert ^{2}}$.
Therefore
\begin{align*}
 & \frac{D^{2}}{2}\left(\gamma_{k}-\gamma_{0}\right)-\frac{D^{4}}{16\beta\left(D^{4}+2\eta^{4}\right)}\sum_{t=1}^{k}a_{t}^{2}\left\Vert g_{t}-g_{t-1}\right\Vert ^{2}\\
\le & \frac{D^{2}}{2\eta}\sqrt{\sum_{t=1}^{k}a_{t}^{2}\left\Vert g_{t}-g_{t-1}\right\Vert ^{2}}-\frac{D^{4}}{16\beta\left(D^{4}+2\eta^{4}\right)}\sum_{t=1}^{k}a_{t}^{2}\left\Vert g_{t}-g_{t-1}\right\Vert ^{2}\\
\stackrel{(a)}{\le} & \frac{4\beta\left(D^{4}+2\eta^{4}\right)}{\eta^{2}}
\end{align*}
where for $(a)$ we use $ax-bx^{2}\le\frac{a^{2}}{4b}$.

For 2). Let $\tau$ be the last index such that $\gamma_{\tau}\le\frac{4\beta\left(D^{4}+2\eta^{4}\right)}{\eta^{4}}$
or $\tau=-1$ if $\gamma_{0}>\frac{4\beta\left(D^{4}+2\eta^{4}\right)}{\eta^{4}}$.
If $\tau\le0$ we have $\sum_{t=1}^{k}\left(\frac{1}{2\gamma_{t}}-\frac{\eta^{4}}{8\beta\left(D^{4}+2\eta^{4}\right)}\right)a_{t}^{2}\left\Vert g_{t}-g_{t-1}\right\Vert ^{2}\le0$
for all $k$. Assume $\tau>0$
\begin{align*}
 & \sum_{t=1}^{k}\left(\frac{1}{2\gamma_{t}}-\frac{\eta^{4}}{8\beta\left(D^{4}+2\eta^{4}\right)}\right)a_{t}^{2}\left\Vert g_{t}-g_{t-1}\right\Vert ^{2}\\
\le & \sum_{t=1}^{\tau}\frac{1}{2\gamma_{t}}a_{t}^{2}\left\Vert g_{t}-g_{t-1}\right\Vert ^{2}\\
= & \eta^{2}\sum_{t=1}^{\tau}\frac{\gamma_{t}^{2}-\gamma_{t-1}^{2}}{2\gamma_{t}}\\
= & \eta^{2}\sum_{t=1}^{\tau}\frac{\left(\gamma_{t}-\gamma_{t-1}\right)\left(\gamma_{t}+\gamma_{t-1}\right)}{2\gamma_{t}}\\
\stackrel{(b)}{\le} & \eta^{2}\sum_{t=1}^{\tau}\left(\gamma_{t}-\gamma_{t-1}\right)\\
\leq & \eta^{2}\gamma_{\tau}\\
\overset{(c)}{\leq} & \frac{4\beta\left(D^{4}+2\eta^{4}\right)}{\eta^{2}}
\end{align*}
where $(b)$ is due to $\gamma_{t-1}\le\gamma_{t}$, $(c)$ is by
the definition of $\tau$.
\end{proof}

Finally we give an explicit choice for the parameters to satisfy all
conditions and give the final necessary bound.

\subsection{Parameter choice and bound}

The following lemma states the bound for the coefficients.
\begin{lem}
\label{lem:AdaVRAE-Bound}Under the choice of parameters in Theorem
\ref{thm:AdaVRAE-Convergence-A}, $\forall s\geq1$, we have
\[
\left(a^{(s)}\right)^{2}<4A_{0}^{(s)}
\]
and
\[
A_{T_{s}}^{(s)}\ge\begin{cases}
n(4n)^{-0.5^{s}} & 1\leq s\le s_{0}\\
\frac{n}{4c}(s-s_{0})^{2} & s_{0}<s
\end{cases}
\]
\end{lem}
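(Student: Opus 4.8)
The plan is to pass to a closed form for the doubly-indexed sequence $(A_t^{(s)})$ and then verify both inequalities by direct computation. Unrolling the within-epoch recursion $A_t^{(s)}=A_{t-1}^{(s)}+a^{(s)}+(a^{(s)})^2$ over the $T_s=n$ inner steps gives $A_{T_s}^{(s)}=A_0^{(s)}+na^{(s)}+n(a^{(s)})^2$, and combining this with the reset $A_0^{(s)}=A_{T_{s-1}}^{(s-1)}-n(a^{(s)})^2$ collapses to $A_{T_s}^{(s)}=A_{T_{s-1}}^{(s-1)}+na^{(s)}$, whence
\[
A_{T_s}^{(s)}=\tfrac54+n\sum_{j=1}^{s}a^{(j)},\qquad A_0^{(s)}=\tfrac54+n\sum_{j=1}^{s-1}a^{(j)}-n(a^{(s)})^2 .
\]
Since each inner step adds $a^{(s)}+(a^{(s)})^2\ge0$, the sequence $(A_t^{(s)})_t$ is nondecreasing, so verifying $(a^{(s)})^2<4A_0^{(s)}$ is precisely the first displayed claim and also yields $(a^{(s)})^2\le 4A_{t-1}^{(s)}$ for all $t\in[T_s]$, the condition used downstream. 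I use $n\ge1$ throughout.

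Next I would prove the lower bound on $A_{T_s}^{(s)}$ by splitting at $s_0$. For $1\le s\le s_0$, discarding all but the last term of the closed form (the remainder $\tfrac54+n\sum_{j<s}a^{(j)}$ being nonnegative) gives $A_{T_s}^{(s)}\ge na^{(s)}=n(4n)^{-0.5^s}$, which is the stated bound; moreover, since $s_0=\lceil\log_2\log_2 4n\rceil$ forces $0.5^{s_0}\le 1/\log_2(4n)$, hence $(4n)^{0.5^{s_0}}\le2$ and $a^{(s_0)}\ge\tfrac12$, I record the estimate $A_{T_{s_0}}^{(s_0)}\ge n/2$. For $s_0<s$, set $m=s-s_0$ and sum the arithmetic progression $a^{(s_0+k)}=\frac{k-1+c}{2c}$ to get $\sum_{k=1}^{m}a^{(s_0+k)}=\frac{m(m-1+2c)}{4c}$; since $c=\tfrac32$ makes $m-1+2c=m+2\ge m$ and $A_{T_{s_0}}^{(s_0)}\ge0$, this yields $A_{T_s}^{(s)}=A_{T_{s_0}}^{(s_0)}+\frac{nm(m+2)}{4c}\ge\frac{n}{4c}(s-s_0)^2$.

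For the first claim $(a^{(s)})^2<4A_0^{(s)}$ I argue by cases on $s$. When $s=1$, $A_0^{(1)}=\tfrac54-n\cdot\tfrac1{4n}=1$ while $(a^{(1)})^2=\tfrac1{4n}$, so the bound is immediate. When $2\le s\le s_0$, squaring doubles the exponent, $(a^{(s)})^2=(4n)^{-0.5^{s-1}}=a^{(s-1)}<1$, so in the closed form the term $na^{(s-1)}$ exactly cancels $-n(a^{(s)})^2$, leaving $A_0^{(s)}=\tfrac54+n\sum_{j=1}^{s-2}a^{(j)}\ge\tfrac54$; hence $(a^{(s)})^2<1<4A_0^{(s)}$. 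When $s=s_0+1$, $a^{(s)}=\tfrac12$ and $A_0^{(s)}=A_{T_{s_0}}^{(s_0)}-\tfrac n4\ge\tfrac n4$, so $(a^{(s)})^2=\tfrac14<n\le4A_0^{(s)}$. Finally, when $s=s_0+k$ with $k\ge2$, the arithmetic-progression sum together with the cancellation at $c=\tfrac32$ gives $A_0^{(s)}=A_{T_{s_0}}^{(s_0)}+\frac{n(2k^2-4k-7)}{36}$; bounding $A_{T_{s_0}}^{(s_0)}\ge n/2$ and using $(a^{(s)})^2=\frac{(2k+1)^2}{36}$ reduces the claim to $(2k+1)^2<4n(2k^2-4k+11)$, which holds for every integer $k\ge2$ and every $n\ge1$ because $4k^2-20k+43$ and $2k^2-4k+11$ both have negative discriminant.

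I expect the bookkeeping in the second phase to be the main obstacle. A crude estimate is not enough: replacing $4n+1$ by $5n$, or dropping $A_{T_{s_0}}^{(s_0)}$ from the bound on $A_{T_{s-1}}^{(s-1)}$, already fails at $k=2$, so one must keep the exact arithmetic-progression sum, exploit the particular value $c=\tfrac32$ that kills the linear-in-$m$ coefficient $c(2c-3)$ in the closed form of $A_0^{(s)}$, invoke the estimate $A_{T_{s_0}}^{(s_0)}\ge n/2$ (which itself rests on the precise definition of $s_0$), and treat the small indices $s=1$, $s=s_0$, $s=s_0+1$ as boundary cases. Once these closed forms are in hand, every remaining step is a finite check that a quadratic in $k$ with negative discriminant stays positive.
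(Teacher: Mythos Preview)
Your proposal is correct and follows essentially the same approach as the paper: derive exact closed forms for $A_0^{(s)}$ and $A_{T_s}^{(s)}$ in each of the two phases and then verify the two claimed inequalities by direct computation. Your execution is a bit more streamlined than the paper's---you unroll the recursion once to obtain the unified formula $A_{T_s}^{(s)}=\tfrac54+n\sum_{j\le s}a^{(j)}$ instead of running separate inductions, and you reduce the second-phase check to a single quadratic in $k$ with negative discriminant rather than the paper's longer algebraic manipulation---but the underlying structure (including the key estimate $A_{T_{s_0}}^{(s_0)}\ge n/2$ and the exact cancellation $(a^{(s)})^2=a^{(s-1)}$ in phase one) is the same.
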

\begin{proof}
As a reminder, we choose the parameters as follows, where $c=\frac{3}{2}$
and $s=s_{0}=\left\lceil \log_{2}\log_{2}4n\right\rceil $

\begin{align*}
a^{(s)} & =\begin{cases}
(4n)^{-0.5^{s}} & 1\leq s\leq s_{0}\\
\frac{s-s_{0}-1+c}{2c} & s_{0}<s
\end{cases},\\
T_{s} & =n,\\
A_{T_{0}}^{(0)} & =\frac{5}{4}.
\end{align*}
The idea in this choice is that we divide the time into two phases
in which the convergence behaves differently. In the first phase,
$A_{T_{s}}^{(s)}$ quickly gets to $\Omega(n)$ and we can set the
coefficients for the checkpoint relatively small. In the second phase,
to achieve the optimal $\sqrt{\frac{n\beta}{\epsilon}}$ rate, $A_{T_{s}}^{(s)}=\Omega(n^{2})$.
In this phase, we need to be more conservative and set the coefficients
for the checkpoint large. We analyze the two phases separately.

First we show by induction that for $1\leq s\le s_{0}$,
\begin{align}
A_{0}^{(s)} & =1+n\sum_{k=0}^{s-2}(4n)^{-0.5^{k}},\label{eq:AdaVRAE-6}\\
A_{T_{s}}^{(s)} & =1+n\sum_{k=0}^{s}(4n)^{-0.5^{k}}.\label{eq:AdaVRAE-7}
\end{align}
Indeed, we have
\begin{align*}
A_{0}^{(1)} & =A_{T_{0}}^{(0)}-T_{1}\left(a^{(1)}\right)^{2}\overset{(a)}{=}\frac{5}{4}-n(4n)^{-1}=\frac{5}{4}-\frac{1}{4}=1,\\
A_{T_{1}}^{(1)} & =A_{0}^{(1)}+T_{1}\left(a^{(1)}+\left(a^{(1)}\right)^{2}\right)\overset{(b)}{=}1+n\left((4n)^{-0.5}+(4n)^{-1}\right),
\end{align*}
where $(a)$ and $(b)$ are both by plugging in $a^{(1)}=(4n)^{-0.5}$
and $T_{1}=n$. Supposed that \ref{eq:AdaVRAE-6} and \ref{eq:AdaVRAE-7}
hold for all $k\le s<s_{0}$. For $k=s+1\le s_{0}$, we have
\begin{align*}
A_{0}^{(s+1)} & =A_{T_{s}}^{(s)}-T_{s+1}\left(a^{(s+1)}\right)^{2}\\
 & \overset{(c)}{=}\left(1+n\sum_{k=0}^{s}(4n)^{-0.5^{k}}\right)-n(4n)^{-0.5^{s}}\\
 & =1+n\sum_{k=0}^{s-1}(4n)^{-0.5^{k}},\\
A_{T_{s+1}}^{(s+1)} & =A_{0}^{(s+1)}+T_{s+1}\left(a^{(s+1)}+\left(a^{(s+1)}\right)^{2}\right)\\
 & \overset{(d)}{=}\left(1+n\sum_{k=0}^{s-1}(4n)^{-0.5^{k}}\right)+n\left((4n)^{-0.5^{s+1}}+(4n)^{-0.5^{s}}\right)\\
 & =1+n\sum_{k=0}^{s+1}(4n)^{-0.5^{k}},
\end{align*}
where $(c)$ is by plugging $a^{(s+1)}=(4n)^{-0.5^{s+1}}$, $T_{s+1}=n$
and the assumption on $A_{T_{s}}^{(s)}$, $(d)$ is by plugging $a^{(s+1)}=(4n)^{-0.5^{s+1}}$
and $T_{s+1}=n$. Now the induction is completed. From this we can
see that $A_{0}^{(s)}\geq1>\frac{\left(a^{(s)}\right)^{2}}{4}$ and
$A_{T_{s}}^{(s)}>n(4n)^{-0.5^{s}}$.

Next, for $s>s_{0}$, we show by induction that
\begin{align}
A_{0}^{(s)} & >\frac{n}{2}+\frac{n}{4c}(s-s_{0}-2+2c)(s-s_{0}-1)-\frac{n}{4c^{2}}(s-s_{0}-1+c)^{2},\label{eq:AdaVRAE-8}\\
A_{T_{s}}^{(s)} & >\frac{n}{2}+\frac{n}{4c}(s-s_{0}-1+2c)(s-s_{0}).\label{eq:AdaVRAE-9}
\end{align}
Indeed we have $A_{T_{s_{0}}}^{(s_{0})}=1+n\sum_{k=0}^{s_{0}}(4n)^{-0.5^{k}}>n(4n)^{-0.5^{s_{0}}}\geq n(4n)^{-0.5^{\log_{2}\log_{2}4n}}=\frac{n}{2}.$
Hence
\begin{align*}
A_{0}^{(s_{0}+1)} & =A_{T_{s_{0}}}^{(s_{0})}-T_{s_{0}+1}\left(a^{(s_{0}+1)}\right)^{2}\\
 & \overset{(e)}{\geq}\frac{n}{2}-\frac{n}{4},\\
A_{T_{s_{0}+1}}^{(s_{0}+1)} & =A_{0}^{(s_{0}+1)}+T_{s_{0}+1}\left(\left(a^{(s_{0}+1)}\right)+\left(a^{(s_{0}+1)}\right)^{2}\right)\\
 & \overset{(f)}{\geq}\frac{n}{2}+n\left(\frac{1}{2}+\frac{1}{4}\right)\\
 & >\frac{n}{2}+\frac{n}{2},
\end{align*}
where $(e)$ and $(f)$ are both by $a^{(s_{0}+1)}=\frac{1}{2}$,
$T_{s_{0}+1}=n$. Supposed that \ref{eq:AdaVRAE-8} and \ref{eq:AdaVRAE-9}
hold for all $s_{0}<k\le s$. For $k=s+1$ we have
\begin{align*}
A_{0}^{(s+1)} & =A_{T_{s}}^{(s)}-T_{s+1}\left(a^{(s+1)}\right)^{2}\\
 & \overset{(g)}{>}\frac{n}{2}+\frac{n}{4c}(s-s_{0}-1+2c)(s-s_{0})-n\left(\frac{s-s_{0}+c}{2c}\right)^{2}\\
 & =\frac{n}{2}+\frac{n}{4c}(s-s_{0}-1+2c)(s-s_{0})-\frac{n}{4c^{2}}(s-s_{0}+c)^{2},\\
A_{T_{s+1}}^{(s+1)} & =A_{0}^{(s+1)}+T_{s+1}\left(a^{(s+1)}+\left(a^{(s+1)}\right)^{2}\right)\\
 & =A_{T_{s}}^{(s)}+T_{s+1}a^{(s+1)}\\
 & \overset{(h)}{>}\frac{n}{2}+\frac{n}{4c}(s-s_{0}-1+2c)(s-s_{0})+\frac{n}{2c}\left(s-s_{0}+c\right)\\
 & =\frac{n}{2}+\frac{n}{4c}(s-s_{0}+2c)(s-s_{0}+1),
\end{align*}
where $(g)$ and $(h)$ are both due to $T_{s+1}=n$, $a^{(s+1)}=\frac{s-s_{0}+c}{2c}$
and the assumption on $A_{T_{s}}^{(s)}$. Now the induction is completed.
We can see that if $c=\frac{3}{2}$. we have 
\begin{align*}
A_{0}^{(s)} & >n\left(\frac{1}{2}+\frac{(s-s_{0}-1+c)^{2}}{4c}\left(1-\frac{1}{c}\right)-\frac{s-s_{0}-1+c^{2}}{4c}\right)\\
 & =n\left(\frac{1}{2}+\frac{(s-s_{0}-1+c)^{2}}{12c}-\frac{s-s_{0}-1+c^{2}}{4c}\right)\\
 & =n\left(\frac{1}{2}+\frac{(s-s_{0}-1+c)^{2}}{16c^{2}}+\frac{(s-s_{0}-1+c)^{2}}{24c}-\frac{s-s_{0}-1+c^{2}}{4c}\right)\\
 & =n\left(\frac{(s-s_{0}-1+c)^{2}}{16c^{2}}+\frac{(s-s_{0}-1)^{2}-3(s-s_{0}-1)+(c^{2}-6c^{2}+12c)}{24c}\right)\\
 & >\frac{(s-s_{0}-1+c)^{2}}{16c^{2}}=\frac{\left(a^{(s)}\right)^{2}}{4}
\end{align*}
 and $A_{T_{s}}^{(s)}>\frac{n}{4c}(s-s_{0})^{2}.$
\end{proof}

\subsection{Putting all together}

We are now ready to put everything together and complete the proof
of Theorem \ref{thm:AdaVRAE-Convergence-A}.

\begin{proof}
(Theorem \ref{thm:AdaVRAE-Convergence-A}) From Lemma \ref{lem:AdaVRAE-Bound},
we know$\left(a^{(s)}\right)^{2}<4A_{0}^{(s)}$ for any $s\geq1$,
which implies for any $s\geq1$, $t\in\left[T_{s}\right]$
\[
\left(a^{(s)}\right)^{2}<4A_{t-1}^{(s)}.
\]
 Combining our parameters, we can find the requirements for Lemma
\ref{lem:AdaVRAE-Whole-Descent} are satisfied, which will give us
\begin{align*}
\E\left[A_{T_{S}}^{(S)}\left(F(u^{(S)})-F(x^{*})\right)\right] & \leq A_{T_{0}}^{(0)}\left(F(u^{(0)})-F(x^{*})\right)+\frac{\gamma}{2}\left\Vert u^{(0)}-x^{*}\right\Vert ^{2}\\
 & \quad+\E\left[\sum_{s=1}^{S}\sum_{t=1}^{T_{s}}\frac{\gamma_{t}^{(s)}-\gamma_{t-1}^{(s)}}{2}\left\Vert x_{t}^{(s)}-x^{*}\right\Vert ^{2}+\left(\frac{\left(a^{(s)}\right)^{2}}{2\gamma_{t}^{(s)}}-\frac{\left(a^{(s)}\right)^{2}}{16\beta}\right)\left\Vert g_{t}^{(s)}-g_{t-1}^{(s)}\right\Vert ^{2}\right].
\end{align*}
By using Lemma \ref{lem:AdaVRAE-Reminder-Bound-I}, we know
\begin{align*}
\E\left[A_{T_{S}}^{(S)}\left(F(u^{(S)})-F(x^{*})\right)\right] & \leq A_{T_{0}}^{(0)}\left(F(u^{(0)})-F(x^{*})\right)+\frac{\gamma}{2}\left\Vert u^{(0)}-x^{*}\right\Vert ^{2}+\frac{8\beta\left(D^{4}+2\eta^{4}\right)}{\eta^{2}}\\
 & \overset{(a)}{\leq}\frac{5}{4}\left(F(u^{(0)})-F(x^{*})\right)+\frac{\gamma}{2}\left\Vert u^{(0)}-x^{*}\right\Vert ^{2}+\frac{8\beta\left(D^{4}+2\eta^{4}\right)}{\eta^{2}}\\
\Rightarrow\E\left[F(u^{(S)})-F(x^{*})\right] & \leq\frac{V}{2A_{T_{S}}^{(S)}}\\
 & \overset{(b)}{\leq}\begin{cases}
\frac{2V}{\left(4n\right)^{1-0.5^{S}}} & 1\leq S\leq s_{0}\\
\frac{2cV}{n(S-s_{0})^{2}} & s_{0}<S
\end{cases}
\end{align*}
where $(a)$ is by plugging in $A_{T_{0}}^{(0)}=\frac{5}{4}$, $(b)$
is by \ref{lem:AdaVRAE-Bound}.
\end{proof}
\begin{itemize}
\item If $\epsilon\ge\frac{V}{n}$, we choose $S=\left\lceil \log_{2}\log_{2}\frac{4V}{\epsilon}\right\rceil \leq\left\lceil \log_{2}\log_{2}4n\right\rceil =s_{0}$,
so we have
\begin{align*}
\E\left[F(u^{(S)})-F(x^{*})\right] & \leq\frac{2V}{(4n)^{1-0.5^{S}}}\\
 & \overset{(c)}{\leq}\frac{2V}{\left(\frac{4V}{\epsilon}\right){}^{1-0.5^{S}}}\\
 & =\frac{\epsilon}{2\left(\frac{4V}{\epsilon}\right){}^{-0.5^{S}}}\\
 & \overset{(d)}{\leq}\epsilon,
\end{align*}
where $(c)$ is by $n\geq\frac{V}{\epsilon}$, $(d)$ is by $\left(\frac{4V}{\epsilon}\right){}^{-0.5^{S}}=\left(\frac{4V}{\epsilon}\right){}^{-0.5^{\left\lceil \log_{2}\log_{2}\frac{4V}{\epsilon}\right\rceil }}\geq\left(\frac{4V}{\epsilon}\right){}^{-0.5^{\log_{2}\log_{2}\frac{4V}{\epsilon}}}=\frac{1}{2}$.
Note that the final full gradient computation in the last epoch is
not needed, therefore the number of individual gradient evaluations
is
\begin{align*}
\#grads & =n+\sum_{s=1}^{S-1}\left(2(T_{s}-1)+n\right)+2(T_{S}-1)\\
 & <3nS\\
 & =3n\left\lceil \log_{2}\log_{2}\frac{4V}{\epsilon}\right\rceil \\
 & =\mathcal{O}\left(n\log\log\frac{V}{\epsilon}\right).
\end{align*}
\item If $\epsilon<\frac{V}{n}$, we choose $S=s_{0}+\left\lceil \sqrt{\frac{2cV}{n\epsilon}}\right\rceil \geq s_{0}+1$,
so we have
\begin{align*}
\E\left[F(u^{(S)})-F(x^{*})\right] & \leq\frac{2cV}{n(S-s_{0})^{2}}\\
 & =\frac{2cV}{n\left(\left\lceil \sqrt{\frac{2cV}{n\epsilon}}\right\rceil \right)^{2}}\\
 & \leq\frac{2cV}{n\left(\sqrt{\frac{2cV}{n\epsilon}}\right)^{2}}\\
 & =\epsilon.
\end{align*}
The number of individual gradient evaluations is
\begin{align*}
\#grads & =n+\sum_{s=1}^{S-1}\left(2(T_{s}-1)+n\right)+2(T_{S}-1)\\
 & <3nS\\
 & =3ns_{0}+3n(S-s_{0})\\
 & =3n\left\lceil \log_{2}\log_{2}4n\right\rceil +3n\left\lceil \sqrt{\frac{2cV}{n\epsilon}}\right\rceil \\
 & =\mathcal{O}\left(n\log\log n+\sqrt{\frac{nV(z)}{\epsilon}}\right).
\end{align*}
\end{itemize}

\section{Analysis of algorithm \ref{alg:AdaVRAG}}

\label{sec:AdaVRAG}

In this section, we analyze Algorithm \ref{alg:AdaVRAG} and prove
the following convergence guarantee:
\begin{thm}
\label{thm:AdaVRAG-Convergence-A}(Convergence of AdaVRAG) Define
$s_{0}=\lceil\log_{2}\log_{2}4n\rceil$, $c=\frac{3+\sqrt{33}}{4}$.
Suppose we set the parameters of Algorithm \ref{alg:AdaVRAG} as follows:
\begin{align*}
a^{(s)} & =\begin{cases}
1-\left(4n\right)^{-0.5^{s}} & 1\leq s\leq s_{0}\\
\frac{c}{s-s_{0}+2c} & s_{0}<s
\end{cases},\\
q^{(s)} & =\begin{cases}
\frac{1}{\left(1-a^{(s)}\right)a^{(s)}} & 1\leq s\leq s_{0}\\
\frac{8\left(2-a^{(s)}\right)a^{(s)}}{3(1-a^{(s)})} & s_{0}<s
\end{cases},\\
T_{s} & =n.
\end{align*}
Suppose that $\dom$ is a compact convex set with diameter $D$
and we set $\eta=\Theta(D)$. Addtionally, we assume that $2\eta^{2}>D^{2}$
if $\text{Option I}$ is used for setting the step size. The number
of individual gradient evaluations to achieve a solution $u^{(S)}$
such that $\E\left[F(u^{(S)})-F(x^{*})\right]\leq\epsilon$ for Algorithm
\ref{alg:AdaVRAG} is 
\[
\#grads=\begin{cases}
\mathcal{O}\left(n\log\log\frac{V}{\epsilon}\right) & \epsilon\geq\frac{V}{n}\\
\mathcal{O}\left(n\log\log n+\sqrt{\frac{nV}{\epsilon}}\right) & \epsilon<\frac{V}{n}
\end{cases},
\]
where
\[
V=\begin{cases}
\frac{1}{2}(F(u^{(0)})-F(x^{*}))+\gamma\left\Vert u^{(0)}-x^{*}\right\Vert ^{2}+\left[\beta-\left(1-\frac{D^{2}}{2\eta^{2}}\right)\gamma\right]^{+}\left(D^{2}+2(\eta^{2}+D^{2})\log\frac{\frac{2\eta^{2}\beta}{2\eta^{2}-D^{2}}}{\gamma}\right) & \text{for Option I}\\
\frac{1}{2}(F(u^{(0)})-F(x^{*}))+\gamma\left\Vert u^{(0)}-x^{*}\right\Vert ^{2}+\eta^{2}\left(\frac{D^{2}}{\eta^{2}}+\beta-\gamma\right)^{+}\left(\frac{2D^{2}}{\eta^{2}}+\beta-\gamma\right) & \text{for Option II}
\end{cases}.
\]
\end{thm}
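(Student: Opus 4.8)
The argument follows the five-stage template of the AdaVRAE analysis in Section~\ref{sec:AdaVRAE}; I describe it emphasizing only where AdaVRAG differs, namely that the inner loop is a plain mirror-descent step (one projection) instead of an extra-gradient step, the step sizes $\gamma_t^{(s)}$ are driven by the iterate movement $\|x_t^{(s)}-x_{t-1}^{(s)}\|$ instead of the stochastic gradient difference, and there is an extra per-epoch factor $q^{(s)}$. Fix an epoch $s$ and abbreviate $a=a^{(s)}$, $q=q^{(s)}$, $\gamma_t=\gamma_t^{(s)}$, $u=u^{(s-1)}$, $x_t=x_t^{(s)}$, $\avx_t=\avx_t^{(s)}$. \emph{Single-iteration progress.} The identities $\avx_t-\avx_{t-1}=a(x_t-x_{t-1})$ and $u-\avx_{t-1}=a(u-x_{t-1})$ follow at once from $\avx_t=ax_t+(1-a)u$ with $u$ fixed during the epoch. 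Combining $\beta$-smoothness of $f$ between $\avx_{t-1}$ and $\avx_t$, convexity of $f$ at $\avx_{t-1}$ evaluated at $ax^*+(1-a)u$, and convexity of $h$ gives $F(\avx_t)-F(x^*)\le(1-a)(F(u)-F(x^*))+a\langle\nabla f(\avx_{t-1}),x_t-x^*\rangle+a(h(x_t)-h(x^*))+\tfrac{\beta a^2}{2}\|x_t-x_{t-1}\|^2$. Replacing $\nabla f(\avx_{t-1})$ by the conditionally unbiased estimate $g_t$, applying the mirror-descent optimality condition for $x_t$ to turn $a\langle g_t,x_t-x^*\rangle+a(h(x_t)-h(x^*))$ into the Bregman telescope $\tfrac{a\gamma_{t-1}q}{2}(\|x_{t-1}-x^*\|^2-\|x_t-x^*\|^2-\|x_t-x_{t-1}\|^2)$, and handling the residual $a\langle\nabla f(\avx_{t-1})-g_t,x_t-x^*\rangle$ by splitting $x_t-x^*=(x_{t-1}-x^*)+(x_t-x_{t-1})$ — the first part has zero conditional expectation, the second is absorbed by Young's inequality and the variance-reduction bound of Lemma~\ref{lem:Variance-Reduction} — yields a single-iteration inequality of the shape of Lemma~\ref{lem:AdaVRAE-One-Step-Descent-II}, with ``gain'' $-\tfrac{a\gamma_{t-1}q}{2}\|x_t-x_{t-1}\|^2$ now pitted against the smoothness ``loss'' $\tfrac{\beta a^2}{2}\|x_t-x_{t-1}\|^2$ and the variance divergence $f(u)-f(\avx_{t-1})-\langle\nabla f(\avx_{t-1}),u-\avx_{t-1}\rangle$ absorbed into the coefficient of $F(u)-F(x^*)$; this absorption is precisely why $q^{(s)}=\tfrac{8(2-a^{(s)})a^{(s)}}{3(1-a^{(s)})}$ carries that form in the second phase.

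\emph{Per-epoch and master inequality.} Summing over $t=1,\dots,T_s$, dividing by $T_s$ and using convexity of $F$ at $u^{(s)}=\tfrac1{T_s}\sum_t\avx_t$, the in-epoch Bregman terms telescope, and multiplying by weights $A^{(s)}$ (defined from $a^{(s)},q^{(s)}$ so that the $F$-recursion, the $x$-Bregman terms and the step sizes all telescope across epochs, the analogue of Lemma~\ref{lem:AdaVRAE-One-Epoch-Descent}) and telescoping over $s=1,\dots,S$ with $\gamma_0^{(1)}=\gamma$, $x_0^{(1)}=u^{(0)}$ gives the analogue of Lemma~\ref{lem:AdaVRAE-Whole-Descent}: $\E[A_{T_S}^{(S)}(F(u^{(S)})-F(x^*))]\le A_{T_0}^{(0)}(F(u^{(0)})-F(x^*))+\gamma\|u^{(0)}-x^*\|^2+R$, with $R=\E[\sum_{s,t}\tfrac{\gamma_t^{(s)}-\gamma_{t-1}^{(s)}}{2}\|x_t^{(s)}-x^*\|^2-c_t^{(s)}\|x_t^{(s)}-x_{t-1}^{(s)}\|^2]$ and $c_t^{(s)}=\Theta(\gamma_{t-1}^{(s)}q^{(s)}-\beta(a^{(s)})^2)$; the common $A^{(s)}/T_s$ weighting on the gain and the loss within each term cancels, so the gain beats the loss exactly when $\gamma_{t-1}\gtrsim\beta a^{(s)}/q^{(s)}=\Theta(\beta)$.

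\emph{Residual bound and weight growth.} The residual $R$ is the technical heart, and Options~I and~II diverge here. Concatenating $(\gamma_t^{(s)})$, $(x_t^{(s)})$ into single sequences $(\gamma_k),(x_k)$ (legitimate since $\gamma_{T_s}^{(s)}=\gamma_0^{(s+1)}$, $x_{T_s}^{(s)}=x_0^{(s+1)}$) and using $\|x_k-x^*\|\le D$, one bounds $\tfrac{D^2}{2}(\gamma_K-\gamma)-\sum_k c_k\|x_k-x_{k-1}\|^2$. For \emph{Option~II} the additive rule $\gamma_k=\gamma_{k-1}+\|x_k-x_{k-1}\|^2/\eta^2$ gives $\sum_k\|x_k-x_{k-1}\|^2=\eta^2(\gamma_K-\gamma)$; past the $\Theta(\beta)$ threshold the $c_k$ are positive and each increment cancels, and since one additive step adds at most $D^2/\eta^2$ the last ``bad'' value obeys $\gamma_\tau\le\Theta(\beta)+D^2/\eta^2$, so the uncancelled loss is $O(\eta^2(\beta+D^2/\eta^2)^2)=O(\beta^2\eta^2+\beta D^2+D^4/\eta^2)$ — the $\beta^2D^2$ term of $V$. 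For \emph{Option~I} the multiplicative ``AdaGrad+'' bookkeeping of \cite{ene2021adaptive} applies: from $\gamma_k^2-\gamma_{k-1}^2=\gamma_{k-1}^2\|x_k-x_{k-1}\|^2/\eta^2$ one gets $\gamma_k-\gamma_{k-1}\le\tfrac{\gamma_{k-1}}{2\eta^2}\|x_k-x_{k-1}\|^2$, so the $\gamma$-increment residual $\propto(\gamma_k-\gamma_{k-1})D^2$ is dominated by the surplus of the gain $\propto(\gamma_{k-1}q^{(s)}-\beta a^{(s)})\|x_k-x_{k-1}\|^2$ as soon as $\gamma_{k-1}(1-\tfrac{D^2}{2\eta^2})\gtrsim\beta a^{(s)}/q^{(s)}=\Theta(\beta)$ — and this is exactly where $2\eta^2>D^2$ enters, the threshold being $\Theta(\tfrac{\beta\eta^2}{2\eta^2-D^2})$; the prefix during which the multiplicative $\gamma$ climbs from $\gamma$ to that threshold has length $O(\log\tfrac{\beta\eta^2/(2\eta^2-D^2)}{\gamma})$, each step contributing $O(\eta^2+D^2)$, which produces the $[\beta-(1-\tfrac{D^2}{2\eta^2})\gamma]^+(D^2+(\eta^2+D^2)\log\tfrac{\beta\eta^2/(2\eta^2-D^2)}{\gamma})$ structure of $V$. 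Finally, as in Lemma~\ref{lem:AdaVRAE-Bound}, one checks the telescoping condition used above and lower-bounds $A_{T_s}^{(s)}$: in phase~1 ($1\le s\le s_0$), $a^{(s)}=1-(4n)^{-0.5^s}$ with $q^{(s)}=\tfrac1{(1-a^{(s)})a^{(s)}}$ makes $A_{T_s}^{(s)}$ grow doubly-exponentially to $\Omega(n)$ at $s_0=\lceil\log_2\log_2 4n\rceil$; in phase~2, $a^{(s)}=\tfrac{c}{s-s_0+2c}$ with $c=\tfrac{3+\sqrt{33}}4$ the positive root of $2c^2-3c-3=0$ — the exact value pinned down so that the phase-2 recursion yields $A_{T_s}^{(s)}=\Omega(n(s-s_0)^2)$ while the single-iteration condition stays tight — and $q^{(s)}=\tfrac{8(2-a^{(s)})a^{(s)}}{3(1-a^{(s)})}$ so the smoothness loss is exactly covered.

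\emph{Conclusion and main obstacle.} Dividing gives $\E[F(u^{(S)})-F(x^*)]\le V/A_{T_S}^{(S)}$ with $V$ as in the statement (tracking the constants $\tfrac12$ and $\gamma$ and the Option-dependent residual from the previous paragraph); taking $S=\lceil\log_2\log_2(4V/\epsilon)\rceil\le s_0$ when $\epsilon\ge V/n$ and $S=s_0+\lceil\sqrt{cV/(n\epsilon)}\rceil$ when $\epsilon<V/n$, and charging $n+2T_s=3n$ individual gradient evaluations per epoch (one full gradient at the checkpoint plus $T_s$ variance-reduced stochastic gradients), reproduces the two claimed complexities exactly as at the end of Section~\ref{sec:AdaVRAE}. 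The hard part is the Option~I residual bound: unlike AdaVRAE, whose adaptive quantity already comes with a free $-\tfrac1{2\beta}\|\cdot\|^2$ smoothness gain, here the gain against which the iterate movement must be matched, $c_k\|x_k-x_{k-1}\|^2$ with $c_k=\Theta(\gamma_{k-1}q^{(s)}-\beta(a^{(s)})^2)$, itself depends on the running $\gamma_{k-1}$, so one must show the multiplicative update self-tunes past the $\Theta(\beta)$ threshold within $O(\log(\beta/\gamma))$ steps without ever overshooting enough to break the telescope, and that this reasoning survives the epoch boundaries where $a^{(s)}$ and $q^{(s)}$ jump; making the constants in $q^{(s)}$ and the value $c=\tfrac{3+\sqrt{33}}4$ simultaneously satisfy the single-iteration telescope condition and the residual cancellation is the main bookkeeping effort.
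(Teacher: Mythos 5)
Your plan tracks the paper's proof of Theorem~\ref{thm:AdaVRAG-Convergence-A} lemma by lemma: the per-iteration bound is Lemma~\ref{lem:AdaVRAG-One-Step-Descent}, the epoch averaging is Lemma~\ref{lem:AdaVRAG-One-Epoch-Descent}, the master telescope with weights $T_s/(q^{(s)}a^{(s)})$ is Lemma~\ref{lem:AdaVRAG-Whole-Descent}, the diameter simplification is Lemma~\ref{lem:AdaVRAG-Simplify-Reminder}, the Option~I/II residual bounds are Lemmas~\ref{lem:AdaVRAG-Reminder-I} and~\ref{lem:AdaVRAG-Reminder-II}, and the parameter checks are Lemmas~\ref{lem:AdaVRAG-Condition} and~\ref{lem:AdaVRAG-Bound}; so this is essentially the same approach. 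Your one variation in the per-iteration step --- substituting $g_t$ inside $a\langle\nabla f(\avx_{t-1}),x_t-x^*\rangle$ and splitting the error against $(x_{t-1}-x^*)+(x_t-x_{t-1})$, rather than the paper's splitting of $\langle\nabla f(\avx_{t-1}),\avx_t-\avx_{t-1}\rangle$ --- is an equivalent reshuffling whose Young balance reproduces the same coefficient $\tfrac{\beta(2-a)a^2}{2(1-a)}$, so it costs nothing. One caution on Option~I: the hitting time $\tau$ is \emph{not} bounded by $O(\log(\beta/\gamma))$ iterations, so ``the prefix has length $O(\log)$, each step contributing $O(\eta^2+D^2)$'' is misleading if read literally; what the paper actually controls is the accumulated movement $\sum_{t<\tau}\|x'_t-x'_{t-1}\|^2\le(\eta^2+D^2)\sum_{t<\tau}\bigl(1-(\gamma'_{t-1}/\gamma'_t)^2\bigr)\le 2(\eta^2+D^2)\log(\gamma'_{\tau-1}/\gamma)$, using $\gamma'_t\le\gamma'_{t-1}\sqrt{1+D^2/\eta^2}$ and $1-1/x^2\le 2\log x$, and this telescoping-$\log$ mechanism is where the $\log\beta$ factor really comes from.
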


\subsection{Single epoch progress and final output}

We first analyze the progress in function value made in a single iteration
of an epoch. The analysis is done in a standard way by combining the
smoothness and convexity of $f$, the convexity of $h$ and the optimality
condition of $x_{t}^{(s)}$. 
\begin{lem}
\label{lem:AdaVRAG-One-Step-Descent}For all epochs $s\geq1$ and
all iterations $t\in\left[T_{s}\right]$, we have 
\begin{align*}
\E\left[F(\avx_{t}^{(s)})-F(x^{*})\right] & \leq\E\left[\left(1-a^{(s)}\right)\left(F(u^{(s-1)})-F(x^{*})\right)\right]\\
 & \quad+\E\left[\frac{\gamma_{t-1}^{(s)}q^{(s)}a^{(s)}}{2}\left(\left\Vert x_{t-1}^{(s)}-x^{*}\right\Vert ^{2}-\left\Vert x_{t}^{(s)}-x^{*}\right\Vert ^{2}\right)\right]\\
 & \quad+\E\left[\left(\frac{\beta\left(2-a^{(s)}\right)\left(a^{(s)}\right)^{2}}{2\left(1-a^{(s)}\right)}-\frac{\gamma_{t-1}^{(s)}q^{(s)}a^{(s)}}{2}\right)\left\Vert x_{t}^{(s)}-x_{t-1}^{(s)}\right\Vert ^{2}\right].
\end{align*}
\end{lem}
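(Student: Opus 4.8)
The plan is to prove Lemma~\ref{lem:AdaVRAG-One-Step-Descent} via the standard one-iteration analysis of accelerated mirror descent, adapted to the variance-reduced setting, with one twist that makes the stochastic-gradient error term cancel exactly. Throughout I suppress the superscript $(s)$ and write $a=a^{(s)}\in(0,1)$, $\mu=\gamma_{t-1}^{(s)}q^{(s)}$, $u=u^{(s-1)}$, and let $\mathcal{F}_{t-1}$ denote the history through the start of iteration $t$, so that $\avx_{t-1},x_{t-1},\gamma_{t-1}$ are $\mathcal{F}_{t-1}$-measurable and $\E[g_t\mid\mathcal{F}_{t-1}]=\nabla f(\avx_{t-1})$. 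First I would record the elementary identity $\avx_t-\avx_{t-1}=a(x_t-x_{t-1})$ (both combination points share the same $u$-component and coefficient $a$), together with the $\beta$-smoothness bound $f(\avx_t)\le f(\avx_{t-1})+a\langle\nabla f(\avx_{t-1}),x_t-x_{t-1}\rangle+\frac{\beta a^2}{2}\|x_t-x_{t-1}\|^2$ and the convexity bound $h(\avx_t)\le a\,h(x_t)+(1-a)h(u)$.

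The key step is to replace $f(\avx_{t-1})$ by a Bregman-style identity rather than a plain subgradient inequality: using convexity of $f$ at $\avx_{t-1}$ only in the $x^*$-direction (scaled by $a$), keeping the $u$-direction exact through $\Delta_{t-1}\coloneqq f(u)-f(\avx_{t-1})-\langle\nabla f(\avx_{t-1}),u-\avx_{t-1}\rangle\ge 0$, and noting $ax^*+(1-a)u-\avx_{t-1}=a(x^*-x_{t-1})$, one obtains $f(\avx_{t-1})\le a f(x^*)+(1-a)f(u)-a\langle\nabla f(\avx_{t-1}),x^*-x_{t-1}\rangle-(1-a)\Delta_{t-1}$. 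Substituting this into the smoothness and $h$-convexity bounds, the gradient terms collapse to $a\langle\nabla f(\avx_{t-1}),x_t-x^*\rangle$. I would then write $\nabla f(\avx_{t-1})=g_t+(\nabla f(\avx_{t-1})-g_t)$ and handle the $g_t$-part with the optimality condition of the proximal step $x_t=\arg\min_x\{\langle g_t,x\rangle+h(x)+\frac{\mu}{2}\|x-x_{t-1}\|^2\}$, convexity of $h$, and the three-point identity $\langle x_t-x_{t-1},x^*-x_t\rangle=\frac12(\|x_{t-1}-x^*\|^2-\|x_t-x^*\|^2-\|x_t-x_{t-1}\|^2)$. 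After the $h(x_t)$ terms cancel and $F(x^*)$ is subtracted, this yields the deterministic bound
\begin{align*}
F(\avx_t)-F(x^*)\le{}&(1-a)\bigl(F(u)-F(x^*)\bigr)+\frac{a\mu}{2}\bigl(\|x_{t-1}-x^*\|^2-\|x_t-x^*\|^2\bigr)\\
&{}+\Bigl(\frac{\beta a^2}{2}-\frac{a\mu}{2}\Bigr)\|x_t-x_{t-1}\|^2+a\langle\nabla f(\avx_{t-1})-g_t,x_t-x^*\rangle-(1-a)\Delta_{t-1}.
\end{align*}

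Handling the last two terms is the main obstacle; everything else is bookkeeping. I would split $x_t-x^*=(x_t-x_{t-1})+(x_{t-1}-x^*)$: the $(x_{t-1}-x^*)$-part has zero conditional expectation, while for the $(x_t-x_{t-1})$-part, Young's inequality gives $a\langle\nabla f(\avx_{t-1})-g_t,x_t-x_{t-1}\rangle\le\frac{1-a}{2\beta}\|\nabla f(\avx_{t-1})-g_t\|^2+\frac{\beta a^2}{2(1-a)}\|x_t-x_{t-1}\|^2$. By Lemma~\ref{lem:Variance-Reduction} applied with the point $\avx_{t-1}$ and snapshot $u$ (which is exactly the form of $g_t$), $\E[\|\nabla f(\avx_{t-1})-g_t\|^2\mid\mathcal{F}_{t-1}]\le 2\beta\Delta_{t-1}$, so the first piece contributes at most $(1-a)\Delta_{t-1}$ in conditional expectation and cancels the $-(1-a)\Delta_{t-1}$ term exactly. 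Taking full expectations and merging $\frac{\beta a^2}{2}+\frac{\beta a^2}{2(1-a)}=\frac{\beta(2-a)a^2}{2(1-a)}$ in the $\|x_t-x_{t-1}\|^2$ coefficient, then recalling $\mu=\gamma_{t-1}^{(s)}q^{(s)}$, produces exactly the stated inequality; note the argument uses only $\gamma_{t-1}$, so it is the same under either step-size option. The one side-condition to verify is $a^{(s)}\in(0,1)$ (so the Young's-inequality split is valid), which holds for all parameter regimes in Theorem~\ref{thm:AdaVRAG-Convergence-A}.
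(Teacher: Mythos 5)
Your proposal is correct and follows essentially the same route as the paper's own proof: smoothness of $f$ at $\avx_{t-1}^{(s)}$, the decomposition $\nabla f(\avx_{t-1}^{(s)}) = g_t^{(s)} + (\nabla f(\avx_{t-1}^{(s)}) - g_t^{(s)})$, Young's inequality with the same coefficients $\frac{1-a^{(s)}}{2\beta}$ and $\frac{\beta}{2(1-a^{(s)})}$ on the error inner product with $x_t^{(s)}-x_{t-1}^{(s)}$, Lemma \ref{lem:Variance-Reduction} at $(\avx_{t-1}^{(s)}, u^{(s-1)})$, the optimality condition of the proximal step with the three-point identity, and convexity of $h$ to absorb the $h(x_t^{(s)})$ term. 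The only difference is bookkeeping order: you apply convexity of $f$ toward $x^{*}$ early (carrying the nonnegative Bregman gap $\Delta_{t-1}$ explicitly so the VR-lemma cancellation is visible up front), whereas the paper applies Young and the VR lemma first and uses convexity of $f$ at $\avx_{t-1}^{(s)}$ in a later step; your grouping of the stochastic error via $x_t^{(s)}-x^{*}=(x_t^{(s)}-x_{t-1}^{(s)})+(x_{t-1}^{(s)}-x^{*})$ corresponds exactly to the paper's Young step together with the replacement $\E[\langle g_t^{(s)}, x^{*}-x_{t-1}^{(s)}\rangle]=\E[\langle\nabla f(\avx_{t-1}^{(s)}), x^{*}-x_{t-1}^{(s)}\rangle]$.
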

\begin{proof}
We have
\begin{align*}
 & \E\left[f(\avx_{t}^{(s)})-f(\avx_{t-1}^{(s)})\right]\\
\overset{(a)}{\leq} & \E\left[\left\langle \nabla f(\avx_{t-1}^{(s)}),\avx_{t}^{(s)}-\avx_{t-1}^{(s)}\right\rangle +\frac{\beta}{2}\left\Vert \avx_{t}^{(s)}-\avx_{t-1}^{(s)}\right\Vert ^{2}\right]\\
= & \E\left[\left\langle g_{t}^{(s)},\avx_{t}^{(s)}-\avx_{t-1}^{(s)}\right\rangle +\left\langle \nabla f(\avx_{t-1}^{(s)})-g_{t}^{(s)},\avx_{t}^{(s)}-\avx_{t-1}^{(s)}\right\rangle +\frac{\beta}{2}\left\Vert \avx_{t}^{(s)}-\avx_{t-1}^{(s)}\right\Vert ^{2}\right],
\end{align*}
where $(a)$ is due to $f$ being $\beta$-smooth. Using Cauchy--Schwarz
inequality and Young's inequality ($ab\leq\frac{\lambda}{2}a^{2}+\frac{1}{2\lambda}b^{2}$
with $\lambda>0$) we have

\begin{align*}
 & \left\langle \nabla f(\avx_{t-1}^{(s)})-g_{t}^{(s)},\avx_{t}^{(s)}-\avx_{t-1}^{(s)}\right\rangle \\
\le & \left\Vert \nabla f(\avx_{t-1}^{(s)})-g_{t}^{(s)}\right\Vert \left\Vert \avx_{t}^{(s)}-\avx_{t-1}^{(s)}\right\Vert \\
\le & \frac{1-a^{(s)}}{2\beta}\left\Vert \nabla f(\avx_{t-1}^{(s)})-g_{t}^{(s)}\right\Vert ^{2}+\frac{\beta}{2(1-a^{(s)})}\left\Vert \avx_{t}^{(s)}-\avx_{t-1}^{(s)}\right\Vert ^{2},
\end{align*}
also note that 
\begin{align*}
\avx_{t}^{(s)}-\avx_{t-1}^{(s)} & =\left(a^{(s)}x_{t}^{(s)}+(1-a^{(s)})u^{(s-1)}\right)-\left(a^{(s)}x_{t-1}^{(s)}+(1-a^{(s)})u^{(s-1)}\right)=a^{(s)}\left(x_{t}^{(s)}-x_{t-1}^{(s)}\right).
\end{align*}
Hence, we obtain

\begin{align}
 & \E\left[f(\avx_{t}^{(s)})-f(\avx_{t-1}^{(s)})\right]\nonumber \\
\leq & \E\left[\left\langle g_{t}^{(s)},a^{(s)}\left(x_{t}^{(s)}-x_{t-1}^{(s)}\right)\right\rangle +\frac{1-a^{(s)}}{2\beta}\left\Vert \nabla f(\avx_{t-1}^{(s)})-g_{t}^{(s)}\right\Vert ^{2}+\frac{\beta\left(2-a^{(s)}\right)\left(a^{(s)}\right)^{2}}{2\left(1-a^{(s)}\right)}\left\Vert x_{t}^{(s)}-x_{t-1}^{(s)}\right\Vert ^{2}\right]\nonumber \\
\overset{(b)}{\leq} & \E\left[\left\langle g_{t}^{(s)},a^{(s)}\left(x_{t}^{(s)}-x_{t-1}^{(s)}\right)\right\rangle +\left(1-a^{(s)}\right)\left(f(u^{(s-1)})-f(\avx_{t-1}^{(s)})-\langle\nabla f(\avx_{t-1}^{(s)}),u^{(s-1)}-\avx_{t-1}^{(s)}\rangle\right)\right]\nonumber \\
 & +\E\left[\frac{\beta\left(2-a^{(s)}\right)\left(a^{(s)}\right)^{2}}{2\left(1-a^{(s)}\right)}\left\Vert x_{t}^{(s)}-x_{t-1}^{(s)}\right\Vert ^{2}\right]\nonumber \\
= & \E\left[\left\langle g_{t}^{(s)},a^{(s)}\left(x_{t}^{(s)}-x^{*}\right)\right\rangle +\left\langle g_{t}^{(s)},a^{(s)}\left(x^{*}-x_{t-1}^{(s)}\right)\right\rangle -\left\langle \nabla f(\avx_{t-1}^{(s)}),\left(1-a^{(s)}\right)\left(u^{(s-1)}-\avx_{t-1}^{(s)}\right)\right\rangle \right]\nonumber \\
 & +\E\left[\left(1-a^{(s)}\right)\left(f(u^{(s-1)})-f(\avx_{t-1}^{(s)})\right)\right]+\E\left[\frac{\beta\left(2-a^{(s)}\right)\left(a^{(s)}\right)^{2}}{2\left(1-a^{(s)}\right)}\left\Vert x_{t}^{(s)}-x_{t-1}^{(s)}\right\Vert ^{2}\right]\nonumber \\
\overset{(c)}{=} & \E\left[\left\langle g_{t}^{(s)},a^{(s)}\left(x_{t}^{(s)}-x^{*}\right)\right\rangle +\left\langle \nabla f(\avx_{t-1}^{(s)}),a^{(s)}\left(x^{*}-x_{t-1}^{(s)}\right)-\left(1-a^{(s)}\right)\left(u^{(s-1)}-\avx_{t-1}^{(s)}\right)\right\rangle \right]\nonumber \\
 & +\E\left[\left(1-a^{(s)}\right)\left(f(u^{(s-1)})-f(\avx_{t-1}^{(s)})\right)\right]+\E\left[\frac{\beta\left(2-a^{(s)}\right)\left(a^{(s)}\right)^{2}}{2\left(1-a^{(s)}\right)}\left\Vert x_{t}^{(s)}-x_{t-1}^{(s)}\right\Vert ^{2}\right]\nonumber \\
\overset{(d)}{=} & \E\left[\left\langle g_{t}^{(s)},a^{(s)}\left(x_{t}^{(s)}-x^{*}\right)\right\rangle +\left\langle \nabla f(\avx_{t-1}^{(s)}),a^{(s)}\left(x^{*}-\avx_{t-1}^{(s)}\right)\right\rangle +\left(1-a^{(s)}\right)\left(f(u^{(s-1)})-f(\avx_{t-1}^{(s)})\right)\right]\nonumber \\
 & +\E\left[\frac{\beta\left(2-a^{(s)}\right)\left(a^{(s)}\right)^{2}}{2\left(1-a^{(s)}\right)}\left\Vert x_{t}^{(s)}-x_{t-1}^{(s)}\right\Vert ^{2}\right]\nonumber \\
\overset{(e)}{\leq} & \E\left[\left\langle g_{t}^{(s)},a^{(s)}\left(x_{t}^{(s)}-x^{*}\right)\right\rangle +\frac{\beta\left(2-a^{(s)}\right)\left(a^{(s)}\right)^{2}}{2\left(1-a^{(s)}\right)}\left\Vert x_{t}^{(s)}-x_{t-1}^{(s)}\right\Vert ^{2}\right.\nonumber \\
 & \qquad\left.+\left(1-a^{(s)}\right)f(u^{(s-1)})+a^{(s)}f(x^{*})-f(\avx_{t-1}^{(s)})\right],\label{eq:varag-1}
\end{align}
where $(b)$ is by Lemma \ref{lem:Variance-Reduction}, $(c)$ is
because of
\[
\E\left[\left\langle g_{t}^{(s)},a^{(s)}\left(x^{*}-x_{t-1}^{(s)}\right)\right\rangle \right]=\E\left[\left\langle \nabla f(\avx_{t-1}^{(s)}),a^{(s)}\left(x^{*}-x_{t-1}^{(s)}\right)\right\rangle \right],
\]
$(d)$ is by $\avx_{t-1}^{(s)}=a^{(s)}x_{t-1}^{(s)}+\left(1-a^{(s)}\right)u^{(s-1)}$,
$(e)$ is due to the convexity of $f$ which implies 
\[
\left\langle \nabla f(\avx_{t-1}^{(s)}),a^{(s)}\left(x^{*}-\avx_{t-1}^{(s)}\right)\right\rangle \leq a^{(s)}\left(f(x^{*})-f(\avx_{t-1}^{(s)})\right).
\]
By adding $\E\left[f(\avx_{t-1}^{(s)})-f(x^{*})\right]$ to both sides
of (\ref{eq:varag-1}), we obtain
\begin{align}
 & \E\left[f(\avx_{t}^{(s)})-f(x^{*})\right]\nonumber \\
\text{\ensuremath{\leq}} & \E\left[\left\langle g_{t}^{(s)},a^{(s)}\left(x_{t}^{(s)}-x^{*}\right)\right\rangle +\frac{\beta\left(2-a^{(s)}\right)\left(a^{(s)}\right)^{2}}{2\left(1-a^{(s)}\right)}\left\Vert x_{t}^{(s)}-x_{t-1}^{(s)}\right\Vert ^{2}+\left(1-a^{(s)}\right)\left(f(u^{(s-1)})-f(x^{*})\right)\right].\label{eq:varag-2}
\end{align}
Next, we upper bound the inner product $\left\langle g_{t}^{(s)},a^{(s)}\left(x_{t}^{(s)}-x^{*}\right)\right\rangle $.
By the optimality condition of $x_{t}^{(s)}$, we have
\[
\left\langle g_{t}^{(s)}+h'(x_{t}^{(s)})+\gamma_{t-1}^{(s)}q^{(s)}\left(x_{t}^{(s)}-x_{t-1}^{(s)}\right),x_{t}^{(s)}-x^{*}\right\rangle \leq0,
\]
 where $h'(x_{t}^{(s)})\in\partial h(x_{t}^{(s)})$ is a subgradient
of $h$ at $x_{t}^{(s)}$. We rearrange the above inequality and obtain
\begin{align}
 & a^{(s)}\left\langle g_{t}^{(s)},x_{t}^{(s)}-x^{*}\right\rangle \nonumber \\
\leq & a^{(s)}\left\langle h'(x_{t}^{(s)})+\gamma_{t-1}^{(s)}q^{(s)}\left(x_{t}^{(s)}-x_{t-1}^{(s)}\right),x^{*}-x_{t}^{(s)}\right\rangle \nonumber \\
\overset{(f)}{\leq} & a^{(s)}\left(h(x^{*})-h(x_{t}^{(s)})\right)+a^{(s)}\gamma_{t-1}^{(s)}q^{(s)}\left\langle x_{t}^{(s)}-x_{t-1}^{(s)},x^{*}-x_{t}^{(s)}\right\rangle \nonumber \\
\overset{(g)}{=} & a^{(s)}\left(h(x^{*})-h(x_{t}^{(s)})\right)+\frac{a^{(s)}\gamma_{t-1}^{(s)}q^{(s)}}{2}\left(\left\Vert x_{t-1}^{(s)}-x^{*}\right\Vert ^{2}-\left\Vert x_{t}^{(s)}-x^{*}\right\Vert ^{2}-\left\Vert x_{t-1}^{(s)}-x_{t}^{(s)}\right\Vert ^{2}\right),\label{eq:varag-3}
\end{align}
 where $(f)$ follows from the convexity of $h$ and the fact that
$h'(x_{t}^{(s)})\in\partial h(x_{t}^{(s)})$, and $(g)$ is due to
the identity $\langle a,b\rangle=\frac{1}{2}\left(\left\Vert a+b\right\Vert ^{2}-\left\Vert a\right\Vert ^{2}-\left\Vert b\right\Vert ^{2}\right)$.

We plug in (\ref{eq:varag-3}) into (\ref{eq:varag-2}), and obtain
\begin{align*}
 & \E\left[f(\avx_{t}^{(s)})-f(x^{*})\right]\\
\leq & \E\left[\left(1-a^{(s)}\right)\left(f(u^{(s-1)})-f(x^{*})\right)+a^{(s)}\left(h(x^{*})-h(x_{t}^{(s)})\right)\right]\\
 & +\E\left[\frac{\gamma_{t-1}^{(s)}q^{(s)}a^{(s)}}{2}\left(\left\Vert x_{t-1}^{(s)}-x^{*}\right\Vert ^{2}-\left\Vert x_{t}^{(s)}-x^{*}\right\Vert ^{2}\right)+\left(\frac{\beta\left(2-a^{(s)}\right)\left(a^{(s)}\right)^{2}}{2\left(1-a^{(s)}\right)}-\frac{\gamma_{t-1}^{(s)}q^{(s)}a^{(s)}}{2}\right)\left\Vert x_{t}^{(s)}-x_{t-1}^{(s)}\right\Vert ^{2}\right]\\
\overset{(h)}{=} & \E\left[\left(1-a^{(s)}\right)\left(F(u^{(s-1)})-F(x^{*})\right)+h(x^{*})-a^{(s)}h(x_{t}^{(s)})-\left(1-a^{(s)}\right)h(u^{(s-1)})\right]\\
 & +\E\left[\frac{\gamma_{t-1}^{(s)}q^{(s)}a^{(s)}}{2}\left(\left\Vert x_{t-1}^{(s)}-x^{*}\right\Vert ^{2}-\left\Vert x_{t}^{(s)}-x^{*}\right\Vert ^{2}\right)+\left(\frac{\beta\left(2-a^{(s)}\right)\left(a^{(s)}\right)^{2}}{2\left(1-a^{(s)}\right)}-\frac{\gamma_{t-1}^{(s)}q^{(s)}a^{(s)}}{2}\right)\left\Vert x_{t}^{(s)}-x_{t-1}^{(s)}\right\Vert ^{2}\right]\\
\overset{(i)}{\leq} & \E\left[\left(1-a^{(s)}\right)\left(F(u^{(s-1)})-F(x^{*})\right)+h(x^{*})-h(\avx_{t}^{(s)})\right]\\
 & +\E\left[\frac{\gamma_{t-1}^{(s)}q^{(s)}a^{(s)}}{2}\left(\left\Vert x_{t-1}^{(s)}-x^{*}\right\Vert ^{2}-\left\Vert x_{t}^{(s)}-x^{*}\right\Vert ^{2}\right)+\left(\frac{\beta\left(2-a^{(s)}\right)\left(a^{(s)}\right)^{2}}{2\left(1-a^{(s)}\right)}-\frac{\gamma_{t-1}^{(s)}q^{(s)}a^{(s)}}{2}\right)\left\Vert x_{t}^{(s)}-x_{t-1}^{(s)}\right\Vert ^{2}\right],
\end{align*}
where $(h)$ is by the definition of $F=f+h$, and $(i)$ is by the
convexity of $h$ which implies 
\[
h(\avx_{t}^{(s)})=h\left(a^{(s)}x_{t}^{(s)}+(1-a^{(s)})u^{(s-1)}\right)\leq a^{(s)}h(x_{t}^{(s)})+(1-a^{(s)})h(u^{(s-1)}).
\]
Now we move the term $\E\left[h(x^{*})-h(\avx_{t}^{(s)})\right]$
to the LHS, and obtain
\begin{align*}
 & \E\left[F(\avx_{t}^{(s)})-F(x^{*})\right]\\
\leq & \E\left[\left(1-a^{(s)}\right)\left(F(u^{(s-1)})-F(x^{*})\right)+\frac{\gamma_{t-1}^{(s)}q^{(s)}a^{(s)}}{2}\left(\left\Vert x_{t-1}^{(s)}-x^{*}\right\Vert ^{2}-\left\Vert x_{t}^{(s)}-x^{*}\right\Vert ^{2}\right)\right]\\
 & +\E\left[\left(\frac{\beta\left(2-a^{(s)}\right)\left(a^{(s)}\right)^{2}}{2\left(1-a^{(s)}\right)}-\frac{\gamma_{t-1}^{(s)}q^{(s)}a^{(s)}}{2}\right)\left\Vert x_{t}^{(s)}-x_{t-1}^{(s)}\right\Vert ^{2}\right].
\end{align*}
\end{proof}

By Lemma \ref{lem:AdaVRAG-One-Step-Descent}, if $\frac{1}{T_{s}}\sum_{t=1}^{T_{s}}\avx_{t}^{(s)}$
is defined as a new chekpoint like what we do in Algorithm \ref{alg:AdaVRAG},
the following guarantee for the function value progress in one epoch
comes up immediately by the convexity of $F$.
\begin{lem}
\label{lem:AdaVRAG-One-Epoch-Descent}For all epochs $s\geq1$, we
have
\begin{align*}
\E\left[F(u^{(s)})-F(x^{*})\right] & \leq\E\left[\left(1-a^{(s)}\right)\left(F(u^{(s-1)})-F(x^{*})\right)\right]\\
 & \quad+\E\left[\frac{1}{T_{s}}\sum_{t=1}^{T_{s}}\frac{\gamma_{t-1}^{(s)}q^{(s)}a^{(s)}}{2}\left(\left\Vert x_{t-1}^{(s)}-x^{*}\right\Vert ^{2}-\left\Vert x_{t}^{(s)}-x^{*}\right\Vert ^{2}\right)\right]\\
 & \quad+\E\left[\frac{1}{T_{s}}\sum_{t=1}^{T_{s}}\left(\frac{\beta\left(2-a^{(s)}\right)\left(a^{(s)}\right)^{2}}{2\left(1-a^{(s)}\right)}-\frac{\gamma_{t-1}^{(s)}q^{(s)}a^{(s)}}{2}\right)\left\Vert x_{t}^{(s)}-x_{t-1}^{(s)}\right\Vert ^{2}\right].
\end{align*}
\end{lem}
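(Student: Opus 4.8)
The plan is to obtain this epoch-level bound as an essentially immediate consequence of the single-iteration descent bound in Lemma~\ref{lem:AdaVRAG-One-Step-Descent}, combined with the convexity of $F$ and the definition $u^{(s)}=\frac{1}{T_s}\sum_{t=1}^{T_s}\avx_t^{(s)}$ prescribed in Algorithm~\ref{alg:AdaVRAG}. There is no heavy lifting: the work was already done in Lemma~\ref{lem:AdaVRAG-One-Step-Descent}, and here we merely average and apply Jensen.

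First I would apply Lemma~\ref{lem:AdaVRAG-One-Step-Descent} for every iteration $t\in[T_s]$ of the fixed epoch $s$, giving $T_s$ inequalities that bound $\E[F(\avx_t^{(s)})-F(x^*)]$. Summing these and dividing by $T_s$ yields
\begin{align*}
\frac{1}{T_s}\sum_{t=1}^{T_s}\E\left[F(\avx_t^{(s)})-F(x^*)\right]
&\le \E\left[(1-a^{(s)})\left(F(u^{(s-1)})-F(x^*)\right)\right]\\
&\quad + \E\left[\frac{1}{T_s}\sum_{t=1}^{T_s}\frac{\gamma_{t-1}^{(s)}q^{(s)}a^{(s)}}{2}\left(\left\Vert x_{t-1}^{(s)}-x^*\right\Vert^2-\left\Vert x_t^{(s)}-x^*\right\Vert^2\right)\right]\\
&\quad + \E\left[\frac{1}{T_s}\sum_{t=1}^{T_s}\left(\frac{\beta(2-a^{(s)})(a^{(s)})^2}{2(1-a^{(s)})}-\frac{\gamma_{t-1}^{(s)}q^{(s)}a^{(s)}}{2}\right)\left\Vert x_t^{(s)}-x_{t-1}^{(s)}\right\Vert^2\right],
\end{align*}
where the one point to keep track of is that the leading term $(1-a^{(s)})(F(u^{(s-1)})-F(x^*))$ does not depend on the inner index $t$, so it is unaffected by the averaging, whereas the step sizes $\gamma_{t-1}^{(s)}$ and the iterates $x_t^{(s)}$ do depend on $t$ and must be carried along inside the sums.

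Next I would invoke the convexity of $F$. Since $f=\frac1n\sum_{i=1}^n f_i$ with each $f_i$ convex and $h$ is convex, $F=f+h$ is convex; applying Jensen's inequality to the convex combination $u^{(s)}=\frac{1}{T_s}\sum_{t=1}^{T_s}\avx_t^{(s)}$ gives
\[
F(u^{(s)})=F\!\left(\frac{1}{T_s}\sum_{t=1}^{T_s}\avx_t^{(s)}\right)\le \frac{1}{T_s}\sum_{t=1}^{T_s}F(\avx_t^{(s)}),
\]
hence $\E[F(u^{(s)})-F(x^*)]\le \frac{1}{T_s}\sum_{t=1}^{T_s}\E[F(\avx_t^{(s)})-F(x^*)]$. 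Chaining this with the averaged bound from the previous step produces exactly the claimed inequality.

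I expect essentially no obstacle in this lemma: the only care needed is the bookkeeping of which quantities are epoch-level constants ($a^{(s)}$, $q^{(s)}$, the checkpoint $u^{(s-1)}$) versus iteration-dependent ($\gamma_{t-1}^{(s)}$, $x_t^{(s)}$), so that the two residual sums — the "iterate-movement'' sum $\sum_t(\cdots)\Vert x_t^{(s)}-x_{t-1}^{(s)}\Vert^2$ and the weighted, telescoping-like sum $\sum_t \frac{\gamma_{t-1}^{(s)}q^{(s)}a^{(s)}}{2}(\Vert x_{t-1}^{(s)}-x^*\Vert^2-\Vert x_t^{(s)}-x^*\Vert^2)$ — are passed forward verbatim. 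These will be controlled later (respectively by the adaptive step-size analysis of $\gamma_t^{(s)}$ à la AdaGrad$^+$/Option~I--II, and by an Abel/weighted-telescoping argument across iterations and epochs together with the choices of $a^{(s)}$, $q^{(s)}$), but none of that enters the present lemma.
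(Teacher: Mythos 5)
Your proposal is correct and follows essentially the same argument as the paper: apply Lemma~\ref{lem:AdaVRAG-One-Step-Descent} to each iteration $t\in[T_s]$, average over $t$, and use the convexity of $F$ (Jensen) together with the definition $u^{(s)}=\frac{1}{T_s}\sum_{t=1}^{T_s}\avx_t^{(s)}$ to pass from the average of $F(\avx_t^{(s)})$ to $F(u^{(s)})$. The paper merely writes these two steps in the opposite order, which is logically equivalent.
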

\begin{proof}
We have
\begin{align*}
 & \E\left[F(u^{(s)})-F(x^{*})\right]\\
\overset{(a)}{\leq} & \E\left[\frac{1}{T_{s}}\sum_{t=1}^{T_{s}}\left(F(\avx_{t}^{(s)})-F(x^{*})\right)\right]\\
\overset{(b)}{\leq} & \E\left[\left(1-a^{(s)}\right)\left(F(u^{(s-1)})-F(x^{*})\right)\right]\\
 & +\E\left[\frac{1}{T_{s}}\sum_{t=1}^{T_{s}}\frac{\gamma_{t-1}^{(s)}q^{(s)}a^{(s)}}{2}\left(\left\Vert x_{t-1}^{(s)}-x^{*}\right\Vert ^{2}-\left\Vert x_{t}^{(s)}-x^{*}\right\Vert ^{2}\right)\right]\\
 & +\E\left[\frac{1}{T_{s}}\sum_{t=1}^{T_{s}}\left(\frac{\beta\left(2-a^{(s)}\right)\left(a^{(s)}\right)^{2}}{2\left(1-a^{(s)}\right)}-\frac{\gamma_{t-1}^{(s)}q^{(s)}a^{(s)}}{2}\right)\left\Vert x_{t}^{(s)}-x_{t-1}^{(s)}\right\Vert ^{2}\right],
\end{align*}
where $(a)$ is by the convexity of $F$ and the definition of $u^{(s)}=\frac{1}{T_{s}}\sum_{t=1}^{T_{s}}\avx_{t}^{(s)}$,
and $(b)$ is by Lemma \ref{lem:AdaVRAG-One-Step-Descent}.
\end{proof}

Lemma \ref{lem:AdaVRAE-Whole-Descent} is a quite general result without
any assumptions on any parameters. To ensure that we can make the
telescoping sum over the function value part, and also to simplify
the term besides the function value part, we need some specific conditions
on our parameters to be satisfied, which is stated in Lemma \ref{lem:AdaVRAG-Whole-Descent}.
With these extra conditions, we can finally find the following guarantee
for the function value gap of the final output $u^{(S)}$.
\begin{lem}
\label{lem:AdaVRAG-Whole-Descent}For all $S\geq1$, if the parameters
satisfy
\[
\frac{\left(2-a^{(s)}\right)a^{(s)}}{1-a^{(s)}}\leq q^{(s)},\forall s\in\left[S\right]
\]
and
\[
\frac{(1-a^{(s+1)})T_{s+1}}{q^{(s+1)}a^{(s+1)}}\leq\frac{T_{s}}{q^{(s)}a^{(s)}},\forall s\in\left[S-1\right].
\]
then we have
\begin{align*}
 & \E\left[\frac{T_{S}}{q^{(S)}a^{(S)}}(F(u^{(S)})-F(x^{*}))\right]\\
\leq & \frac{(1-a^{(1)})T_{1}}{q^{(1)}a^{(1)}}(F(u^{(0)})-F(x^{*}))\\
 & +\E\left[\sum_{s=1}^{S}\sum_{t=1}^{T_{s}}\frac{\gamma_{t-1}^{(s)}}{2}\left\Vert x_{t-1}^{(s)}-x^{*}\right\Vert ^{2}-\frac{\gamma_{t-1}^{(s)}}{2}\left\Vert x_{t}^{(s)}-x^{*}\right\Vert ^{2}+\frac{\beta-\gamma_{t-1}^{(s)}}{2}\left\Vert x_{t}^{(s)}-x_{t-1}^{(s)}\right\Vert ^{2}\right].
\end{align*}
\end{lem}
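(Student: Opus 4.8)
The plan is to rescale the single-epoch bound of Lemma~\ref{lem:AdaVRAG-One-Epoch-Descent} so that the function-value terms line up into a telescoping sum across epochs, and then discard the intermediate nonnegative terms. Concretely, for each $s\in[S]$ I would multiply both sides of Lemma~\ref{lem:AdaVRAG-One-Epoch-Descent} by $\tfrac{T_s}{q^{(s)}a^{(s)}}$. The $\tfrac{1}{T_s}$ in front of the two inner sums cancels, the weight $\tfrac{\gamma_{t-1}^{(s)}q^{(s)}a^{(s)}}{2}$ in the $\left\Vert x_{t-1}^{(s)}-x^*\right\Vert^2-\left\Vert x_t^{(s)}-x^*\right\Vert^2$ block collapses to $\tfrac{\gamma_{t-1}^{(s)}}{2}$, and the coefficient of $\left\Vert x_t^{(s)}-x_{t-1}^{(s)}\right\Vert^2$ becomes $\tfrac{\beta(2-a^{(s)})a^{(s)}}{2(1-a^{(s)})q^{(s)}}-\tfrac{\gamma_{t-1}^{(s)}}{2}$. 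Here I invoke the first hypothesis $\tfrac{(2-a^{(s)})a^{(s)}}{1-a^{(s)}}\le q^{(s)}$, which gives $\tfrac{(2-a^{(s)})a^{(s)}}{(1-a^{(s)})q^{(s)}}\le 1$, so this coefficient is at most $\tfrac{\beta-\gamma_{t-1}^{(s)}}{2}$; since $\left\Vert x_t^{(s)}-x_{t-1}^{(s)}\right\Vert^2\ge0$, replacing the coefficient by this larger scalar only weakens the inequality. The upshot is, for every $s$,
\begin{align*}
\frac{T_s}{q^{(s)}a^{(s)}}\E\!\left[F(u^{(s)})-F(x^*)\right] &\le \frac{(1-a^{(s)})T_s}{q^{(s)}a^{(s)}}\E\!\left[F(u^{(s-1)})-F(x^*)\right]\\
&\quad+\E\!\left[\sum_{t=1}^{T_s}\frac{\gamma_{t-1}^{(s)}}{2}\left\Vert x_{t-1}^{(s)}-x^*\right\Vert^2-\frac{\gamma_{t-1}^{(s)}}{2}\left\Vert x_t^{(s)}-x^*\right\Vert^2+\frac{\beta-\gamma_{t-1}^{(s)}}{2}\left\Vert x_t^{(s)}-x_{t-1}^{(s)}\right\Vert^2\right].
\end{align*}

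Next I would sum this over $s=1,\dots,S$. Writing $c_s:=\tfrac{T_s}{q^{(s)}a^{(s)}}$ and $d_s:=\tfrac{(1-a^{(s)})T_s}{q^{(s)}a^{(s)}}$, the function-value contribution is $\sum_{s=1}^S c_s\,\E[F(u^{(s)})-F(x^*)]-\sum_{s=1}^S d_s\,\E[F(u^{(s-1)})-F(x^*)]$; re-indexing the second sum by $s\mapsto s-1$ and collecting terms turns this into $c_S\,\E[F(u^{(S)})-F(x^*)] + \sum_{s=1}^{S-1}(c_s-d_{s+1})\,\E[F(u^{(s)})-F(x^*)] - d_1\bigl(F(u^{(0)})-F(x^*)\bigr)$, where the $s=0$ term needs no expectation since $u^{(0)}$ is deterministic. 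The second hypothesis $\tfrac{(1-a^{(s+1)})T_{s+1}}{q^{(s+1)}a^{(s+1)}}\le\tfrac{T_s}{q^{(s)}a^{(s)}}$ says precisely $c_s-d_{s+1}\ge0$, and each $\E[F(u^{(s)})-F(x^*)]\ge0$ since $x^*$ minimizes $F$ over $\dom$; hence the middle sum is nonnegative and may be dropped from the right-hand side. What remains is $c_S\,\E[F(u^{(S)})-F(x^*)]\le d_1\bigl(F(u^{(0)})-F(x^*)\bigr)+\text{(residual)}$, and merging the per-epoch residuals into the single double sum $\E\bigl[\sum_{s=1}^S\sum_{t=1}^{T_s}(\cdots)\bigr]$ gives exactly the claimed inequality.

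This is essentially a bookkeeping argument, so there is no deep obstacle; the only places that need care are the sign handling. I would be explicit that the movement-coefficient bound goes through only because $\left\Vert x_t^{(s)}-x_{t-1}^{(s)}\right\Vert^2$ is nonnegative (so an upper bound on the scalar coefficient yields an upper bound on the product, even when $\beta-\gamma_{t-1}^{(s)}<0$), and that discarding the intermediate $\E[F(u^{(s)})-F(x^*)]$ terms uses both the second hypothesis (for $c_s-d_{s+1}\ge0$) and the optimality of $x^*$ (for $F(u^{(s)})\ge F(x^*)$). Note that I deliberately do not attempt to telescope the residual within an epoch, since the weights $\gamma_{t-1}^{(s)}$ vary with $t$; that term is left as a double sum and is bounded later.
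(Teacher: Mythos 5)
Your proposal matches the paper's proof: multiply the per-epoch bound of Lemma~\ref{lem:AdaVRAG-One-Epoch-Descent} by $\tfrac{T_s}{q^{(s)}a^{(s)}}$, use the first hypothesis to bound the movement-term coefficient by $\tfrac{\beta-\gamma_{t-1}^{(s)}}{2}$, then telescope across epochs using the second hypothesis together with $F(u^{(s)})\ge F(x^*)$ to discard the intermediate terms. The only difference is that you spell out the re-indexing and sign-handling in the telescoping step more explicitly than the paper does.
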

\begin{proof}
If $\frac{\left(2-a^{(s)}\right)a^{(s)}}{1-a^{(s)}}\leq q^{(s)}$
for any $s\in\left[S\right]$, by using Lemma \ref{lem:AdaVRAG-One-Epoch-Descent},
we know
\begin{align*}
 & \E\left[F(u^{(s)})-F(x^{*})\right]\\
\leq & \E\left[\left(1-a^{(s)}\right)\left(F(u^{(s-1)})-F(x^{*})\right)\right]\\
 & +\E\left[\frac{1}{T_{s}}\sum_{t=1}^{T_{s}}\frac{\gamma_{t-1}^{(s)}q^{(s)}a^{(s)}}{2}\left(\left\Vert x_{t-1}^{(s)}-x^{*}\right\Vert ^{2}-\left\Vert x_{t}^{(s)}-x^{*}\right\Vert ^{2}\right)\right]\\
 & +\E\left[\frac{1}{T_{s}}\sum_{t=1}^{T_{s}}\left(\frac{\beta\left(2-a^{(s)}\right)\left(a^{(s)}\right)^{2}}{2\left(1-a^{(s)}\right)}-\frac{\gamma_{t-1}^{(s)}q^{(s)}a^{(s)}}{2}\right)\left\Vert x_{t}^{(s)}-x_{t-1}^{(s)}\right\Vert ^{2}\right]\\
\leq & \E\left[(1-a^{(s)})(F(u^{(s-1)})-F(x^{*}))\right]\\
 & +\E\left[\frac{q^{(s)}a^{(s)}}{T_{s}}\left(\sum_{t=1}^{T_{s}}\frac{\gamma_{t-1}^{(s)}}{2}\left\Vert x_{t-1}^{(s)}-x^{*}\right\Vert ^{2}-\frac{\gamma_{t-1}^{(s)}}{2}\left\Vert x_{t}^{(s)}-x^{*}\right\Vert ^{2}+\frac{\beta-\gamma_{t-1}^{(s)}}{2}\left\Vert x_{t}^{(s)}-x_{t-1}^{(s)}\right\Vert ^{2}\right)\right]
\end{align*}
Now multiply both sides by $\frac{T_{s}}{q^{(s)}a^{(s)}}$, we have
\begin{align*}
 & \E\left[\frac{T_{s}}{q^{(s)}a^{(s)}}(F(u^{(s)})-F(x^{*}))\right]\\
\leq & \E\left[\frac{(1-a^{(s)})T_{s}}{q^{(s)}a^{(s)}}(F(u^{(s-1)})-F(x^{*}))\right]\\
 & +\E\left[\sum_{t=1}^{T_{s}}\frac{\gamma_{t-1}^{(s)}}{2}\left\Vert x_{t-1}^{(s)}-x^{*}\right\Vert ^{2}-\frac{\gamma_{t-1}^{(s)}}{2}\left\Vert x_{t}^{(s)}-x^{*}\right\Vert ^{2}+\frac{\beta-\gamma_{t-1}^{(s)}}{2}\left\Vert x_{t}^{(s)}-x_{t-1}^{(s)}\right\Vert ^{2}\right].
\end{align*}
If $\frac{(1-a^{(s+1)})T_{s+1}}{q^{(s+1)}a^{(s+1)}}\leq\frac{T_{s}}{q^{(s)}a^{(s)}}$
is satisfied for any $s\in\left[S-1\right]$, we can make the telescoping
sum from $s=1$ to $S$ to get
\begin{align*}
 & \E\left[\frac{T_{S}}{q^{(S)}a^{(S)}}(F(u^{(S)})-F(x^{*}))\right]\\
\leq & \frac{(1-a^{(1)})T_{1}}{q^{(1)}a^{(1)}}(F(u^{(0)})-F(x^{*}))\\
 & +\E\left[\sum_{s=1}^{S}\sum_{t=1}^{T_{s}}\frac{\gamma_{t-1}^{(s)}}{2}\left\Vert x_{t-1}^{(s)}-x^{*}\right\Vert ^{2}-\frac{\gamma_{t-1}^{(s)}}{2}\left\Vert x_{t}^{(s)}-x^{*}\right\Vert ^{2}+\frac{\beta-\gamma_{t-1}^{(s)}}{2}\left\Vert x_{t}^{(s)}-x_{t-1}^{(s)}\right\Vert ^{2}\right].
\end{align*}
\end{proof}

\subsection{Bound for the residual term}

By the analysis in the previous subsection, we get an upper bound
for the function value gap of $u^{(S)}$ involving $F(u^{(0)})-F(x^{*})$
and
\begin{equation}
\E\left[\sum_{s=1}^{S}\sum_{t=1}^{T_{s}}\frac{\gamma_{t-1}^{(s)}}{2}\left\Vert x_{t-1}^{(s)}-x^{*}\right\Vert ^{2}-\frac{\gamma_{t-1}^{(s)}}{2}\left\Vert x_{t}^{(s)}-x^{*}\right\Vert ^{2}+\frac{\beta-\gamma_{t-1}^{(s)}}{2}\left\Vert x_{t}^{(s)}-x_{t-1}^{(s)}\right\Vert ^{2}\right].\label{eq:residual}
\end{equation}
In this subsection we will show how to bound \ref{eq:residual} under
the compact assumption of $\dom$. Before giving the detailed analysis
of the two different update options, we first state the following
lemma to simplify \ref{eq:residual}.
\begin{lem}
\label{lem:AdaVRAG-Simplify-Reminder}If $\gamma_{t}^{(s)}\geq\gamma_{t-1}^{(s)}$
for any $s\in\left[S\right]$, $t\in\left[T_{s}\right]$ and $\dom$
is a compact convex set with diameter $D$, then we have
\begin{align*}
 & \E\left[\sum_{s=1}^{S}\sum_{t=1}^{T_{s}}\frac{\gamma_{t-1}^{(s)}}{2}\left\Vert x_{t-1}^{(s)}-x^{*}\right\Vert ^{2}-\frac{\gamma_{t-1}^{(s)}}{2}\left\Vert x_{t}^{(s)}-x^{*}\right\Vert ^{2}+\frac{\beta-\gamma_{t-1}^{(s)}}{2}\left\Vert x_{t}^{(s)}-x_{t-1}^{(s)}\right\Vert ^{2}\right]\\
\leq & \frac{\gamma}{2}\left\Vert u^{(0)}-x^{*}\right\Vert ^{2}+\E\left[\sum_{s=1}^{S}\sum_{t=1}^{T_{s}}\frac{\gamma_{t}^{(s)}-\gamma_{t-1}^{(s)}}{2}D^{2}+\frac{\beta-\gamma_{t-1}^{(s)}}{2}\left\Vert x_{t}^{(s)}-x_{t-1}^{(s)}\right\Vert ^{2}\right].
\end{align*}
\end{lem}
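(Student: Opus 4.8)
The plan is to split each summand so that the $x^{*}$-distance terms become a true telescoping sum over all iterations of all epochs, and to leave the term $\frac{\beta-\gamma_{t-1}^{(s)}}{2}\|x_{t}^{(s)}-x_{t-1}^{(s)}\|^{2}$ untouched, since it appears identically on both sides of the claimed inequality. Concretely, I would write, for each $s$ and $t$,
\[
\frac{\gamma_{t-1}^{(s)}}{2}\left\Vert x_{t-1}^{(s)}-x^{*}\right\Vert ^{2}-\frac{\gamma_{t-1}^{(s)}}{2}\left\Vert x_{t}^{(s)}-x^{*}\right\Vert ^{2}=\frac{\gamma_{t-1}^{(s)}}{2}\left\Vert x_{t-1}^{(s)}-x^{*}\right\Vert ^{2}-\frac{\gamma_{t}^{(s)}}{2}\left\Vert x_{t}^{(s)}-x^{*}\right\Vert ^{2}+\frac{\gamma_{t}^{(s)}-\gamma_{t-1}^{(s)}}{2}\left\Vert x_{t}^{(s)}-x^{*}\right\Vert ^{2},
\]
so that it remains to control $\sum_{s=1}^{S}\sum_{t=1}^{T_{s}}\big(\frac{\gamma_{t-1}^{(s)}}{2}\|x_{t-1}^{(s)}-x^{*}\|^{2}-\frac{\gamma_{t}^{(s)}}{2}\|x_{t}^{(s)}-x^{*}\|^{2}\big)$ plus the residual $\sum_{s,t}\frac{\gamma_{t}^{(s)}-\gamma_{t-1}^{(s)}}{2}\|x_{t}^{(s)}-x^{*}\|^{2}$.

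For the first part I would use the hand-off conditions built into Algorithm \ref{alg:AdaVRAG}, namely $x_{0}^{(s+1)}=x_{T_{s}}^{(s)}$ and $\gamma_{0}^{(s+1)}=\gamma_{T_{s}}^{(s)}$, which let us regard $(\gamma_{t}^{(s)})$ and $(x_{t}^{(s)})$ as single sequences indexed across all epochs, exactly as in the proof of Lemma \ref{lem:AdaVRAE-Reminder-Bound-II}. The sum then telescopes to $\frac{\gamma_{0}^{(1)}}{2}\|x_{0}^{(1)}-x^{*}\|^{2}-\frac{\gamma_{T_{S}}^{(S)}}{2}\|x_{T_{S}}^{(S)}-x^{*}\|^{2}$. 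Using the initialization $\gamma_{0}^{(1)}=\gamma$ and $x_{0}^{(1)}=u^{(0)}$, and discarding the trailing term, which is non-positive because iterating $\gamma_{t}^{(s)}\ge\gamma_{t-1}^{(s)}$ along the concatenated sequence gives $\gamma_{T_{S}}^{(S)}\ge\gamma_{0}^{(1)}=\gamma>0$, this part is at most $\frac{\gamma}{2}\|u^{(0)}-x^{*}\|^{2}$.

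For the residual I would invoke the two hypotheses directly: $\gamma_{t}^{(s)}\ge\gamma_{t-1}^{(s)}$ makes each coefficient non-negative, and compactness of $\dom$ with $x_{t}^{(s)},x^{*}\in\dom$ gives $\|x_{t}^{(s)}-x^{*}\|\le D$, so $\frac{\gamma_{t}^{(s)}-\gamma_{t-1}^{(s)}}{2}\|x_{t}^{(s)}-x^{*}\|^{2}\le\frac{\gamma_{t}^{(s)}-\gamma_{t-1}^{(s)}}{2}D^{2}$. Summing these bounds and adding back the untouched $\frac{\beta-\gamma_{t-1}^{(s)}}{2}\|x_{t}^{(s)}-x_{t-1}^{(s)}\|^{2}$ terms yields precisely the right-hand side; since all the steps are deterministic identities or monotone inequalities on the realized iterates, taking expectations preserves them. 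I do not expect a genuine obstacle here; the only subtlety is to confirm that the cross-epoch telescoping is legitimate, which relies entirely on the boundary conditions $\gamma_{0}^{(s+1)}=\gamma_{T_{s}}^{(s)}$, $x_{0}^{(s+1)}=x_{T_{s}}^{(s)}$ and on $\gamma$ being a positive constant so that the dropped boundary term is indeed non-positive.
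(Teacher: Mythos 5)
Your proposal is correct and is essentially identical to the paper's proof: the same add-and-subtract of $\frac{\gamma_t^{(s)}}{2}\|x_t^{(s)}-x^*\|^2$, the same bound $\|x_t^{(s)}-x^*\|\le D$ on the residual, the same telescoping via the hand-off conditions $x_0^{(s+1)}=x_{T_s}^{(s)}$ and $\gamma_0^{(s+1)}=\gamma_{T_s}^{(s)}$, and the same dropping of the non-positive trailing term before taking expectations. The only cosmetic difference is that the paper telescopes epoch-by-epoch before chaining across epochs, whereas you frame the sequences as concatenated from the outset.
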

\begin{proof}
It follows that
\begin{align*}
 & \sum_{s=1}^{S}\sum_{t=1}^{T_{s}}\frac{\gamma_{t-1}^{(s)}}{2}\left\Vert x_{t-1}^{(s)}-x^{*}\right\Vert ^{2}-\frac{\gamma_{t-1}^{(s)}}{2}\left\Vert x_{t}^{(s)}-x^{*}\right\Vert ^{2}+\frac{\beta-\gamma_{t-1}^{(s)}}{2}\left\Vert x_{t}^{(s)}-x_{t-1}^{(s)}\right\Vert ^{2}\\
= & \sum_{s=1}^{S}\sum_{t=1}^{T_{s}}\frac{\gamma_{t-1}^{(s)}}{2}\left\Vert x_{t-1}^{(s)}-x^{*}\right\Vert ^{2}-\frac{\gamma_{t}^{(s)}}{2}\left\Vert x_{t}^{(s)}-x^{*}\right\Vert ^{2}+\frac{\gamma_{t}^{(s)}-\gamma_{t-1}^{(s)}}{2}\left\Vert x_{t}^{(s)}-x^{*}\right\Vert ^{2}+\frac{\beta-\gamma_{t-1}^{(s)}}{2}\left\Vert x_{t}^{(s)}-x_{t-1}^{(s)}\right\Vert ^{2}\\
\overset{(a)}{\leq} & \sum_{s=1}^{S}\sum_{t=1}^{T_{s}}\frac{\gamma_{t-1}^{(s)}}{2}\left\Vert x_{t-1}^{(s)}-x^{*}\right\Vert ^{2}-\frac{\gamma_{t}^{(s)}}{2}\left\Vert x_{t}^{(s)}-x^{*}\right\Vert ^{2}+\frac{\gamma_{t}^{(s)}-\gamma_{t-1}^{(s)}}{2}D^{2}+\frac{\beta-\gamma_{t-1}^{(s)}}{2}\left\Vert x_{t}^{(s)}-x_{t-1}^{(s)}\right\Vert ^{2}\\
= & \sum_{s=1}^{S}\left(\frac{\gamma_{0}^{(s)}}{2}\left\Vert x_{0}^{(s)}-x^{*}\right\Vert ^{2}-\frac{\gamma_{T_{s}}^{(s)}}{2}\left\Vert x_{T_{s}}^{(s)}-x^{*}\right\Vert ^{2}+\sum_{t=1}^{T_{s}}\frac{\gamma_{t}^{(s)}-\gamma_{t-1}^{(s)}}{2}D^{2}+\frac{\beta-\gamma_{t-1}^{(s)}}{2}\left\Vert x_{t}^{(s)}-x_{t-1}^{(s)}\right\Vert ^{2}\right)\\
\overset{(b)}{=} & \sum_{s=1}^{S}\left(\frac{\gamma_{0}^{(s)}}{2}\left\Vert x_{0}^{(s)}-x^{*}\right\Vert ^{2}-\frac{\gamma_{0}^{(s+1)}}{2}\left\Vert x_{0}^{(s+1)}-x^{*}\right\Vert ^{2}+\sum_{t=1}^{T_{s}}\frac{\gamma_{t}^{(s)}-\gamma_{t-1}^{(s)}}{2}D^{2}+\frac{\beta-\gamma_{t-1}^{(s)}}{2}\left\Vert x_{t}^{(s)}-x_{t-1}^{(s)}\right\Vert ^{2}\right)\\
= & \frac{\gamma_{0}^{(1)}}{2}\left\Vert x_{0}^{(1)}-x^{*}\right\Vert ^{2}-\frac{\gamma_{0}^{(S+1)}}{2}\left\Vert x_{0}^{(S+1)}-x^{*}\right\Vert ^{2}+\sum_{s=1}^{S}\sum_{t=1}^{T_{s}}\frac{\gamma_{t}^{(s)}-\gamma_{t-1}^{(s)}}{2}D^{2}+\frac{\beta-\gamma_{t-1}^{(s)}}{2}\left\Vert x_{t}^{(s)}-x_{t-1}^{(s)}\right\Vert ^{2}\\
\leq & \frac{\gamma_{0}^{(1)}}{2}\left\Vert x_{0}^{(1)}-x^{*}\right\Vert ^{2}+\sum_{s=1}^{S}\sum_{t=1}^{T_{s}}\frac{\gamma_{t}^{(s)}-\gamma_{t-1}^{(s)}}{2}D^{2}+\frac{\beta-\gamma_{t-1}^{(s)}}{2}\left\Vert x_{t}^{(s)}-x_{t-1}^{(s)}\right\Vert ^{2}\\
\overset{(c)}{=} & \frac{\gamma}{2}\left\Vert u^{(0)}-x^{*}\right\Vert ^{2}+\sum_{s=1}^{S}\sum_{t=1}^{T_{s}}\frac{\gamma_{t}^{(s)}-\gamma_{t-1}^{(s)}}{2}D^{2}+\frac{\beta-\gamma_{t-1}^{(s)}}{2}\left\Vert x_{t}^{(s)}-x_{t-1}^{(s)}\right\Vert ^{2},
\end{align*}
where $(a)$ is due to $\gamma_{t}^{(s)}\geq\gamma_{t-1}^{(s)}$ and
$\left\Vert x_{t}^{(s)}-x^{*}\right\Vert \leq D$, $(b)$ follows
from the definition of $x_{0}^{(s+1)}=x_{T_{s}}^{(s)}$ and $\gamma_{0}^{(s+1)}=\gamma_{T_{s}}^{(s)}$,
$(c)$ is by the definition of $x_{0}^{(1)}=u^{(0)}$ and $\gamma_{0}^{(1)}=\gamma$.
Now Taking expectations with both sides yields what we want.
\end{proof}

With the above result, we can show the bound of \ref{eq:residual}
under Option I and Option II respectively. There are two key common
parts in our analysis, the first one is to notice that we can reduce
the doubly indexed sequence $\left\{ x_{t}^{(s)}\right\} $ and $\left\{ \gamma_{t}^{(s)}\right\} $
into two singly indexed sequences, which are much easier to bound.
The second technique is to define a hitting time $\tau$ to upper
bound $\gamma_{t}^{(s)}$. Read our proof for the details.
\begin{lem}
\label{lem:AdaVRAG-Reminder-I}For $\text{Option I}$, if $\dom$
is a compact convex set with diameter $D$ and $2\eta^{2}>D^{2}$,
we have
\begin{align*}
 & \E\left[\sum_{s=1}^{S}\sum_{t=1}^{T_{s}}\frac{\gamma_{t-1}^{(s)}}{2}\left\Vert x_{t-1}^{(s)}-x^{*}\right\Vert ^{2}-\frac{\gamma_{t-1}^{(s)}}{2}\left\Vert x_{t}^{(s)}-x^{*}\right\Vert ^{2}+\frac{\beta-\gamma_{t-1}^{(s)}}{2}\left\Vert x_{t}^{(s)}-x_{t-1}^{(s)}\right\Vert ^{2}\right]\\
\leq & \frac{\gamma}{2}\left\Vert u^{(0)}-x^{*}\right\Vert ^{2}+\left[\frac{\beta}{2}-\left(\frac{1}{2}-\frac{D^{2}}{4\eta^{2}}\right)\gamma\right]^{+}\left(D^{2}+2(\eta^{2}+D^{2})\log\frac{\frac{2\eta^{2}\beta}{2\eta^{2}-D^{2}}}{\gamma}\right).
\end{align*}
\end{lem}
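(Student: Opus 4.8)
The plan is to follow the AdaGrad-type argument used in the proofs of Lemma~\ref{lem:AdaVRAE-Reminder-Bound-I}--\ref{lem:AdaVRAE-Reminder-Bound-II}, adapted to the multiplicative Option~I rule. First, since the Option~I update multiplies $\gamma_{t-1}^{(s)}$ by $\sqrt{1+\|x_t^{(s)}-x_{t-1}^{(s)}\|^2/\eta^2}\ge1$, we have $\gamma_t^{(s)}\ge\gamma_{t-1}^{(s)}$, so Lemma~\ref{lem:AdaVRAG-Simplify-Reminder} applies and it suffices to bound $\E\big[\sum_{s=1}^{S}\sum_{t=1}^{T_s}\frac{\gamma_t^{(s)}-\gamma_{t-1}^{(s)}}{2}D^2+\frac{\beta-\gamma_{t-1}^{(s)}}{2}\|x_t^{(s)}-x_{t-1}^{(s)}\|^2\big]$ by $[c_1]^+\big(D^2+2(\eta^2+D^2)\log\frac{\Gamma}{\gamma}\big)$, where I abbreviate $\Gamma:=\frac{2\eta^2\beta}{2\eta^2-D^2}$ and $c_1:=\frac{\beta}{2}-(\frac12-\frac{D^2}{4\eta^2})\gamma$. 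As in Lemma~\ref{lem:AdaVRAE-Reminder-Bound-II}, I concatenate the doubly indexed sequences into singly indexed ones $(\gamma_k)_{k\ge0}$, $(x_k)_{k\ge0}$ with $\gamma_0=\gamma$ and $x_0=u^{(0)}$ (legitimate because $\gamma_{T_s}^{(s)}=\gamma_0^{(s+1)}$ and $x_{T_s}^{(s)}=x_0^{(s+1)}$); then $\gamma_k^2=\gamma_{k-1}^2(1+\|x_k-x_{k-1}\|^2/\eta^2)$ for every $k$, so in particular $\|x_k-x_{k-1}\|^2=\eta^2\frac{\gamma_k^2-\gamma_{k-1}^2}{\gamma_{k-1}^2}$ and $\gamma_k^2\le(1+D^2/\eta^2)\gamma_{k-1}^2$. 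All remaining estimates are pathwise, so the hitting-time argument below is harmless inside the expectation.

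The crux is a per-step estimate. Using $\sqrt{1+\delta}-1\le\delta/2$ with $\delta=\|x_k-x_{k-1}\|^2/\eta^2$ gives $\gamma_k-\gamma_{k-1}=\gamma_{k-1}(\sqrt{1+\delta}-1)\le\frac{\gamma_{k-1}\|x_k-x_{k-1}\|^2}{2\eta^2}$, hence $\frac{D^2}{2}(\gamma_k-\gamma_{k-1})+\frac{\beta-\gamma_{k-1}}{2}\|x_k-x_{k-1}\|^2\le w_k\|x_k-x_{k-1}\|^2$, where $w_k:=\frac{\beta}{2}-(\frac12-\frac{D^2}{4\eta^2})\gamma_{k-1}$. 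Since $2\eta^2>D^2$, the coefficient of $\gamma_{k-1}$ in $w_k$ is positive, so $w_k$ is nonincreasing in $\gamma_{k-1}$; thus $w_k\le c_1$ (because $\gamma_{k-1}\ge\gamma_0=\gamma$), and, crucially, $w_k\le0$ exactly when $\gamma_{k-1}\ge\Gamma$ (by the definition of $\Gamma$). Note also that $c_1\le0\iff\gamma\ge\Gamma$.

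Now split into cases. If $c_1\le0$, i.e. $\gamma\ge\Gamma$, then $\gamma_{k-1}\ge\gamma_0=\gamma\ge\Gamma$ for all $k$, so every per-step term is $\le w_k\|x_k-x_{k-1}\|^2\le0$ and the residual sum is $\le0$, which matches the claimed bound since $[c_1]^+=0$. If $c_1>0$, i.e. $\gamma<\Gamma$, then $w_k\le c_1$ together with $c_1\ge0$ yields the per-step bound $c_1\|x_k-x_{k-1}\|^2$, while $w_k\le0$ for all $k$ with $\gamma_{k-1}\ge\Gamma$. Since $(\gamma_k)$ is nondecreasing, $\{k\ge1:\gamma_{k-1}<\Gamma\}=\{1,\dots,\tau\}$ for some $\tau\ge1$ ($\tau\ge1$ because $\gamma_0=\gamma<\Gamma$), so the residual sum is at most $c_1\sum_{k=1}^{\tau}\|x_k-x_{k-1}\|^2$. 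Bound the $\tau$-th term crudely by $\|x_\tau-x_{\tau-1}\|^2\le D^2$; for $k\le\tau-1$ both $\gamma_{k-1},\gamma_k<\Gamma$, and $\|x_k-x_{k-1}\|^2=\eta^2\frac{\gamma_k^2-\gamma_{k-1}^2}{\gamma_{k-1}^2}\le(\eta^2+D^2)\frac{\gamma_k^2-\gamma_{k-1}^2}{\gamma_k^2}\le(\eta^2+D^2)\log\frac{\gamma_k^2}{\gamma_{k-1}^2}$, using $\gamma_k^2\le(1+D^2/\eta^2)\gamma_{k-1}^2$ and $1-1/x\le\log x$ for $x\ge1$. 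Summing over $k\le\tau-1$ telescopes to $2(\eta^2+D^2)\log\frac{\gamma_{\tau-1}}{\gamma}\le2(\eta^2+D^2)\log\frac{\Gamma}{\gamma}$. Adding the carried-over term $\frac{\gamma}{2}\|u^{(0)}-x^*\|^2$ from Lemma~\ref{lem:AdaVRAG-Simplify-Reminder} gives exactly the claimed bound.

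The main obstacle is the per-step estimate: massaging $\frac{D^2}{2}(\gamma_k-\gamma_{k-1})+\frac{\beta-\gamma_{k-1}}{2}\|x_k-x_{k-1}\|^2$ into the clean form $w_k\|x_k-x_{k-1}\|^2$ with a coefficient that is monotone in $\gamma_{k-1}$ and whose sign flips precisely at $\Gamma$ --- this single fact simultaneously powers the ``drop the large-$\gamma$ steps'' reduction, the equivalence $c_1\le0\iff\gamma\ge\Gamma$, and the final constants. A secondary point requiring care is the bookkeeping in the telescoping: the isolated $D^2$ is the contribution of the $\tau$-th increment (where $\gamma_\tau$ may already exceed $\Gamma$), whereas the logarithm comes from the first $\tau-1$ increments, for which both endpoints stay below $\Gamma$. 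The hypothesis $2\eta^2>D^2$ is used both to make $\frac12-\frac{D^2}{4\eta^2}>0$ and to keep $\Gamma$ a finite positive threshold.
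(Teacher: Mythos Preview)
Your proposal is correct and follows essentially the same argument as the paper's proof: reduce via Lemma~\ref{lem:AdaVRAG-Simplify-Reminder}, re-index to a single sequence, derive the per-step bound $\big[\frac{\beta}{2}-(\frac{1}{2}-\frac{D^{2}}{4\eta^{2}})\gamma_{k-1}\big]\|x_k-x_{k-1}\|^2$, and then use the hitting-time $\tau$ together with $\|x_k-x_{k-1}\|^2\le(\eta^2+D^2)\log(\gamma_k^2/\gamma_{k-1}^2)$ to telescope. The only cosmetic difference is that you obtain $\gamma_k-\gamma_{k-1}\le\frac{\gamma_{k-1}}{2\eta^2}\|x_k-x_{k-1}\|^2$ via $\sqrt{1+\delta}-1\le\delta/2$, whereas the paper writes $\gamma_k-\gamma_{k-1}=\frac{\gamma_k^2-\gamma_{k-1}^2}{\gamma_k+\gamma_{k-1}}$ and uses $\gamma_k\ge\gamma_{k-1}$; these are the same inequality.
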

\begin{proof}
For Option I, by the definition of $\gamma_{t}^{(s)}$, we have
\[
\gamma_{t}^{(s)}\geq\gamma_{t-1}^{(s)},\forall s\in\left[S\right],t\in\left[T_{s}\right].
\]
By requiring that $\dom$ is a compact convex set with diameter $D$,
we can apply Lemma \ref{lem:AdaVRAG-Simplify-Reminder} and obtain
\begin{align}
 & \E\left[\sum_{s=1}^{S}\sum_{t=1}^{T_{s}}\frac{\gamma_{t-1}^{(s)}}{2}\left\Vert x_{t-1}^{(s)}-x^{*}\right\Vert ^{2}-\frac{\gamma_{t-1}^{(s)}}{2}\left\Vert x_{t}^{(s)}-x^{*}\right\Vert ^{2}+\frac{\beta-\gamma_{t-1}^{(s)}}{2}\left\Vert x_{t}^{(s)}-x_{t-1}^{(s)}\right\Vert ^{2}\right]\nonumber \\
\leq & \frac{\gamma}{2}\left\Vert u^{(0)}-x^{*}\right\Vert ^{2}+\E\left[\sum_{s=1}^{S}\sum_{t=1}^{T_{s}}\frac{\gamma_{t}^{(s)}-\gamma_{t-1}^{(s)}}{2}D^{2}+\frac{\beta-\gamma_{t-1}^{(s)}}{2}\left\Vert x_{t}^{(s)}-x_{t-1}^{(s)}\right\Vert ^{2}\right].\label{eq:varag-6}
\end{align}

Note that the last element $x_{T_{s}}^{(s)}$ (resp. $\gamma_{T_{s}}^{(s)}$)
in the $s$-th epoch is just the start element $x_{0}^{(s+1)}$ (resp.
$\gamma_{0}^{(s+1)}$) in the $(s+1)$-th epoch, which means we can
consider the doubly indexed sequences $\{x_{t}^{(s)}\}$ and $\{\gamma_{t}^{(s)}\}$
as two singly indexed sequences $\{x'_{t},t\geq0\}$ and $\{\gamma'_{t},t\geq0,\gamma'_{0}=\gamma\}$
with the reformulated update rule as follows
\[
\gamma'_{t}=\gamma'_{t-1}\sqrt{1+\frac{\left\Vert x'_{t}-x'_{t-1}\right\Vert ^{2}}{\eta^{2}}}.
\]
Besides, by defining $T'=\sum_{s=1}^{S}T_{s}$, we have
\[
\sum_{s=1}^{S}\sum_{t=1}^{T_{s}}\frac{\gamma_{t}^{(s)}-\gamma_{t-1}^{(s)}}{2}D^{2}+\frac{\beta-\gamma_{t-1}^{(s)}}{2}\left\Vert x_{t}^{(s)}-x_{t-1}^{(s)}\right\Vert ^{2}=\sum_{t=1}^{T'}\frac{\gamma'_{t}-\gamma'_{t-1}}{2}D^{2}+\frac{\beta-\gamma'_{t-1}}{2}\left\Vert x'_{t}-x'_{t-1}\right\Vert ^{2}.
\]

Note that we require $2\eta^{2}>D^{2}$, so if $\gamma\geq\frac{2\eta^{2}\beta}{2\eta^{2}-D^{2}}\Leftrightarrow\frac{\beta}{2}-\left(\frac{1}{2}-\frac{D^{2}}{4\eta^{2}}\right)\gamma\leq0\Rightarrow\frac{\beta}{2}-\left(\frac{1}{2}-\frac{D^{2}}{4\eta^{2}}\right)\gamma'_{t-1}\leq0$,
by using the reformulated update rule, we have
\begin{align*}
 & \sum_{t=1}^{T'}\frac{\gamma'_{t}-\gamma'_{t-1}}{2}D^{2}+\frac{\beta-\gamma'_{t-1}}{2}\left\Vert x'_{t}-x'_{t-1}\right\Vert ^{2}\\
= & \sum_{t=1}^{T'}\frac{(\gamma'_{t})^{2}-(\gamma'_{t-1})^{2}}{2(\gamma'_{t}+\gamma'_{t-1})}D^{2}+\frac{\beta-\gamma'_{t-1}}{2}\left\Vert x'_{t}-x'_{t-1}\right\Vert ^{2}\\
= & \sum_{t=1}^{T'}\frac{(\gamma'_{t-1})^{2}D^{2}}{2\eta^{2}(\gamma'_{t}+\gamma'_{t-1})}\left\Vert x'_{t}-x'_{t-1}\right\Vert ^{2}+\frac{\beta-\gamma'_{t-1}}{2}\left\Vert x'_{t}-x'_{t-1}\right\Vert ^{2}\\
\overset{(a)}{\leq} & \sum_{t=1}^{T'}\left(\frac{\gamma'_{t-1}}{4\eta^{2}}D^{2}+\frac{\beta-\gamma'_{t-1}}{2}\right)\left\Vert x'_{t}-x'_{t-1}\right\Vert ^{2}\\
= & \sum_{t=1}^{T'}\left[\frac{\beta}{2}-\left(\frac{1}{2}-\frac{D^{2}}{4\eta^{2}}\right)\gamma'_{t-1}\right]\left\Vert x'_{t}-x'_{t-1}\right\Vert ^{2}\\
\leq & 0,
\end{align*}
where $(a)$ is by $\gamma'_{t}\geq\gamma'_{t-1}$. Now we assume
$\gamma<\frac{2\eta^{2}\beta}{2\eta^{2}-D^{2}}$, define
\[
\tau=\max\left\{ t\in[T'],\gamma'_{t-1}<\frac{2\eta^{2}\beta}{2\eta^{2}-D^{2}}\right\} .
\]
By our assumption on $\gamma$, we know $\tau\geq1$, Combining the
reformulated update rule, we have
\begin{align*}
 & \sum_{t=1}^{T'}\frac{\gamma'_{t}-\gamma'_{t-1}}{2}D^{2}+\frac{\beta-\gamma'_{t-1}}{2}\left\Vert x'_{t}-x'_{t-1}\right\Vert ^{2}\\
\leq & \sum_{t=1}^{T'}\left[\frac{\beta}{2}-\left(\frac{1}{2}-\frac{D^{2}}{4\eta^{2}}\right)\gamma'_{t-1}\right]\left\Vert x'_{t}-x'_{t-1}\right\Vert ^{2}\\
\leq & \sum_{t=1}^{\tau}\left[\frac{\beta}{2}-\left(\frac{1}{2}-\frac{D^{2}}{4\eta^{2}}\right)\gamma'_{t-1}\right]\left\Vert x'_{t}-x'_{t-1}\right\Vert ^{2}\\
\overset{(b)}{\leq} & \left[\frac{\beta}{2}-\left(\frac{1}{2}-\frac{D^{2}}{4\eta^{2}}\right)\gamma\right]\sum_{t=1}^{\tau}\left\Vert x'_{t}-x'_{t-1}\right\Vert ^{2}\\
\overset{(c)}{\leq} & \left[\frac{\beta}{2}-\left(\frac{1}{2}-\frac{D^{2}}{4\eta^{2}}\right)\gamma\right]\left(D^{2}+\sum_{t=1}^{\tau-1}\left\Vert x'_{t}-x'_{t-1}\right\Vert ^{2}\right)\\
\overset{(d)}{=} & \left[\frac{\beta}{2}-\left(\frac{1}{2}-\frac{D^{2}}{4\eta^{2}}\right)\gamma\right]\left(D^{2}+\sum_{t=1}^{\tau-1}\eta^{2}\frac{(\gamma'_{t})^{2}-(\gamma'_{t-1})^{2}}{(\gamma'_{t-1})^{2}}\right)\\
\overset{(e)}{\leq} & \left[\frac{\beta}{2}-\left(\frac{1}{2}-\frac{D^{2}}{4\eta^{2}}\right)\gamma\right]\left(D^{2}+\left(\eta^{2}+D^{2}\right)\sum_{t=1}^{\tau-1}\frac{(\gamma'_{t})^{2}-(\gamma'_{t-1})^{2}}{(\gamma'_{t})^{2}}\right)\\
\overset{(f)}{\leq} & \left[\frac{\beta}{2}-\left(\frac{1}{2}-\frac{D^{2}}{4\eta^{2}}\right)\gamma\right]\left(D^{2}+2\left(\eta^{2}+D^{2}\right)\sum_{t=1}^{\tau-1}\log\frac{\gamma'_{t}}{\gamma'_{t-1}}\right)\\
= & \left[\frac{\beta}{2}-\left(\frac{1}{2}-\frac{D^{2}}{4\eta^{2}}\right)\gamma\right]\left(D^{2}+2\left(\eta^{2}+D^{2}\right)\log\frac{\gamma'_{\tau-1}}{\gamma}\right)\\
\overset{(g)}{\leq} & \left[\frac{\beta}{2}-\left(\frac{1}{2}-\frac{D^{2}}{4\eta^{2}}\right)\gamma\right]\left(D^{2}+2\left(\eta^{2}+D^{2}\right)\log\frac{\frac{2\eta^{2}\beta}{2\eta^{2}-D^{2}}}{\gamma}\right),
\end{align*}
where $(b)$ is by $\gamma'_{t-1}\geq\gamma$, $(c)$ is by $\left\Vert x'_{\tau}-x'_{\tau-1}\right\Vert \leq D$,
$(d)$ is by the reformulated update rule, $(e)$ is due to 
\[
\gamma'_{t}=\gamma'_{t-1}\sqrt{1+\frac{\left\Vert x'_{t}-x'_{t-1}\right\Vert ^{2}}{\eta^{2}}}\leq\gamma'_{t-1}\sqrt{1+\frac{D^{2}}{\eta^{2}}},
\]
$(f)$ is by the inequality $1-\frac{1}{x^{2}}\leq\log x^{2}=2\log x$,
$(g)$ is by the definition of $\tau$.

Combining two cases of $\gamma$, we obtain the bound
\begin{align}
\sum_{s=1}^{S}\sum_{t=1}^{T_{s}}\frac{\gamma_{t}^{(s)}-\gamma_{t-1}^{(s)}}{2}D^{2}+\frac{\beta-\gamma_{t-1}^{(s)}}{2}\left\Vert x_{t}^{(s)}-x_{t-1}^{(s)}\right\Vert ^{2} & =\sum_{t=1}^{T'}\frac{\gamma'_{t}-\gamma'_{t-1}}{2}D^{2}+\frac{\beta-\gamma'_{t-1}}{2}\left\Vert x'_{t}-x'_{t-1}\right\Vert ^{2}\nonumber \\
 & \leq\left[\frac{\beta}{2}-\left(\frac{1}{2}-\frac{D^{2}}{4\eta^{2}}\right)\gamma\right]^{+}\left(D^{2}+2\left(\eta^{2}+D^{2}\right)\log\frac{\frac{2\eta^{2}\beta}{2\eta^{2}-D^{2}}}{\gamma}\right).\label{eq:varag-7}
\end{align}
By plugging in (\ref{eq:varag-7}) into (\ref{eq:varag-6}), we have
\begin{align*}
 & \E\left[\sum_{s=1}^{S}\sum_{t=1}^{T_{s}}\frac{\gamma_{t-1}^{(s)}}{2}\left\Vert x_{t-1}^{(s)}-x^{*}\right\Vert ^{2}-\frac{\gamma_{t-1}^{(s)}}{2}\left\Vert x_{t}^{(s)}-x^{*}\right\Vert ^{2}+\frac{\beta-\gamma_{t-1}^{(s)}}{2}\left\Vert x_{t}^{(s)}-x_{t-1}^{(s)}\right\Vert ^{2}\right]\\
\leq & \frac{\gamma}{2}\left\Vert u^{(0)}-x^{*}\right\Vert ^{2}+\left[\frac{\beta}{2}-\left(\frac{1}{2}-\frac{D^{2}}{4\eta^{2}}\right)\gamma\right]^{+}\left(D^{2}+2\left(\eta^{2}+D^{2}\right)\log\frac{\frac{2\eta^{2}\beta}{2\eta^{2}-D^{2}}}{\gamma}\right).
\end{align*}
\end{proof}

\begin{lem}
\label{lem:AdaVRAG-Reminder-II}For $\text{Option II}$, if $\dom$
is a compact set with diameter $D$, we have
\begin{align*}
 & \E\left[\sum_{s=1}^{S}\sum_{t=1}^{T_{s}}\frac{\gamma_{t-1}^{(s)}}{2}\left\Vert x_{t-1}^{(s)}-x^{*}\right\Vert ^{2}-\frac{\gamma_{t-1}^{(s)}}{2}\left\Vert x_{t}^{(s)}-x^{*}\right\Vert ^{2}+\frac{\beta-\gamma_{t-1}^{(s)}}{2}\left\Vert x_{t}^{(s)}-x_{t-1}^{(s)}\right\Vert ^{2}\right]\\
\leq & \frac{\gamma}{2}\left\Vert u^{(0)}-x^{*}\right\Vert ^{2}+\frac{\eta^{2}}{2}\left(\frac{D^{2}}{\eta^{2}}+\beta-\gamma\right)^{+}\left(\frac{2D^{2}}{\eta^{2}}+\beta-\gamma\right).
\end{align*}
\end{lem}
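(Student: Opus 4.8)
The plan is to mirror the structure of the proof of Lemma~\ref{lem:AdaVRAG-Reminder-I}, but to exploit that under Option~II the increment of the step size is \emph{exactly} $\gamma_t^{(s)}-\gamma_{t-1}^{(s)}=\|x_t^{(s)}-x_{t-1}^{(s)}\|^2/\eta^2$; this additive relation makes every sum telescope cleanly and removes the square-root/logarithm estimates needed in Option~I. First I would note that Option~II trivially yields $\gamma_t^{(s)}\ge\gamma_{t-1}^{(s)}$, so Lemma~\ref{lem:AdaVRAG-Simplify-Reminder} applies and reduces the claim to bounding $\E\bigl[\sum_{s=1}^S\sum_{t=1}^{T_s}\frac{\gamma_t^{(s)}-\gamma_{t-1}^{(s)}}{2}D^2+\frac{\beta-\gamma_{t-1}^{(s)}}{2}\|x_t^{(s)}-x_{t-1}^{(s)}\|^2\bigr]$ by $\frac{\eta^2}{2}\bigl(\frac{D^2}{\eta^2}+\beta-\gamma\bigr)^+\bigl(\frac{2D^2}{\eta^2}+\beta-\gamma\bigr)$, on top of the $\frac{\gamma}{2}\|u^{(0)}-x^*\|^2$ term that the lemma already produces. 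As in the Option~I proof I would then flatten the doubly indexed families $\{x_t^{(s)}\}$ and $\{\gamma_t^{(s)}\}$ into single sequences $\{x'_t\}_{t\ge0}$, $\{\gamma'_t\}_{t\ge0}$ with $\gamma'_0=\gamma$ and reformulated update $\gamma'_t=\gamma'_{t-1}+\|x'_t-x'_{t-1}\|^2/\eta^2$, so that with $T'=\sum_s T_s$ the sum to be bounded equals $\sum_{t=1}^{T'}\frac{\gamma'_t-\gamma'_{t-1}}{2}D^2+\frac{\beta-\gamma'_{t-1}}{2}\|x'_t-x'_{t-1}\|^2$.

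The key algebraic step is to substitute $\gamma'_t-\gamma'_{t-1}=\|x'_t-x'_{t-1}\|^2/\eta^2$ into the first term, which collapses the whole expression to $\sum_{t=1}^{T'}\frac12\bigl(\frac{D^2}{\eta^2}+\beta-\gamma'_{t-1}\bigr)\|x'_t-x'_{t-1}\|^2$. Now I would split on the sign of $\frac{D^2}{\eta^2}+\beta-\gamma$. If $\gamma\ge\frac{D^2}{\eta^2}+\beta$, then $\gamma'_{t-1}\ge\gamma$ makes every coefficient $\le0$, the sum is $\le0$, and the positive part on the right-hand side vanishes, so the claim holds. Otherwise I would define the hitting time $\tau=\max\{t\in[T']:\gamma'_{t-1}<\frac{D^2}{\eta^2}+\beta\}\ge1$; for $t>\tau$ the coefficients are nonpositive and may be discarded, while for $t\le\tau$ the bound $\gamma'_{t-1}\ge\gamma$ gives a coefficient at most $\frac12\bigl(\frac{D^2}{\eta^2}+\beta-\gamma\bigr)$, leaving $\frac12\bigl(\frac{D^2}{\eta^2}+\beta-\gamma\bigr)\sum_{t=1}^{\tau}\|x'_t-x'_{t-1}\|^2$ to control.

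Finally, to bound $\sum_{t=1}^{\tau}\|x'_t-x'_{t-1}\|^2$, I would keep the last term via the diameter bound $\|x'_\tau-x'_{\tau-1}\|^2\le D^2$ and telescope the rest using $\|x'_t-x'_{t-1}\|^2=\eta^2(\gamma'_t-\gamma'_{t-1})$, so $\sum_{t=1}^{\tau-1}\|x'_t-x'_{t-1}\|^2=\eta^2(\gamma'_{\tau-1}-\gamma)$; since $\gamma'_{\tau-1}<\frac{D^2}{\eta^2}+\beta$ by definition of $\tau$, this yields $\sum_{t=1}^{\tau}\|x'_t-x'_{t-1}\|^2\le D^2+\eta^2\bigl(\frac{D^2}{\eta^2}+\beta-\gamma\bigr)=\eta^2\bigl(\frac{2D^2}{\eta^2}+\beta-\gamma\bigr)$. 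Multiplying by $\frac12\bigl(\frac{D^2}{\eta^2}+\beta-\gamma\bigr)$ produces exactly $\frac{\eta^2}{2}\bigl(\frac{D^2}{\eta^2}+\beta-\gamma\bigr)\bigl(\frac{2D^2}{\eta^2}+\beta-\gamma\bigr)$; recombining the two cases under the positive part and inserting this into the Lemma~\ref{lem:AdaVRAG-Simplify-Reminder} estimate finishes the proof. I do not anticipate a genuine obstacle here---the additive update makes everything telescope---so the only care needed is in matching the positive-part form in the statement and checking that the second factor $\frac{2D^2}{\eta^2}+\beta-\gamma$ stays positive precisely in the regime $\gamma<\frac{D^2}{\eta^2}+\beta$ where it is used.
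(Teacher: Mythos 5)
Your proposal is correct and follows essentially the same route as the paper: apply Lemma~\ref{lem:AdaVRAG-Simplify-Reminder}, flatten to singly indexed sequences, substitute the additive update to collapse the summand, split on the sign of $\frac{D^2}{\eta^2}+\beta-\gamma$, and use the hitting time $\tau$ with a telescoping argument plus the diameter bound on the last step. The only cosmetic difference is that you peel off the $t=\tau$ term before telescoping while the paper telescopes all $\tau$ terms and then expands $\gamma'_\tau$ from the update rule; these are algebraically identical.
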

\begin{proof}
For Option II, by the definition of $\gamma_{t}^{(s)}$, we have
\[
\gamma_{t}^{(s)}\geq\gamma_{t-1}^{(s)},\forall s\in\left[S\right],t\in\left[T_{s}\right].
\]
By requiring that $\dom$ is a compact convex set with diameter $D$,
we can apply Lemma \ref{lem:AdaVRAG-Simplify-Reminder} and obtain
\begin{align}
 & \E\left[\sum_{s=1}^{S}\sum_{t=1}^{T_{s}}\frac{\gamma_{t-1}^{(s)}}{2}\left\Vert x_{t-1}^{(s)}-x^{*}\right\Vert ^{2}-\frac{\gamma_{t-1}^{(s)}}{2}\left\Vert x_{t}^{(s)}-x^{*}\right\Vert ^{2}+\frac{\beta-\gamma_{t-1}^{(s)}}{2}\left\Vert x_{t}^{(s)}-x_{t-1}^{(s)}\right\Vert ^{2}\right]\nonumber \\
\leq & \frac{\gamma}{2}\left\Vert u^{(0)}-x^{*}\right\Vert ^{2}+\E\left[\sum_{s=1}^{S}\sum_{t=1}^{T_{s}}\frac{\gamma_{t}^{(s)}-\gamma_{t-1}^{(s)}}{2}D^{2}+\frac{\beta-\gamma_{t-1}^{(s)}}{2}\left\Vert x_{t}^{(s)}-x_{t-1}^{(s)}\right\Vert ^{2}\right].\label{eq:varag-4}
\end{align}

Note that the last element $x_{T_{s}}^{(s)}$ (resp. $\gamma_{T_{s}}^{(s)}$)
in the $s$-th epoch is just the starting element $x_{0}^{(s+1)}$
(resp. $\gamma_{0}^{(s+1)}$) in the $(s+1)$-th epoch, which means
we can consider the doubly indexed sequences $\{x_{t}^{(s)}\}$ and
$\{\gamma_{t}^{(s)}\}$ as two singly indexed sequences $\{x'_{t},t\geq0\}$
and $\{\gamma'_{t},t\geq0,\gamma'_{0}=\gamma\}$ with the reformulated
update rule as follows
\[
\gamma'_{t}=\gamma'_{t-1}+\frac{\left\Vert x'_{t}-x'_{t-1}\right\Vert ^{2}}{\eta^{2}}.
\]
Besides, by defining $T'=\sum_{s=1}^{S}T_{s}$, we have
\[
\sum_{s=1}^{S}\sum_{t=1}^{T_{s}}\frac{\gamma_{t}^{(s)}-\gamma_{t-1}^{(s)}}{2}D^{2}+\frac{\beta-\gamma_{t-1}^{(s)}}{2}\left\Vert x_{t}^{(s)}-x_{t-1}^{(s)}\right\Vert ^{2}=\sum_{t=1}^{T'}\frac{\gamma'_{t}-\gamma'_{t-1}}{2}D^{2}+\frac{\beta-\gamma'_{t-1}}{2}\left\Vert x'_{t}-x'_{t-1}\right\Vert ^{2}.
\]

If $\gamma\geq\frac{D^{2}}{\eta^{2}}+\beta\Leftrightarrow\frac{D^{2}}{2\eta^{2}}+\frac{\beta-\gamma}{2}\leq0\Rightarrow\frac{D^{2}}{2\eta^{2}}+\frac{\beta-\gamma'_{t-1}}{2}\leq0$,
by using the reformulated update rule, we have
\begin{align*}
\sum_{t=1}^{T'}\frac{\gamma'_{t}-\gamma'_{t-1}}{2}D^{2}+\frac{\beta-\gamma'_{t-1}}{2}\left\Vert x'_{t}-x'_{t-1}\right\Vert ^{2} & =\sum_{t=1}^{T'}\left(\frac{D^{2}}{2\eta^{2}}+\frac{\beta-\gamma'_{t-1}}{2}\right)\left\Vert x'_{t}-x'_{t-1}\right\Vert ^{2}\\
 & \leq0.
\end{align*}
Now we assume $\gamma<\frac{D^{2}}{\eta^{2}}+\beta$. Define
\[
\tau=\max\left\{ t\in[T'],\gamma'_{t-1}<\frac{D^{2}}{\eta^{2}}+\beta\right\} .
\]
By our assumption on $\gamma$, we know $\tau\geq1$. Combining the
reformulated update rule, we have
\begin{align*}
 & \sum_{t=1}^{T'}\frac{\gamma'_{t}-\gamma'_{t-1}}{2}D^{2}+\frac{\beta-\gamma'_{t-1}}{2}\left\Vert x'_{t}-x'_{t-1}\right\Vert ^{2}\\
= & \sum_{t=1}^{T'}\left(\frac{D^{2}}{2\eta^{2}}+\frac{\beta-\gamma'_{t-1}}{2}\right)\left\Vert x'_{t}-x'_{t-1}\right\Vert ^{2}\\
\leq & \sum_{t=1}^{\tau}\left(\frac{D^{2}}{2\eta^{2}}+\frac{\beta-\gamma'_{t-1}}{2}\right)\left\Vert x'_{t}-x'_{t-1}\right\Vert ^{2}\\
\overset{(a)}{\leq} & \sum_{t=1}^{\tau}\left(\frac{D^{2}}{2\eta^{2}}+\frac{\beta-\gamma}{2}\right)\left\Vert x'_{t}-x'_{t-1}\right\Vert ^{2}\\
\overset{(b)}{=} & \sum_{t=1}^{\tau}\left(\frac{D^{2}}{2\eta^{2}}+\frac{\beta-\gamma}{2}\right)\eta^{2}\left(\gamma'_{t}-\gamma'_{t-1}\right)\\
= & \left(\frac{D^{2}}{2\eta^{2}}+\frac{\beta-\gamma}{2}\right)\eta^{2}\left(\gamma'_{\tau}-\gamma\right)\\
\overset{(c)}{=} & \left(\frac{D^{2}}{2\eta^{2}}+\frac{\beta-\gamma}{2}\right)\eta^{2}\left(\gamma'_{\tau-1}+\frac{\left\Vert x'_{\tau}-x'_{\tau-1}\right\Vert ^{2}}{\eta^{2}}-\gamma\right)\\
\overset{(d)}{\leq} & \left(\frac{D^{2}}{2\eta^{2}}+\frac{\beta-\gamma}{2}\right)\eta^{2}\left(2\frac{D^{2}}{\eta^{2}}+\beta-\gamma\right)\\
= & \frac{\eta^{2}}{2}\left(\frac{D^{2}}{\eta^{2}}+\beta-\gamma\right)\left(\frac{2D^{2}}{\eta^{2}}+\beta-\gamma\right),
\end{align*}
where $(a)$ is by the fact $\gamma'_{t-1}\geq\gamma$, $(b)$ and
$(c)$ are by the reformulated update rule, $(d)$ is by the definition
of $\tau$ and $\left\Vert x'_{\tau}-x'_{\tau-1}\right\Vert \leq D.$

Combining two cases of $\gamma$, we obtain the bound
\begin{align}
 & \sum_{s=1}^{S}\sum_{t=1}^{T_{s}}\frac{\gamma_{t}^{(s)}-\gamma_{t-1}^{(s)}}{2}D^{2}+\frac{\beta-\gamma_{t-1}^{(s)}}{2}\left\Vert x_{t}^{(s)}-x_{t-1}^{(s)}\right\Vert ^{2}\nonumber \\
= & \sum_{t=1}^{T'}\frac{\gamma'_{t}-\gamma'_{t-1}}{2}D^{2}+\frac{\beta-\gamma'_{t-1}}{2}\left\Vert x'_{t}-x'_{t-1}\right\Vert ^{2}\nonumber \\
\leq & \frac{\eta^{2}}{2}\left(\frac{D^{2}}{\eta^{2}}+\beta-\gamma\right)^{+}\left(\frac{2D^{2}}{\eta^{2}}+\beta-\gamma\right).\label{eq:varag-5}
\end{align}
By plugging in (\ref{eq:varag-5}) into (\ref{eq:varag-4}), we have
\begin{align*}
 & \E\left[\sum_{s=1}^{S}\sum_{t=1}^{T_{s}}\frac{\gamma_{t-1}^{(s)}}{2}\left\Vert x_{t-1}^{(s)}-x^{*}\right\Vert ^{2}-\frac{\gamma_{t-1}^{(s)}}{2}\left\Vert x_{t}^{(s)}-x^{*}\right\Vert ^{2}+\frac{\beta-\gamma_{t-1}^{(s)}}{2}\left\Vert x_{t}^{(s)}-x_{t-1}^{(s)}\right\Vert ^{2}\right]\\
\leq & \frac{\gamma}{2}\left\Vert u^{(0)}-x^{*}\right\Vert ^{2}+\frac{\eta^{2}}{2}\left(\frac{D^{2}}{\eta^{2}}+\beta-\gamma\right)^{+}\left(\frac{2D^{2}}{\eta^{2}}+\beta-\gamma\right).
\end{align*}
\end{proof}

\subsection{Parameter bound}

Combining the previous two parts analysis on the function value gap
and the residual term, we already can see the bound for $F(u^{(S)})-F(x^{*})$.
However, we need to make sure that our choice stated in Theorem \ref{thm:AdaVRAG-Convergence-A}
indeed satisfies the conditions used in previous lemmas, besides,
we also need to give the bounds for our choice explicitly. The following
two lemmas can help us to do this.
\begin{lem}
\label{lem:AdaVRAG-Condition}Under the choice of parameters in Theorem
\ref{thm:AdaVRAG-Convergence-A}, $\forall s\geq1$, we have the following
facts
\begin{align*}
a^{(s_{0})} & \leq\frac{1}{2},\\
\frac{\left(2-a^{(s)}\right)a^{(s)}}{1-a^{(s)}} & \leq q^{(s)},\\
\frac{(1-a^{(s+1)})T_{s+1}}{q^{(s+1)}a^{(s+1)}} & \leq\frac{T_{s}}{q^{(s)}a^{(s)}}.
\end{align*}
\end{lem}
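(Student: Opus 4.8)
All three facts concern only the explicit parameter choices, so the plan is to substitute the formulas for $a^{(s)}$ and $q^{(s)}$ and reduce each to an elementary one‑variable inequality. The first fact is equivalent, after taking logarithms of $(4n)^{-0.5^{s_0}}\ge\tfrac12$, to $0.5^{s_0}\log_2(4n)\le 1$, i.e. to $\log_2(4n)\le 2^{s_0}$; this holds because $s_0=\lceil\log_2\log_2 4n\rceil\ge\log_2\log_2 4n$ and $t\mapsto 2^t$ is increasing, so $2^{s_0}\ge 2^{\log_2\log_2 4n}=\log_2 4n$. The same chain gives the estimate $(4n)^{-0.5^{s_0}}\ge\tfrac12$, which I will reuse below.

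For the second fact I would split on the two regimes. When $s_0<s$ one has $q^{(s)}=\tfrac83\cdot\frac{(2-a^{(s)})a^{(s)}}{1-a^{(s)}}$, and since $0<a^{(s)}<1$ the factor $\frac{(2-a^{(s)})a^{(s)}}{1-a^{(s)}}$ is positive, so the claim is just $1\le\tfrac83$. When $1\le s\le s_0$ one has $q^{(s)}a^{(s)}=\frac1{1-a^{(s)}}$, and multiplying the desired inequality by $1-a^{(s)}>0$ and then by $a^{(s)}>0$ turns it into $(2-a^{(s)})(a^{(s)})^2\le 1$; with $x=a^{(s)}\in(0,1)$ this is $x^3-2x^2+1\ge0$, which factors as $(x-1)(x^2-x-1)\ge0$ and holds because both factors are negative on $(0,1)$.

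The third fact is the real content. Since $T_s=T_{s+1}=n$, writing $\Gamma_s:=\frac{n}{q^{(s)}a^{(s)}}$ it reads $(1-a^{(s+1)})\Gamma_{s+1}\le\Gamma_s$, where $\Gamma_s=n(1-a^{(s)})=n(4n)^{-0.5^s}$ for $s\le s_0$ and, with $m:=s-s_0\ge1$, $\Gamma_s=\frac{3n}{8c^2}\cdot\frac{(m+c)(m+2c)^2}{2m+3c}$ for $s>s_0$. I would treat three cases. If $s+1\le s_0$ the inequality is an equality: $(1-a^{(s+1)})\Gamma_{s+1}=n(4n)^{-2\cdot 0.5^{s+1}}=n(4n)^{-0.5^s}=\Gamma_s$. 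If $s=s_0$, then $a^{(s_0+1)}=\frac{c}{1+2c}$, so $(1-a^{(s_0+1)})\Gamma_{s_0+1}=\frac{3n}{8c^2}\cdot\frac{(1+c)^2(1+2c)}{2+3c}$; the defining relation $2c^2=3c+3$ of $c=\frac{3+\sqrt{33}}4$ gives $3(1+c)^2(1+2c)=4c^2(2+3c)$ (both sides equal $39+57c$), so this quantity equals exactly $\tfrac n2$, and $\tfrac n2\le n(4n)^{-0.5^{s_0}}=\Gamma_{s_0}$ by the first‑paragraph estimate.

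The remaining case $s>s_0$ is the obstacle. Clearing denominators turns $(1-a^{(s+1)})\Gamma_{s+1}\le\Gamma_s$ into the polynomial inequality $(2m+2+3c)(m+c)(m+2c)^2\ge(2m+3c)(m+1+c)^2(m+1+2c)$. The plan is to expand both sides, note that the $m^4$ coefficients cancel, and rewrite every remaining coefficient in the form $\alpha c+\beta$ using $c^2=\tfrac{3c+3}2$, $c^3=\tfrac{15c+9}4$, $c^4=\tfrac{63c+45}8$; this is exactly where the value of $c$ enters, since the constant term $6c^4-7c^3-12c^2-3c=c(2c^2-3c-3)(3c+1)$ then vanishes. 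After simplification the difference of the two sides equals $m\bigl[(2c-4)m^2+\tfrac{3(5-c)}2\,m+\tfrac{19c+1}4\bigr]$, and since $c=\frac{3+\sqrt{33}}4\in(2,3)$ every coefficient of the bracketed quadratic is positive, so the difference is $\ge0$ for all $m\ge1$. The only real difficulty is keeping this quartic bookkeeping under control; every other step is a one‑line estimate, and the whole lemma amounts to checking that $c$ was chosen precisely to make the two equalities above (in the $s=s_0$ case and in the vanishing constant term) hold.
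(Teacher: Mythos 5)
Your proof is correct and follows the same three-part structure as the paper's: the bound $(4n)^{-0.5^{s_0}}\ge\tfrac12$ handles the first fact and the $s=s_0$ transition, a direct substitution of $q^{(s)}$ handles the second fact (you go through $(2-a)a^2\le 1$ via $(x-1)(x^2-x-1)\ge 0$, the paper via $(2-a)a^2\le a\le 1$ and $(1-a)^2\ge 0$; both are one-liners), and the third fact splits into the same cases $s<s_0$, $s=s_0$, $s>s_0$. The one place you add real content is the case $s>s_0$: the paper reduces it to the quartic inequality $(2y+3c)(y+1+2c)(y+1+c)^2\le(2y+2+3c)(y+c)(y+2c)^2$ and simply asserts "people can check" it for $c=\tfrac{3+\sqrt{33}}{4}$, whereas you carry out the expansion, use $2c^2=3c+3$ to reduce powers of $c$, show the constant term $c(2c^2-3c-3)(3c+1)$ vanishes by the choice of $c$, and verify that the remaining cubic $m\bigl[(2c-4)m^2+\tfrac{3(5-c)}{2}m+\tfrac{19c+1}{4}\bigr]$ has positive coefficients since $c\in(2,3)$ — I checked the coefficient arithmetic and it is right. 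So this is a faithful and in fact slightly more complete proof of the lemma.
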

\begin{proof}
Under the choice of parameters in Theorem \ref{thm:AdaVRAG-Convergence-A},
the first inequality follows that
\[
a^{(s_{0})}=1-\left(4n\right)^{-0.5^{s_{0}}}\leq1-\left(4n\right)^{-0.5^{\log_{2}\log_{2}4n}}=\frac{1}{2}.
\]

For the second inequality, note that
\[
\frac{\left(2-a^{(s)}\right)a^{(s)}}{\left(1-a^{(s)}\right)q^{(s)}}=\begin{cases}
\left(2-a^{(s)}\right)\left(a^{(s)}\right)^{2} & 1\leq s\leq s_{0}\\
\text{\ensuremath{\frac{3}{8}}} & s_{0}<s
\end{cases}.
\]
By noticing $\left(2-a^{(s)}\right)\left(a^{(s)}\right)^{2}\leq a^{(s)}\leq1$,
the inequality $\frac{\left(2-a^{(s)}\right)a^{(s)}}{1-a^{(s)}}\leq q^{(s)}$
becomes true immediately.

For the third inequality, note that we have $T_{s}\equiv n$, we only
need to prove for any $s\geq1$, there is
\[
\frac{1-a^{(s+1)}}{q^{(s+1)}a^{(s+1)}}\leq\frac{1}{q^{(s)}a^{(s)}}.
\]
We consider the following three cases:
\begin{itemize}
\item For $1\leq s\leq s_{0}-1$, note that $\left(1-a^{(s+1)}\right)^{2}=\left(4n\right)^{-0.5^{s}}=1-a^{(s)}$,
$q^{(s)}=\frac{1}{\left(1-a^{(s)}\right)a^{(s)}}$. We know
\begin{align*}
\frac{1-a^{(s+1)}}{q^{(s+1)}a^{(s+1)}} & =(1-a^{(s+1)})^{2}\\
 & =1-a^{(s)}\\
 & =\frac{1}{q^{(s)}a^{(s)}}.
\end{align*}
\item For $s=s_{0}$, note that $a^{(s_{0}+1)}=\frac{c}{1+2c}=\frac{9-\sqrt{33}}{8}$,
$q^{(s_{0}+1)}=\frac{8(2-a^{(s_{0}+1)})a^{(s_{0}+1)}}{3(1-a^{(s_{0}+1)})}$
we have
\begin{align*}
\frac{1-a^{(s_{0}+1)}}{q^{(s_{0}+1)}a^{(s_{0}+1)}} & =\frac{3(1-a^{(s_{0}+1)})^{2}}{8(2-a^{(s_{0}+1)})\left(a^{(s_{0}+1)}\right)^{2}}\\
 & =\frac{1}{2}\\
 & \overset{(a)}{\leq}1-a^{(s_{0})}\\
 & \overset{(b)}{=}\frac{1}{q^{(s_{0})}a^{(s_{0})}},
\end{align*}
where $(a)$ is by $a^{(s_{0})}\leq\frac{1}{2}$, $(b)$ is by $q^{(s_{0})}=\frac{1}{\left(1-a^{(s_{0})}\right)a^{(s_{0})}}$.
\item For $s\geq s_{0}+1$, note that $q^{(s)}=\frac{8(2-a^{(s)})a^{(s)}}{3\left(1-a^{(s)}\right)}$,
by plugging in $q^{(s)}$, we only need to show
\[
\frac{(1-a^{(s+1)})^{2}}{(a^{(s+1)})^{2}(2-a^{(s+1)})}\leq\frac{1-a^{(s)}}{(a^{(s)})^{2}(2-a^{(s)})}.
\]
Plug in $a^{(s)}=\frac{c}{s-s_{0}+2c}$, the above inequality is equivalent
to
\[
(2(s-s_{0})+3c)(s-s_{0}+1+2c)(s-s_{0}+1+c)^{2}\leq(2(s-s_{0})+2+3c)(s-s_{0}+c)(s-s_{0}+2c)^{2}.
\]
Let $y=s-s_{0}\geq1$, we need to show
\[
(2y+3c)(y+1+2c)(y+1+c)^{2}\leq(2y+2+3c)(y+c)(y+2c)^{2}
\]
is true for $y\geq1$. People can check when $c=\frac{3+\sqrt{33}}{4}$,
the above inequality is right for $y\ge1$.
\end{itemize}
\end{proof}

\begin{lem}
\label{lem:AdaVRAG-Bound}Under the choice of parameters in Theorem
\ref{thm:AdaVRAG-Convergence-A}, $\forall s\geq1$, we have the following
bounds
\[
\frac{(1-a^{(1)})T_{1}}{q^{(1)}a^{(1)}}=\frac{1}{4}
\]
and
\[
\frac{q^{(s)}a^{(s)}}{T_{s}}\leq\begin{cases}
\frac{4}{(4n)^{1-0.5^{s}}} & 1\leq s\leq s_{0}\\
\frac{2(5+\sqrt{33})c^{2}}{3n(s-s_{0}+2c)^{2}} & s_{0}<s
\end{cases}.
\]
\end{lem}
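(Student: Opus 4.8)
The plan is to verify both claims by direct substitution of the closed-form parameters $a^{(s)}$, $q^{(s)}$ and $T_s = n$ from Theorem~\ref{thm:AdaVRAG-Convergence-A}, handling the two regimes $1 \le s \le s_0$ and $s_0 < s$ separately. First observe that $s_0 = \lceil \log_2\log_2 4n \rceil \ge 1$, so $s = 1$ lies in the first regime, where $q^{(1)} = \frac{1}{(1-a^{(1)})a^{(1)}}$ and hence $q^{(1)}a^{(1)} = \frac{1}{1-a^{(1)}}$. Therefore $\frac{(1-a^{(1)})T_1}{q^{(1)}a^{(1)}} = n(1-a^{(1)})^2$, and since $1-a^{(1)} = (4n)^{-1/2}$ this equals exactly $\frac14$, which is the first identity.

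For the second claim with $1 \le s \le s_0$, the same simplification $q^{(s)}a^{(s)} = \frac{1}{1-a^{(s)}}$ gives $\frac{q^{(s)}a^{(s)}}{T_s} = \frac{1}{n(1-a^{(s)})} = \frac{(4n)^{0.5^s}}{n}$ (using $1-a^{(s)} = (4n)^{-0.5^s}$), and a one-line computation shows $\frac{4}{(4n)^{1-0.5^s}}$ equals the same quantity, so the bound is in fact an equality. For $s_0 < s$, I would set $y := s - s_0 \ge 1$, so that $a^{(s)} = \frac{c}{y+2c}$, $1-a^{(s)} = \frac{y+c}{y+2c}$ and $2-a^{(s)} = \frac{2y+3c}{y+2c}$; plugging these into $q^{(s)} = \frac{8(2-a^{(s)})a^{(s)}}{3(1-a^{(s)})}$ and simplifying yields $\frac{q^{(s)}a^{(s)}}{T_s} = \frac{8(2y+3c)c^2}{3n(y+2c)^2(y+c)}$. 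The claimed bound $\frac{2(5+\sqrt{33})c^2}{3n(y+2c)^2}$ is then equivalent to $\frac{8(2y+3c)}{y+c} \le 2(5+\sqrt{33})$ for all integers $y \ge 1$. This is where the exact value $c = \frac{3+\sqrt{33}}{4}$ matters: from $4c = 3+\sqrt{33}$ one gets $2(5+\sqrt{33}) = 4+8c$ together with the quadratic relation $2c^2 - 3c - 3 = 0$ (equivalently $8c^2 = 12c + 12$); expanding $8(2y+3c) \le (4+8c)(y+c)$ and reducing via this relation collapses it to $4(3-2c)(y-1) \le 0$, which holds since $c > \frac32$ (as $\sqrt{33} > 3$) and $y \ge 1$.

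The only genuinely non-routine point is recognizing that the worst case of the second-regime bound occurs at $y = 1$ (the first epoch past the transition) and that the stated constant is \emph{tight} there --- this tightness is precisely what forces the choice $c = \frac{3+\sqrt{33}}{4}$, and the proof must therefore carry the quadratic identity $2c^2 - 3c - 3 = 0$ through the algebra rather than bounding loosely. Everything else is mechanical substitution of the parameter formulas, analogous to the computation in Lemma~\ref{lem:AdaVRAE-Bound}.
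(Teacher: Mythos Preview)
Your proof is correct and follows essentially the same route as the paper's: direct substitution of the parameter formulas in both regimes, with the first-regime bound holding with equality. The only cosmetic difference is in the second-regime step, where the paper bounds $\frac{2-a^{(s)}}{1-a^{(s)}}\le\frac{2-a^{(s_0+1)}}{1-a^{(s_0+1)}}=\frac{5+\sqrt{33}}{4}$ by monotonicity (since $a^{(s)}$ is decreasing for $s>s_0$) rather than via your quadratic identity $2c^2-3c-3=0$; both arguments verify the same inequality, tight at $y=1$.
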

\begin{proof}
Note that $a^{(1)}=1-\frac{1}{2\sqrt{n}}$, $T_{1}=n$, $q^{(1)}=\frac{1}{(1-a^{(1)})a^{(1)}}$,
plugging in these values, we obtain
\begin{align*}
\frac{(1-a^{(1)})T_{1}}{q^{(1)}a^{(1)}} & =(1-a^{(1)})^{2}T_{1}\\
 & =\frac{1}{4}
\end{align*}
\begin{itemize}
\item For $1\leq s\leq s_{0}$, note that $q^{(s)}=\frac{1}{(1-a^{(s)})a^{(s)}}$
in our choice, so we know
\begin{align*}
\frac{q^{(s)}a^{(s)}}{T_{s}} & =\frac{1}{T_{s}(1-a^{(s)})}\\
 & \overset{(a)}{=}\frac{4}{(4n)^{1-0.5^{s}}}
\end{align*}
where $(a)$ is by plugging in $T_{s}=n$ and $a^{(s)}=1-\left(4n\right){}^{-0.5^{s}}$.
\item For $s>s_{0}$, note that $q^{(s)}=\frac{8(2-a^{(s)})a^{(s)}}{3(1-a^{(s)})}$
we have
\begin{align*}
\frac{q^{(s)}a^{(s)}}{T_{s}} & =\frac{8(2-a^{(s)})(a^{(s)})^{2}}{3T_{s}(1-a^{(s)})}\\
 & \overset{(b)}{=}\frac{8(2-a^{(s)})(a^{(s)})^{2}}{3n(1-a^{(s)})}\\
 & \overset{(c)}{\leq}\frac{2(5+\sqrt{33})c^{2}}{3n(s-s_{0}+2c)^{2}},
\end{align*}
where $(b)$ is by plugging in$T_{s}=n$, $(c)$ is by noticing $\frac{2-a^{(s)}}{1-a^{(s)}}\leq\frac{2-a^{(s_{0}+1)}}{1-a^{(s_{0}+1)}}=\frac{5+\sqrt{33}}{4}$
for $s>s_{0}$, and plug in $a^{(s)}=\frac{c}{s-s_{0}+2c}$.
\end{itemize}
\end{proof}

\subsection{Putting all together}

We are now ready to put everything together and complete the proof
of Theorem \ref{thm:AdaVRAG-Convergence-A}.

\begin{proof}
(Theorem \ref{thm:AdaVRAG-Convergence-A}) By Lemma \ref{lem:AdaVRAG-Condition},
$\forall s\geq1$, we have
\begin{align*}
\frac{\left(2-a^{(s)}\right)a^{(s)}}{1-a^{(s)}} & \leq q^{(s)},\\
\frac{(1-a^{(s+1)})T_{s+1}}{q^{(s+1)}a^{(s+1)}} & \leq\frac{T_{s}}{q^{(s)}a^{(s)}}.
\end{align*}
Hence all the conditions for Lemma \ref{lem:AdaVRAG-Whole-Descent}
are satisfied. Besides, we assume $\dom$ is a compact convex set
with diameter $D$, which satisfies the requirements for Lemma \ref{lem:AdaVRAG-Reminder-II}
and \ref{lem:AdaVRAG-Reminder-I}.
\begin{enumerate}
\item For Option I, by Lemma \ref{lem:AdaVRAG-Whole-Descent} and \ref{lem:AdaVRAG-Reminder-I}
\begin{align*}
\E\left[\frac{T_{S}}{q^{(S)}a^{(S)}}(F(u^{(S)})-F(x^{*}))\right] & \leq\frac{(1-a^{(1)})T_{1}}{q^{(1)}a^{(1)}}(F(u^{(0)})-F(x^{*}))+\frac{\gamma}{2}\left\Vert u^{(0)}-x^{*}\right\Vert ^{2}\\
 & \quad+\left[\frac{\beta}{2}-\left(\frac{1}{2}-\frac{D^{2}}{4\eta^{2}}\right)\gamma\right]^{+}\left(D^{2}+2(\eta^{2}+D^{2})\log\frac{\frac{2\eta^{2}\beta}{2\eta^{2}-D^{2}}}{\gamma}\right).
\end{align*}
\item For Option II, by Lemma \ref{lem:AdaVRAG-Whole-Descent} and \ref{lem:AdaVRAG-Reminder-II}
\begin{align*}
\E\left[\frac{T_{S}}{q^{(S)}a^{(S)}}(F(u^{(S)})-F(x^{*}))\right] & \leq\frac{(1-a^{(1)})T_{1}}{q^{(1)}a^{(1)}}(F(u^{(0)})-F(x^{*}))+\frac{\gamma}{2}\left\Vert u^{(0)}-x^{*}\right\Vert ^{2}\\
 & \quad+\frac{\eta^{2}}{2}\left(\frac{D^{2}}{\eta^{2}}+\beta-\gamma\right)^{+}\left(\frac{2D^{2}}{\eta^{2}}+\beta-\gamma\right).
\end{align*}
\end{enumerate}
Plugging in the bound $\frac{(1-a^{(1)})T_{1}}{q^{(1)}a^{(1)}}=\frac{1}{4}$
from Lemma \ref{lem:AdaVRAG-Bound}, we have
\begin{align*}
\E\left[\frac{T_{S}}{q^{(S)}a^{(S)}}(F(u^{(S)})-F(x^{*}))\right] & \leq\frac{V}{2}\\
\Rightarrow\E\left[F(u^{(S)})-F(x^{*})\right] & \leq\frac{q^{(S)}a^{(S)}V}{2T_{S}}\\
 & \overset{(a)}{\leq}\begin{cases}
\frac{2V}{(4n)^{1-0.5^{S}}} & 1\leq S\leq s_{0}\\
\frac{(5+\sqrt{33})c^{2}V}{3n(S-s_{0}+2c)^{2}} & s_{0}<S
\end{cases},
\end{align*}
where $(a)$ is by the bound for $\frac{q^{(S)}a^{(S)}}{T_{S}}$ from
Lemma \ref{lem:AdaVRAG-Bound}.
\begin{itemize}
\item If $\epsilon\ge\frac{V}{n}$, we choose $S=\left\lceil \log_{2}\log_{2}\frac{4V}{\epsilon}\right\rceil \leq\left\lceil \log_{2}\log_{2}4n\right\rceil =s_{0}$,
so we have
\begin{align*}
\E\left[F(u^{(S)})-F(x^{*})\right] & \leq\frac{2V}{(4n)^{1-0.5^{S}}}\\
 & \overset{(b)}{\leq}\frac{2V}{\left(\frac{4V}{\epsilon}\right){}^{1-0.5^{S}}}\\
 & =\frac{\epsilon}{2\left(\frac{4V}{\epsilon}\right){}^{-0.5^{S}}}\\
 & \overset{(c)}{\leq}\epsilon,
\end{align*}
where $(b)$ is by $n\geq\frac{V}{\epsilon}$, $(c)$ is by $\left(\frac{4V}{\epsilon}\right){}^{-0.5^{S}}=\left(\frac{4V}{\epsilon}\right){}^{-0.5^{\left\lceil \log_{2}\log_{2}\frac{4V}{\epsilon}\right\rceil }}\geq\left(\frac{4V}{\epsilon}\right){}^{-0.5^{\log_{2}\log_{2}\frac{4V}{\epsilon}}}=\frac{1}{2}$.
The number of individual gradient evaluations is
\begin{align*}
\#grads & =nS+\sum_{s=1}^{S}2T_{s}\\
 & =3nS\\
 & =3n\left\lceil \log_{2}\log_{2}\frac{4V}{\epsilon}\right\rceil \\
 & =\mathcal{O}\left(n\log\log\frac{V}{\epsilon}\right).
\end{align*}
\item If $\epsilon<\frac{V}{n}$, we choose $S=s_{0}+\left\lceil c\left(\sqrt{\frac{(5+\sqrt{33})V}{3n\epsilon}}-\frac{15}{8}\right)\right\rceil \geq s_{0}+\left\lceil c\left(\sqrt{\frac{5+\sqrt{33}}{3}}-\frac{15}{8}\right)\right\rceil =s_{0}+1$,
so we have
\begin{align*}
\E\left[F(u^{(S)})-F(x^{*})\right] & \leq\frac{(5+\sqrt{33})c^{2}V}{3n(S-s_{0}+2c)^{2}}\\
 & =\frac{(5+\sqrt{33})c^{2}V}{3n\left(s_{0}+\left\lceil c\left(\sqrt{\frac{(5+\sqrt{33})V}{3n\epsilon}}-\frac{15}{8}\right)\right\rceil +2c\right)^{2}}\\
 & \leq\frac{(5+\sqrt{33})c^{2}V}{3n\left(c\sqrt{\frac{(5+\sqrt{33})V}{3n\epsilon}}+\frac{c}{8}\right)^{2}}\\
 & \leq\frac{(5+\sqrt{33})c^{2}V}{3n\left(c\sqrt{\frac{(5+\sqrt{33})V}{3n\epsilon}}\right)^{2}}\\
 & =\epsilon.
\end{align*}
The number of individual gradient evaluations is
\begin{align*}
\#grads & =nS+\sum_{s=1}^{S}2T_{s}\\
 & =3nS\\
 & =3ns_{0}+3n(S-s_{0})\\
 & =3n\left\lceil \log_{2}\log_{2}4n\right\rceil +3n\left\lceil c\left(\sqrt{\frac{(5+\sqrt{33})V}{3n\epsilon}}-\frac{15}{8}\right)\right\rceil \\
 & =\mathcal{O}\left(n\log\log n+\sqrt{\frac{nV}{\epsilon}}\right).
\end{align*}
\end{itemize}
\end{proof}

\section{AdaVRAE for known $\beta$}

\label{sec:VRAE}

\begin{algorithm}
\caption{VRAE\label{alg:VRAE}}

\textbf{Input}: initial point $u^{(0)}$, smoothness parameter $\beta$.

\textbf{Parameters}: $\{a^{(s)}\}$, $\left\{ T_{s}\right\} $, $A_{T_{0}}^{(0)}>0$

$\avx_{0}^{(1)}=z_{0}^{(1)}=u^{(0)}$, compute $\nabla f(u^{(0)})$

\textbf{for $s=1$ to $S$:}

$\quad$$A_{0}^{(s)}=A_{T_{s-1}}^{(s-1)}-T_{s}\left(a^{(s)}\right)^{2}$

\textbf{$\quad$for $t=1$ to $T_{s}$:}

$\quad\quad$$x_{t}^{(s)}=\arg\min_{x\in\dom}\left\{ a^{(s)}\left\langle g_{t-1}^{(s)},x\right\rangle +a^{(s)}h(x)+4\beta\left\Vert x-z_{t-1}^{(s)}\right\Vert ^{2}\right\} $

$\quad\quad$Let $A_{t}^{(s)}=A_{t-1}^{(s)}+a^{(s)}+\left(a^{(s)}\right)^{2}$

$\quad\quad$$\avx_{t}^{(s)}=\frac{1}{A_{t}^{(s)}}\left(A_{t-1}^{(s)}\avx_{t-1}^{(s)}+a^{(s)}x_{t}^{(s)}+\left(a^{(s)}\right)^{2}u^{(s-1)}\right)$

$\quad\quad$ \textbf{if $t\neq T_{s}$:}

$\quad\quad\quad$Pick $i_{t}^{(s)}\sim\mathrm{Uniform}\left(\left[n\right]\right)$

$\quad\quad\quad$$g_{t}^{(s)}=\nabla f_{i_{t}^{(s)}}(\avx_{t}^{(s)})-\nabla f_{i_{t}^{(s)}}(u^{(s-1)})+\nabla f(u^{(s-1)})$

$\quad\quad$ \textbf{else:}

$\quad\quad\quad$$g_{t}^{(s)}=\nabla f(\avx_{t}^{(s)})$

$\quad\quad$$z_{t}^{(s)}=\arg\min_{z\in\dom}\left\{ a^{(s)}\left\langle g_{t}^{(s)},z\right\rangle +a^{(s)}h(z)+4\beta\left\Vert z-z_{t-1}^{(s)}\right\Vert ^{2}\right\} $

$\quad$$u^{(s)}=\avx_{0}^{(s+1)}=\avx_{T_{s}}^{(s)}$, $z_{0}^{(s+1)}=z_{T_{s}}^{(s)}$,
$g_{0}^{(s+1)}=g_{T_{s}}^{(s)}$

\textbf{return} $u^{(S)}$
\end{algorithm}

In this section, we give a non-adaptive version of our algorithm AdaVRAE.
The algorithm is shown in Algorithm \ref{alg:VRAE}. The only change
is in the step size: we set $\gamma_{t}^{(s)}=8\beta$ for all epochs
$s$ and iterations $t$. The analysis readily extends to show the
following convergence guarantee:
\begin{thm}
\label{thm:AdaVRAE-Convergence-NA} Let $s_{0}=\left\lceil \log_{2}\log_{2}4n\right\rceil $,
$c=\frac{3}{2}$. If we choose parameters as follows
\begin{align*}
a^{(s)} & =\begin{cases}
(4n)^{-0.5^{s}} & 1\leq s\leq s_{0}\\
\frac{s-s_{0}-1+c}{2c} & s_{0}<s
\end{cases},\\
T_{s} & =n,\\
A_{T_{0}}^{(0)} & =\frac{5}{4}.
\end{align*}
The number of gradient evaluations to achieve a solution $u^{(S)}$
such that $\E\left[F(u^{(S)})-F(x^{*})\right]\le\epsilon$ for Algorithm
\ref{alg:VRAE} is

\begin{align*}
\#grads & =\begin{cases}
\mathcal{O}\text{\ensuremath{\left(n\log\log\frac{V}{\epsilon}\right)}} & \mbox{if }\epsilon\geq\frac{V}{n}\\
\mathcal{O}\left(n\log\log n+\sqrt{\frac{Vn}{\epsilon}}\right) & \mbox{if }\epsilon<\frac{V}{n}
\end{cases}
\end{align*}

where $V=\frac{5}{2}\left(F(u^{(0)})-F(x^{*})\right)+\text{\ensuremath{8\beta}}\left\Vert u^{(0)}-x^{*}\right\Vert ^{2}$.
\end{thm}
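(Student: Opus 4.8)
The plan is to reuse, essentially verbatim, the analysis of Algorithm \ref{alg:AdaVRAE} developed in Section \ref{sec:AdaVRAE}, exploiting the fact that fixing the step size to $\gamma_t^{(s)}=8\beta$ for all $s,t$ (equivalently, using the regularization weight $4\beta=\tfrac{\gamma_{t-1}^{(s)}}{2}$ in the $x$- and $z$-updates of Algorithm \ref{alg:VRAE}) makes every adaptive/residual term collapse to zero. First I would observe that Lemmas \ref{lem:AdaVRAE-Iterate-Diff} and \ref{lem:AdaVRAE-One-Step-Descent-I} do not involve $\gamma$ and carry over unchanged. In the arguments underlying Lemma \ref{lem:AdaVRAE-Stochastic-Regret-Bound}, since $\gamma_t^{(s)}=\gamma_{t-1}^{(s)}=8\beta$, all terms proportional to $\gamma_t^{(s)}-\gamma_{t-1}^{(s)}$ vanish and the $z$-update invoked there reduces exactly to the $z$-update of Algorithm \ref{alg:VRAE}, so the optimality-condition computations go through identically. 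Then, mimicking Lemma \ref{lem:AdaVRAE-One-Step-Descent-II}, I split $\tfrac{(a^{(s)})^2}{2\gamma_t^{(s)}}\|g_t^{(s)}-g_{t-1}^{(s)}\|^2$ at the threshold $\tfrac{1}{16\beta}$: the $\tfrac{1}{2\gamma_t^{(s)}}-\tfrac{1}{16\beta}$ piece is now identically zero, so only the variance-reduction bound (Lemma \ref{lem:Variance-Reduction}) and the cancellation $\big(\tfrac{(a^{(s)})^2}{8\beta}-\tfrac{A_{t-1}^{(s)}}{2\beta}\big)\|\nabla f(\avx_t^{(s)})-\nabla f(\avx_{t-1}^{(s)})\|^2\le 0$ are used, the latter holding whenever $(a^{(s)})^2\le 4A_{t-1}^{(s)}$.

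Next I would run the per-epoch telescoping exactly as in Lemmas \ref{lem:AdaVRAE-One-Epoch-Descent} and \ref{lem:AdaVRAE-Whole-Descent}, noting that the residual sum $\sum_{s,t}\tfrac{\gamma_t^{(s)}-\gamma_{t-1}^{(s)}}{2}\|x_t^{(s)}-x^*\|^2+\big(\tfrac{(a^{(s)})^2}{2\gamma_t^{(s)}}-\tfrac{(a^{(s)})^2}{16\beta}\big)\|g_t^{(s)}-g_{t-1}^{(s)}\|^2$ is now exactly $0$. Consequently the compactness hypothesis and Lemmas \ref{lem:AdaVRAE-Reminder-Bound-I} and \ref{lem:AdaVRAE-Reminder-Bound-II} are not needed, and one obtains directly $\E\big[A_{T_S}^{(S)}(F(u^{(S)})-F(x^*))\big]\le A_{T_0}^{(0)}(F(u^{(0)})-F(x^*))+\tfrac{\gamma_0^{(1)}}{2}\|u^{(0)}-x^*\|^2=\tfrac{5}{4}(F(u^{(0)})-F(x^*))+4\beta\|u^{(0)}-x^*\|^2=\tfrac{V}{2}$, using $\gamma_0^{(1)}=8\beta$, $z_0^{(1)}=u^{(0)}$ and $A_{T_0}^{(0)}=\tfrac{5}{4}$.

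Finally, since the schedule $a^{(s)}$, $T_s=n$, $A_{T_0}^{(0)}=\tfrac54$ is identical to that of Theorem \ref{thm:AdaVRAE-Convergence-A}, Lemma \ref{lem:AdaVRAE-Bound} applies unchanged: it both discharges the condition $(a^{(s)})^2<4A_0^{(s)}\le 4A_{t-1}^{(s)}$ used above and supplies the lower bounds $A_{T_S}^{(S)}\ge n(4n)^{-0.5^S}$ for $S\le s_0$ and $A_{T_S}^{(S)}\ge\tfrac{n}{4c}(S-s_0)^2$ for $S>s_0$. Plugging these into $\E[F(u^{(S)})-F(x^*)]\le\tfrac{V}{2A_{T_S}^{(S)}}$ and making the two-phase choice of $S$ (namely $S=\lceil\log_2\log_2\tfrac{4V}{\epsilon}\rceil$ when $\epsilon\ge V/n$, and $S=s_0+\lceil\sqrt{2cV/(n\epsilon)}\rceil$ when $\epsilon<V/n$), together with the count $\#grads=n+\sum_{s=1}^{S-1}(2(T_s-1)+n)+2(T_S-1)<3nS$ (the final full-gradient computation of the last epoch being unnecessary), yields the claimed complexities.

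I do not anticipate a genuine obstacle, since this is a strict simplification of the adaptive analysis. The only points that need care are: checking that with $\gamma_t^{(s)}$ held constant the $z$-update of Algorithm \ref{alg:VRAE} really coincides with the one used inside the proof of Lemma \ref{lem:AdaVRAE-Stochastic-Regret-Bound} (it does, because the weight $\tfrac{\gamma_t^{(s)}-\gamma_{t-1}^{(s)}}{2}$ on $\|z-x_t^{(s)}\|^2$ vanishes and $\tfrac{\gamma_{t-1}^{(s)}}{2}=4\beta$), and verifying that the single place where smoothness enters the bookkeeping — the inequality $(a^{(s)})^2\le 4A_{t-1}^{(s)}$ that lets the gain term absorb the gradient-difference loss — is untouched by the unchanged parameter schedule.
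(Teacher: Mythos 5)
Your proposal is correct and follows essentially the same route as the paper: treat Algorithm \ref{alg:VRAE} as Algorithm \ref{alg:AdaVRAE} with $\gamma_t^{(s)}\equiv 8\beta$, invoke Lemma \ref{lem:AdaVRAE-Whole-Descent} with the residual sum collapsing to zero (so compactness and Lemmas \ref{lem:AdaVRAE-Reminder-Bound-I}--\ref{lem:AdaVRAE-Reminder-Bound-II} are not needed), and then apply Lemma \ref{lem:AdaVRAE-Bound} with the same two-phase choice of $S$ to conclude.
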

\begin{proof}
Note that Algorithm \ref{alg:VRAE} is essentially the same as Algorithm
\ref{alg:AdaVRAE} by choosing $\gamma_{t}^{(s)}\equiv8\beta$ with
no other changes. Hence the requirements for Lemma \ref{lem:AdaVRAE-Whole-Descent}
still hold. So we can obtain
\begin{align*}
\E\left[A_{T_{S}}^{(S)}\left(F(u^{(S)})-F(x^{*})\right)\right] & \leq A_{T_{0}}^{(0)}\left(F(u^{(0)})-F(x^{*})\right)+4\beta\left\Vert u^{(0)}-x^{*}\right\Vert ^{2}.
\end{align*}
Then by the similar proof in Theorem \ref{thm:AdaVRAE-Convergence-A},
we get the desired result.
\end{proof}

\section{AdaVRAG for known $\beta$}

\label{sec:VRAG}

\begin{algorithm}
\caption{VRAG\label{alg:VRAG}}

\textbf{Input:} initial point $u^{(0)}$, smoothness parameter $\beta$

\textbf{Parameters:} $\{a^{(s)}\}$ where $a^{(s)}\in(0,1)$, $\left\{ T_{s}\right\} $

$x_{0}^{(1)}=u^{(0)}$

\textbf{for $s=1$ to $S$:}

$\quad$$\avx_{0}^{(s)}=a^{(s)}x_{0}^{(s)}+(1-a^{(s)})u^{(s-1)},\text{calculate }\nabla f(u^{(s-1)})$

\textbf{$\quad$for $t=1$ to $T_{s}$:}

$\quad\quad$Pick $i_{t}^{(s)}\sim\mathrm{Uniform}\left(\left[n\right]\right)$

$\quad\quad$$g_{t}^{(s)}=\nabla f_{i_{t}^{(s)}}(\avx_{t-1}^{(s)})-\nabla f_{i_{t}^{(s)}}(u^{(s-1)})+\nabla f(u^{(s-1)})$

$\quad\quad$$x_{t}^{(s)}=\arg\min_{x\in\dom}\left\{ \left\langle g_{t}^{(s)},x\right\rangle +h(x)+\frac{\beta\left(2-a^{(s)}\right)a^{(s)}}{2(1-a^{(s)})}\left\Vert x-x_{t-1}^{(s)}\right\Vert ^{2}\right\} $

$\quad\quad$$\avx_{t}^{(s)}=a^{(s)}x_{t}^{(s)}+(1-a^{(s)})u^{(s-1)}$

$\quad$$u^{(s)}=\frac{1}{T_{s}}\sum_{t=1}^{T_{s}}\avx_{t}^{(s)}$,
$x_{0}^{(s+1)}=x_{T_{s}}^{(s)}$

\textbf{return} $u^{(S)}$
\end{algorithm}

In this section, we give a non-adaptive version of our algorithm AdaVRAG.
The algorithm is shown in Algorithm \ref{alg:VRAG}. VRAG admits the
following convergence guarantee:
\begin{thm}
\label{thm:AdaVRAG-Convergence-NA}(Convergence of VRAG) Define $s_{0}=\lceil\log_{2}\log_{2}4n\rceil$,
$c=\frac{3+\sqrt{33}}{4}$. If we choose the parameters as follows
\begin{align*}
a^{(s)} & =\begin{cases}
1-\left(4n\right)^{-0.5^{s}} & 1\leq s\leq s_{0}\\
\frac{c}{s-s_{0}+2c} & s_{0}<s
\end{cases},\\
T_{s} & =n.
\end{align*}
The number of individual gradient evaluations to achieve a solution
$u^{(S)}$ such that $\E\left[F(u^{(S)})-F(x^{*})\right]\leq\epsilon$
for Algorithm \ref{alg:VRAG} is 
\[
\#grads=\begin{cases}
\mathcal{O}\left(n\log\log\frac{V}{\epsilon}\right) & \epsilon\geq\frac{V}{n}\\
\mathcal{O}\left(n\log\log n+\sqrt{\frac{nV}{\epsilon}}\right) & \epsilon<\frac{V}{n}
\end{cases},
\]
where
\[
V=\frac{1}{2}(F(u^{(0)})-F(x^{*}))+\beta\left\Vert u^{(0)}-x^{*}\right\Vert ^{2}.
\]
\end{thm}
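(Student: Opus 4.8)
The plan is to observe that Algorithm~\ref{alg:VRAG} is exactly Algorithm~\ref{alg:AdaVRAG} with the adaptive proximal weight $\tfrac12\gamma_{t-1}^{(s)}q^{(s)}$ replaced by the fixed weight $\tfrac{\beta(2-a^{(s)})a^{(s)}}{2(1-a^{(s)})}$, and to re-run the AdaVRAG analysis with this substitution. The proof of Lemma~\ref{lem:AdaVRAG-One-Step-Descent} uses only $\beta$-smoothness and convexity of $f$, convexity of $h$, the optimality condition of the proximal step, and Lemma~\ref{lem:Variance-Reduction}, so it applies verbatim; crucially, writing $p^{(s)}:=\tfrac{\beta(2-a^{(s)})a^{(s)}}{1-a^{(s)}}$, the last term of that lemma, $\big(\tfrac{\beta(2-a^{(s)})(a^{(s)})^2}{2(1-a^{(s)})}-\tfrac{\gamma_{t-1}^{(s)}q^{(s)}a^{(s)}}{2}\big)\|x_t^{(s)}-x_{t-1}^{(s)}\|^2$, now equals $\big(\tfrac{a^{(s)}p^{(s)}}{2}-\tfrac{a^{(s)}p^{(s)}}{2}\big)\|x_t^{(s)}-x_{t-1}^{(s)}\|^2=0$. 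This vanishing is precisely the reason for the choice of proximal weight in VRAG, and it yields the clean one-step bound $\E[F(\avx_t^{(s)})-F(x^*)]\le(1-a^{(s)})\,\E[F(u^{(s-1)})-F(x^*)]+\tfrac{p^{(s)}a^{(s)}}{2}\,\E[\|x_{t-1}^{(s)}-x^*\|^2-\|x_t^{(s)}-x^*\|^2]$.

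First I would average this over $t=1,\dots,T_s$, use convexity of $F$ with $u^{(s)}=\tfrac1{T_s}\sum_t\avx_t^{(s)}$ (the argument of Lemma~\ref{lem:AdaVRAG-One-Epoch-Descent}), and multiply by $\tfrac{T_s}{p^{(s)}a^{(s)}}$ to obtain
\[
\tfrac{T_s}{p^{(s)}a^{(s)}}\E[F(u^{(s)})-F(x^*)]\le\tfrac{(1-a^{(s)})T_s}{p^{(s)}a^{(s)}}\E[F(u^{(s-1)})-F(x^*)]+\tfrac12\E\big[\|x_0^{(s)}-x^*\|^2-\|x_{T_s}^{(s)}-x^*\|^2\big].
\]
Since $x_0^{(s+1)}=x_{T_s}^{(s)}$ and $x_0^{(1)}=u^{(0)}$, the squared-distance terms telescope across epochs, so, \emph{provided} $\tfrac{(1-a^{(s+1)})T_{s+1}}{p^{(s+1)}a^{(s+1)}}\le\tfrac{T_s}{p^{(s)}a^{(s)}}$ for all $s$, chaining from $s=1$ to $S$ exactly as in Lemma~\ref{lem:AdaVRAG-Whole-Descent} gives $\tfrac{T_S}{p^{(S)}a^{(S)}}\E[F(u^{(S)})-F(x^*)]\le\tfrac{(1-a^{(1)})T_1}{p^{(1)}a^{(1)}}(F(u^{(0)})-F(x^*))+\tfrac12\|u^{(0)}-x^*\|^2$. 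With $T_s\equiv n$ the monotonicity condition is equivalent to $\tfrac{(1-a^{(s+1)})^2}{(2-a^{(s+1)})(a^{(s+1)})^2}\le\tfrac{1-a^{(s)}}{(2-a^{(s)})(a^{(s)})^2}$: for $s<s_0$ it reduces to $a^{(s+1)}\ge a^{(s)}$, immediate since $0.5^{s+1}<0.5^{s}$; for $s>s_0$ it is precisely the third inequality of Lemma~\ref{lem:AdaVRAG-Condition}; and at $s=s_0$ it follows from the identity $\tfrac{3(1-a^{(s_0+1)})^2}{8(2-a^{(s_0+1)})(a^{(s_0+1)})^2}=\tfrac12$ established in that lemma together with $a^{(s_0)}\le\tfrac12$, which gives $(2-a^{(s_0)})(a^{(s_0)})^2\le\tfrac38$ because $a\mapsto(2-a)a^2$ is increasing on $(0,1)$.

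It remains to estimate the coefficients, in analogy with Lemma~\ref{lem:AdaVRAG-Bound}. Using $1-a^{(s)}=(4n)^{-0.5^{s}}$ and $(2-a^{(s)})(a^{(s)})^2\le1$ one gets $\tfrac{p^{(s)}a^{(s)}}{T_s}\le\tfrac{4\beta}{(4n)^{1-0.5^{s}}}$ for $1\le s\le s_0$, and using $\tfrac{2-a^{(s)}}{1-a^{(s)}}\le\tfrac{2-a^{(s_0+1)}}{1-a^{(s_0+1)}}=\tfrac{5+\sqrt{33}}{4}$ together with $a^{(s)}=\tfrac{c}{s-s_0+2c}$ one gets $\tfrac{p^{(s)}a^{(s)}}{T_s}\le\tfrac{(5+\sqrt{33})c^2\beta}{4n(s-s_0+2c)^2}$ for $s>s_0$; also $\tfrac{(1-a^{(1)})T_1}{p^{(1)}a^{(1)}}=\tfrac{(1-a^{(1)})^2n}{\beta(2-a^{(1)})(a^{(1)})^2}=\Theta(1/\beta)$ after plugging $a^{(1)}=1-\tfrac1{2\sqrt n}$ (it tends to $\tfrac1{2\beta}$, which gives exactly the stated $V$; for finite $n$ the slightly larger constant is absorbed). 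Hence $\E[F(u^{(S)})-F(x^*)]\le\tfrac{p^{(S)}a^{(S)}}{2\beta T_S}\,V$ with $V=\tfrac12(F(u^{(0)})-F(x^*))+\beta\|u^{(0)}-x^*\|^2$, which is $\le\tfrac{2V}{(4n)^{1-0.5^{S}}}$ for $S\le s_0$ and $\le\tfrac{(5+\sqrt{33})c^2V}{8n(S-s_0+2c)^2}$ for $S>s_0$. From here the bound $\#grads=3nS$ follows by the same two-case choice of $S$ used at the end of the proof of Theorem~\ref{thm:AdaVRAG-Convergence-A}: take $S=\lceil\log_2\log_2\tfrac{4V}{\epsilon}\rceil\le s_0$ when $\epsilon\ge\tfrac Vn$, and $S=s_0+\Theta\big(\sqrt{\tfrac V{n\epsilon}}\big)$ when $\epsilon<\tfrac Vn$.

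The main obstacle is a mild one: checking that the fixed proximal weight is still large enough for the per-epoch recursion to telescope across the phase boundary $s=s_0$, which as sketched above reduces to the single numerical inequality $(2-a^{(s_0)})(a^{(s_0)})^2\le\tfrac38$ guaranteed by $a^{(s_0)}\le\tfrac12$. Everything else is a direct transcription of the AdaVRAG argument, with the step-size bookkeeping (Lemmas~\ref{lem:AdaVRAG-Simplify-Reminder}, \ref{lem:AdaVRAG-Reminder-I}, \ref{lem:AdaVRAG-Reminder-II}) dropped entirely, since there is no adaptive step size and the iterate-movement term in the one-step descent has been cancelled by construction.
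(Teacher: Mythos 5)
The idea of folding the proximal weight into a single sequence $p^{(s)}=\tfrac{\beta(2-a^{(s)})a^{(s)}}{1-a^{(s)}}$ so that the iterate-movement term vanishes and the distance-telescope coefficient is a constant $\tfrac12$ is attractive, but the resulting function-value telescope condition is not the same as in the paper's Lemma~\ref{lem:AdaVRAG-Condition}, and you verify it incorrectly. Your monotonicity check for $1\le s\le s_0-1$ is wrong in direction: from $a^{(s)}=1-(4n)^{-0.5^s}$ and the fact that $(4n)^{-0.5^{s}}$ increases towards $1$ as $s$ grows, the sequence $a^{(s)}$ is \emph{decreasing} on the first phase (e.g.\ $n=4$ gives $a^{(1)}=3/4$, $a^{(2)}=1/2$, $a^{(3)}\approx0.29$). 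With the identity $(1-a^{(s+1)})^2=1-a^{(s)}$, your condition $\tfrac{(1-a^{(s+1)})T_{s+1}}{p^{(s+1)}a^{(s+1)}}\le\tfrac{T_s}{p^{(s)}a^{(s)}}$ reduces to $(2-a^{(s)})(a^{(s)})^2\le(2-a^{(s+1)})(a^{(s+1)})^2$, which by the monotonicity of $a\mapsto(2-a)a^2$ on $(0,1)$ forces $a^{(s)}\le a^{(s+1)}$ --- the opposite of what actually holds. So the chain of per-epoch inequalities does not telescope with your choice of weight, and the claimed bound after summation over $s$ in the first phase is unjustified.

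This is precisely the obstruction the paper avoids by keeping the adaptive weight $\tfrac{T_s}{q^{(s)}a^{(s)}}$ (for which the first-phase telescope holds with equality: $\tfrac{1-a^{(s+1)}}{q^{(s+1)}a^{(s+1)}}=(1-a^{(s+1)})^2=1-a^{(s)}=\tfrac1{q^{(s)}a^{(s)}}$) at the price of distance terms carrying the nonconstant factor $\gamma^{(s)}=\beta(2-a^{(s)})(a^{(s)})^2$, which is then controlled by Abel summation using that $\gamma^{(s)}$ is \emph{decreasing} in the first phase (again because $a^{(s)}$ decreases). The paper also introduces the extra multiplicative bridge $(2-a^{(s_0)})(a^{(s_0)})^2$ at the phase boundary; compare Lemma~\ref{lem:AdaVRAG-Whole-Descent-NA}. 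Your $s=s_0$ and $s>s_0$ checks are fine, and the coefficient estimates and the two-case choice of $S$ at the end would go through, but the first-phase telescope is a genuine gap that cannot be patched within the $p^{(s)}$-weighting you chose: you would need to revert to the paper's $q^{(s)}$-weighting and perform the Abel summation, or otherwise account for the loss incurred when $(1-a^{(s+1)})B_{s+1}>B_s$.
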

Before giving the proof of Theorem \ref{thm:AdaVRAE-Convergence-NA},
we state some intuition on our parameter choice. Note that by defining
the following two auxiliary sequences

\begin{align*}
q^{(s)} & =\begin{cases}
\frac{1}{\left(1-a^{(s)}\right)a^{(s)}} & 1\leq s\leq s_{0}\\
\frac{\left(2-a^{(s)}\right)a^{(s)}}{1-a^{(s)}} & s_{0}<s
\end{cases},\\
\gamma_{t-1}^{(s)} & =\frac{\beta\left(2-a^{(s)}\right)a^{(s)}}{(1-a^{(s)})q^{(s)}},\forall t\in\left[T_{s}\right],
\end{align*}
the update rule of $x_{t}^{(s)}$ in every epoch in Algorithm \ref{alg:VRAG}
is equivalent to the update rule of $x_{t}^{(s)}$ in every epoch
in Algorithm \ref{alg:AdaVRAG}. Since $\gamma_{t-1}^{(s)}$ is a
constant in the corresponding epoch now, we will use $\gamma^{(s)}$
without the subscript to simplify the notation. The above argument
means that we can apply Lemma \ref{lem:AdaVRAG-One-Epoch-Descent}
directly to obtain the following lemma.
\begin{lem}
\label{lem:AdaVRAG-One-Epoch-Descent-NA}For all epochs $s\geq1$,
we have
\begin{align*}
\E\left[F(u^{(s)})-F(x^{*})\right] & \leq\E\left[\left(1-a^{(s)}\right)\left(F(u^{(s-1)})-F(x^{*})\right)+\frac{\gamma^{(s)}q^{(s)}a^{(s)}}{2T_{s}}\left(\left\Vert x_{0}^{(s)}-x^{*}\right\Vert ^{2}-\left\Vert x_{0}^{(s+1)}-x^{*}\right\Vert ^{2}\right)\right].
\end{align*}
\end{lem}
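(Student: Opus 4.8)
The plan is to reduce the statement directly to Lemma \ref{lem:AdaVRAG-One-Epoch-Descent}, which is a generic one-epoch progress bound for the inner-loop mirror-descent update shared by AdaVRAG and VRAG. As observed in the discussion immediately preceding the statement, with the auxiliary quantities $q^{(s)}$ and $\gamma^{(s)} := \gamma_{t-1}^{(s)}$ the prox coefficient $\tfrac{1}{2}\gamma^{(s)}q^{(s)}$ of the $x$-update in Algorithm \ref{alg:AdaVRAG} coincides with the coefficient $\tfrac{\beta(2-a^{(s)})a^{(s)}}{2(1-a^{(s)})}$ of the $x$-update in Algorithm \ref{alg:VRAG}; indeed by the definition $\gamma^{(s)} = \tfrac{\beta(2-a^{(s)})a^{(s)}}{(1-a^{(s)})q^{(s)}}$ we have $\gamma^{(s)}q^{(s)} = \tfrac{\beta(2-a^{(s)})a^{(s)}}{1-a^{(s)}}$. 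Since Lemma \ref{lem:AdaVRAG-One-Step-Descent}, and hence Lemma \ref{lem:AdaVRAG-One-Epoch-Descent}, uses only the update \emph{structure} of the algorithm --- the convex combination $\avx_t^{(s)} = a^{(s)}x_t^{(s)} + (1-a^{(s)})u^{(s-1)}$, the variance-reduced estimator $g_t^{(s)}$, the optimality condition of $x_t^{(s)}$ with prox coefficient $\gamma_{t-1}^{(s)}q^{(s)}$, and the averaging $u^{(s)} = \tfrac{1}{T_s}\sum_{t=1}^{T_s}\avx_t^{(s)}$ --- and never the specific (adaptive) rule producing $\gamma_t^{(s)}$, it applies verbatim to VRAG with $\gamma_{t-1}^{(s)} \equiv \gamma^{(s)}$ throughout epoch $s$.

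Applying Lemma \ref{lem:AdaVRAG-One-Epoch-Descent} with $\gamma_{t-1}^{(s)} = \gamma^{(s)}$ produces three terms on the right-hand side. The first is already $(1-a^{(s)})(F(u^{(s-1)})-F(x^*))$. The third term carries the coefficient $\tfrac{\beta(2-a^{(s)})(a^{(s)})^2}{2(1-a^{(s)})} - \tfrac{\gamma^{(s)}q^{(s)}a^{(s)}}{2}$, which vanishes identically because $\gamma^{(s)}q^{(s)} = \tfrac{\beta(2-a^{(s)})a^{(s)}}{1-a^{(s)}}$ by construction, so the whole iterate-movement term drops out. For the second term, since $\gamma^{(s)}$ is constant over the epoch, the summand $\|x_{t-1}^{(s)}-x^*\|^2 - \|x_t^{(s)}-x^*\|^2$ telescopes over $t = 1,\dots,T_s$, leaving $\tfrac{\gamma^{(s)}q^{(s)}a^{(s)}}{2T_s}\bigl(\|x_0^{(s)}-x^*\|^2 - \|x_{T_s}^{(s)}-x^*\|^2\bigr)$; using $x_0^{(s+1)} = x_{T_s}^{(s)}$ from Algorithm \ref{alg:VRAG} rewrites this as $\tfrac{\gamma^{(s)}q^{(s)}a^{(s)}}{2T_s}\bigl(\|x_0^{(s)}-x^*\|^2 - \|x_0^{(s+1)}-x^*\|^2\bigr)$, which is exactly the claimed bound. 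Taking expectations throughout completes the argument.

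The only delicate point --- the ``hard part,'' such as it is --- is to justify cleanly that Lemma \ref{lem:AdaVRAG-One-Epoch-Descent} genuinely transfers to VRAG despite being stated in the AdaVRAG section. I would handle this by spelling out that (i) the auxiliary sequence $\gamma^{(s)}$ is constant within each epoch, so any monotonicity hypothesis on $\gamma_{t-1}^{(s)}$ used in the proof of Lemma \ref{lem:AdaVRAG-One-Step-Descent} holds trivially, and (ii) the optimality condition invoked there for $x_t^{(s)}$ with prox weight $\gamma_{t-1}^{(s)}q^{(s)}$ is precisely the first-order condition satisfied by VRAG's $x$-update once $\gamma_{t-1}^{(s)}q^{(s)}$ is identified with $\tfrac{\beta(2-a^{(s)})a^{(s)}}{1-a^{(s)}}$. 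After this bookkeeping, the single algebraic identity $\gamma^{(s)}q^{(s)} = \tfrac{\beta(2-a^{(s)})a^{(s)}}{1-a^{(s)}}$ does all the remaining work and no new estimates are needed.
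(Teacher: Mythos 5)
Your proposal is correct and follows essentially the same route as the paper: apply Lemma \ref{lem:AdaVRAG-One-Epoch-Descent} with $\gamma_{t-1}^{(s)}\equiv\gamma^{(s)}$, observe that $\gamma^{(s)}q^{(s)}=\frac{\beta(2-a^{(s)})a^{(s)}}{1-a^{(s)}}$ kills the iterate-movement term, telescope the distance terms over $t$, and invoke $x_{0}^{(s+1)}=x_{T_{s}}^{(s)}$. The extra care you take in justifying that the AdaVRAG epoch lemma transfers verbatim to the constant-step-size setting is a reasonable addition but not a departure from the paper's argument.
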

\begin{proof}
By applying Lemma \ref{lem:AdaVRAG-One-Epoch-Descent}, we know
\begin{align*}
 & \E\left[F(u^{(s)})-F(x^{*})\right]\\
\leq & \E\left[\left(1-a^{(s)}\right)\left(F(u^{(s-1)})-F(x^{*})\right)\right]\\
 & +\E\left[\frac{1}{T_{s}}\sum_{t=1}^{T_{s}}\frac{\gamma_{t-1}^{(s)}q^{(s)}a^{(s)}}{2}\left(\left\Vert x_{t-1}^{(s)}-x^{*}\right\Vert ^{2}-\left\Vert x_{t}^{(s)}-x^{*}\right\Vert ^{2}\right)\right]\\
 & +\E\left[\frac{1}{T_{s}}\sum_{t=1}^{T_{s}}\left(\frac{\beta\left(2-a^{(s)}\right)\left(a^{(s)}\right)^{2}}{2\left(1-a^{(s)}\right)}-\frac{\gamma_{t-1}^{(s)}q^{(s)}a^{(s)}}{2}\right)\left\Vert x_{t}^{(s)}-x_{t-1}^{(s)}\right\Vert ^{2}\right]\\
\overset{(a)}{=} & \E\left[\left(1-a^{(s)}\right)\left(F(u^{(s-1)})-F(x^{*})\right)+\frac{\gamma^{(s)}q^{(s)}a^{(s)}}{2T_{s}}\sum_{t=1}^{T_{s}}\left\Vert x_{t-1}^{(s)}-x^{*}\right\Vert ^{2}-\left\Vert x_{t}^{(s)}-x^{*}\right\Vert ^{2}\right]\\
= & \E\left[\left(1-a^{(s)}\right)\left(F(u^{(s-1)})-F(x^{*})\right)+\frac{\gamma^{(s)}q^{(s)}a^{(s)}}{2T_{s}}\left(\left\Vert x_{0}^{(s)}-x^{*}\right\Vert ^{2}-\left\Vert x_{T_{s}}^{(s)}-x^{*}\right\Vert ^{2}\right)\right]\\
\overset{(b)}{=} & \E\left[\left(1-a^{(s)}\right)\left(F(u^{(s-1)})-F(x^{*})\right)+\frac{\gamma^{(s)}q^{(s)}a^{(s)}}{2T_{s}}\left(\left\Vert x_{0}^{(s)}-x^{*}\right\Vert ^{2}-\left\Vert x_{0}^{(s+1)}-x^{*}\right\Vert ^{2}\right)\right],
\end{align*}
where $(a)$ is by $\gamma_{t-1}^{(s)}q^{(s)}=\frac{\beta\left(2-a^{(s)}\right)a^{(s)}}{1-a^{(s)}}$
and $\gamma^{(s)}=\gamma_{t-1}^{(s)},\forall t\in\left[T_{s}\right]$,
$(b)$ is by $x_{0}^{(s+1)}=x_{T_{s}}^{(s)}$.
\end{proof}

Now if we still multiply both sides by $\frac{T_{s}}{q^{(s)}a^{(s)}}$,
we need to ensure that $\gamma^{(s)}$ can help us to make a telescoping
sum. However, this is not always true. So we need some different conditions
as stated in the following lemma to obtain a bound for the function
value gap of $u^{(S)}$. The new bound for the function value gap
of $u^{(S)}$ for Algorithm \ref{alg:VRAG} is as follows.
\begin{lem}
\label{lem:AdaVRAG-Whole-Descent-NA}If  $\forall s\neq s_{0}$, we
have
\begin{align*}
a^{(s+1)} & \leq a^{(s)},\\
\frac{(1-a^{(s+1)})T_{s+1}}{q^{(s+1)}a^{(s+1)}} & \leq\frac{T_{s}}{q^{(s)}a^{(s)}}.
\end{align*}
Additionally, for $s_{0}$, assume we have
\[
\frac{\left(1-a^{(s_{0}+1)}\right)^{2}T_{s_{0}+1}}{\left(2-a^{(s_{0}+1)}\right)\left(a^{(s_{0}+1)}\right)^{2}}\leq\frac{(1-a^{(s_{0})})T_{s_{0}}}{\left(2-a^{(s_{0})}\right)\left(a^{(s_{0})}\right)^{2}}.
\]
Then for $S\leq s_{0}$,
\begin{align*}
\E\left[\frac{T_{S}}{q^{(S)}a^{(S)}}\left(F(u^{(S)})-F(x^{*})\right)\right] & \leq\frac{\left(1-a^{(1)}\right)T_{1}}{q^{(1)}a^{(1)}}\left(F(u^{(0)})-F(x^{*})\right)+\frac{\beta}{2}\left\Vert u^{(0)}-x^{*}\right\Vert ^{2}.
\end{align*}
For $S>s_{0}$,
\[
\E\left[\frac{\left(2-a^{(s_{0})}\right)\left(a^{(s_{0})}\right)^{2}T_{S}}{q^{(S)}a^{(S)}}\left(F(u^{(S)})-F(x^{*})\right)\right]\leq\frac{\left(1-a^{(1)}\right)T_{1}}{q^{(1)}a^{(1)}}\left(F(u^{(0)})-F(x^{*})\right)+\frac{\beta}{2}\left\Vert u^{(0)}-x^{*}\right\Vert ^{2}
\]
\end{lem}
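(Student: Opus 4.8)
The plan is to carry out a weighted telescoping of the per-epoch bound in Lemma~\ref{lem:AdaVRAG-One-Epoch-Descent-NA}, handling the two phases $s\le s_0$ and $s>s_0$ with slightly different weights and gluing them at $s_0$. First I would unpack the parameter choice: using $\gamma^{(s)}=\frac{\beta(2-a^{(s)})a^{(s)}}{(1-a^{(s)})q^{(s)}}$, one checks $\gamma^{(s)}q^{(s)}=\frac{\beta(2-a^{(s)})a^{(s)}}{1-a^{(s)}}$, so after multiplying the inequality of Lemma~\ref{lem:AdaVRAG-One-Epoch-Descent-NA} by the weight $w_s\coloneqq\frac{T_s}{q^{(s)}a^{(s)}}$, the distance term acquires the clean coefficient $\frac{\gamma^{(s)}}{2}$, where $\gamma^{(s)}=\beta(2-a^{(s)})(a^{(s)})^2$ for $s\le s_0$ and $\gamma^{(s)}=\beta$ for $s>s_0$. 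Note also $E_s\coloneqq\E[F(u^{(s)})-F(x^*)]\ge0$, and the hypothesis $\frac{(1-a^{(s+1)})T_{s+1}}{q^{(s+1)}a^{(s+1)}}\le\frac{T_s}{q^{(s)}a^{(s)}}$ (for $s\neq s_0$) says precisely that the incoming weight on $E_s$ in epoch $s+1$ is at most $w_s$; since $E_s\ge0$, inflating these coefficients is harmless and the function-value part telescopes within each phase.

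For phase $1$, the hypothesis $a^{(s+1)}\le a^{(s)}$ together with the monotonicity of $a\mapsto(2-a)a^2$ on $(0,1)$ makes $\gamma^{(s)}$ nonincreasing for $s\le s_0$, so an Abel summation bounds $\sum_{s=1}^{S}\frac{\gamma^{(s)}}{2}\bigl(\|x_0^{(s)}-x^*\|^2-\|x_0^{(s+1)}-x^*\|^2\bigr)$ by $\frac{\gamma^{(1)}}{2}\|u^{(0)}-x^*\|^2-\frac{\gamma^{(S)}}{2}\|x_0^{(S+1)}-x^*\|^2$, and $\gamma^{(1)}=\beta(2-a^{(1)})(a^{(1)})^2\le\beta$. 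Combining this with the telescoped function-value part and dropping the nonnegative leftover distance term yields, for any $S\le s_0$, $w_S E_S\le\frac{(1-a^{(1)})T_1}{q^{(1)}a^{(1)}}E_0+\frac{\beta}{2}\|u^{(0)}-x^*\|^2$, which is the first claim (since $w_S=\frac{T_S}{q^{(S)}a^{(S)}}$). Running the same argument with $S=s_0$ but \emph{keeping} the leftover term produces the stronger statement $w_{s_0}E_{s_0}+\frac{\gamma^{(s_0)}}{2}\E\|x_0^{(s_0+1)}-x^*\|^2\le\frac{(1-a^{(1)})T_1}{q^{(1)}a^{(1)}}E_0+\frac{\beta}{2}\|u^{(0)}-x^*\|^2$, which I will feed into phase $2$.

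For $S>s_0$ I would rescale the phase-$2$ weights by the constant $(2-a^{(s_0)})(a^{(s_0)})^2$, i.e., work with $\hat w_s\coloneqq(2-a^{(s_0)})(a^{(s_0)})^2\frac{T_s}{q^{(s)}a^{(s)}}$. Two things then line up. The extra hypothesis $\frac{(1-a^{(s_0+1)})^2T_{s_0+1}}{(2-a^{(s_0+1)})(a^{(s_0+1)})^2}\le\frac{(1-a^{(s_0)})T_{s_0}}{(2-a^{(s_0)})(a^{(s_0)})^2}$ is, after this rescaling, exactly $\hat w_{s_0+1}(1-a^{(s_0+1)})\le w_{s_0}$, so the function-value telescoping glues across the boundary; and since $\gamma^{(s)}=\beta$ throughout phase $2$, the rescaled distance coefficient is the constant $\frac{\beta(2-a^{(s_0)})(a^{(s_0)})^2}{2}=\frac{\gamma^{(s_0)}}{2}$. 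Telescoping $\hat w_s E_s\le\hat w_s(1-a^{(s)})E_{s-1}+\frac{\gamma^{(s_0)}}{2}\bigl(\|x_0^{(s)}-x^*\|^2-\|x_0^{(s+1)}-x^*\|^2\bigr)$ from $s=s_0+1$ to $S$ — using $\hat w_s(1-a^{(s)})\le\hat w_{s-1}$ for $s>s_0+1$ (equivalent to the second hypothesis) and $\hat w_{s_0+1}(1-a^{(s_0+1)})\le w_{s_0}$ at the first step — and collapsing the constant-coefficient distance sum gives $\hat w_S E_S\le w_{s_0}E_{s_0}+\frac{\gamma^{(s_0)}}{2}\E\|x_0^{(s_0+1)}-x^*\|^2$. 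Substituting the phase-$1$ bound, the $\frac{\gamma^{(s_0)}}{2}\E\|x_0^{(s_0+1)}-x^*\|^2$ terms cancel and $\hat w_S E_S\le\frac{(1-a^{(1)})T_1}{q^{(1)}a^{(1)}}E_0+\frac{\beta}{2}\|u^{(0)}-x^*\|^2$, i.e., the second claim, since $\hat w_S=\frac{(2-a^{(s_0)})(a^{(s_0)})^2T_S}{q^{(S)}a^{(S)}}$.

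The main obstacle is making the phase boundary match: the Abel summation in phase $1$ leaves behind the residue $-\frac{\gamma^{(s_0)}}{2}\|x_0^{(s_0+1)}-x^*\|^2$, and it is precisely the rescaling by $(2-a^{(s_0)})(a^{(s_0)})^2$ — which is forced by the boundary hypothesis on the function-value weights — that makes the (now constant) distance coefficient in phase $2$ equal to exactly $\frac{\gamma^{(s_0)}}{2}$, so the two cancel. Everything else is bookkeeping: verifying the simplifications of $w_s$ and $\gamma^{(s)}$ under the parameter choice, the inequalities $\gamma^{(1)}\le\beta$ and $(2-a)a^2$ increasing on $(0,1)$, and the observation $E_s\ge0$.
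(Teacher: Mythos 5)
Your proposal is correct and takes essentially the same route as the paper: both multiply the per-epoch bound of Lemma~\ref{lem:AdaVRAG-One-Epoch-Descent-NA} by the weight $T_s/(q^{(s)}a^{(s)})$, telescope the function-value part using the stated weight inequality, handle the phase-$1$ distance sum by Abel summation with $\gamma^{(1)}\le\beta$ and monotonicity of $(2-a)a^2$, and glue at $s_0$ by scaling the phase-$2$ chain by $(2-a^{(s_0)})(a^{(s_0)})^2$ so the constant phase-$2$ distance coefficient cancels the phase-$1$ residue. The only (cosmetic) difference is that you bake the $(2-a^{(s_0)})(a^{(s_0)})^2$ factor into the phase-$2$ weights up front, whereas the paper telescopes phase $2$ with the unscaled weights and multiplies through by the constant afterward; the computations coincide.
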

\begin{proof}
By applying Lemma \ref{lem:AdaVRAG-One-Epoch-Descent-NA} and multiply
both sides by $\frac{T_{s}}{q^{(s)}a^{(s)}},$we have
\begin{align*}
\E\left[\frac{T_{s}}{q^{(s)}a^{(s)}}\left(F(u^{(s)})-F(x^{*})\right)\right] & \leq\E\left[\frac{\left(1-a^{(s)}\right)T_{s}}{q^{(s)}a^{(s)}}\left(F(u^{(s-1)})-F(x^{*})\right)+\frac{\gamma^{(s)}}{2}\left(\left\Vert x_{0}^{(s)}-x^{*}\right\Vert ^{2}-\left\Vert x_{0}^{(s+1)}-x^{*}\right\Vert ^{2}\right)\right].
\end{align*}

For $S\leq s_{0}$
\begin{align*}
 & \E\left[\frac{T_{S}}{q^{(S)}a^{(S)}}\left(F(u^{(S)})-F(x^{*})\right)\right]\\
\leq & \E\left[\frac{\left(1-a^{(1)}\right)T_{1}}{q^{(1)}a^{(1)}}\left(F(u^{(0)})-F(x^{*})\right)+\sum_{s=1}^{S}\frac{\gamma^{(s)}}{2}\left(\left\Vert x_{0}^{(s)}-x^{*}\right\Vert ^{2}-\left\Vert x_{0}^{(s+1)}-x^{*}\right\Vert ^{2}\right)\right]\\
\overset{(a)}{=} & \frac{\left(1-a^{(1)}\right)T_{1}}{q^{(1)}a^{(1)}}\left(F(u^{(0)})-F(x^{*})\right)+\E\left[\sum_{s=1}^{S}\frac{\beta\left(2-a^{(s)}\right)\left(a^{(s)}\right)^{2}}{2}\left(\left\Vert x_{0}^{(s)}-x^{*}\right\Vert ^{2}-\left\Vert x_{0}^{(s+1)}-x^{*}\right\Vert ^{2}\right)\right]\\
= & \frac{\left(1-a^{(1)}\right)T_{1}}{q^{(1)}a^{(1)}}\left(F(u^{(0)})-F(x^{*})\right)+\frac{\beta\left(2-a^{(1)}\right)\left(a^{(1)}\right)^{2}}{2}\left\Vert x_{0}^{(1)}-x^{*}\right\Vert ^{2}\\
 & +\E\left[\sum_{s=1}^{S-1}\frac{\beta\left[\left(2-a^{(s+1)}\right)\left(a^{(s+1)}\right)^{2}-\left(2-a^{(s)}\right)\left(a^{(s)}\right)^{2}\right]}{2}\left(\left\Vert x_{0}^{(s+1)}-x^{*}\right\Vert ^{2}\right)\right]\\
 & -\E\left[\frac{\beta\left(2-a^{(S)}\right)\left(a^{(S)}\right)^{2}}{2}\left(\left\Vert x_{0}^{(S+1)}-x^{*}\right\Vert ^{2}\right)\right]\\
\overset{(b)}{\leq} & \frac{\left(1-a^{(1)}\right)T_{1}}{q^{(1)}a^{(1)}}\left(F(u^{(0)})-F(x^{*})\right)+\frac{\beta}{2}\left\Vert u^{(0)}-x^{*}\right\Vert ^{2}\\
 & -\E\left[\frac{\beta\left(2-a^{(S)}\right)\left(a^{(S)}\right)^{2}}{2}\left(\left\Vert x_{0}^{(S+1)}-x^{*}\right\Vert ^{2}\right)\right],
\end{align*}
where $(a)$ is by the definition of $\gamma^{(s)}$ when $s\leq s_{0}$,
$(b)$ is by $\left(2-a^{(1)}\right)\left(a^{(1)}\right)^{2}\leq1$
and $x_{0}^{(1)}=u^{(0)}$, additionally, note that our assumption
$a^{(s+1)}\leq a^{(s)}\Rightarrow\left(2-a^{(s+1)}\right)\left(a^{(s+1)}\right)^{2}\leq\left(2-a^{(s)}\right)\left(a^{(s)}\right)^{2}$.

For $S>s_{0}$, we can also make the telescoping sum from $s=s_{0}+1$
to $S$ by a similar argument to get
\begin{align*}
\E\left[\frac{T_{S}}{q^{(S)}a^{(S)}}\left(F(u^{(S)})-F(x^{*})\right)\right] & \leq\E\left[\frac{\left(1-a^{(s_{0}+1)}\right)T_{s_{0}+1}}{q^{(s_{0}+1)}a^{(s_{0}+1)}}\left(F(u^{(s_{0})})-F(x^{*})\right)+\frac{\beta}{2}\left\Vert x_{0}^{(s_{0}+1)}-x^{*}\right\Vert ^{2}\right].
\end{align*}
Multiplying both sides by $\left(2-a^{(s_{0})}\right)\left(a^{(s_{0})}\right)^{2}$,
we have
\begin{align*}
 & \E\left[\frac{\left(2-a^{(s_{0})}\right)\left(a^{(s_{0})}\right)^{2}T_{S}}{q^{(S)}a^{(S)}}\left(F(u^{(S)})-F(x^{*})\right)\right]\\
\leq & \E\left[\frac{\left(2-a^{(s_{0})}\right)\left(a^{(s_{0})}\right)^{2}\left(1-a^{(s_{0}+1)}\right)T_{s_{0}+1}}{q^{(s_{0}+1)}a^{(s_{0}+1)}}\left(F(u^{(s_{0})})-F(x^{*})\right)+\frac{\beta\left(2-a^{(s_{0})}\right)\left(a^{(s_{0})}\right)^{2}}{2}\left\Vert x_{0}^{(s_{0}+1)}-x^{*}\right\Vert ^{2}\right]\\
\overset{(c)}{=} & \E\left[\frac{\left(2-a^{(s_{0})}\right)\left(a^{(s_{0})}\right)^{2}\left(1-a^{(s_{0}+1)}\right)^{2}T_{s_{0}+1}}{(2-a^{(s_{0}+1)})\left(a^{(s_{0}+1)}\right)^{2}}\left(F(u^{(s_{0})})-F(x^{*})\right)+\frac{\beta\left(2-a^{(s_{0})}\right)\left(a^{(s_{0})}\right)^{2}}{2}\left\Vert x_{0}^{(s_{0}+1)}-x^{*}\right\Vert ^{2}\right],
\end{align*}
where $(c)$ is by the definition $q^{(s_{0}+1)}=\frac{\left(2-a^{(s_{0}+1)}\right)a^{(s_{0}+1)}}{1-a^{(s_{0}+1)}}$.
Note that by our assumption
\begin{align*}
\frac{\left(2-a^{(s_{0})}\right)\left(a^{(s_{0})}\right)^{2}\left(1-a^{(s_{0}+1)}\right)^{2}T_{s_{0}+1}}{(2-a^{(s_{0}+1)})\left(a^{(s_{0}+1)}\right)^{2}} & \leq(1-a^{(s_{0})})T_{s_{0}}\\
 & =\frac{T_{s_{0}}}{q^{(s_{0})}a^{(s_{0})}},
\end{align*}
so we know
\begin{align*}
 & \E\left[\frac{\left(2-a^{(s_{0})}\right)\left(a^{(s_{0})}\right)^{2}T_{S}}{q^{(S)}a^{(S)}}\left(F(u^{(S)})-F(x^{*})\right)\right]\\
\leq & \E\left[\frac{T_{s_{0}}}{q^{(s_{0})}a^{(s_{0})}}\left(F(u^{(s_{0})})-F(x^{*})\right)+\frac{\beta\left(2-a^{(s_{0})}\right)\left(a^{(s_{0})}\right)^{2}}{2}\left\Vert x_{0}^{(s_{0}+1)}-x^{*}\right\Vert ^{2}\right].
\end{align*}
Now combining
\begin{align*}
\E\left[\frac{T_{s_{0}}}{q^{(s_{0})}a^{(s_{0})}}\left(F(u^{(s_{0})})-F(x^{*})\right)\right] & \leq\frac{\left(1-a^{(1)}\right)T_{1}}{q^{(1)}a^{(1)}}\left(F(u^{(0)})-F(x^{*})\right)+\frac{\beta}{2}\left\Vert u^{(0)}-x^{*}\right\Vert ^{2}\\
 & \quad-\E\left[\frac{\beta\left(2-a^{(s_{0})}\right)\left(a^{(s_{0})}\right)^{2}}{2}\left(\left\Vert x_{0}^{(s_{0}+1)}-x^{*}\right\Vert ^{2}\right)\right],
\end{align*}
we have
\begin{align*}
\E\left[\frac{\left(2-a^{(s_{0})}\right)\left(a^{(s_{0})}\right)^{2}T_{S}}{q^{(S)}a^{(S)}}\left(F(u^{(S)})-F(x^{*})\right)\right] & \leq\frac{\left(1-a^{(1)}\right)T_{1}}{q^{(1)}a^{(1)}}\left(F(u^{(0)})-F(x^{*})\right)+\frac{\beta}{2}\left\Vert u^{(0)}-x^{*}\right\Vert ^{2}.
\end{align*}
\end{proof}

Using the above new lemma w.r.t. the function value gap of $u^{(S)}$,
we finally can give the proof of Theorem \ref{thm:AdaVRAG-Convergence-NA}.

\begin{proof}
(Theorem $\ref{thm:AdaVRAG-Convergence-NA}$) Note that by our choice
$a^{(s+1)}\leq a^{(s)}$ is true for any $s\neq s_{0}$. Besides,
our parameters $\left\{ a^{(s)}\right\} $ and $\left\{ q^{(s)}\right\} $
are totally the same as the choice in Theorem \ref{thm:AdaVRAG-Convergence-A}
when $s\leq s_{0}$. Hence we know
\[
\frac{(1-a^{(s+1)})T_{s+1}}{q^{(s+1)}a^{(s+1)}}\leq\frac{T_{s}}{q^{(s)}a^{(s)}}
\]
is still true for $s\leq s_{0}-1$. For $s\geq s_{0}+1$, note that
our new $\left\{ q^{(s)}\right\} $ are only different from the choice
in Theorem \ref{thm:AdaVRAG-Convergence-A} by a constant, which implies
\[
\frac{(1-a^{(s+1)})T_{s+1}}{q^{(s+1)}a^{(s+1)}}\leq\frac{T_{s}}{q^{(s)}a^{(s)}}
\]
also holds for $s\geq s_{0}+1$. Besides, we can show 
\[
\frac{\left(1-a^{(s_{0}+1)}\right)^{2}T_{s_{0}+1}}{\left(2-a^{(s_{0}+1)}\right)\left(a^{(s_{0}+1)}\right)^{2}}\leq\frac{(1-a^{(s_{0})})T_{s_{0}}}{\left(2-a^{(s_{0})}\right)\left(a^{(s_{0})}\right)^{2}}
\]
is true by plugging in the value of $a^{(s_{0}+1)}=\frac{c}{1+2c}$
and noticing that $a^{(s_{0})}\leq\frac{1}{2}$. Hence all the conditions
for Lemma \ref{lem:AdaVRAG-Whole-Descent-NA} are satisfied, then
we know for $S\leq s_{0}$,
\[
\E\left[\frac{T_{S}}{q^{(S)}a^{(S)}}\left(F(u^{(S)})-F(x^{*})\right)\right]\leq\frac{\left(1-a^{(1)}\right)T_{1}}{q^{(1)}a^{(1)}}\left(F(u^{(0)})-F(x^{*})\right)+\frac{\beta}{2}\left\Vert u^{(0)}-x^{*}\right\Vert ^{2}.
\]
For $S>s_{0}$,
\[
\E\left[\frac{\left(2-a^{(s_{0})}\right)\left(a^{(s_{0})}\right)^{2}T_{S}}{q^{(S)}a^{(S)}}\left(F(u^{(S)})-F(x^{*})\right)\right]\leq\frac{\left(1-a^{(1)}\right)T_{1}}{q^{(1)}a^{(1)}}\left(F(u^{(0)})-F(x^{*})\right)+\frac{\beta}{2}\left\Vert u^{(0)}-x^{*}\right\Vert ^{2}.
\]
By noticing
\begin{align*}
a^{(s_{0})} & =1-\left(4n\right)^{-0.5^{s_{0}}}\\
 & \geq1-\left(4n\right)^{-0.5^{\left(\log_{2}\log_{2}4n\right)+1}}\\
 & =1-\frac{1}{\sqrt{2}}\\
\Rightarrow\left(2-a^{(s_{0})}\right)\left(a^{(s_{0})}\right)^{2} & \geq\frac{2-\sqrt{2}}{4}
\end{align*}
and
\[
\frac{\left(1-a^{(1)}\right)T_{1}}{q^{(1)}a^{(1)}}=\frac{1}{4},
\]
combining the fact that our new $\left\{ q^{(s)}\right\} $ for $S>s_{0}$
have the same order of the choice in Theorem \ref{thm:AdaVRAG-Convergence-A}.
Following a similar proof, we can arrive the desired result.
\end{proof}

\section{Hyperparameter choices and additional results}

\label{sec:Additional-experiment-details}

Table \ref{tab:Hyperparameters} reports the hyperparameter choices
used in the experiments. VRAG and VRAE are the non-adaptive versions
our algorithms (Algorithms \ref{alg:VRAE} and \ref{alg:VRAG}). We
set their step sizes via a hyperparameter search as described in Section
\ref{sec:experiments}. Figures \ref{fig:apdx-experimental-results-a1a},
\ref{fig:apdx-experimental-results-mushrooms}, \ref{fig:apdx-experimental-results-w8a},
\ref{fig:apdx-experimental-results-phishing} give the experimental
evaluation of our non-adaptive algorithms. 

\begin{table}[h]
\centering{}{\small{}\caption{\label{tab:Hyperparameters}Hyperparameters used in the experiments}
\vspace{0.1in}
}%
\begin{tabular}{cccccccc}
\toprule 
{\small{}Dataset} & {\small{}Loss} & {\small{}SVRG} & {\small{}$\text{SVRG}^{++}$} & {\small{}VARAG} & {\small{}VRADA} & {\small{}VRAG} & {\small{}VRAE}\tabularnewline
\midrule
\midrule 
\multirow{3}{*}{{\small{}a1a}} & {\small{}logistic} & {\small{}0.5} & {\small{}0.5} & {\small{}1} & {\small{}1} & {\small{}1} & {\small{}1}\tabularnewline
\cmidrule{2-8} \cmidrule{3-8} \cmidrule{4-8} \cmidrule{5-8} \cmidrule{6-8} \cmidrule{7-8} \cmidrule{8-8} 
 & {\small{}squared} & {\small{}0.01} & {\small{}0.05} & {\small{}0.05} & {\small{}0.1} & {\small{}0.1} & {\small{}0.05}\tabularnewline
\cmidrule{2-8} \cmidrule{3-8} \cmidrule{4-8} \cmidrule{5-8} \cmidrule{6-8} \cmidrule{7-8} \cmidrule{8-8} 
 & {\small{}huber} & {\small{}0.05} & {\small{}0.1} & {\small{}0.1} & {\small{}0.5} & {\small{}0.1} & {\small{}0.1}\tabularnewline
\midrule
\midrule 
\multirow{3}{*}{{\small{}mushrooms}} & {\small{}logistic} & {\small{}0.5} & {\small{}1} & {\small{}1} & {\small{}1} & {\small{}1} & {\small{}1}\tabularnewline
\cmidrule{2-8} \cmidrule{3-8} \cmidrule{4-8} \cmidrule{5-8} \cmidrule{6-8} \cmidrule{7-8} \cmidrule{8-8} 
 & {\small{}squared} & {\small{}0.01} & {\small{}0.01} & {\small{}0.05} & {\small{}0.1} & {\small{}0.05} & {\small{}0.01}\tabularnewline
\cmidrule{2-8} \cmidrule{3-8} \cmidrule{4-8} \cmidrule{5-8} \cmidrule{6-8} \cmidrule{7-8} \cmidrule{8-8} 
 & {\small{}huber} & {\small{}0.05} & {\small{}0.1} & {\small{}0.1} & {\small{}0.1} & {\small{}0.1} & {\small{}0.05}\tabularnewline
\midrule
\midrule 
\multirow{3}{*}{{\small{}w8a}} & {\small{}logistic} & {\small{}0.1} & {\small{}1} & {\small{}1} & {\small{}100} & {\small{}1} & {\small{}5}\tabularnewline
\cmidrule{2-8} \cmidrule{3-8} \cmidrule{4-8} \cmidrule{5-8} \cmidrule{6-8} \cmidrule{7-8} \cmidrule{8-8} 
 & {\small{}squared} & {\small{}0.01} & {\small{}0.01} & {\small{}0.01} & {\small{}100} & {\small{}0.05} & {\small{}0.05}\tabularnewline
\cmidrule{2-8} \cmidrule{3-8} \cmidrule{4-8} \cmidrule{5-8} \cmidrule{6-8} \cmidrule{7-8} \cmidrule{8-8} 
 & {\small{}huber} & {\small{}0.01} & {\small{}0.1} & {\small{}0.1} & {\small{}100} & {\small{}0.1} & {\small{}0.5}\tabularnewline
\midrule
\midrule 
\multirow{3}{*}{{\small{}phishing}} & {\small{}logistic} & {\small{}50} & {\small{}100} & {\small{}100} & {\small{}100} & {\small{}100} & {\small{}100}\tabularnewline
\cmidrule{2-8} \cmidrule{3-8} \cmidrule{4-8} \cmidrule{5-8} \cmidrule{6-8} \cmidrule{7-8} \cmidrule{8-8} 
 & {\small{}squared} & {\small{}0.05} & {\small{}0.5} & {\small{}1} & {\small{}1} & {\small{}1} & {\small{}1}\tabularnewline
\cmidrule{2-8} \cmidrule{3-8} \cmidrule{4-8} \cmidrule{5-8} \cmidrule{6-8} \cmidrule{7-8} \cmidrule{8-8} 
 & {\small{}huber} & {\small{}0.5} & {\small{}1} & {\small{}1} & {\small{}5} & {\small{}5} & {\small{}5}\tabularnewline
\bottomrule
\end{tabular}
\end{table}

\clearpage

\begin{figure*}
\subfloat[Logistic loss]{\includegraphics[width=0.33\textwidth]{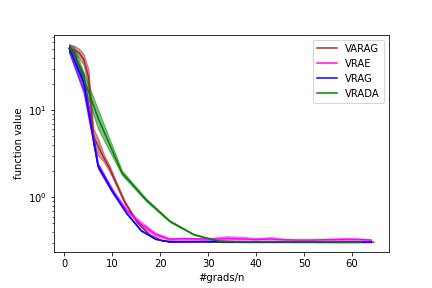}}\subfloat[Squared loss]{\includegraphics[width=0.33\textwidth]{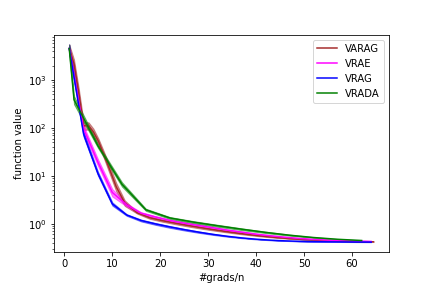}}\subfloat[Huber loss]{\includegraphics[width=0.33\textwidth]{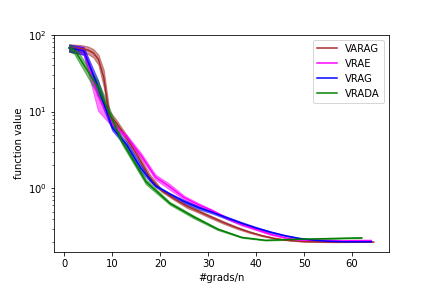}}\hfill{}\caption{a1a}

\label{fig:apdx-experimental-results-a1a}
\end{figure*}
\begin{figure*}
\subfloat[Logistic loss]{\includegraphics[width=0.33\textwidth]{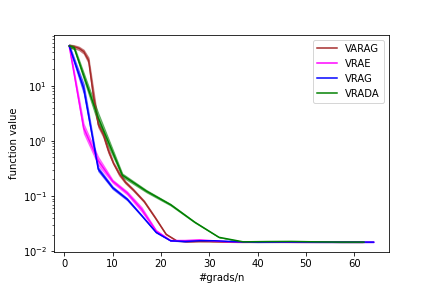}}\subfloat[Squared loss]{\includegraphics[width=0.33\textwidth]{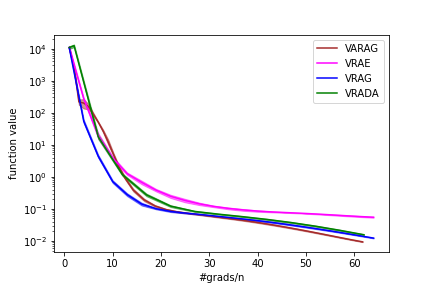}}\subfloat[Huber loss]{\includegraphics[width=0.33\textwidth]{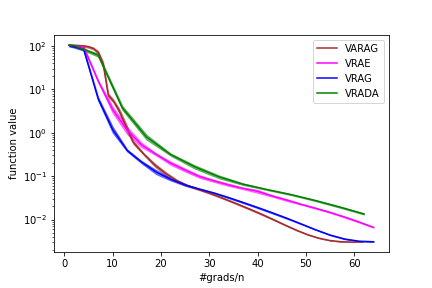}}\hfill{}\caption{mushrooms}

\label{fig:apdx-experimental-results-mushrooms}
\end{figure*}
\begin{figure*}
\subfloat[Logistic loss]{\includegraphics[width=0.33\textwidth]{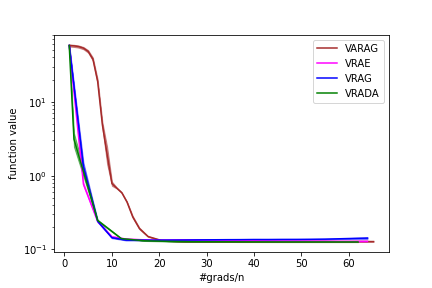}}\subfloat[Squared loss]{\includegraphics[width=0.33\textwidth]{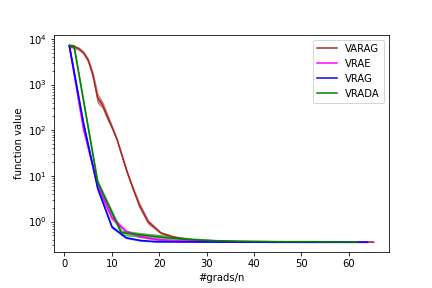}}\subfloat[Huber loss]{\includegraphics[width=0.33\textwidth]{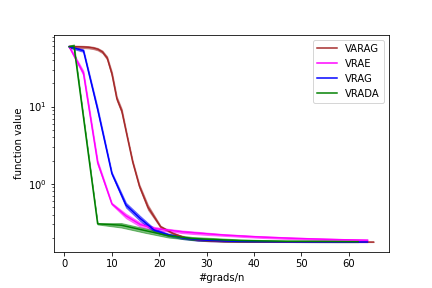}}\hfill{}\caption{w8a}

\label{fig:apdx-experimental-results-w8a}
\end{figure*}
\begin{figure*}
\subfloat[Logistic loss]{\includegraphics[width=0.33\textwidth]{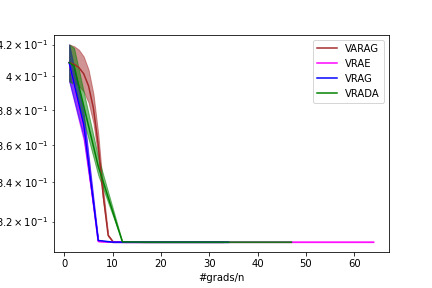}}\subfloat[Squared loss]{\includegraphics[width=0.33\textwidth]{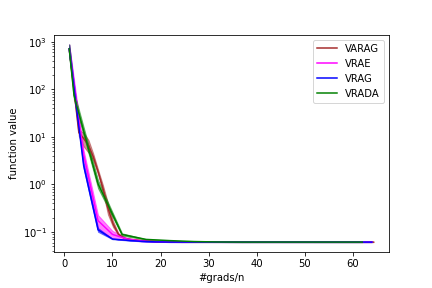}}\subfloat[Huber loss]{\includegraphics[width=0.33\textwidth]{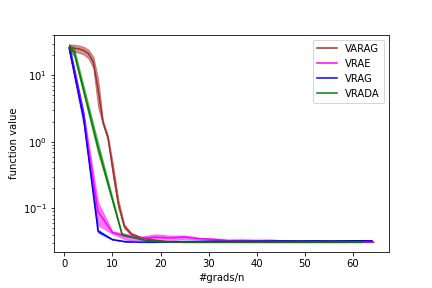}}\hfill{}\caption{phishing}

\label{fig:apdx-experimental-results-phishing}
\end{figure*}

\end{document}